\documentclass[11pt,a4paper,leqno,noamsfonts]{amsart}
\usepackage{a4wide}
\numberwithin{equation}{section}
\allowdisplaybreaks
\usepackage[british]{babel}
\usepackage[dvipsnames]{xcolor}
\usepackage{graphicx,pifont,comment}
\usepackage[utopia]{mathdesign}
\usepackage[a4paper,top=25mm,bottom=25mm,left=25mm,right=25mm,marginparwidth=60pt]{geometry}
\usepackage[utf8]{inputenc}

\usepackage{braket,caption,comment,mathtools,stmaryrd,enumitem,bm,tensor}
\usepackage[usestackEOL]{stackengine}
\usepackage{multirow,booktabs,microtype,relsize,soul}
\usepackage[foot]{amsaddr}
\usepackage[colorlinks,bookmarks]{hyperref} %
      \hypersetup{colorlinks,%
            citecolor=britishracinggreen,%
            filecolor=black,%
            linkcolor=cobalt,%
            urlcolor=RawSienna}

\definecolor{cornellred}{rgb}{0.7, 0.11, 0.11}
\definecolor{britishracinggreen}{rgb}{0.0, 0.26, 0.15}
\definecolor{cobalt}{rgb}{0.0, 0.28, 0.67}

\usepackage[colorinlistoftodos]{todonotes} 
\usepackage{faktor}
\usepackage[all]{xy}
\usepackage{tikz}
\usepackage{tikz-cd}
\usepackage{adjustbox}
\usepackage{rotating}
\usepackage{pict2e}

\usetikzlibrary{matrix,shapes,intersections,arrows,decorations.pathmorphing,decorations.markings}
\tikzset{commutative diagrams/.cd,
mysymbol/.style={start anchor=center,end anchor=center,draw=none}}
\newcommand\MySymb[2][\square]{%
  \arrow[mysymbol]{#2}[description]{#1}}
\tikzset{
shift up/.style={
to path={([yshift=#1]\tikztostart.east) -- ([yshift=#1]\tikztotarget.west) \tikztonodes}
}
}

\newcommand{\adjunction}[4]{%
  \begin{tikzcd}[ampersand replacement=\&, column sep=4em]
    #1 \colon #2
      \arrow[r, shift left=0.8ex]
      \arrow[r, phantom, "\scriptstyle\perp"]
    \&
    #3 \colon #4
      \arrow[l, shift left=0.8ex]
  \end{tikzcd}%
}

\usepackage[capitalise]{cleveref}
\usepackage{etoolbox}
\newlist{defenum}{enumerate}{1} 
\setlist[defenum]{label=$(\arabic*)$, ref=\thedefinition$(\arabic*)$}
\crefalias{defenumi}{definition} 
\newlist{lemenum}{enumerate}{1} 
\setlist[lemenum]{label=$(\arabic*)$, ref=\thelemma$(\arabic*)$}
\crefalias{lemenumi}{lemma} 
\newlist{assumpenum}{enumerate}{1} 
\setlist[assumpenum]{label=$(\arabic*)$, ref=\theassumption$(\arabic*)$}
\crefalias{assumpenumi}{assumption} 
\newlist{propenum}{enumerate}{1} 
\setlist[propenum]{label=$(\arabic*)$, ref=\theprop$(\arabic*)$}
\crefalias{propenumi}{proposition}
\newlist{exmpenum}{enumerate}{1} 
\setlist[exmpenum]{label=$(\arabic*)$, ref=\theexample$(\arabic*)$}
\crefalias{exmpenumi}{example}
\newlist{thmenum}{enumerate}{1} 
\setlist[thmenum]{label=$(\arabic*)$, ref=\thetheorem$(\arabic*)$}
\crefalias{thmenumi}{theorem}
\newlist{corenum}{enumerate}{1} 
\setlist[corenum]{label=$(\arabic*)$, ref=\thecorollary$(\arabic*)$}
\crefalias{corenumi}{corollary}
\newlist{factenum}{enumerate}{1} 
\setlist[factenum]{label=$(\arabic*)$, ref=\thefact$(\arabic*)$}
\crefalias{factenumi}{fact}
\newlist{remarkenum}{enumerate}{1} 
\setlist[remarkenum]{label=$(\arabic*)$, ref=\theremark$(\arabic*)$}
\crefalias{remarkenumi}{remark}
\newlist{axiomenum}{enumerate}{1} 
\setlist[axiomenum]{label=$(\arabic*)$, ref=\theaxiom$(\arabic*)$}
\crefalias{axiomenumi}{axiom} 
\theoremstyle{definition}
\newtheorem*{lemma*}{Lemma}
\newtheorem*{theorem*}{Theorem}
\newtheorem*{example*}{Example}
\newtheorem*{fact*}{Fact}
\newtheorem*{notation*}{Notation}
\newtheorem*{definition*}{Definition}
\newtheorem*{prop*}{Proposition}
\newtheorem*{remark*}{Remark}
\newtheorem*{corollary*}{Corollary}

\newtheorem*{conventions*}{Conventions}

\newtheorem{definition}{Definition}[section]

\newtheorem{example}[definition]{Example}
\newtheorem{fact}[definition]{Fact}

\newtheorem{notation}[definition]{Notation}
\newtheorem{construction}[definition]{Construction}
\newtheorem{remark}[definition]{Remark}
\newtheorem{warning}[definition]{Warning}

\newtheoremstyle{thm} 
        {3mm}
        {3mm}
        {\slshape}
        {0mm}
        {\bfseries}
        {.}
        {1mm}
        {}
\theoremstyle{thm}
\newtheorem{theorem}[definition]{Theorem}
\newtheorem{corollary}[definition]{Corollary}

\newtheorem{lemma}[definition]{Lemma}
\newtheorem{prop}[definition]{Proposition}

\newtheorem{thm}{Theorem}

\newtheorem*{Acknowledgments*}{Acknowledgments}



\newcommand{\BA}{{\mathbb{A}}}
\newcommand{\BB}{{\mathbb{B}}}
\newcommand{\BC}{{\mathbb{C}}}
\newcommand{\BD}{{\mathbb{D}}}
\newcommand{\BE}{{\mathbb{E}}}

\newcommand{\BH}{{\mathbb{H}}}

\newcommand{\BK}{{\mathbb{K}}}
\newcommand{\BL}{{\mathbb{L}}}
\newcommand{\BM}{{\mathbb{M}}}
\newcommand{\BN}{{\mathbb{N}}}

\newcommand{\BP}{{\mathbb{P}}}
\newcommand{\BQ}{{\mathbb{Q}}}
\newcommand{\BR}{{\mathbb{R}}}
\newcommand{\BS}{{\mathbb{S}}}
\newcommand{\BT}{{\mathbb{T}}}

\newcommand{\BZ}{{\mathbb{Z}}}

\newcommand{\CC}{{\mathcal C}}

\newcommand{\CF}{{\mathcal F}}

\newcommand{\CM}{{\mathcal M}}

\newcommand{\CX}{{\mathcal X}}
\newcommand{\CY}{{\mathcal Y}}

\newcommand{\FM}{{\mathfrak{M}}}

\DeclareSymbolFont{cmarrows}{OMS}{cmsy}{m}{n}
\SetSymbolFont{cmarrows}{bold}{OMS}{cmsy}{b}{n}
\DeclareMathSymbol{\cmminus}{\mathbin}{cmarrows}{"00}
\DeclareRobustCommand\relbar{\mathrel{\smash\cmminus}}
\DeclareMathSymbol{\leftrightarrow}{\mathrel}{cmarrows}{"24}
\DeclareMathSymbol{\leftarrow}{\mathrel}{cmarrows}{"20}
   
\DeclareMathSymbol{\rightarrow}{\mathrel}{cmarrows}{"21}
   \let\to=\rightarrow
\DeclareMathSymbol{\mapstochar}{\mathrel}{cmarrows}{"37}
   \def\mapsto{\mapstochar\rightarrow}    

\DeclareSymbolFont{usualmathcal}{OMS}{cmsy}{m}{n}
\DeclareSymbolFontAlphabet{\mathcal}{usualmathcal}
\DeclareMathAlphabet\BCal{OMS}{cmsy}{b}{n} 

\makeatletter
\newcommand{\mylabel}[2]{#2\def\@currentlabel{#2}\label{#1}}
\makeatother

\newcommand*{\defeq}{\mathrel{\vcenter{\baselineskip0.5ex \lineskiplimit0pt
                     \hbox{\scriptsize.}\hbox{\scriptsize.}}}=}
                     
\renewcommand{\geq}{\geqslant}
\renewcommand{\le}{\leqslant}
\renewcommand{\ge}{\geqslant}

\DeclareMathOperator{\Gr}{Gr}

\DeclareMathOperator*{\colim}{colim}
\newcommand{\Mbar}{\overline{\CM}_{g,n}(Y,\beta)}
\newcommand{\MB}{\overline{\CM}_{g,n}}

\newcommand{\preSUSY}{{\Mf^{\mathsf{pre-}\mathrm{SUSY}}_{g,\nf}}}
\newcommand{\SUSY}{{\Mf^{\mathsf{st-}\hbox{\rm\tiny SUSY}}_{g,\mathfrak n}(\boldit Y,\beta) }}

\newcommand{\OO}{\mathscr{O}}
\newcommand{\HH}{\mathrm{H}}

\newcommand{\ch}{\mathsf{ch}}

\DeclareMathOperator{\Spec}{Spec}

\DeclareMathOperator{\rk}{rk}

\DeclareMathOperator{\RR}{\mathbf{R}}
\DeclareMathOperator{\LL}{\mathbf{L}}

\DeclareMathOperator{\vd}{vdim}

\DeclareMathOperator{\Ob}{Ob}

\newcommand{\simto}{\,\widetilde{\to}\,}
\newcommand{\into}{\hookrightarrow}

\DeclareMathOperator{\et}{\mathrm{\acute{e}t}}

\DeclareMathOperator{\pre}{\mathsf{pre}}

\DeclareMathOperator{\derived}{\mathbf{D}}

\DeclareMathOperator{\CCoh}{\mathscr{C}\kern-0.25em \mathit{o}\kern-0.1em\mathit{h}}
\DeclareMathOperator{\ncCCoh}{nc\kern-0.25em\CCoh}

\DeclareMathOperator{\Fun}{\mathsf{Fun}}

\DeclareMathOperator{\Sets}{\mathsf{Sets}}
\DeclareMathOperator{\Mod}{Mod}
\DeclareMathOperator{\opp}{op}

\DeclareMathOperator{\Schs}{\mathsf{Sch}}
\DeclareMathOperator{\SSchs}{\mathsf{SSch}}
\DeclareMathOperator{\SSt}{\mathsf{SSt}}
\DeclareMathOperator{\St}{\mathsf{St}}
\DeclareMathOperator{\Id}{\mathsf{Id}}

\DeclareMathOperator{\Hom}{Hom}

\DeclareMathOperator{\lHom}{\mathscr{H}\kern-0.3em\mathit{om}}
\DeclareMathOperator{\lEnd}{\mathscr{E}\kern-0.2em\mathit{nd}}
\DeclareMathOperator{\RRlHom}{\mathbf{R}\kern-0.025em\mathscr{H}\kern-0.3em\mathit{om}}

\DeclareMathOperator{\lExt}{{\mathscr{E}\kern-0.2em\mathit{xt}}}

\newcommand*{\boldone}{\text{\usefont{U}{bbold}{m}{n}1}}
\newcommand{\boldit}[1]{\boldsymbol{#1}}

\newcommand{\XX}{\boldit{X}}

\newcommand{\UU}{\boldit{U}}

\newcommand{\TT}{\boldit{T}}
\newcommand{\dS}{\boldit{S}}

\newcommand{\Z}{{\mathbb Z}}

\newcommand\Homst{{\mathfrak{Hom}}}

\newcommand\iso{\kern.35em{\raise3pt\hbox{$\sim$}\kern-1.1em\to}\kern.3em}

\newcommand{\DM}{Deligne--Mumford}

\makeatletter
\newcommand{\longrightharpoonup}{\relbar\joinrel\rightharpoonup}
\newcommand{\longleftharpoondown}{\leftharpoondown\joinrel\relbar}
\newcommand\noloc{%
  \nobreak
  \mspace{6mu plus 1mu}
  {:}
  \nonscript\mkern-\thinmuskip
  \mathpunct{}
  \mspace{2mu}
}
\newcommand{\smallbot}{%
  \begingroup\setlength\unitlength{.15em}%
  \begin{picture}(1,1)
  \roundcap
  \polyline(0,0)(1,0)
  \polyline(0.5,0)(0.5,1)
  \end{picture}%
  \endgroup
}

\let\Xcal\XX
\let\Ycal\YY
\newcommand\Sc{\boldit{S}}
\newcommand\Tc{\boldit{T}}
\let\Ucal\UU
\let\U\UU
\let\Zc\ZZ

\newcommand\nf{\mathfrak{n}}
\let\Oc\OO
\newcommand{\Wc}{\boldit{W}}

\newcommand{\Xf}{\mathfrak{X}}
\newcommand\Mf{\BCal{M}}

\newcommand{\Lcl}{\mathcal{L}}

\newcommand{\bos}{\operatorname{bos}}

\makeatother

\title[Derived supergeometry and perfect obstruction theories on algebraic superstacks]{Derived supergeometry  and perfect obstruction \\ theories on algebraic superstacks}
\author{Ugo Bruzzo, Daniel Hern\'andez Ruip\'erez, 
\\ Emanuele Pavia and Andrea T. Ricolfi}
\keywords{Superstacks, cotangent complex, obstruction theory, SUSY curves, stable supermaps.}
\thanks{Research partly supported by Bolsa de Produtividade 305343/2025-4 of Brazilian CNPq   (UB) and by the  research projects ``Espacios finitos y functores integrales'', MTM2017-86042-P (Ministerio de Econom\'{\i}a, Industria y Competitividad) and GIR 4139 of the University of Salamanca (DHR)}
\subjclass[2020]{Primary 14D23; Secondary 14A20, 14A30, 14B12, 14F08, 18F20, 32C11}

\begin{document}

\begin{abstract}
We develop a deformation-theoretic framework for algebraic superstacks and apply it to the construction of perfect obstruction theories on moduli spaces of stable supermaps. In order to achieve this we employ the tools and concepts from derived algebraic geometry, extending them to the supergeometric setting. Our main technical contributions are (i) the notions of derived superstacks, (ii) their cotangent supercomplexes and (iii) perfect obstruction theories for algebraic superstacks. We prove that a quasi-smooth derived enhancement of an algebraic superstack always yields a perfect obstruction theory: this generalizes a well-known principle from classical deformation theory. 
 
We then turn to the case of the Deligne--Mumford superstack $\Mf^{\mathsf{st-}\mathrm{SUSY}}_{g,\nf}(\Ycal)$ of stable supermaps of fixed genus, homology class and numbers of Neveu--Schwarz and Ramond--Ramond punctures, with target a smooth projective superscheme $\Ycal$. We exhibit a natural derived enhancement $\BR \Mf^{\mathsf{st-}\mathrm{SUSY}}_{g,\nf}(\Ycal)$ and compute its cotangent supercomplex; we prove that the induced obstruction theory on $\Mf^{\mathsf{st-}\mathrm{SUSY}}_{g,\nf}(\Ycal)$ is perfect. Finally, we compute its virtual dimension by super Grothendieck--Riemann--Roch. The resulting formula gives, in particular, a deformation-theoretic proof of the virtual dimension formula previously conjectured for stable supermaps and specializes to the usual virtual dimension of the classical moduli stack of stable maps.
\end{abstract}

\maketitle
{\hypersetup{linkcolor=black}\tableofcontents}

\section{Introduction}
\subsection{Overview and motivation}
Algebraic geometry can be seen as ``contained in'', or generalized by two modern theories: derived algebraic geometry (DAG, for short), and 
algebraic supergeometry (ASG, for short). In algebraic geometry, especially in enumerative geometry, one is interested in moduli stacks and in using deformation theory to extract invariants out of them. The corresponding enhanced spaces in both generalized theories exist: \emph{derived stacks} and \emph{superstacks} are fundamental objects constituting a very active field of research. While deformation theory is naturally embedded within DAG, via the theory of cotangent complexes, this is not the case for supergeometry. The first technical achievement of our paper is to fill this gap: it provides a derived-type enhancement of supergeometry with a built-in coherent and deformation-theoretic framework incorporating a functional theory of cotangent complexes. 

Our main motivation comes from enumerative geometry: the cotangent complex is used in the definition of an \emph{obstruction theory} on a moduli stack, and in turn a \emph{perfect} obstruction theory is the key technical tool used to extract invariants out of a moduli stack \cite{BFinc}. The prototype is the perfect obstruction theory of the moduli stack of stable maps $\overline{\CM}_{g,n}(Y,\beta)$ to a smooth projective variety $Y$ (recalled in \Cref{sec:classical-stable-maps}), which is at the foundation of Gromov--Witten theory \cite{Behrend_GW}.

At a visual, and highly informal level, the main contribution of this paper is to fill in the ``pushout diagram''
\[
\begin{tikzcd}
\textrm{Algebraic Geometry}\arrow[hook]{r}\arrow[hook]{d} & \textrm{DAG}\arrow[dotted]{d} \\
\textrm{ASG}\arrow[dotted]{r} & \textrm{Derived supergeometry}
\end{tikzcd}
\]
by constructing a ``derived-super framework'' that is flexible enough to incorporate the notion of a perfect obstruction theory in algebraic supergeometry: as briefly outlined in \Cref{sec:intro-derived-super-stuff}, we introduce the notion of a \emph{derived superstack} and attach to each derived superstack $\BCal{X}$ a \emph{superderived $\infty$-category} $\mathbf{SD}_\infty(\BCal{X})$ containing a distinguished object $\BL_{\BCal{X}} \in \mathbf{SD}_\infty(\BCal{X})$ which satisfies the universal property and has all the formal properties enjoyed by the cotangent complex in DAG (cf.~\Cref{Main-Thm-Intro-1} below). 

\smallbreak
There are two related issues in carrying out this program. First, one needs a coherent derived category of quasicoherent supermodules in which cotangent objects can be defined and handled. Although the elementary homological algebra of modules is familiar, in order to deal with supermodules one needs to take account of both the cohomological grading and the additional $\mathbb Z/2\mathbb Z$-grading in a compatible way. Secondly, a theory defined locally on affine {derived} superschemes has to be globalized to algebraic {derived} superstacks. In particular, one needs descent for the corresponding derived categories and a sufficiently functorial construction of cotangent supercomplexes: these are well-known issues even in classical algebraic geometry, and they are known to \textit{fail} if one sticks to the triangulated setting.

\smallbreak
Once the general theory is in place, we apply it in \Cref{sec:stable-supermaps} to the superstack 
\[
\Mf^{\mathsf{st}-\mathrm{SUSY}}_{g,\nf}(\Ycal,\beta)
\]
of stable supermaps with target a smooth projective superscheme $\Ycal$. The output is a perfect obstruction theory on this Deligne--Mumford superstack, generalizing nontrivially the classical construction (cf.~\Cref{sec:classical-stable-maps}). We also compute the (super) virtual dimension, confirming the expectation formulated in \cite[Conj.~5.1]{BHRSM25}, see \Cref{thm:intro-derived-enhancement-maps} below.

We spend the rest of the introduction discussing some of our constructions and main results.

\subsection{Derived superstacks, and the superderived \texorpdfstring{$\infty$}{∞}-category}
\label{sec:intro-derived-super-stuff}

The starting point of the present paper is the homotopy theory of supercommutative algebras and homological algebra for supermodules, in the sense of derived algebraic geometry \textit{à la} To\"en--Vezzosi \cite{hag_2} and Lurie \cite{sag}. Let us remark that, in recent years, other sources have tried to adapt homotopical and derived tools to differential \cite{Carchedi_Roytenberg,carchedi_derived_supergeometry} and algebraic \cite{hesselholt_pstragowski_I,hesselholt_pstragowski_II,dang_derived_supergeometry} supergeometry, in order to define stable enhancements of derived categories of supermodules. However, our approach differs in two main aspects.
\begin{enumerate}
\item Since our paper is mostly aimed at an audience of classical algebraic supergeometers, we do not derive our statements and constructions using other derived theories (e.g., Dirac geometry) as black boxes.
 \item For the same reason, our approach tries to emphasize the relation with the standard theory of derived categories in terms of localizations of cochain complexes of supermodules, using model categories and differential graded categories as our preferred models for $\infty$-categories (see \cref{sec:dg-supermodules}).
\end{enumerate}
We start from a commutative ring $R$ of characteristic different from $2$. Denote by $\mathrm{SMod}_R$ the abelian category of supermodules over $R$, equipped with the symmetric monoidal structure given by the supertensor product. In \cref{sec:dg-supermodules}, we construct a stable differential graded enhancement (hence, a stable $\infty$-category) of $\mathrm{SMod}_R$, using the homotopy theory of cochain complexes of supermodules as a building block: we call such $\infty$-category the \textit{superderived $\infty$-category}, and denote it as $\mathbf{SD}_\infty(R)$. The derived supertensor product endows $\mathbf{SD}_\infty(R)$ with a symmetric monoidal structure, with a symmetry constraint governed by the super sign rule: therefore, we can consider commutative algebra objects inside the superderived $\infty$-category: we define the $\infty$-category of
\textit{derived supercommutative $R$-algebras} as
\[
\mathrm{dSCAlg}_R\defeq \mathrm{CAlg}(\mathbf{SD}_{\infty}(R)).
\]
In \cref{def:super-derived}, for an arbitrary derived supercommutative algebra $\BA$, we define its own superderived $\infty$-category
\[
\mathbf{SD}_\infty(\BA)\defeq\mathrm{Mod}_{\BA}(\mathbf{SD}_\infty(R)),
\]
which again inherits a symmetric monoidal structure given by the relative derived supertensor product $\otimes^{\mathrm{super}}_{\BA}$, and so allows us to work in a relative setting as well: indeed, we can define the $\infty$-category of \textit{derived supercommutative $\BA$-algebras} as
\[
\mathrm{dSCAlg}_{\BA}\defeq\mathrm{CAlg}(\mathbf{SD}_\infty(\BA))\simeq\mathrm{CAlg}(\mathbf{SD}_\infty(R))_{\BA/}.
\]
Just like in the non-super setting, the perfect complexes inside $\mathbf{SD}_\infty(\BA)$ are defined as the fully dualizable objects under the supertensor product, yielding a stable full sub-$\infty$-category $\mathrm{SPerf}(\BA)\subseteq\mathbf{SD}_\infty(\BA)$ (cf.~\cref{subsec:perf_objects}). Just like in the non-super setting, this is identified as the full sub-$\infty$-category spanned by compact objects, and with the smallest stable $\infty$-category containing the compact generators $\BA$ and $\Pi\BA$ (cf.~\cref{prop:superf}).

Morevoer, as we prove in \cref{prop:t-structure}, the superderived $\infty$-category $\mathbf{SD}_\infty(\BA)$ of any connective derived supercommutative $R$-algebra $\BA\in\mathrm{dSCAlg}_R^{\leqslant0}$ admits a very well-behaved natural $t$-structure, whose heart recovers the ordinary abelian categories of supermodules over the underived superalgebra $\BH^0(\BA)$. In particular, any supercommutative algebra can be seen as a derived supercommutative algebra concentrated in cohomological degree $0$.

Moreover, in \cref{sec:grothendieck_topologies} we show that $\infty$-categories of derived supercommutative algebras admit Grothendieck topologies that naturally generalize the relevant (fpqc, fppf, étale, Zariski) topologies on underived supercommutative algebras. In this way, the theory of algebraic supergeometry can be embedded in its derived counterpart.

Derived superstacks are then defined in \cref{sec:dasg} following a functor-of-points approach: given a connective derived supercommutative algebra $\BA$, we denote by $\mathrm{dSAff}_{\BA}$ the opposite $\infty$-category of $\mathrm{dSCAlg}_{\BA}^{\leqslant0}$ and call it the $\infty$-category of \textit{derived affine superschemes over $\BA$}. We then define (cf.~\Cref{def:derived-super-prestack}) a \textit{derived pre-superstack over $\BA$} to be a presheaf 
\[
\begin{tikzcd}
\BCal{X}\colon \mathrm{dSAff}^{\mathrm{op}}_{\BA}\arrow{r} & \mathrm{Spaces}
\end{tikzcd}
\]
that is \textit{accessible}: there exists a small regular cardinal $\kappa$ such that $\BCal{X}$ preserves filtered $\kappa$-colimits. For a connective derived supercommutative $\BA$-algebra $\BS$, we denote by $\Spec \BS$ the corresponding derived pre-superstack.  Then, in \Cref{def:derived-superstack} we define a \emph{derived superstack} as a derived pre-superstack that is a sheaf with respect to the fpqc, or to the \'etale topology.

With this technology in place, we can define the $\infty$-category which will host the cotangent complex $\BL_{\BCal X}$ of a derived superstack: we define it in \Cref{def:derived_qcoh} as the limit of presentable symmetric monoidal $\infty$-categories
\[
\mathbf{SD}_\infty(\BCal{X})\defeq\lim_{\Spec\BS\in\mathrm{dSAff}_{/\BCal{X}}^{\mathrm{op}}}\mathbf{SD}_\infty(\BS),
\]
using the fact that superderived $\infty$-categories glue along faithfully flat covers (\cref{thm:flat_descent}). 

\subsection{Cotangent complexes on derived superstacks and obstruction theories on superstacks}
\label{sec:intro-super-pot}

Let $\varphi\colon\BCal X\to\BCal Y$ be a morphism of derived superstacks. The main technical novelty of this work is the definition of the relative cotangent complex $\BL_{\BCal X/\BCal Y}$ associated to the morphism $\varphi$, and the study of the connectivity properties of $\BL_{\BCal X/\BCal Y}$ that are detected from the connectivity properties of $\mathrm{cone}(\varphi)$. Our definition in \cref{sec:cotangent_pot} is, just as in DAG, completely functorial and in terms of a deformation-theoretic universal property, cf.~\Cref{def:cotangent-global} for full details. 

\begin{thm}[{\Cref{thm:cotg-exists-and-induced-pot}\ref{cotg-exists}}]
\label{Main-Thm-Intro-1}
Let $\BCal X$ be a locally geometric\footnote{The definition of local geometricity is parallel to the one in DAG, see \Cref{def:geometricity}.} derived superstack over a base $\BA$. Then there exists a distinguished object $\BL_{\BCal X} \in \mathbf{SD}_\infty(\BCal{X})$, called the \emph{cotangent complex of} $\BCal X$, uniquely defined (up to a contractible space of choices) by the following properties.
\begin{enumerate}
\item [\mylabel{cotg-1}{\normalfont{(1)}}] For any derived supercommutative $\BA$-algebra $\BS$ and for any~~$\BS$-rational point $x\colon\Spec\BS\to\BCal X$, the pullback $x^\ast\BL_{\BCal X}$ classifies higher $\BA$-linear derivations from $\Spec\BS$ to $\BCal X$.
\item [\mylabel{cotg-2}{\normalfont{(2)}}] For any~~$\BS$-rational point $x\colon\Spec\BS\to\BCal X$, and for any map $\varphi\colon \Spec \BT\to\Spec\BS$ of derived affine schemes, there is a natural equivalence $\varphi^\ast x^\ast\BL_{\BCal X}\simeq(x\circ\varphi)^\ast\BL_{\BCal X}$.
\end{enumerate}
\end{thm}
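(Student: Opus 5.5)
The plan is to follow the To\"en--Vezzosi strategy from HAG II, transported to the super setting: first build the cotangent complex on derived affine superschemes, then glue it along the limit $\mathbf{SD}_\infty(\BCal X)=\lim\mathbf{SD}_\infty(\BS)$, and finally extend from affines to locally geometric superstacks by induction on the geometric level $n$.

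\textbf{Affine case.} For $\BS\in\mathrm{dSCAlg}_{\BA}^{\leqslant0}$ and $M\in\mathbf{SD}_\infty(\BS)^{\leqslant 0}$, I first construct the square-zero extension $\BS\oplus M$. It is the trivial extension in $\mathrm{CAlg}(\mathbf{SD}_\infty(R))$, obtained as usual from the stabilization of $\mathrm{dSCAlg}_{\BA/\BS}$. The super sign rule is built into the symmetric monoidal structure, so no new argument is needed. The functor $M\mapsto \mathrm{Map}_{\BA/\BS}(\BS,\BS\oplus M)$ preserves limits and is accessible. By the adjoint functor theorem in the presentable stable setting it is then corepresented by an object $\BL_{\BS/\BA}\in\mathbf{SD}_\infty(\BS)$. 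Two consequences follow:
\begin{itemize}
\item because $\mathbf{SD}_\infty(\BS)$ is compactly generated by $\BS$ and $\Pi\BS$ (\cref{prop:superf}), representability is tested on shifts of both parities;
\item base change $\BL_{\BT/\BA}\simeq \varphi^*\BL_{\BS/\BA}$ fails for general $\varphi$, and is not what is wanted. Instead, property (2) is encoded by defining $x^*\BL_{\BCal X}$ as the corepresenting object, and then checking compatibility.
\end{itemize}

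\textbf{Global definition.} For a general $\BCal X$ and a point $x\colon\Spec\BS\to\BCal X$, set
\[
\mathrm{Der}_x(M)\defeq \mathrm{fib}_x\bigl(\BCal X(\BS\oplus M)\to\BCal X(\BS)\bigr).
\]
The first step is to show that $\mathrm{Der}_x$ is corepresentable by some $\BL_{\BCal X,x}\in\mathbf{SD}_\infty(\BS)$, possibly non-connective but bounded above, as in HAG II. The second step is to show that for $\varphi\colon\Spec\BT\to\Spec\BS$ the natural map $\varphi^*\BL_{\BCal X,x}\to \BL_{\BCal X,x\varphi}$ is an equivalence. This is the ``cartesianness'' condition, which follows from the equivalence $\BCal X(\BS\oplus \varphi_*N)\simeq\BCal X(\BT\oplus N)$ fibred over the points. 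Once both steps hold, the family $(\BL_{\BCal X,x})_x$ is a cartesian section, hence an object of the limit $\mathbf{SD}_\infty(\BCal X)$. Uniqueness up to contractible choice comes from the Yoneda lemma together with the fact that the limit is taken in presentable $\infty$-categories. Properties (1) and (2) then hold by construction.

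\textbf{Induction and main obstacle.} Existence of $\BL_{\BCal X,x}$ is proved by induction on $n$ for $n$-geometric $\BCal X$, then passed to filtered unions for local geometricity:
\begin{itemize}
\item \emph{Base case.} For $n=-1$, disjoint unions of affines, the affine case applies directly.
\item \emph{Inductive step.} Choose a smooth atlas $p\colon \UU\to\BCal X$ with $(n-1)$-geometric fibres. Lift $x$ étale-locally to $u\colon\Spec\BS'\to\UU$, using that smooth maps admit étale-local sections together with \'etale descent for $\mathbf{SD}_\infty$ (\cref{thm:flat_descent}). Then define $\BL_{\BCal X,x}$ as the cofibre of $u^*\BL_{\UU}[-1]\to u^*\BL_{\UU/\BCal X}[-1]$, i.e.\ via the fibre sequence $u^*p^*\BL_{\BCal X}\to u^*\BL_\UU\to u^*\BL_{\UU/\BCal X}$. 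The relative term is known by induction, applied to the diagonal and fibre products.
\end{itemize}
The main obstacle is this inductive step. One must show that derivations lift along smooth maps, i.e.\ that $\BCal X(\BS\oplus M)\to\BCal X(\BS)$ is computed via the atlas. This needs the infinitesimal lifting property for smooth morphisms of derived affine superschemes against square-zero extensions $\BS\oplus M$ with $M$ connective. In turn it requires that $\BL$ of a smooth morphism is locally a projective (free, both parities) module in degree $0$. I would prove this first for $\BA[x_i,\theta_j]$ and then étale-locally using the t-structure of \cref{prop:t-structure}. A secondary difficulty is independence of the chosen lift $u$. This follows from the Čech nerve of $p$ and descent, in the same way as in HAG II, Prop.~1.4.1.11.
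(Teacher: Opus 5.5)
Your plan is essentially correct, with the two caveats below, but it takes a different route from the paper. The paper's proof of (1) does no induction. It picks a surjection from a disjoint union of affines onto $\BCal X$ and declares the statement equivalent to the affine case. The only thing it proves is that, for $\varphi\colon\BS\to\BT$, the module $\BL_{\BS/\BA}\otimes^{\mathrm{super}}_{\BS}\BT$ corepresents $\mathrm{fib}_\varphi\bigl(\mathrm{Map}(\BS,\BT\oplus(-))\to\mathrm{Map}(\BS,\BT)\bigr)$. This comes from the adjunction $\varphi\circ(-)\dashv(-)\times_{\BT}\BS$ together with $(\BT\oplus\BM)\times_{\BT}\BS\simeq\BS\oplus\BM$.

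You instead run the To\"en--Vezzosi induction on the geometric level. You build $\BL_{\BCal X,x}$ as $\mathrm{fib}(u^*\BL_{\UU}\to u^*\BL_{\UU/\BCal X})$ from a local lift $u$ to a smooth atlas, with infinitesimal lifting for smooth maps as the key lemma.

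The paper's reduction is short, and it suffices when passing from the cover to $\BCal X$ adds no relative term, i.e.\ for Zariski or \'etale atlases. It says nothing about the relative cotangent complex of a smooth atlas. That term is exactly where the positive-degree part of $\BL$ of an Artin-type superstack comes from, e.g.\ the $\Gamma_\ast(\mathcal A)^\vee[-1]$ used later for $\preSUSY$. Your route genuinely covers such stacks and gives the expected amplitude, but two inputs must be made explicit.

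\emph{Property (2) is not automatic.} Read literally, $\BCal X(\BS\oplus\varphi_\ast N)\simeq\BCal X(\BT\oplus N)$ is false. The fibrewise statement you need says that $\BCal X$ takes the pullback $\BS\oplus\varphi_\ast N\simeq\BS\times_{\BT}(\BT\oplus N)$ to a pullback of spaces. For affines this is precisely the paper's adjunction computation. For $n$-geometric $\BCal X$ it has to be carried through your induction, together with independence of the lift $u$.

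\emph{Your smoothness lemma needs characteristic $0$.} The lemma that a smooth map has $\BL$ projective in degree $0$, checked on $\BA[x_i,\theta_j]$, holds for the paper's cotangent complex only over $\BQ$-algebras. Here $\BL$ is the topological Andr\'e--Quillen one (see the paper's Warning). The paper's notion of smooth ($\BH^0$ smooth plus flat) does not force projectivity otherwise, so lifting against square-zero extensions by non-discrete modules can fail. This is harmless for the application over $\BC$, but it restricts your argument to characteristic $0$ rather than the paper's general $R$ of characteristic $\neq 2$.
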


The cotangent complex $\BL_{\BCal X}$ is characterized locally by its universal property and is compatible with base change --- see \Cref{def:super_cotangent} for the local definition and \Cref{remark:cotangent_universal_property} for the universal property in terms of higher $\BA$-linear super-derivations. Thus $\BL_{\BCal X}$ has the expected deformation-theoretic interpretation, in that it classifies infinitesimal deformations and governs the lifting problem. This leads naturally to a definition of obstruction theory for an algebraic superstack $\BCal X$, parallel to the original Behrend--Fantechi definition \cite{BFinc}, as follows.

\begin{definition}[\Cref{def:superPOT}]
\label{def:INTRO-superPOT}
Let $f\colon \BCal X \to \BCal Y$ be a morphism of superstacks admitting cotangent complexes. An \emph{obstruction theory} relative to $f$ is the datum of an object $\BE\in \mathbf{SD}_\infty(\BCal X)$ along with a morphism $\psi\colon \BE \to \BL_{f}$ such that
\begin{enumerate}
    \item [\mylabel{intro-OT-i}{(i)}] $\BH^0(\psi)$ is an isomorphism of $\OO_{\BCal X}$-modules, and
    \item [\mylabel{intro-OT-ii}{(ii)}] $\BH^{-1}(\psi)$ is a surjection of $\OO_{\BCal X}$-modules.
\end{enumerate}
An obstruction theory is said to be \emph{perfect} if, moreover, $\BE$ is perfect of cohomological amplitude $[-1,0]$. If $\BCal{X}$ is defined over an underived base commutative superalgebra $\BA$, an obstruction theory on the structure morphism $\BCal{X} \to \Spec \BA$ is simply called an obstruction theory on $\BCal X$.

Finally, the \emph{virtual dimension} of a perfect obstruction theory $\BE \to \BL_{\BCal X}$ is defined to be the rank of $\BE$, in the sense of \Cref{free-rank-global}.
\end{definition}

In the definition, flat descent of super derived $\infty$-categories (cf.~\Cref{thm:flat_descent}) allows us to define perfect objects in $\mathbf{SD}_\infty(\BCal Y)$ in terms of local charts, cf.~\Cref{Dperf} for the global case, building upon the local one, developed in \Cref{subsec:perf_objects}.

This technical machinery, developed in \cref{sec:homological_super_algebra,sec:dasg}, implies that perfect obstruction theories on superstacks can be systematically produced from derived enhancements, generalizing a well-known principle from classical algebraic geometry \cite{DerivedQuot,DerivedHilbert,Schurg_Toen_Vezzosi_Determinants_of_perfect_complexes}.  

\begin{thm}[{\Cref{thm:cotg-exists-and-induced-pot}\ref{induced-POT}}]
\label{Main-Thm-Intro-2}
Assume $\BCal{X}$ is an algebraic superstack over an underived supercommutative algebra $\BA$, and let $\BR\BCal X$ be a locally geometric derived enhancement of $\BCal X$ with closed immersion $\iota \colon \BCal X\into \BR\BCal X$. Then, the canonical morphism $\iota^*\BL_{\BR\BCal X}\to \BL_{\BCal X}$ is an obstruction theory on $\BCal X$.
\end{thm}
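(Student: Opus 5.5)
The plan is to reduce to a connectivity statement about the relative cotangent complex of the closed immersion $\iota$. Since $\iota\colon \BCal X\into\BR\BCal X$ is a morphism of locally geometric derived superstacks over $\BA$, the transitivity (cofiber) sequence for cotangent complexes of the composite $\BCal X\to\BR\BCal X\to\Spec\BA$ gives a cofiber sequence in $\mathbf{SD}_\infty(\BCal X)$,
\[
\iota^*\BL_{\BR\BCal X}\longrightarrow \BL_{\BCal X}\longrightarrow \BL_{\BCal X/\BR\BCal X}.
\]
The long exact sequence in cohomology for the $t$-structure of \cref{prop:t-structure}, glued to $\BCal X$ by flat descent (\cref{thm:flat_descent}), shows the following. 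If $\BL_{\BCal X/\BR\BCal X}$ is $2$-connective, meaning concentrated in cohomological degrees $\leq -2$, then $\BH^0(\psi)$ is an isomorphism and $\BH^{-1}(\psi)$ is surjective. Here $\psi$ denotes the canonical map. This is exactly the definition of an obstruction theory in \Cref{def:superPOT}. The proof therefore reduces to showing $\BH^i(\BL_{\BCal X/\BR\BCal X})=0$ for $i\geq -1$.

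Both the cotangent complex and the $t$-structure are compatible with base change and are checked on smooth atlases (property (2) of \Cref{Main-Thm-Intro-1}). So I would first reduce to the local situation. Choose a smooth atlas $\Spec\BS\to\BR\BCal X$ with $\BS$ a connective derived supercommutative $\BA$-algebra. Because $\iota$ is the inclusion of the truncation, the base change of $\iota$ is $\Spec\BH^0(\BS)\to\Spec\BS$. The relative cotangent complex pulls back to $\BL_{\BH^0(\BS)/\BS}$. Here one uses that $\BR\BCal X$ is an enhancement, i.e.\ its classical truncation is $\BCal X$, so the truncation commutes with the smooth atlas.

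The key local statement is then the following: for a map $\BS\to\BT$ of connective derived supercommutative algebras that is $n$-connective on cofibers, the relative cotangent complex $\BL_{\BT/\BS}$ is $(n+1)$-connective. For $\BT=\BH^0(\BS)$, the map $\BS\to\BT$ is surjective on $\BH^0$ with $1$-connective fiber. This yields that $\BL_{\BT/\BS}$ is $2$-connective, together with the identification $\BH^{-1}$-level $\BH^{-2}(\BL_{\BT/\BS})\cong\BH^{-1}(\mathrm{fib})\otimes$ as in the classical Hurewicz-type statement. I would prove this super analogue of Lurie's connectivity estimate by the same route as in DAG. First, via the universal property (\Cref{remark:cotangent_universal_property}), compute $\BL$ on free algebras: symmetric algebras on $\BA$ and $\Pi\BA$ shifts, which generate. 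Then use cell attachments, building $\BT$ from $\BS$ by attaching cells of dimension $\geq n+1$, and the fact that attaching a cell in degree $-k$ (even or odd parity) contributes a summand $\BT[k]$ or $\Pi\BT[k]$ to the cotangent complex. This requires characteristic $\neq2$ only to keep the free supercommutative algebra well behaved, in analogy with characteristic $0$ or animated arguments; this is the point I would check carefully.

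The main obstacle is precisely this connectivity estimate in the super setting. It requires the cotangent complex to be computed by cell decompositions in $\mathrm{dSCAlg}_\BA^{\leq0}$, and free objects on odd generators have exterior-algebra behaviour, so the parity-odd cells must be tracked. If a direct cell argument is problematic, I would instead try to reduce via the even/odd bosonic splitting ($\bos$) to the classical statement for the underlying bosonic derived algebra plus an odd module, where the classical estimate applies. The final step is to globalize: vanishing of $\BH^{\geq -1}$ is local on smooth covers by descent, which concludes the proof.
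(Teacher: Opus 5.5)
Your proposal follows the paper's proof: the paper also tests the statement locally, where it reduces to the connectivity estimate \cref{thm:obstruction-theory-on-truncation} for the truncation map $\BS\to\BH^0(\BS)$. The only differences are that the paper imports this estimate from \cite[Thm.~7.4.3.12]{ha} rather than reproving it by cell attachments, and that you need to fix the indexing of your key lemma: in your homological convention, an $n$-connective cofiber forces $\BL_{\BT/\BS}$ to be $n$-connective (not $(n+1)$-connective), equivalently an $n$-connective fiber gives an $(n+1)$-connective $\BL_{\BT/\BS}$, and this second form is what you actually use when you get $2$-connectivity from the $1$-connective fiber $\tau^{\leqslant -1}\BS$.
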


Consequently, as pointed out in \Cref{cor:quasi-smooth-pot}, the problem of constructing a \emph{perfect} obstruction theory on an algebraic superstack $\BCal X$ can be turned into the problem of constructing a suitable derived enhancement and proving that it is quasi-smooth (cf.~\Cref{def:quasi-smooth}).

\subsection{Application to stable supermaps}
\label{sec:intro-application-stablemaps}

In \Cref{sec:stable-supermaps} we turn to the geometric application that mainly motivates our paper: the moduli of stable supermaps. We work with a smooth projective superscheme $\boldit{Y}$, fix an arithmetic genus $g$, a class $\beta$ of $1$-supercycles, and a pair of nonnegative integers $\mathfrak n = (n_{\mathrm{NS}},n_{\mathrm{RR}})$ of Neveu--Schwarz and Ramond--Ramond punctures. A \textit{prestable SUSY curve} over a superscheme $\boldit{S}$ consists of a supercurve $f\colon\boldit{C}\to\boldit{S}$  of relative dimension $1|1$ together with the relevant punctures and the superconformal structure encoded by a surjective morphism $\delta\colon\Omega^1_f\to\omega_f$. Stability is imposed by the ampleness condition involving the relative dualizing sheaf and the punctures. We have an open immersion of smooth algebraic superstacks 
\[
\Mf^{\mathsf{st-}\mathrm{SUSY}}_{g,\nf}\subset \Mf_{g,\nf}^{\mathsf{\pre-}\mathrm{SUSY}},
\]
where, in addition, $\Mf^{\mathsf{st-}\mathrm{SUSY}}_{g,\nf}$ is proper and of Deligne--Mumford type.

A stable supermap to $\boldit{Y}$ is obtained by adding to such a prestable SUSY curve a morphism to   $\boldit{Y}$, together with the condition that the corresponding class in $\boldit{Y}$ is $\beta$ and an appropriate ampleness condition involving a polarization of $\boldit{Y}$. The resulting moduli problem defines a Deligne--Mumford superstack of stable supermaps, denoted
\[
\Mf^{\mathsf{st}-\mathrm{SUSY}}_{g,\nf}(\Ycal,\beta). 
\]
This is the natural supergeometric analog of the classical moduli stack of stable maps, and it is the setting in which a virtual fundamental class and related enumerative constructions are expected to arise from a perfect obstruction theory.

We obtain the following result.

\begin{thm}[\Cref{thm:pot-on-maps}, \Cref{thm:vdim-maps}]
\label{thm:intro-derived-enhancement-maps}
Let $\boldit Y$ be a smooth projective superscheme of dimension $\dim \boldit Y = r\vert s$. 
The Deligne--Mumford superstack $\Mf^{\mathsf{st}-\mathrm{SUSY}}_{g,\nf}(\Ycal,\beta)$ admits a natural derived enhancement 
\begin{equation}
\label{eqn:der-enhance-maps}
\BR\Mf^{\mathsf{st}-\mathrm{SUSY}}_{g,\nf}(\Ycal,\beta).
\end{equation} 
Moreover, $\Mf^{\mathsf{st}-\mathrm{SUSY}}_{g,\nf}(\Ycal,\beta)$ carries a canonical perfect obstruction theory. The even and odd parts of the virtual dimension are given by the formulas
\begin{align*}
   \mathrm{even\ vdim \ } \SUSY &= (r-3)(1-g) + \nf_{\mathrm{NS}}+ \nf_{\mathrm{RR}} (1+ s/2)
   + \int_{\beta_0} [ \ch_1(T_Y)-\ch_1(\mathcal F_{\boldit Y})], \\
   \mathrm{odd\ vdim\  } \SUSY &=  (1-g)(s-2)+ \nf_{\mathrm{NS}}+ (\nf_{\mathrm{RR}}/2) (r+1)
   + \int_{\beta_0} [ \ch_1(T_Y)-\ch_1(\mathcal F_{\boldit Y})].
   \end{align*}
Here, $\mathcal F_{\boldit Y}=\Pi \mathcal{J}/\mathcal{J}^2$, where $\mathcal{J}$ is the ideal generated by the odd elements of $\Oc_{\boldit{Y}}$.
\end{thm}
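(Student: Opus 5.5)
The plan follows the classical strategy for stable maps (\Cref{sec:classical-stable-maps}), transported to the super setting via \Cref{Main-Thm-Intro-2} and \Cref{cor:quasi-smooth-pot}.

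\emph{Derived enhancement.} Write $\Mf=\Mf^{\mathsf{st-}\mathrm{SUSY}}_{g,\nf}$ and let $\pi\colon\boldit{C}\to\Mf$ be its universal SUSY curve. This is proper and flat of relative dimension $1|1$. Define the derived enhancement as the relative derived mapping superstack
\[
\BR\Mf^{\mathsf{st}-\mathrm{SUSY}}_{g,\nf}(\Ycal,\beta)\subset \BR\underline{\mathrm{Map}}_{\Mf}(\boldit{C},\Ycal\times\Mf),
\]
cut out by the open conditions that the class is $\beta$ and that stability holds. Both conditions depend only on the truncation. The key properties to establish are three. First, the relative mapping superstack is locally geometric; I would argue this via a super analogue of the Toën--Vezzosi/Lurie representability, using flat descent (\Cref{thm:flat_descent}) and perfectness of $\pi_*$ for proper flat $\pi$. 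Second, its classical truncation is $\SUSY$. Third, its relative cotangent complex over $\Mf$ is
\[
\BL_{\BR\SUSY/\Mf}\simeq \big(\pi_*(u^*\BT_{\boldit Y})\big)^{\vee},
\]
where $u\colon\boldit C\to\boldit Y$ is the universal supermap. The third property follows from the universal property in \Cref{Main-Thm-Intro-1}: derivations of a map $\boldit C_S\to\Ycal$ are sections of $u^*\BT_{\boldit Y}\otimes M$, and one then applies Grothendieck duality along $\pi$.

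\emph{Perfectness.} Since $\boldit Y$ is smooth, $u^*\BT_{\boldit Y}$ is locally free of rank $r|s$. Since $\pi$ has relative dimension $1|1$, its underlying bosonic relative dimension is $1$, so $\RR\pi_*$ of a locally free sheaf is perfect of amplitude $[0,1]$. Hence the relative cotangent complex is perfect of amplitude $[-1,0]$. Because $\Mf$ is smooth, the triangle
\[
\BL_{\Mf}\to\BL_{\BR\SUSY}\to\BL_{\BR\SUSY/\Mf}
\]
shows that $\BL_{\BR\SUSY}$ is perfect in $[-1,0]$, i.e.\ quasi-smooth. \Cref{cor:quasi-smooth-pot} then produces the perfect obstruction theory.

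\emph{Virtual dimension.} The rank is
\[
\dim\Mf+\chi\big(C,u^*\BT_{\boldit Y}\big),
\]
computed fibrewise as a super Euler characteristic $(\mathrm{even}\,|\,\mathrm{odd})$. I would take $\dim\Mf$ from the known formula for stable SUSY curves with NS and RR punctures. For the Euler characteristic, apply super Grothendieck--Riemann--Roch on the curve. Reduce the supercurve $C$ to its bosonic reduction $C_0$: since $C$ is SUSY, $C$ is the split curve associated to a theta-characteristic-type line bundle $L$, with $\Oc_C=\Oc_{C_0}\oplus\Pi L$ and $L^{2}\cong\omega_{C_0}(-\text{RR divisor})$. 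Similarly, $\BT_{\boldit Y}$ restricted to $\boldit Y_{\bos}$ has graded pieces $T_Y$ (even) and $\mathcal F_{\boldit Y}^{\vee}$-type terms (odd). Then compute by ordinary Riemann--Roch on $C_0$:
\[
\chi(u^*\BT_{\boldit Y})=\chi(C_0,u_0^*\BT|_{0})\ \big|\ \chi(C_0,u_0^*\BT|_{0}\otimes L)\quad\text{(with parity swaps)}.
\]
The $\ch_1$ terms integrate over $\beta_0$. The $\deg L=g-1-\nf_{\mathrm{RR}}/2$ contributions produce the $\nf_{\mathrm{RR}}(r+1)/2$ and $s\,\nf_{\mathrm{RR}}/2$ terms, and the $\dim\Mf$ term supplies the $-3(1-g)$, $-2(1-g)$ and $\nf_{\mathrm{NS}}$ pieces.

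\emph{Main obstacles.} I expect two hard points: representability and local geometricity of the derived mapping superstack in this new derived-super setting, and the bookkeeping of parities in super GRR. In particular, I would need to check carefully that the odd twist of the tangent bundle along the fibres yields exactly $\ch_1(T_Y)-\ch_1(\mathcal F_{\boldit Y})$ in both parities. My first attempt would be to compute on a split model and deform, since $\chi$ is locally constant.
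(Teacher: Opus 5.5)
\textbf{Main gap: the choice of base.} Your construction fails because you work relative to $\Mf^{\mathsf{st-}\mathrm{SUSY}}_{g,\nf}$. The domain of a stable supermap is in general only a \emph{prestable} SUSY curve. Condition \ref{st3} of \Cref{def:stablemap2} only asks that $\omega^2_{\boldit{X}/\boldit{S}}(\sum_i\boldit{W}_i+\sum_j\boldit{Z}_j)\otimes\phi^*\Lcl_{\boldit{Y}}^{\otimes 3}$ be ample. This allows components that are unstable as punctured SUSY curves, provided they are not contracted. The classical analogue is $\overline{\CM}_{0,0}(\BP^r,1)=\Gr(1,r)$, although $\overline{\CM}_{0,0}=\emptyset$.

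Consequently:
\begin{itemize}
\item There is no morphism from $\SUSY$ to $\Mf^{\mathsf{st-}\mathrm{SUSY}}_{g,\nf}$ that remembers the domain curve.
\item The classical truncation of your relative mapping superstack is only the open locus of supermaps with stable domain, so your second key property is false.
\end{itemize}
The paper instead works over the stack $\preSUSY$ of projective prestable SUSY curves. This stack is smooth and algebraic (\Cref{prop:prestable-are-smooth}) but not Deligne--Mumford. The paper takes the open derived substack of $\BR\underline{\Hom}_{/\preSUSY}(\BCal{C}^{\mathsf{pre-}\mathrm{SUSY}}_{g,\nf},\preSUSY\times\boldit{Y})$ lifting $\SUSY$.

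This change also breaks your perfectness step. Now $\pi(x)^*\BL_{\preSUSY}\simeq\Gamma_*(\mathcal{A})^\vee[-1]$ has amplitude $[0,1]$, so the transitivity triangle only gives amplitude $[-1,1]$. You must use stability to kill $\BH^1$. This group is dual to the kernel of $\HH^0(\mathcal{A})\to\HH^0(\varphi^*T_{\boldit{Y}})$, i.e.\ to the infinitesimal automorphisms of the supermap. These vanish because stable supermaps have finite, reduced automorphism groups. The rank bookkeeping does survive: $\dim\preSUSY=-\chi(\mathcal{A})$ is the same expression $3g-3+n_{\mathrm{NS}}+n_{\mathrm{RR}}\,|\,2g-2+n_{\mathrm{NS}}+n_{\mathrm{RR}}/2$ as for stable SUSY curves.

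\textbf{Second error: the sign in the spin relation.} In your split model $\mathcal{O}_{\boldit{C}}=\mathcal{O}_C\oplus\Pi L$, the spin bundle satisfies $L^2\cong\omega_C(\sum_j Z_{\mathrm{RR},j})$, as in \eqref{eq:sheafA}. Hence $\deg L=g-1+n_{\mathrm{RR}}/2$; this is also what makes $-\chi(\mathcal{A}_-)=2g-2+n_{\mathrm{NS}}+n_{\mathrm{RR}}/2$.

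Filter $E=u^*T_{\boldit{Y}}$ by $\mathcal{J}_{\boldit{C}}E\subset E$, and let $\mathcal{N}$ be the rank-$s$ odd part of $T_{\boldit{Y}}|_Y$. The $L$-dependent terms are $\chi(C,u_0^*\mathcal{N}\otimes L)$ in the even part and $\chi(C,u_0^*T_Y\otimes L)$ in the odd part. Apart from the $\beta_0$-terms, they contribute $s(\deg L+1-g)$ and $r(\deg L+1-g)$.
\begin{itemize}
\item With your $\deg L=g-1-n_{\mathrm{RR}}/2$, these equal $-s\,n_{\mathrm{RR}}/2$ and $-r\,n_{\mathrm{RR}}/2$, which contradicts the formula you are trying to prove.
\item With the correct degree they equal $+s\,n_{\mathrm{RR}}/2$ and $+r\,n_{\mathrm{RR}}/2$. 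Adding $-\chi(\mathcal{A})$ then yields exactly both displayed formulas.
\end{itemize}
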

The enhancement \eqref{eqn:der-enhance-maps} is described as a derived open substack of the derived mapping stack
\[
\BR\Mf_{g,\nf}^{\pre-\mathrm{SUSY}}(\Ycal) \defeq \BR\underline{\Hom}_{/\BCal{M}^{\pre-\mathrm{SUSY}}_{g,\nf}}(\BCal{C}^{\pre-\mathrm{SUSY}}_{g,\nf},\BCal{M}^{\pre-\mathrm{SUSY}}_{g,\nf} \times \boldit{Y}),
\]   
where
\[
\begin{tikzcd}
\BCal{C}^{\pre-\mathrm{SUSY}}_{g,\nf} \arrow{r} & \BCal{M}^{\pre-\mathrm{SUSY}}_{g,\nf}
\end{tikzcd}
\]
is the universal curve.
Its cotangent supercomplex can then be computed locally: given a point $x$ corresponding to a stable supermap $\varphi \colon \boldit{C}\to \boldit{Y}$, the cotangent complex 
\[
x^\ast\BL_{\BR\BCal{M}^{\mathsf{st}-\mathrm{SUSY}}_{g,\nf}(\boldit{Y},\beta)}
\]
is computed by the cone
\[
\begin{tikzcd}
\mathrm{Cone}(\Gamma_\ast(\varphi^\ast T_{\boldit{Y}})^\vee\arrow{r} & \Gamma_\ast(\mathcal{A})^{\vee})[-1],
\end{tikzcd}
\]
where the source of the morphism governs the problem of deforming the underlying pre-stable SUSY curve  $\boldit{C}\to \boldit{S}$ with its punctures and superconformal structure, while the target governs the problem of deforming the map $\varphi$ while holding the curve fixed. When $\boldit{Y}$ is smooth and projective,   the derived global sections in the above formula are perfect and the cone lies in cohomological degrees $[-1,0]$. Restricting the cotangent supercomplex of $\BR\Mf^{\mathsf{st}-\mathrm{SUSY}}_{g,\nf}(\Ycal,\beta)$ to the classical moduli superstack $\Mf^{\mathsf{st}-\mathrm{SUSY}}_{g,\nf}(\Ycal,\beta)$ produces a perfect obstruction theory by \Cref{Main-Thm-Intro-2}.

The virtual dimension computation uses the super Grothendieck--Riemann--Roch theorem established in \cite{VMP} and naturally involves both the ordinary tangent bundle of the bosonic reduction of the target $\boldit{Y}$ and the odd directions of the superscheme. The latter are encoded by the sheaf $\mathcal F_{\boldit{Y}}=\Pi\mathcal J/\mathcal J^2$, where $\mathcal J$ is the ideal generated by the odd elements of the structure sheaf of $\boldit{Y}$. The odd contribution is therefore intrinsic to the supergeometry of the target rather than being an auxiliary correction appended to the classical formula. 
In the bosonic specialization, this computation recovers the corresponding classical deformation theory and, ultimately, the usual virtual dimension of the moduli of stable maps.

\subsection{Structure of the paper}
The paper is organized as follows. In \Cref{sec:background} we recall the classical background: perfect obstruction theories in the sense of Behrend--Fantechi \cite{BFinc}, with the guiding example of the classical moduli stack of stable maps \cite{Behrend_GW}. \Cref{sec:superstacks} reviews the basic theory of superstacks, including algebraic superspaces, Deligne--Mumford and algebraic superstacks, and the bosonic reduction.

\Cref{sec:homological_super_algebra} develops the homological foundations. We begin with introducing the differential graded enhancement of the abelian category of supermodules, construct the super derived $\infty$-category, and equip it with its symmetric monoidal structure. We then develop the theory of derived superalgebras and supermodules, establish the relevant $t$-structure and perfectness criteria, and define the (affine) cotangent supercomplex by the universal property associated with square-zero extensions. The final part of the section treats the Grothendieck topologies on derived superalgebras and proves descent for derived $\infty$-categories of supermodules. These results provide the technical foundation needed for the global theory.

\Cref{sec:dasg} passes from affine derived supergeometry to derived superstacks. We introduce geometric derived superstacks as sheaves for the étale topology over connective derived supercommutative algebras; we then introduce their superderived $\infty$-categories, prove the relevant locality properties, and globalize the cotangent supercomplex. We then introduce perfect obstruction theories for algebraic superstacks (cf.~\Cref{def:superPOT}) and explain their relation with derived enhancements and quasi-smoothness. 

Finally, \cref{sec:stable-supermaps} is devoted to the study of the moduli superstack of stable supermaps. After recalling prestable and stable SUSY curves and the definition of stable supermaps, we construct the derived enhancement $\BR\BCal{M}^{\mathrm{SUSY}}_{g,\nf}(\boldit{Y},\beta)$ of the moduli superstack $\BCal{M}^{\mathrm{SUSY}}_{g,\nf}(\boldit{Y},\beta)$. We compute its cotangent supercomplex locally, prove that the induced obstruction theory on the classical moduli superstack is perfect, and identify the corresponding deformation complex. The final subsection applies super Grothendieck--Riemann--Roch to compute the virtual dimension and verifies its compatibility with the previously known special cases.

\subsection*{Conventions and notations}
Classical schemes and algebraic spaces are written in standard font $X$. Classical Deligne--Mumford and algebraic stacks are written in calligraphic font $\CX$. Their super versions are denoted, respectively, $\boldit X$ and $\BCal{X}$. Derived superstacks are also denoted $\BCal{X}$ --- after all, ASG embeds in its derived counterpart --- but when we want to emphasize the presence of a derived enhancement of a superstack $\BCal{X}$, we denote it by $\BR\BCal{X}$. Sometimes, when no confusion is likely to arise, we denote by $X$, resp.~$\CX$, the bosonic reduction of a superscheme $\boldit X$, resp.~superstack $\BCal{X}$.
 
\section{Classical theory: obstruction theory on moduli of stable maps}
\label{sec:background}
In this section we recall from \cite{BFinc} the classical notion of a perfect obstruction theory on a Deligne--Mumford type morphism of algebraic stacks, and we sketch the well-known construction of the perfect obstruction theory on the moduli stack of stable maps $\Mbar$ with target a smooth projective complex variety.

\subsection{Perfect obstruction theories}
\label{sec:classical-POTs}
We review the classical theory of perfect obstruction theories in the setup of Behrend--Fantechi \cite{BFinc}. Let $\CY$ be a pure dimensional algebraic stack. Let $f\colon \CX \to \CY$ be a morphism of algebraic stacks with unramified diagonal (such morphisms are called \emph{of Deligne--Mumford type}), and let $\BL_f$ be the relative cotangent complex. Then one has $h^1(\BL_f) = 0$, so that $\BL_f$ is an object of the derived category $\derived^{\leqslant 0}(\CX_{\et})$. 

\begin{definition}[{\cite[Deff.~4.4,~5.1]{BFinc}}]
\label{def:pot}
An \emph{obstruction theory} relative to a morphism $f\colon \CX \to \CY$ of Deligne--Mumford type is a pair $(\BE,\varphi)$, where 
\begin{itemize}
    \item [$\circ$] $\BE \in \derived^{\leqslant 0}(\CX_{\et})$ is a complex with coherent cohomology in degrees $-1,0$ and vanishing cohomology in positive degrees, and
    \item [$\circ$] $\varphi\colon \BE \to \BL_f$ is a morphism in $\derived^{\leqslant 0}(\CX_{\et})$ such that $h^0(\varphi)$ is an isomorphism and $h^{-1}(\varphi)$ is surjective.
\end{itemize} 
We say that $(\BE,\varphi)$ is a \emph{perfect obstruction theory} if $\BE$ is of perfect amplitude contained in $[-1,0]$.
\end{definition}

\begin{definition}
The \emph{virtual dimension} of a perfect obstruction theory $(\BE,\varphi)$ is the integer $\vd = \rk \BE$. The \emph{obstruction sheaf} of $(\BE,\varphi)$ is the cohomology sheaf $\Ob = h^1(\BE^\vee)$.
\end{definition}

\subsection{The classical moduli stack of stable maps}
\label{sec:classical-stable-maps}
Fix integers $g,n \in \BZ_{\geqslant 0}$. We start recalling the definition of the stack $\FM_{g,n}$ of $n$-\emph{pointed prestable curves} of genus $g$. Recall that a purely 1-dimensional finite type $\BC$-scheme $C$ is a \emph{prestable curve} if it is proper, reduced, connected and has at worst nodal singularities. The arithmetic genus of a prestable curve $C$ is the integer $g = 1-\chi(\OO_C)$. For a $\BC$-scheme $B$, the groupoid of $B$-valued points of $\FM_{g,n}$ consists of tuples
\begin{equation}
\label{family-prestable-curves}
(\pi\colon \CC \to B,\sigma_1,\ldots,\sigma_n)
\end{equation}
where
\begin{itemize}
    \item [$\circ$] $\pi$ is a flat, proper morphism of schemes, of finite presentation and relative dimension 1, whose geometric fibers are prestable curves of arithmetic genus $g$,
    \item [$\circ$] $\sigma_1,\ldots,\sigma_n \colon B \to \CC$ are pairwise disjoint sections of $\pi$, landing in its smooth locus.
\end{itemize}

Two objects $(C,p_1,\ldots,p_n)$ and $(C',p_1',\ldots,p_n')$ of $\FM_{g,n}(\BC)$ are isomorphic is there is an isomorphism $C \simto C'$ sending $p_i$ to $p_i'$ for every $i$. Thus one sees that the automorphism group of a prestable $n$-pointed curve may be infinite. 

\begin{theorem}[{\cite[Prop.~2]{Behrend_GW}}]
The groupoid $\FM_{g,n}$ is a smooth algebraic stack of dimension $3g-3+n$.
\end{theorem}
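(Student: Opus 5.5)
We must show that $\FM_{g,n}$ is an algebraic stack, that it is smooth, and that its dimension is $3g-3+n$. The plan is to reduce everything to the moduli of stable curves via stabilization by extra markings, then check smoothness and dimension by deformation theory.

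\emph{Algebraicity.} Every prestable curve becomes stable after adding enough smooth markings: each unstable rational component needs at most three special points, and genus one components at most one. Hence the forgetful morphisms
\[
\overline{\CM}_{g,n+m} \to \FM_{g,n}
\]
(forget the last $m$ points, no contraction) have open images $U_m$, and these cover $\FM_{g,n}$ as $m$ varies. Write $\overline{\CM}^{\circ}_{g,n+m}$ for the open locus where the extra points are allowed anywhere in the smooth locus. The map $\overline{\CM}^{\circ}_{g,n+m}\to\FM_{g,n}$ is representable: its fibre over a family $\CC\to B$ is an open subscheme of the $m$-fold fibre product of the smooth locus of $\CC/B$. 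It is also smooth, since it is an open piece of the smooth morphism $(\CC^{\mathrm{sm}})^m_B\to B$. Since $\overline{\CM}_{g,n+m}$ is a smooth Deligne--Mumford stack (Deligne--Mumford, Knudsen), we obtain smooth surjective atlases. It then remains to check the following:
\begin{itemize}
\item[$\circ$] the diagonal of $\FM_{g,n}$ is representable and of finite type, which holds because $\mathrm{Isom}$ functors of flat proper curves are represented by open subschemes of Hilbert schemes of graphs;
\item[$\circ$] $\FM_{g,n}$ satisfies étale (indeed fpqc) descent, which is standard descent for proper flat families with sections, via relatively ample $\omega_\pi(\sum\sigma_i)^{\otimes 3}$ twisted by the auxiliary markings.
\end{itemize}

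\emph{Smoothness.} Smoothness descends along smooth surjective atlases, so it follows immediately from the atlases above. Alternatively, it can be seen directly: obstructions to deforming $(C,p_i)$ lie in $\mathrm{Ext}^2(\Omega_C(\sum p_i),\OO_C)$. Using the local-to-global spectral sequence, this group vanishes because
\begin{itemize}
\item[$\circ$] $C$ is a curve, so $H^2$ of coherent sheaves vanishes;
\item[$\circ$] nodes are lci, so $\lExt^2=0$;
\item[$\circ$] $H^1(\lExt^1)=0$, since $\lExt^1$ is supported at the finitely many nodes.
\end{itemize}

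\emph{Dimension.} The dimension is $\dim T-\dim \mathrm{Aut}$, that is,
\[
\dim \mathrm{Ext}^1(\Omega_C(\textstyle\sum p_i),\OO_C)-\dim \mathrm{Ext}^0(\Omega_C(\textstyle\sum p_i),\OO_C)=-\chi\big(\mathbf{R}\mathrm{Hom}(\Omega_C(\textstyle\sum p_i),\OO_C)\big).
\]
This Euler characteristic is locally constant in flat families, so it may be computed at a smooth curve, where Riemann--Roch for $T_C(-\sum p_i)$ gives $-\big((2-2g-n)+1-g\big)=3g-3+n$. As a consistency check with the atlas, the relative dimension of $\overline{\CM}_{g,n+m}\to\FM_{g,n}$ is $m$, and $(3g-3+n+m)-m=3g-3+n$.

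\emph{Main obstacle.} The delicate step is the algebraicity input, specifically the representability and finite type of the Isom functor, since automorphism groups may be positive-dimensional and non-proper. I would handle this by embedding the graph via the auxiliary stabilizing markings and citing the Hilbert scheme, mirroring Behrend's argument.
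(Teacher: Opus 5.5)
Your proposal is correct and follows essentially the route the paper relies on: the paper only cites Behrend for this statement, but its proof of the super analogue (Proposition~\ref{prop:prestable-are-smooth}) uses exactly your strategy. It covers the stack by the smooth representable morphisms obtained by adding stabilizing markings in the smooth locus of the universal curve (your $\overline{\CM}_{g,n+m}\to\FM_{g,n}$, the paper's $\mathcal{V}_m$), reads off smoothness and the dimension $(3g-3+n+m)-m$ from this atlas, and your $\mathrm{Ext}$-computation is a valid independent check. The one step to repair is descent: the auxiliary markings exist only \'etale-locally and are not respected by the gluing isomorphisms of a descent datum, so they cannot be used to polarize it, and effectivity should instead come from, e.g., \'etale descent for algebraic spaces.
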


Next, we recall the definition of the classical moduli stack of stable maps. Fix a smooth complex projective variety $Y$, along with integers $g,n \in \BZ_{\geqslant 0}$ and a homology class $\beta \in \HH_2(Y,\BZ)$. The moduli stack 
\[
\Mbar
\]
of \emph{stable maps} in class $\beta$ is defined as follows. Recall that, if $(C,p_1,\ldots,p_n)$ is an $n$-pointed prestable curve, a regular map $f \colon (C,p_1,\ldots,p_n) \to Y$ is \emph{stable} if whenever $f$ contracts a component $E \subset C$, such component contains at least 3 special points if it is rational and at least 1 special point if it has genus 1. Here, a point $p \in C$ is said to be \emph{special} if it is either a singular point of $C$ or a marked point $p_i$ for some $i$. By definition, the objects of the groupoid $\Mbar(B)$, for a $\BC$-scheme $B$, are the tuples
\[
(\pi\colon \CC \to B, \sigma_1,\ldots,\sigma_n,f\colon \CC \to Y),
\]
where
\begin{itemize}
\item [$\circ$] $(\pi\colon \CC \to B,\sigma_1,\ldots,\sigma_n)$ is a family of $n$-pointed prestable curves of genus $g$ as in \eqref{family-prestable-curves}, and
\item [$\circ$] $f\colon \CC \to Y$ is a morphism such that, for every geometric point $b$ of $B$, the induced map $f_b\colon (\CC_b,\sigma_1(b),\ldots,\sigma_n(b)) \to Y$ is a stable map such that $f_{b\ast}[\CC_b] = \beta$. 
\end{itemize}

\begin{remark}
Let $(C,p,f)$ and $(C',p',f')$ be two $\BC$-valued points of $\Mbar$. An isomorphism between them is an isomorphism between the underlying prestable curves, commuting with the stable maps $f$ and $f'$. A crucial observation is that the stability condition for a map says precisely that the automorphism group of the $\BC$-valued point $(C,p,f)$ of $\Mbar$ is finite. In particular, there is no genus 1 unpointed stable map hitting $\beta=0$, although this is a bit of a pathology, as the  condition on contracted elliptic components is always satisfied for $(g, n, \beta) \neq (1, 0, 0)$.
\end{remark}

\begin{example}
\label{example:classical-stable-maps}
We have the following main examples of the moduli stack of stable maps.
\begin{itemize}
\item [\mylabel{SM-1}{(1)}] If $2g-2+n>0$, we obtain $\overline{\CM}_{g,n}(Y,0) = \MB \times Y$, where $\MB$ is the classical Deligne--Mumford moduli space of $n$-pointed stable curves of genus $g$.
\item [\mylabel{SM-2}{(2)}] We have $\overline{\CM}_{0,0}(\BP^r,1) = \Gr(1,r)$, the classical Grassmannian. The map 
\[
\begin{tikzcd}
\overline{\CM}_{0,n}(\BP^r,1) \arrow{r} & \overline{\CM}_{0,0}(\BP^r,1)
\end{tikzcd}
\]
forgetting all markings is a Zariski locally trivial fibration with fiber $\BP^1[n]$, the Fulton–MacPherson configuration space.
\item [\mylabel{SM-3}{(3)}] $\overline{\CM}_{0,0}(\BP^2,2)$ is the moduli stack of \emph{complete conics}, whose coarse moduli space if the classical moduli scheme of complete conics, isomorphic to the blow-up of $\BP^5$ along the Veronese surface of double lines.
\end{itemize}
\end{example}

There is a canonical morphism
\[
\begin{tikzcd}
\Mbar \arrow{r}{\tau} & \FM_{g,n},
\end{tikzcd}
\]
forgetting the stable map. This morphism is used to construct the perfect obstruction on $\Mbar$ which gives rise, via integration against the induced virtual fundamental class, to the \emph{Gromov--Witten invariants} of $Y$. We briefly recall the construction, following \cite{Behrend_GW,Ricolfi-Book}.

Consider the universal structure
\[
\begin{tikzcd}
\CC\arrow{d}{\pi}\arrow{r}{f} & Y \\
\Mbar &
\end{tikzcd}
\]
and the transitivity triangle
\[
\begin{tikzcd}
\tau^\ast \BL_{\FM_{g,n}} \arrow{r} & \BL_{\Mbar}\arrow{r} & \BL_\tau \arrow{r}{\alpha} & \tau^\ast \BL_{\FM_{g,n}}[1]
\end{tikzcd}
\]
attached to the forgetful morphism $\tau\colon \Mbar \to \FM_{g,n}$. One can prove that there is a relative obstruction theory 
\[
\begin{tikzcd}
\BE_\tau = (\RR\pi_\ast f^\ast T_Y)^\vee \arrow{r}{\phi_\tau} & \BL_\tau.
\end{tikzcd}
\]
This is proved by embedding $\Mbar$ as an open substack of the $\mathfrak M_{g,n}$-relative stack of morphisms into $Y$, and restricting the natural obstruction theory there; of course, $\phi_\tau$ can be seen as a globalisation of the classical fact that tangents and obstructions for the deformation problem of a map $C \to Y$ with fixed domain are given by $\HH^i(C,f^\ast T_Y)$, for $i=0$ and $i=1$ respectively. Now, define
\[
\begin{tikzcd}
\BE \defeq \mathrm{Cone}(\alpha \circ \phi_\tau \colon \BE_\tau \arrow{r} & \tau^\ast \BL_{\FM_{g,n}}[1])[-1].
\end{tikzcd}
\]
Using the smoothness of $\mathfrak M_{g,n}$ and the perfectness of $\BE_\tau$ in $[-1,0]$, one deduces that in the morphism of exact triangles
\[
\begin{tikzcd}
\tau^\ast \BL_{\FM_{g,n}}\arrow[equal]{d} \arrow{r} & \BE \arrow{r}\arrow[dotted]{d}{\phi} & \BE_\tau \arrow{r}\arrow{d}{\phi_\tau} & \tau^\ast \BL_{\FM_{g,n}}[1]\arrow[equal]{d} \\
\tau^\ast \BL_{\FM_{g,n}} \arrow{r} & \BL_{\Mbar}\arrow{r} & \BL_\tau \arrow{r}{\alpha} & \tau^\ast \BL_{\FM_{g,n}}[1]
\end{tikzcd}
\]
the\footnote{There might be more than one, but all possible arrows $\BE \to \BL_{\Mbar}$ are isomorphisms on $h^0$ and surjections on $h^{-1}$ by a standard diagram chasing argument.} induced morphism $\phi$ is an absolute perfect obstruction theory. The induced virtual fundamental class lives in Chow-dimension
\[
\vd = \rk \BE = (\dim Y-3)(1-g) - \int_\beta K_Y + n.
\]
The adaptation of this construction to the super setting is not quite straightforward; it will be performed with full details in \Cref{sec:stable-supermaps}. 

\section{Basic theory of superstacks}
\label{sec:superstacks}
\subsection{General theory of superstacks}

We recall the basics of the theory of superstacks, mainly taken from \cite{BHRSS25}.
Superstacks had already been defined for the \'etale site of smooth superschemes in \cite{CodViv17}. However, since the definition of stack over a site is pretty general (see, for instance, \cite{CasWis17}), we can just use the general theory applied to the site of superschemes with respect to the \'etale topology. A more involved question is related to the notion of algebracity of superstacks. 

We will denote by $\SSchs$ the category of superschemes.  We refer to \cite{BrHRMa23,BrHRPo20} for all the definitions of the different types of morphisms of superschemes. For instance, smooth and \'etale morphisms of superschemes are defined in  \cite[Def.~2.7 and 2.13]{BrHRMa23} (see also \cite{BrHRPo20}); equivalent definitions are provided in \cite{BHRSS25}. 
The \'etale (pre)site of superschemes for the \'etale topology is given in \cite[Def.~2.14]{BrHRMa23}. We denote this site by $\SSchs_{\et}$. Notice that $\SSchs_{\et}$ is \emph{subcanonical} \cite[Def.~3.12]{CasWis17}, that is, every functor  $\SSchs^{\opp}\to\Sets$ that is representable by a superscheme, is an \'etale sheaf.

A pre-superstack (resp.~a superstack) is a prestack (resp.~a stack) over $\SSchs_{\et}$. This can be spelled out as follows.
 
\begin{definition} 
A \emph{pre-superstack} (resp.~a \emph{superstack}) is a category fibered in groupoids 
\[
\begin{tikzcd}
p_\BCal{X}\colon \BCal{X} \arrow{r} & \SSchs_{\et} 
\end{tikzcd}
\]
over the site $\SSchs_{\et}$ such that the isomorphisms between two objects in $\BCal{X}(\dS)\defeq p_\BCal{X}^{-1}(\dS)$ form a sheaf for every superscheme $\dS$ (resp.~ditto and that every descent data is effective). A morphism of pre-superstacks or superstacks is a morphism of categories fibered in groupoids over $\SSchs_{\et}$. If $\dS$ is a superscheme, changing $\SSchs$ to the category $\SSchs/\dS$ of superschemes over $\dS$, we obtain the notions of pre-superstack and superstack over $\dS$. 
\end{definition}

The first example of a superstack is the one associated to a sheaf on $\SSchs_{\et}$. To see this, consider a presheaf of  $\CF\colon \SSchs^{\opp}\to\Sets$. We define a category fibered in groupoids $p_\BCal{F}\colon \BCal{F} \to \SSchs$ by setting $\BCal{F}(\dS)$ as the set $\CF(\dS)$ considered as a category; morphisms in $\BCal{F}$ are defined in the obvious way. Notice that $p_\BCal{F}\colon\BCal{F}\to\SSchs$ is a category fibered in setoids, namely, groupoids whose objects have no other morphisms than the identity. If $\BCal{X}$ is a superstack equivalent to the superstack $\BCal{F}$ associated to a sheaf $\CF$ we say that $\BCal{X}$ is \emph{representable} by a sheaf, or simply, that it is a sheaf.

Since for every superscheme $\dS$ the functor of points is an \'etale sheaf, it gives rise to
 a superstack. If $\BCal{X}$ is a superstack equivalent to the superstack $\BCal{S}$ associated to a superscheme $\dS$, we say that \emph{$\BCal{X}$ is representable by the superscheme $\dS$} or simply that it is a superscheme. One easily sees that, as a fibered category, the superstack $\BCal{S}$ is equivalent to the category fibered in groupoids $\SSchs/\dS\to \SSchs$.

There is a version of the Yoneda Lemma for superstacks \cite[Lemma 2.4]{BHRSS25}. To formulate it, let $p_\BCal{X}\colon \BCal{X}\to\SSchs$, $p_\BCal{Y}\colon \BCal{Y}\to\SSchs$ be superstacks. We denote by $\Homst_{\SSt}(\BCal{X},\BCal{Y})$ the fibered category whose objects are the morphisms $\BCal{X} \to \BCal{Y}$ of superstacks and whose morphisms are the natural isomorphisms. 

\begin{lemma}[2-Yoneda] Let $p_\BCal{X}\colon\BCal{X}\to\SSchs$ be a superstack. For every superscheme $\dS$ the natural transformation
\[
\begin{tikzcd}[row sep=tiny]
\Homst_{\SSt}(\SSchs/\dS,\BCal{X}) \arrow{r} & \BCal{X}(\dS) \\
F \arrow[mapsto]{r} & F(\dS\xrightarrow{\Id_{\dS}}\dS)
\end{tikzcd}
\]
with the similar definition for morphisms, is an equivalence of categories.
\qed
\end{lemma}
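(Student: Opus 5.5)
The plan is the standard 2-categorical Yoneda argument, run on the site $\SSchs_{\et}$. The key fact is that the superstack attached to $\dS$ is the category fibred in groupoids $\SSchs/\dS\to\SSchs$. Its objects over $\TT$ are morphisms $g\colon\TT\to\dS$, and a morphism $(g'\colon\TT'\to\dS)\to(g\colon\TT\to\dS)$ lying over $h\colon\TT'\to\TT$ exists precisely when $g'=g\circ h$.

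Every object $g\colon\TT\to\dS$ is the source of a unique morphism over $g$ to the object $\Id_\dS$. A morphism $F$ of fibred categories is therefore determined up to unique isomorphism by $x\defeq F(\Id_\dS)$, since $F$ sends cartesian arrows to cartesian arrows. This is the whole mechanism of the proof.

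\textbf{Quasi-inverse.} For $x\in\BCal{X}(\dS)$, we first choose a cleavage of $p_\BCal{X}$, that is, a pullback $g^\ast x$ and a cartesian arrow $g^\ast x\to x$ over $g$ for every $g\colon\TT\to\dS$. We then define $F_x\colon\SSchs/\dS\to\BCal{X}$ on objects by $F_x(g)=g^\ast x$. On a morphism $h$ with $g'=g\circ h$, we let $F_x(h)\colon g'^\ast x\to g^\ast x$ be the unique arrow over $h$ factoring $g'^\ast x\to x$ through the cartesian arrow $g^\ast x\to x$. Uniqueness in the cartesian property gives functoriality, and the construction visibly commutes with the projections to $\SSchs$.

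An isomorphism $\alpha\colon x\to y$ in $\BCal{X}(\dS)$ induces arrows $g^\ast x\to g^\ast y$ over $\Id_\TT$, again by the cartesian property, and these assemble into a natural isomorphism $F_x\Rightarrow F_y$. This gives a functor $\Phi\colon\BCal{X}(\dS)\to\Homst_{\SSt}(\SSchs/\dS,\BCal{X})$.

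\textbf{Equivalence.} First, $\mathrm{ev}\circ\Phi\cong\Id$, because $\Id_\dS^\ast x$ is canonically isomorphic to $x$ (take the identity if the cleavage is normalised). Second, for a given $F$, the images $F(g\to\Id_\dS)$ are cartesian arrows $F(g)\to F(\Id_\dS)$ over $g$. Comparing them with the chosen cartesian arrows $g^\ast F(\Id_\dS)\to F(\Id_\dS)$ yields unique isomorphisms $F(g)\cong g^\ast F(\Id_\dS)$ over $\Id_\TT$. These are natural in $g$ by uniqueness, so we get $F\cong\Phi(\mathrm{ev}(F))$. Finally, naturality of these isomorphisms in $F$, with respect to 2-morphisms of $\Homst_{\SSt}$, again follows from the uniqueness of factorisations through cartesian arrows.

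\textbf{Remarks.} None of the sheaf conditions defining a superstack are used; only the fact that $\BCal{X}$ is fibred in groupoids matters, so the statement holds verbatim for pre-superstacks. The only real care needed is bookkeeping: fixing a cleavage and checking naturality. There is no genuine obstacle, because the argument is formally identical to the classical one (e.g.\ in \cite{CasWis17}), with schemes replaced by superschemes.
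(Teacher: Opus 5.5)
Your proof is correct. It is the standard 2-Yoneda argument: build the quasi-inverse $x\mapsto F_x$ from a chosen cleavage, then obtain both natural isomorphisms from the uniqueness of factorisations through cartesian arrows. The paper gives no argument of its own here; it simply closes the statement with $\qed$ and relies on \cite[Lemma 2.4]{BHRSS25}, where the same formal argument is used. Your observation that only the fibred-in-groupoids structure is used, and no descent condition, is accurate.
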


By virtue of the 2-Yoneda Lemma, specifying a superstack morphism
\[
\begin{tikzcd}
x\colon \SSchs/\dS \arrow{r} & \BCal{X}
\end{tikzcd}
\]
is the same as specifying an object $x\in \BCal{X}(\dS)$. We will make this identification without further mention.

The fiber product of two superstacks is its fiber product as fibered categories; it is also a superstack. This is a particular case of \cite[Def. 4.2 and Lemma 4.4]{CasWis17}. 

A morphism $f\colon \BCal{X} \to \BCal{Y}$ of superstacks is \emph{superschematic} (resp.~\emph{representable by sheaves}), if for every superscheme $\dS$ and every superstack morphism $g\colon\dS\to \BCal{Y}$ the fiber product superstack $\dS\times_{g,\BCal{Y} ,f} \BCal{X}$ is (representable by) a superscheme (resp.\! a sheaf). One has that a morphism $f\colon \BCal{X} \to \BCal{Y}$ of superstacks is \! representable by sheaves, if for every sheaf $\CF$ and every superstack morphism $g\colon\CF\to \BCal{Y}$ the fiber product superstack $\CF\times_{g,\BCal{Y} ,f} \BCal{X}$ is (representable by) a sheaf. 

The diagonal morphism $\Delta_{\BCal{X}} \colon \BCal{X}\to \BCal{X}\times\BCal{X}$ of a superstack $\BCal{X}$ has the important property that it codifies the isomorphisms between the objects of the category $\BCal{X}$, namely, for every superscheme $\dS$ and objects $x_1$, $x_2$ in $\BCal{X}(\dS)$, if we consider them as morphisms $x_1\colon \dS\to \BCal{X}$, $x_2\colon \dS \to \BCal{X}$ by the 2-Yoneda Lemma, there is a 2-cartesian diagram
\begin{equation}
\xymatrix{ 
\mathfrak{Isom}_{\BCal{X}(\dS)}(x_1,x_2) \ar[r]\ar[d] & \dS \ar[d]^{(x_1,x_2)} \\
\BCal{X}\ar[r]^{\Delta_{\BCal{X}}}& \BCal{X}\times\BCal{X}
}
\end{equation}

More generally, if $f\colon \BCal{X}\to \BCal{Y}$ is a superstack morphism, and $(x_1,x_2,\alpha)$ (where $\alpha\colon f(x_1) \simto f(x_2)$ is an isomorphism in $\BCal{Y}(\dS)$) is an object of $(\BCal{X}\times_{\BCal{Y}} \BCal{X})(\dS)$,  we have a 2-cartesian diagram
\begin{equation}
\xymatrix{ 
\mathfrak{Isom}_{\BCal{X}(\dS),\alpha}(x_1,x_2) \ar[r]\ar[d] & \dS \ar[d]^{(x_1,x_2,\alpha)} \\
\BCal{X}\ar[r]^{\Delta_f}& \BCal{X}\times_{\BCal{Y}}\BCal{X}
}
\end{equation}
where $\mathfrak{Isom}_{\BCal{X}(\dS),\alpha}(x_1,x_2)$ is the pre-image of $\alpha$ under the natural map 
\[
\begin{tikzcd}
f_\ast \colon \mathfrak{Isom}_{\BCal{X}(\dS)}(x_1,x_2)\arrow{r} & \mathfrak{Isom}_{\BCal{Y}(\dS)}(f(x_1),f(x_2)).
\end{tikzcd}
\]
The injectivity of the diagonal characterizes sheaves, that is, a superstack morphism $f\colon \BCal{X}\to \BCal{Y}$ is representable by sheaves if and only if the diagonal $\Delta_f$ is a monomorphism.

A  subsuperstack $\BCal{X}\subseteq \BCal{Y}$ is a monomorphism $\BCal{X}\hookrightarrow \BCal{Y}$ of superstacks. A subsuperstack $\BCal{X}\subseteq \BCal{Y}$ is open (resp.\! closed), if for every superscheme $\dS$ and every superstack morphism $\dS\to \BCal{Y}$, the superstack $\dS\times_{\BCal{Y}}\BCal{X}$ is representable by an open (resp.\! closed) superscheme of $\dS$.

\subsection{Algebraic superspaces. \DM\ and algebraic superstacks}

Though algebraic superspaces are particular instances of algebraic stacks, for operative reasons it is better to define them first.

If we have a property $\mathbf P$ of morphisms of superschemes that is stable under base change (e.g., surjective, proper, smooth, \'etale, unramified, formally smooth, formally \'etale, formally unramified), we can say that a superschematic superstack morphism $\BCal{X} \to \BCal{Y}$ has property $\mathbf P$ if for every morphism $\TT\to\BCal{Y}$ from a superscheme, the base-change morphism $\BCal{X}\times_{\BCal{Y}}\TT \to \TT$ of superschemes has property $\mathbf P$.

\begin{definition}
An \emph{algebraic superspace} is a sheaf $\XX$ such that there exist a superscheme $\UU$ and a superschematic surjective \'etale  morphism $p\colon \UU\to\XX$. A morphism of algebraic superspaces is a morphism of superspaces.
We say that $p\colon \UU\to\XX$ is an \emph{\'etale presentation} of $\XX$.
\end{definition}

We now say that a morphism $f\colon\BCal{X}\to\BCal{Y}$ of superstacks is \emph{representable} if for every morphism $\TT\to\BCal{Y}$ from a superscheme $\TT$, the fiber product superstack $\BCal{X}\times_\BCal{Y} \TT$ is an algebraic superspace. 

One easily sees that a superschematic morphism is representable and that any representable morphism, it is also representable by algebraic superspaces, namely,  for every morphism $\BZ\to\BCal{Y}$ from an algebraic superspace $\BZ$, the fiber product $\BCal{X}\times_{\BCal{Y}} \BZ$ is an algebraic superspace. 

For the properties of morphisms of superschemes that are \emph{local on the source} for the \'etale topology, we can make sense of the analogous properties for representable morphisms of superstacks. Namely, let  $\mathbf P$ be a property of morphisms of superschemes that is stable under base change and  local to the source for the \'etale topology. Then, a \emph{representable  morphism $\BCal{X}\to\BCal{Y}$ of superstacks has $\mathbf P$} if for  every morphism $\TT\to\BCal{Y}$ from a superscheme $\TT$ and \'etale presentation $\UU \to \BCal{X}\times_\BCal{Y} \TT$ by a superscheme, the composition morphism of superschemes  $\UU \to \BCal{X}\times_\BCal{Y} \TT \to \TT$ has property $\mathbf P$.

We can now give the more relevant geometric types of superstacks:

\begin{definition} 
A \DM\  (resp.\! algebraic) superstack is a superstack $\BCal{X}$ such that there exists a superscheme $\UU$ and a representable surjective \'etale (resp.\! smooth) morphism $p\colon\UU\to \BCal{X}$. A morphism of \DM\  or algebraic superstacks is a morphism of superstacks.
The morphism $p\colon\UU\to \BCal{X}$  is called an \'etale (resp.\! smooth) presentation. 
\end{definition}

One  has the following results whose proof is straightforward:

\begin{prop}
Superschemes and  algebraic superspaces are Deligne--Mumford superstacks.  De\-ligne--Mumford  superstacks are algebraic superstacks.
Open and closed substacks of an algebraic superspace (resp.\! a \DM\  superstack, resp.\! an algebraic superstack) are algebraic superspaces (resp.\! \DM\  superstacks, resp.\! algebraic superstacks). Fiber products exist for algebraic superspaces, \DM\  superstacks and algebraic superstacks.\qed
\end{prop}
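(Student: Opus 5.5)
The proposition has four parts, and I will verify them one at a time, straight from the definitions and in the order stated.

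For the first part, take a superscheme $\dS$. The identity $\Id_{\dS}$ is an étale presentation: it is superschematic, surjective and étale, since its base change along any $\TT\to\dS$ is the identity of $\TT\times_{\dS}\dS\simeq\TT$. For an algebraic superspace $\XX$ with étale presentation $p\colon\UU\to\XX$, the morphism $p$ is superschematic. Hence it is representable, because superschemes are algebraic superspaces via the identity presentation. So $p$ is a representable surjective étale morphism and $\XX$ is Deligne--Mumford. Finally, étale morphisms of superschemes are smooth (\cite{BrHRMa23}), so every étale presentation is a smooth presentation, and Deligne--Mumford superstacks are algebraic.

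For open and closed substacks, let $\BCal{Z}\subseteq\BCal{X}$ be one and let $p\colon\UU\to\BCal{X}$ be a presentation. Form $\UU_{\BCal{Z}}\defeq\UU\times_{\BCal{X}}\BCal{Z}$. By the definition of open (resp.\ closed) substack, $\UU_{\BCal{Z}}$ is an open (resp.\ closed) subsuperscheme of $\UU$. The base change $\UU_{\BCal{Z}}\to\BCal{Z}$ is again representable, surjective and étale (resp.\ smooth), since these properties are stable under base change and $(\UU\times_{\BCal{X}}\BCal{Z})\times_{\BCal{Z}}\TT\simeq\UU\times_{\BCal{X}}\TT$ for any $\TT\to\BCal{Z}$. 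It remains to show $\BCal{Z}$ is a superstack (resp.\ a sheaf, in the superspace case). This holds because $\BCal{Z}\to\BCal{X}$ is a monomorphism and its pullback to every superscheme is a subsuperscheme, so descent data in $\BCal{Z}$ are effective: they are effective in $\BCal{X}$, and the condition of lying in $\BCal{Z}$ is étale local.

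For fiber products $\BCal{X}\times_{\BCal{Y}}\BCal{X}'$, the fiber product of fibered categories is already a superstack by \cite{CasWis17}. Choose presentations $\UU\to\BCal{X}$ and $\UU'\to\BCal{X}'$, and a presentation $\boldit{V}\to\BCal{Y}$. The standard argument takes $\UU\times_{\BCal{Y}}\UU'$ as the chart. Representability of the diagonal gives that this is an algebraic superspace, since it is $\UU\times\UU'\times_{\BCal{Y}\times\BCal{Y},\Delta}\BCal{Y}$. One then refines by an étale presentation of it by a superscheme, and the composite to $\BCal{X}\times_{\BCal{Y}}\BCal{X}'$ is a composite of base changes of $p$ and $p'$, hence representable, surjective and étale (resp.\ smooth).

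The main obstacle is this last point, because it needs representability of the diagonal of $\BCal{Y}$. That property is not literally in the definition as stated. I would first try to derive it from the existence of a representable presentation, using the usual argument that $\UU\times_{\BCal{Y}}\UU$ is an algebraic superspace. If that fails, I would follow \cite{BHRSS25} and understand the definition to include the standard separation and diagonal hypotheses. The remaining steps are formal transpositions of the classical arguments, since all the relevant classes of morphisms are stable under base change and composition.
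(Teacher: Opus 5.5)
Your verification of the first two assertions is the routine check the paper has in mind; it gives no argument beyond calling the statement straightforward. Two small corrections there. First, $\BCal{Z}$ is a superstack by definition, since a subsuperstack is a monomorphism of superstacks, so your effective-descent remark is superfluous. Second, the superspace case needs two facts you did not state clearly:
$\BCal{Z}$ is fibred in setoids, because a monomorphism is fully faithful on fibres and the ambient algebraic superspace has no nontrivial automorphisms; and $\UU_{\BCal{Z}}\to\BCal{Z}$ is superschematic, which your identification $\UU_{\BCal{Z}}\times_{\BCal{Z}}\TT\simeq\UU\times_{\BCal{X}}\TT$ already gives.

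The genuine gap is the fibre-product step. As written it is conditional on representability of $\Delta_{\BCal{Y}}$, and neither of your fallbacks closes it:
\begin{itemize}
\item Representability of $p$ only tells you that $\UU\times_{\BCal{Y}}\UU$ is an algebraic superspace. To get the diagonal you must show that a sheaf such as $\TT\times_{\BCal{Y}}\TT$ is itself an algebraic superspace, although it merely receives a representable smooth surjection from the algebraic superspace $(\TT\times_{\BCal{Y}}\UU)\times_{\UU}(\UU\times_{\BCal{Y}}\TT)$. That is an Artin-type bootstrap (slice the smooth cover to an étale one, then quotient by an étale equivalence relation), not a formal step.
\item Redefining algebraic superstacks to include diagonal hypotheses changes the statement being proved.
\end{itemize}
You could quote the paper's next proposition, taken from \cite{BHRSS25}, but that imports a nontrivial theorem you do not need.

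The presentation $q\colon\boldit{V}\to\BCal{Y}$, which you chose and never used, removes the diagonal entirely. Put $\boldit{P}\defeq(\UU\times_{\BCal{Y}}\boldit{V})\times_{\boldit{V}}(\boldit{V}\times_{\BCal{Y}}\UU')$.
\begin{itemize}
\item Both factors are algebraic superspaces, because $q$ is representable and $\UU$, $\UU'$ are superschemes.
\item A fibre product of algebraic superspaces over the superscheme $\boldit{V}$ is an algebraic superspace: fibre their étale presentations over $\boldit{V}$. Hence $\boldit{P}$ is an algebraic superspace.
\item Since $\boldit{P}\simeq(\UU\times_{\BCal{Y}}\UU')\times_{\BCal{Y}}\boldit{V}$, the map $\boldit{P}\to\BCal{X}\times_{\BCal{Y}}\BCal{X}'$ factors as $\boldit{P}\to\UU\times_{\BCal{Y}}\UU'\to\BCal{X}\times_{\BCal{Y}}\UU'\to\BCal{X}\times_{\BCal{Y}}\BCal{X}'$. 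These three arrows are base changes of $q$, $p$ and $p'$ respectively.
\item The composite is therefore representable, surjective and smooth (étale in the \DM\ case). Composing with an étale presentation of $\boldit{P}$ gives the required chart.
\end{itemize}
This also repairs the superspace case. There your chart was only shown to be representable, whereas the definition asks for a superschematic presentation. When $\boldit{Y}$ is an algebraic superspace, $q$, $p$ and $p'$ are superschematic, so $\boldit{P}$ is already a superscheme and the three arrows are superschematic.
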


It is clear that algebraic superspaces are algebraic stacks and sheaves. It can be proved \cite[Prop.\! 6.5]{BHRSS25} that an algebraic stack that is a sheaf is actually an algebraic superspace.

As for algebraic stacks, one has \cite[Prop.~3.27 and Cor.~3.28]{BHRSS25}:

\begin{prop}[Representability of the diagonal] 
The following statements hold true.
\begin{enumerate}
\item [\mylabel{ss1}{\normalfont{(1)}}]
The diagonal of an algebraic superspace is superschematic;
\item [\mylabel{ss2}{\normalfont{(2)}}]
the diagonal of an algebraic superstack is representable;
\item [\mylabel{ss3}{\normalfont{(3)}}]
the diagonal $\Delta_f\colon \BCal{X} \to \BCal{X}\times_\BCal{Y} \BCal{X}$ of a representable morphism $f\colon\BCal{X} \to\BCal{Y}$ of algebraic superstacks is superschematic.
\end{enumerate}
\end{prop}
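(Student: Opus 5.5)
We prove the three items in order; each reduces to the previous one through the 2-cartesian squares describing the diagonal via $\mathfrak{Isom}$ sheaves. The common principle is that representability (or superschematicity) of a morphism into a fibre product can be tested after pulling back along a morphism $\TT\to \BCal{X}\times\BCal{X}$ from a superscheme. By the 2-Yoneda Lemma such a morphism is a pair $(x_1,x_2)$ of objects of $\BCal{X}(\TT)$, and the pullback of the diagonal is $\mathfrak{Isom}_{\BCal{X}(\TT)}(x_1,x_2)\to\TT$.

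\textbf{Item (1).} Let $\XX$ be an algebraic superspace with étale presentation $p\colon\UU\to\XX$, and let $\TT\to\XX\times\XX$ be given by $x_1,x_2\in\XX(\TT)$. Since $\XX$ is a sheaf, $\mathfrak{Isom}(x_1,x_2)$ is the subsheaf of $\TT$ on which $x_1=x_2$. It is therefore the fibre product $\TT\times_{\XX\times\XX}\XX$, which is a monomorphism into $\TT$. We first show it is representable when $\TT=\UU\times\UU$ and $(x_1,x_2)=(p\circ\mathrm{pr}_1,p\circ\mathrm{pr}_2)$: in that case it equals $\UU\times_\XX\UU$, a superscheme because $p$ is superschematic. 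For a general $\TT$, pull back along the étale cover $\TT':=\TT\times_{\XX\times\XX}(\UU\times\UU)\to\TT$. The superscheme $\TT'$ exists because $p\times p$ is superschematic. Over $\TT'$ the fibre is $\TT'\times_{\UU\times\UU}(\UU\times_\XX\UU)$, which is a superscheme. This exhibits $\mathfrak{Isom}(x_1,x_2)$ as an étale-local superscheme that is a monomorphism into $\TT$.

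The main obstacle is descent: an étale sheaf that becomes a superscheme after an étale cover is a priori only an algebraic superspace. To obtain a superscheme we use that the base change to $\UU\times\UU$ is a quasi-compact monomorphism, hence locally an immersion, and invoke effectivity of étale descent for immersions (quasi-affine morphisms) of superschemes. This reduces to the classical statement after passing to the bosonic reduction together with the nilpotent odd ideal. If that route is too heavy, we instead follow \cite{BHRSS25} and impose quasi-separatedness on algebraic superspaces, so that descent for quasi-affine morphisms applies.

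\textbf{Item (2).} Let $\BCal{X}$ have smooth presentation $\UU\to\BCal{X}$. For $(x_1,x_2)\colon\TT\to\BCal{X}\times\BCal{X}$, the object $\mathfrak{Isom}(x_1,x_2)$ is a sheaf, since isomorphisms in a superstack form a sheaf. We show it is an algebraic superspace by producing an étale presentation. After a smooth cover of $\TT$ we may assume that $x_1,x_2$ lift to $\UU$. Then $\mathfrak{Isom}(x_1,x_2)$ is a base change of $\UU\times_{\BCal{X}}\UU\to\UU\times\UU$, and $\UU\times_{\BCal{X}}\UU$ is an algebraic superspace because $p$ is representable. This yields a smooth surjection onto $\mathfrak{Isom}(x_1,x_2)$ from an algebraic superspace. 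We then apply the cited result \cite[Prop.~6.5]{BHRSS25}, which says that an algebraic stack that is a sheaf is an algebraic superspace. This step uses descent along smooth covers; we check it via the sheaf property together with the existence of étale slices of smooth surjections of superschemes \cite{BrHRMa23}.

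\textbf{Item (3).} If $f$ is representable, then for $\TT\to\BCal{X}\times_\BCal{Y}\BCal{X}$ the superstack $\BCal{X}_\TT:=\BCal{X}\times_\BCal{Y}\TT$ is an algebraic superspace. The pullback of $\Delta_f$ is then identified with the pullback of the diagonal of $\BCal{X}_\TT$ relative to $\TT$. This in turn is a base change of $\Delta_{\BCal{X}_\TT}$ via $\BCal{X}_\TT\times_\TT\BCal{X}_\TT\to\BCal{X}_\TT\times\BCal{X}_\TT$. By item (1), $\Delta_{\BCal{X}_\TT}$ is superschematic, and superschematic morphisms are stable under base change, so $\Delta_f$ is superschematic.
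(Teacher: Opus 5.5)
The paper gives no proof of this proposition; it only cites \cite[Prop.~3.27, Cor.~3.28]{BHRSS25}, so I am judging your proposal on its merits. Your reductions in (2) and (3) are the right ones, and (3) is fine once (1) is known. The gap is in the descent step of (1), which is the only non-formal point of the proposition.

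First, you have no quasi-compactness to work with. The definition of algebraic superspace used here imposes no quasi-separatedness. Already for a bosonic space such as $X=\mathbb{A}^1_{\mathbb{C}}/\mathbb{Z}$ (translations) with $U=\mathbb{A}^1_{\mathbb{C}}$, one has $U\times_X U=\coprod_{n\in\mathbb{Z}}\mathbb{A}^1$, which is not quasi-compact over $U\times U=\mathbb{A}^2$.

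Second, and more seriously, the implication ``quasi-compact monomorphism, hence locally an immersion'' fails for exactly the morphisms at hand. Take $U=\mathbb{A}^1_k$ with $\mathrm{char}\,k\neq 2$ and the étale equivalence relation $R=\Delta\sqcup\{(x,-x):x\neq 0\}\to U\times U$. This is a quasi-compact monomorphism, and it presents a quasi-separated algebraic space. It is not an immersion, not even Zariski-locally over $U\times U$ near the origin. The reason is that $\{(x,-x):x\neq 0\}$ is closed in $R$, but its image is not closed in the image $\Delta\cup\{(x,-x)\}$ of $R$. So $\mathfrak{Isom}(x_1,x_2)\to\TT$ need not be étale-locally an immersion, and descent of immersions is not available. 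Falling back on quasi-separatedness changes the statement. Even then, quasi-affineness needs Zariski's Main Theorem, which you never invoke.

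What the situation actually gives you is the following. The map $\UU\times_{\XX}\UU\to\UU\times\UU$ is a monomorphism whose composite with $\mathrm{pr}_1$ is the étale map $\UU\times_{\XX}\UU\to\UU$. Hence it is locally of finite type and unramified, and therefore separated and locally quasi-finite. So $\mathfrak{Isom}(x_1,x_2)$ is an algebraic superspace, with superschematic étale cover $\mathfrak{Isom}(x_1,x_2)\times_{\TT}\TT'$, that is separated and locally quasi-finite over the superscheme $\TT$.

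The missing ingredient is the super version of the Zariski-Main-Theorem-type fact that such an algebraic space over a scheme is a scheme. In the quasi-separated case this reads: separated and quasi-finite over an affine implies quasi-affine, after which fpqc descent for quasi-affine morphisms applies. You must prove this, for example by reducing to the bosonic truncation and then lifting across the nilpotent odd thickening, or else cite it.

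A smaller caveat for (2): \cite[Prop.~6.5]{BHRSS25} comes after Prop.~3.27 and Cor.~3.28 in that reference. You should check that it does not already use representability of the diagonal; if it does, the étale-slice bootstrap you allude to has to be carried out explicitly.
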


Let $k$ be a field and  $x\colon \Spec k \to \BCal{X}$ a $k$-valued point of an algebraic superstack $\BCal{X}$. The stabilizer of $x$ is the \'etale sheaf 
\[
\mathbf{Aut}_{\BCal{X}(k)}(x)\defeq \mathfrak{Isom}_{\BCal{X}(\Spec k)}(x,x).
\]
Then, there is a 2-cartesian diagram
\begin{equation}
\xymatrix{ 
\mathbf{Aut}_{\BCal{X}(k)}(x) \ar[r]\ar[d] & \Spec k \ar[d]^{(x,x)} \\
\BCal{X}\ar[r]^{\Delta_\BCal{X}}& \BCal{X}\times\BCal{X}
}
\end{equation}
so that $\mathbf{Aut}_{\BCal{X}(k)}(x)$ is a \emph{group-algebraic superspace} by the representability of the diagonal. 
Also, since the diagonal of an algebraic superspace is superschematic, the stabilizers of the field-valued points of an algebraic superspace are actually group-superschemes or \emph{supergroups}.

The characterization of \DM\ superstacks is similar to the one for stacks:

\begin{prop}[{\cite[Prop.~6.3]{BHRSS25}}] Let $\BCal{X}$ be an algebraic superstack.
\begin{enumerate}
\item [\mylabel{diag}{\normalfont{(1)}}]
The diagonal $\Delta_\BCal{X}\colon \BCal{X} \to \BCal{X}\times\BCal{X}$ is of finite type,
\item [\mylabel{DM1}{\normalfont{(2)}}]
$\BCal{X}$ is a \DM\ superstack if and only if the diagonal $\Delta_\BCal{X}$ is unramified,
\item [\mylabel{DM2}{\normalfont{(3)}}] $\BCal{X}$ is a \DM\ superstack if and only if for every point $x$, the stabilizer $\mathbf{Aut}_{\BCal{X}(k)}(x)$ is a discrete and reduced ordinary group scheme.
\end{enumerate}
\end{prop}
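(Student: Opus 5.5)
The plan is to mimic the classical proof (Laumon--Moret-Bailly, Stacks Project) for algebraic stacks, reducing each statement to a question about superschemes via a smooth presentation and the representability of the diagonal established in the previous proposition.

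\emph{Step 1: part \ref{diag}.} I take a smooth presentation $p\colon\UU\to\BCal{X}$. Then $\UU\times\UU\to\BCal{X}\times\BCal{X}$ is a smooth surjection, and the base change of $\Delta_\BCal{X}$ along it is the algebraic superspace $\mathbf{R}=\UU\times_\BCal{X}\UU\to\UU\times\UU$. Finite type is local on the target for the smooth (hence fppf) topology. So it suffices to show $\mathbf R\to\UU\times\UU$ is of finite type. This reduces to quasi-compactness of $\mathbf R$ over $\UU\times\UU$ and to local finite presentation. The latter follows because $\mathbf R\to\UU$ is smooth, and so locally of finite type, and a morphism whose composite with $\mathbf R\to\UU$ is locally of finite type is itself locally of finite type. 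Quasi-compactness I would take from the standing quasi-separatedness hypotheses in the definition of algebraic superstack in \cite{BHRSS25}. Alternatively, I would reduce to the bosonic reduction, using that a morphism of superschemes is quasi-compact if and only if its bosonic reduction is.

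\emph{Step 2: part \ref{DM1}.} If $\BCal{X}$ is \DM, with an étale presentation $\UU$, then $\mathbf R\to\UU$ is étale. Hence $\mathbf R\to\UU\times\UU$ is unramified, since a morphism whose composite with an étale map is étale is unramified. Unramifiedness is étale-local on the target, so $\Delta_\BCal{X}$ is unramified. For the converse, I follow the classical strategy: starting from a smooth presentation $\UU\to\BCal{X}$, I construct étale slices. For a point $u$ of $\UU$, unramifiedness of the diagonal gives $\Omega_{\UU/\BCal{X}}\simeq$ the relative cotangent sheaf of the smooth morphism $\UU\to\BCal{X}$ of relative dimension $m|n$ at $u$. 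I then choose local even and odd functions $z_1,\dots,z_m\mid\theta_1,\dots,\theta_n$ whose differentials form a basis of $\Omega_{\UU/\BCal{X}}$ near $u$. Their common zero locus $\mathbf V\subset\UU$ is a closed sub-superscheme such that $\mathbf V\to\BCal{X}$ is smooth of relative dimension $0|0$, i.e.\ étale. Here I use the super Jacobian criterion for smooth/étale morphisms from \cite{BrHRMa23}. Covering $\UU$ by such slices gives an étale presentation.

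This slicing step is the main obstacle. One needs the super version of the statement that a smooth morphism with vanishing relative differentials is étale, and one has to handle odd coordinates, which do not cut down the bosonic reduction. I would handle it by working with $\Omega_{\UU/\BCal{X}}$ as a locally free sheaf of rank $m|n$ computed from the unramified diagonal, via $\Omega_{\mathbf R/\UU}\simeq$ pullback of $\Omega_{\UU/\BCal{X}}$. I would then apply the super fibrewise smoothness criterion to $\mathbf V$.

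\emph{Step 3: part \ref{DM2}.} By \ref{DM1}, it suffices to show that $\Delta_\BCal{X}$ is unramified if and only if every stabilizer is discrete and reduced. Unramifiedness can be checked on fibres over field-valued points, and the fibre of $\Delta_\BCal{X}$ over $(x,x)$ is exactly $\mathbf{Aut}_{\BCal{X}(k)}(x)$. By Step 1 this fibre is a group superspace of finite type over $k$. A finite type superscheme over $k$ is unramified if and only if its module of differentials vanishes, and for a group this means the (super) Lie algebra $\mathfrak g_0\oplus\mathfrak g_1$ is zero. Vanishing of the odd part means the group is purely even, i.e.\ an ordinary group scheme. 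Vanishing of the even part means it is étale over $k$, so discrete and reduced. Finally, a finite type group superspace over a field with trivial Lie superalgebra is a disjoint union of points, so it is a scheme and no algebraic-space subtleties arise. Over non-algebraically-closed $k$, I would pass to $\bar k$, which is harmless since unramifiedness is fpqc-local.
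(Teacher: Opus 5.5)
The paper gives no proof of this statement; it only cites \cite{BHRSS25}. Your outline follows the classical Laumon--Moret-Bailly strategy, but the hard direction of \ref{DM1} has a gap exactly where the hypothesis should be used.

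You assert that near $u$ one can choose even and odd functions $z_1,\dots,z_m\mid\theta_1,\dots,\theta_n$ whose differentials give a basis of $\Omega_{\UU/\BCal{X}}\otimes\kappa(u)$. This requires $\Omega_{\UU}\otimes\kappa(u)\to\Omega_{\UU/\BCal{X}}\otimes\kappa(u)$ to be surjective, and that fails for a general smooth atlas. For $\Spec k\to B\mathbb{G}_m$, or super-specifically $\Spec k\to B\mathbb{G}_a^{0|1}$, the target has rank $1|0$, resp.\ $0|1$, while $\Omega_{\Spec k}=0$. In the second case there are no odd functions at all to cut with. Neither of your justifications supplies this surjectivity. ``$\Omega_{\UU/\BCal{X}}\simeq$ the relative cotangent sheaf'' is a tautology, and $\Omega_{\mathbf{R}/\UU}\simeq\mathrm{pr}_1^*\Omega_{\UU/\BCal{X}}$ holds for every smooth presentation.

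The missing argument runs as follows. Unramifiedness of $\mathbf{R}\to\UU\times\UU$ gives $\Omega_{\mathbf{R}/\UU\times\UU}=0$, so $\mathrm{pr}_1^*\Omega_{\UU}\oplus\mathrm{pr}_2^*\Omega_{\UU}\to\Omega_{\mathbf{R}}$ is surjective. Passing to $\Omega_{\mathbf{R}/\UU}$ (relative to $\mathrm{pr}_2$) kills the second summand. Pulling back along the identity section $e\colon\UU\to\mathbf{R}$ then gives surjectivity of $\Omega_{\UU}\to\Omega_{\UU/\BCal{X}}$.

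With this in place, the step you single out as the main obstacle is routine. The functions $(\mathrm{pr}_1^*z,\mathrm{pr}_1^*\theta)$ define a $\UU$-morphism $\mathbf{R}\to\mathbb{A}^{m|n}_{\UU}$ between smooth $\UU$-superspaces. It induces an isomorphism on relative differentials on $\mathrm{pr}_1^{-1}$ of a neighbourhood of $u$, so it is étale there. Then $\mathbf{V}\times_{\BCal{X}}\UU$ is the preimage of the zero section, and odd coordinates need no special treatment.

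You also cannot slice through every point of $\UU$. To have $u\in\mathbf{V}$ you need $z_i\in\mathfrak{m}_u$, so $\mathfrak{m}_u$ must already span the even part of $\Omega_{\UU/\BCal{X}}\otimes\kappa(u)$. This fails, e.g., at generic points when the even relative dimension is positive. ``Covering $\UU$ by slices'' must be replaced by slicing at suitable (e.g.\ closed) points and checking that the open images of these slices cover $|\BCal{X}|$.

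Two smaller points.

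\textbf{Part (1).} Quasi-compactness of $\Delta_{\BCal{X}}$ does not follow from the definition of algebraic superstack given in this paper. Your bosonic-reduction alternative cannot produce it: it only moves the question to $\BCal{X}_{\bos}$, where it is false in general. A non-quasi-separated scheme, or $\mathbb{A}^1/\mathbb{Z}$ in characteristic $0$, has non-quasi-compact diagonal. So either a quasi-separatedness hypothesis is genuinely part of the definition, or \ref{diag} must be read as ``locally of finite type'', which is what your argument actually proves.

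\textbf{Part (3).} The fibres of $\Delta_{\BCal{X}}$ over field-valued points $(x_1,x_2)$ are $\mathfrak{Isom}(x_1,x_2)$, not just $\mathbf{Aut}(x)$. You need that these are empty or, after a field extension, torsors under $\mathbf{Aut}(x_1)$, and that unramifiedness descends along field extensions. Otherwise \ref{DM2} is fine, and you can bypass the Lie superalgebra. A superspace locally of finite type over $k$ with $\Omega=0$ is a disjoint union of spectra of finite separable extensions of $k$, hence even, discrete and reduced. Conversely, a discrete reduced group scheme has trivial identity component, so it is étale.
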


Notice that, by the representability of the diagonal, it makes sense to say that it is unramified. Notice also that the stabilizers of the points are (ordinary) group schemes. This is due to the fact that every formally unramified morphism of superschemes is actually bosonic, that is, its fibers are ordinary schemes.

\subsection{Bosonic reduction of a superstack}
In the same way that any superscheme has a bosonic reduction, which is the underlying ordinary scheme, every superstack has a natural underlying (ordinary) stack, which we will call its \emph{bosonic reduction}; moreover, every stack can be understood as a superstack (see \cite{CodViv17}).

The \'etale site $\Schs_{\et}$ of ordinary schemes (i.e. even superschemes) is a subsite of $\SSchs_{\et}$. There is a functor $i\colon \Schs_{\et} \hookrightarrow \SSchs_{\et}$ defined by associating with every scheme $S$ the scheme itself $i(S)$ considered as a superscheme, that is actually a superstack.

\begin{definition}\label{def:bosred} The bosonic reduction of a superstack  $p_{\BCal{X}}\colon \BCal{X} \to \SSchs_{\et}$ is the fiber product of categories fibered in groupoids
\[
\begin{tikzcd}
\BCal{X}_{\bos} \defeq \BCal{X}\times_{p_{\BCal{X}},\SSchs_{\et},i}\Schs_{\et}\arrow{rr}{p_{\BCal{X}_{\bos}}} && \Schs_{\et}.
\end{tikzcd}
\]
For every scheme $S$, there is an equivalence of categories $\BCal{X}_{\bos}(S)\simeq \BCal{X}(S)$ and $\BCal{X}_{\bos}$ is a stack.
\end{definition}

In the same way that a scheme can be considered as a (even) superscheme, every stack can be consider as a superstack. We can formalize this by first considering the functor $p_+\colon\SSchs_{\et}\to \Schs_{\et}$ that maps a superscheme $\dS$ to the even superscheme $\dS_{\mathrm{even}}=\dS/\tau$, where $\tau$ is the natural involution that acts as the identity on the even sections and as multiplication by $-1$ on the odd ones. The functor $p_+\colon\SSchs_{\et}\to \Schs_{\et}$ is actually a stack. This is formalized in the next definition.

\begin{definition} 
\label{def:superstack-ass-to-stack}
The superstack associated to a stack $p_{\CX}\colon\CX \to \Schs_{\et}$ is defined as the fiber product
\[
\begin{tikzcd}
i(\CX)\defeq \CX\times_{p_{\CX},\Schs_{\et},p_+} \SSchs_{\et}\arrow{r} & \SSchs_{\et}.
\end{tikzcd}
\]
\end{definition}
One verifies immediately that $i(\CX)$ is indeed a superstack.
As for schemes, we usually write also $\CX$ for the superstack $i(\CX)$.
\begin{definition}
We say that a superstack $\BCal{X}$ is \emph{bosonic} when the natural superstack morphism $\BCal{X} \to i(\BCal{X}_{\bos})$ is an isomorphism.
\end{definition}
The functors $(-)_{\bos}$ and $i$ are actually adjoint, in the sense that, for every stack $\dS$ and every superstack $\BCal{X}$, there is an equivalence of categories
\[
\Homst_{\SSt}(i(\dS), \BCal{X})\simeq \Homst_{\St}(\dS, \BCal{X}_{\bos}),
\]
see \cite[Equation 2.2]{BHRSS25}.

\section{Foundations of homological superalgebra}
\label{sec:homological_super_algebra}
In order to develop a consistent theory of cotangent supercomplexes for algebraic superstacks, which is essential for a good definition of (perfect) obstruction theories, we need to construct a suitable derived ($\infty$-)category of quasicoherent supersheaves over algebraic superstacks: our cotangent supercomplex will be then naturally defined as an object in such $\infty$-category. Notice that even if the cotangent supercomplex has never been explicitly studied in the literature, its existence is somewhat assumed in the proof of \cite[Thm.~3.3]{Felder-Kazhdan-Polishchuk}, which is itself elaborated from an argument due to Deligne \cite{Deligne-letter}. There, the definition of the cotangent supercomplex of a supercommutative ring mirrors the one by Illusie in \cite{Illusie_I,Illusie_II} in terms of (super)polynomial simplicial resolutions. However, the relevant properties relating the cotangent supercomplex and deformation theory are not discussed, and it is not even proved to be uniquely defined up to canonical quasi-isomorphism.

Here, we propose a somewhat more esoteric approach following the construction of topological André--Quillen homology for an arbitrary commutative algebra object in a presentable ($\infty$-)category $\mathscr{C}$, as presented in \cite[$\S$7.3]{ha}. This requires a more convoluted higher categorical formalism, but it has the following advantages.
\begin{enumerate}
    \item All constructions make sense for an arbitrary (derived) supercommutative ring, superscheme or superstack. Moreover, they are naturally well-defined up to quasi-isomorphism.
    \item The cotangent supercomplex is determined by local data: that is, a derived superstack $\bm{\CX}$ admits a global cotangent supercomplex provided that there exists a cotangent supercomplex at any point $\eta\colon \Spec\BA\to \bm{\CX}$, and for any map of affine derived superschemes $f\colon \Spec\BB\to\Spec\BA$ the cotangent supercomplex at the point $\eta\circ f$ is the base change of the cotangent supercomplex at the point $\eta$ along $f$.
    \item By its very definition, the cotangent supercomplex naturally classifies superderivations and controls the deformation/obstruction theory of a superscheme.
\end{enumerate}
For the sake of clarity, we start by recalling the main basic notions of supermodules over ordinary commutative rings, which readily generalize the relevant definitions from ordinary commutative algebra and classical scheme theory, and then proceed to extend the theory to the derived setting. Most of the results in this appendix follow from formal rephrasings of the arguments for the analogous statements in the non-super case (see for example \cite{ha,sag}): we still provide proofs for the reader's convenience. Proofs of longer or more technical results can be found in the appendix.
\subsection{Enhancements of categories of supermodules}
\label{sec:dg-supermodules}
Let $R$ be an ordinary commutative ring. We recall that a \textit{$R$-supermodule} is just a $\BZ/2\BZ$-graded $R$-module
\[
\BM\defeq\BM^{\mathrm{even}}\oplus\BM^{\mathrm{odd}}.
\]
$R$-supermodules are naturally gathered in the category $\mathrm{SMod}_R$, where a morphism $\BM\to\BN$ is required to satisfy the parity of the grading. Via the projections $\BM\mapsto(\BM^{\mathrm{even}},\BM^{\mathrm{odd}})$, the category $\mathrm{SMod}_R$ is readily seen to decompose as
\[
\mathrm{SMod}_R\simeq\mathrm{Mod}_R\times\mathrm{Mod}_R.
\]
This presentation allows us to deduce straightforwardly some structural properties of the category of $R$-supermodules.
\begin{fact}
\label{fact:super_modules_good_abelian} The category   $\mathrm{SMod}_R$ enjoys the following properties: 
  \begin{factenum}
    \item\label{fact:SMod_lim_colim}  it  admits all small limits and colimits, and they are computed degree-wise;
    \item\label{fact:SMod_abelian} it is abelian; 
    \item\label{fact:SMod_presentable} it is locally presentable. Let
    \[
    \begin{tikzcd}
\Pi\colon\mathrm{SMod}_R\arrow{r} & \mathrm{SMod}_R
    \end{tikzcd}
    \]
    denote the parity exchange functor (sending a $\mathbb{Z}/2\mathbb{Z}$-graded $R$-module $(M,N)$ to $(N,M)$). Then the even and odd lines $R\defeq(R,0)$ and $\Pi R$ are compact generators;
    \item\label{fact:SMod_Grothendieck}   in particular, it is Grothendieck. 
    \item\label{fact:SMod_enough_inj_proj} it  has enough projectives and injectives. 
\end{factenum}  
\end{fact}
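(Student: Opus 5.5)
The plan is to reduce everything to the equivalence of categories $\mathrm{SMod}_R\simeq\mathrm{Mod}_R\times\mathrm{Mod}_R$, $\BM\mapsto(\BM^{\mathrm{even}},\BM^{\mathrm{odd}})$, recalled just before the statement. All five properties are then inherited from the classical facts about $\mathrm{Mod}_R$, since each is stable under finite products of categories. The only place where some care is needed is the description of the generators in \ref{fact:SMod_presentable}.

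For \ref{fact:SMod_lim_colim}, limits and colimits in a product category exist and are computed componentwise. Since $\mathrm{Mod}_R$ is complete and cocomplete, so is $\mathrm{SMod}_R$, and the componentwise description says exactly that the even and odd parts of a limit or colimit are the limits or colimits of the even and odd parts. For \ref{fact:SMod_abelian}, a product of two abelian categories is abelian. Kernels and cokernels are computed componentwise, and the canonical map $\mathrm{coim}\to\mathrm{im}$ is an isomorphism because it is so in each factor.

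For \ref{fact:SMod_presentable}, I will first check that $\Pi$ is an autoequivalence, in fact an involution. Next I compute the corepresented functors:
\[
\Hom_{\mathrm{SMod}_R}(R,\BM)=\BM^{\mathrm{even}},\qquad \Hom_{\mathrm{SMod}_R}(\Pi R,\BM)=\BM^{\mathrm{odd}},
\]
since a parity-preserving map out of $(R,0)$ is determined by the image of $1$ in the even part. Both functors preserve filtered colimits by \ref{fact:SMod_lim_colim}, because forgetful functors on $\mathrm{Mod}_R$ do; hence $R$ and $\Pi R$ are compact. They generate: if $\Hom(R,\BM)=\Hom(\Pi R,\BM)=0$, then $\BM^{\mathrm{even}}=\BM^{\mathrm{odd}}=0$. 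Equivalently, every $\BM$ is a quotient of a direct sum of copies of $R$ and $\Pi R$. A cocomplete category with a set of compact generators is locally (finitely) presentable; alternatively, a finite product of locally presentable categories is locally presentable.

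For \ref{fact:SMod_Grothendieck}, I need an abelian category with a generator and exact filtered colimits. The object $R\oplus\Pi R$ is a generator by the previous step, and filtered colimits are exact because they are computed componentwise in $\mathrm{Mod}_R$, where they are exact. For \ref{fact:SMod_enough_inj_proj}, the pair $(P,Q)$ is projective (resp.\ injective) whenever $P$ and $Q$ are, since $\Hom$ splits as a product. Given $\BM$, I choose epimorphisms from projectives onto $\BM^{\mathrm{even}}$ and $\BM^{\mathrm{odd}}$, and monomorphisms into injectives from them, using the corresponding facts for $\mathrm{Mod}_R$. No step is a genuine obstacle. The point needing most care is remembering that morphisms preserve parity, which is what makes the product decomposition an honest equivalence; it is also why both $R$ and $\Pi R$, and not $R$ alone, are needed as generators.
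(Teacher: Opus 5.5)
Your proposal is correct and takes the same approach as the paper, which derives all five properties directly from the equivalence $\mathrm{SMod}_R\simeq\mathrm{Mod}_R\times\mathrm{Mod}_R$ and leaves the verifications as ``readily seen.'' Your componentwise arguments, notably $\Hom_{\mathrm{SMod}_R}(R,\BM)=\BM^{\mathrm{even}}$ and $\Hom_{\mathrm{SMod}_R}(\Pi R,\BM)=\BM^{\mathrm{odd}}$, supply those missing details; the passage from ``jointly detect zero'' to ``generate'' relies on $R$ and $\Pi R$ being projective, and your quotient-of-a-direct-sum reformulation already settles this.
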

This, however, is only part of the story. The actual difference between ordinary $\BZ/2\BZ$-graded $R$-modules and $R$-supermodules is provided by the \textit{braiding} of the symmetric monoidal structure. Namely, let $\Mod^{\BZ/2\BZ}_R$ denote the category of $\BZ/2\BZ$-graded $R$-modules: then we have a natural symmetric monoidal structure described as
\[
\BM\otimes^{\BZ/2\BZ}_R\BN\defeq \left((\BM^{\mathrm{even}}\otimes_R\BN^{\mathrm{even}})\oplus(\BM^{\mathrm{odd}}\otimes_R\BN^{\mathrm{odd}})\right)\bigoplus\left((\BM^{\mathrm{even}}\otimes_R\BN^{\mathrm{odd}})\oplus(\BM^{\mathrm{odd}}\otimes_R\BN^{\mathrm{even}})\right).
\]
The \textit{braiding} is a natural isomorphism $\tau\colon \BM\otimes^{\BZ/2\BZ}_R\BN\cong\BN\otimes^{\BZ/2\BZ}_R\BM$, satisfying $\tau^2\simeq\mathrm{id}$ and some additional coherence properties, which guarantee the commutativity of all possible diagrams where factors in a tensor product are swapped. Such braiding is naturally described as
\[
\begin{tikzcd}[row sep=tiny]
\BM\otimes^{\BZ/2\BZ}_R\BN \arrow{r}{\sim} & \BN\otimes^{\BZ/2\BZ}_R\BM\\
m\otimes n \arrow[mapsto]{r} & n\otimes m.
\end{tikzcd}
\]
When the characteristic of $R$ is different from $2$, the same tensor product (which we shall now denote as $\otimes^{\mathrm{super}}_R$) equips the category $\mathrm{SMod}_R$ with another symmetric monoidal structure, and as plain monoidal categories we have an equivalence $\Mod^{\BZ/2\BZ}_R\simeq\mathrm{SMod}_R$. Yet, $\mathrm{SMod}_R$ comes equipped with a different braiding which involves a Koszul sign. Namely, let $p_m$ denote the parity of an element $m$ in a $R$-supermodule $\BM$. Then the braiding on $\mathrm{SMod}_R$ is described as
\[
\begin{tikzcd}[row sep=tiny]
\BM\otimes^{\mathrm{super}}_R\BN
\arrow{r}{\sim} & 
\BN\otimes^{\mathrm{super}}_R\BM\\
m\otimes n \arrow[mapsto]{r} & (-1)^{p_m\cdot p_n} n\otimes m.
\end{tikzcd}
\]
As \textit{symmetric} monoidal categories, it is no longer true that $\Mod^{\BZ/2\BZ}_R\simeq\mathrm{SMod}_R$. We call a commutative algebra object $\BA$ in $\mathrm{SMod}_R$ a \textit{supercommutative $R$-algebra} (or \textit{supercommutative ring} if $R$ is assumed to be the ring of integers $\BZ$): we say that a $R$-supermodule $\BM$ is a \textit{$\BA$-supermodule} over a supercommutative ring $\BA$ if it comes equipped with a map
\[
\begin{tikzcd}
\alpha\colon\BM\otimes^{\mathrm{super}}_R\BA\arrow{r} &  \BM
\end{tikzcd}
\]
satisfying the obvious diagrammatic axioms.
\begin{fact}
Fix an ordinary commutative ring $R$.
\begin{factenum}
\item \label{fact:internal_hom}The supertensor product $\otimes^{\mathrm{super}}_R$ commutes with arbitrary colimits. In fact, for any $R$-supermodule $\BM$ the functor
\[
\begin{tikzcd}
(-)\otimes^{\mathrm{super}}_R\BM\colon\mathrm{SMod}_R\arrow{r} & \mathrm{SMod}_R
\end{tikzcd}
\]
admits a right adjoint
\[
\begin{tikzcd}
\BH\mathrm{om}_R(\BM,-) \colon\mathrm{SMod}_R\arrow{r} & \mathrm{SMod}_R.
\end{tikzcd}
\]
Given an arbitrary $R$-supermodule $\BN$, the $R$-supermodule $\BH\mathrm{om}_R(\BM,\BN)$ is the $R$-supermodule of homomorphisms between $\BM$ and $\BN$, and it is described by the formula
\begin{align*}
\BH\mathrm{om}(\BM,\BN)^{\mathrm{even}}&\defeq\Hom_R(\BM^{\mathrm{even}},\BN^{\mathrm{even}})\oplus\Hom_R(\BM^{\mathrm{odd}},\BN^{\mathrm{odd}}),\\
\BH\mathrm{om}(\BM,\BN)^{\mathrm{odd}}&\defeq\Hom_R(\BM^{\mathrm{even}},\BN^{\mathrm{odd}})\oplus\Hom_R(\BM^{\mathrm{odd}},\BN^{\mathrm{even}}).
\end{align*}
\item Since the supertensor product commutes with colimits, it follows that for any supercommutative $R$-algebra $\BA$ limits and colimits in the category $\mathrm{SMod}_{\BA}$ are computed at the level of the underlying $R$-supermodules: one can see it by computing explicit left and right adjoints to the forgetful functor $\mathrm{SMod}_{\BA}\to\mathrm{SMod}_R$ given respectively by $(-)\otimes^{\mathrm{super}}_R\BA$ and $\BH\mathrm{om}_R(\BA,-)$.
\item For any supercommutative $R$-algebra $\BA$, the category $\mathrm{SMod}_{\BA}$ is moreover locally presentable: once again, the objects $\BA$ and $\Pi\BA\defeq \BA\otimes^{\mathrm{super}}_R\Pi R$ are compact generators.
\item In particular, $\mathrm{SMod}_{\BA}$ admits both enough injectives (being a Grothendieck abelian category) and projectives. The latter can be deduced from the fact that $\mathrm{SMod}_{\BA}$ can be realized as the category of $\mathrm{SMod}_R$-enriched functors from the $\mathrm{SMod}_R$-enriched category $\mathbf{B}\BA$, having only one object $\left\{\ast\right\}$ with automorphisms given by $\BA$, to $\mathrm{SMod}_R$. 
\end{factenum}
\end{fact}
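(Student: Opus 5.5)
The plan is to reduce every item to elementary facts about $\mathrm{Mod}_R$ through the decomposition $\mathrm{SMod}_R\simeq\mathrm{Mod}_R\times\mathrm{Mod}_R$ and \Cref{fact:super_modules_good_abelian}. The key observation is that the braiding plays no role in any of these statements. Each of them concerns only the monoidal, non-symmetric structure or the underlying abelian category, and as monoidal categories $\mathrm{SMod}_R\simeq\mathrm{Mod}^{\BZ/2\BZ}_R$.

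For \Cref{fact:internal_hom}, I will write down the adjunction directly. Define $\BH\mathrm{om}_R(\BM,\BN)$ by the displayed formula, namely parity-preserving and parity-reversing $R$-linear maps. Given an even map $\BL\otimes^{\mathrm{super}}_R\BM\to\BN$, I send it to $\ell\mapsto(m\mapsto f(\ell\otimes m))$. Evenness of $f$ forces $\ell$ of parity $p$ to go to a map shifting parity by $p$, which is exactly the definition of $\BH\mathrm{om}^{\mathrm{even}}$ and $\BH\mathrm{om}^{\mathrm{odd}}$. Bijectivity and naturality then reduce, component by component, to the ordinary tensor--hom adjunction in $\mathrm{Mod}_R$. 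No Koszul signs are needed, since only the tensor product, not the braiding, enters. Commutation of $(-)\otimes^{\mathrm{super}}_R\BM$ with colimits then follows formally from its being a left adjoint.

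For the second item, I will first check that $\mathrm{SMod}_{\BA}$ is the category of modules over the monad $T=(-)\otimes^{\mathrm{super}}_R\BA$ on $\mathrm{SMod}_R$. By \Cref{fact:internal_hom}, $T$ preserves all colimits, so the forgetful functor $U\colon\mathrm{SMod}_{\BA}\to\mathrm{SMod}_R$ creates colimits. For limits, $U$ has the left adjoint $T$ given by free modules. I will also verify that $\BH\mathrm{om}_R(\BA,-)$, with $\BA$-action induced by precomposition, is right adjoint to $U$. Granting this, $U$ preserves both limits and colimits and is conservative, hence creates them; by \Cref{fact:SMod_lim_colim} they are computed degree-wise.

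For local presentability I will argue as follows. We have $\Hom_{\mathrm{SMod}_{\BA}}(\BA,\BM)\cong\BM^{\mathrm{even}}$ and $\Hom_{\mathrm{SMod}_{\BA}}(\Pi\BA,\BM)\cong\BM^{\mathrm{odd}}$, naturally in $\BM$. Since filtered colimits are computed degree-wise, both functors commute with filtered colimits, so $\BA$ and $\Pi\BA$ are compact. Together the two functors are jointly conservative and faithful, so $\BA$ and $\Pi\BA$ generate. A cocomplete category with a small set of compact generators is locally presentable.

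For the last item, exactness of filtered colimits is inherited from $\mathrm{SMod}_R$, because $U$ creates colimits and is exact. So $\mathrm{SMod}_{\BA}$ is a Grothendieck abelian category with generator $\BA\oplus\Pi\BA$, and therefore has enough injectives. For projectives I will use the generators directly rather than the enriched-functor description. Each $\BA$-supermodule is a quotient of a direct sum of copies of $\BA$ and $\Pi\BA$, mapping each homogeneous element via the generator of matching parity. Such sums are projective because $\Hom(\BA,-)$ and $\Hom(\Pi\BA,-)$ are the exact functors $(-)^{\mathrm{even}}$ and $(-)^{\mathrm{odd}}$.

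The only step needing genuine care is checking that the candidate right adjoint $\BH\mathrm{om}_R(\BA,-)$ carries a well-defined $\BA$-module structure. This is where supercommutativity of $\BA$, and hence signs, could enter; I would verify the module axioms on homogeneous elements. Everything else is formal.
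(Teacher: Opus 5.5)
Your proposal is correct and follows essentially the paper's own inline justification: the explicit internal $\BH\mathrm{om}$ as right adjoint, the free and coinduced adjoints to the forgetful functor, $\BA$ and $\Pi\BA$ as compact generators, and injectives from the Grothendieck property. The sign check you flag is in fact vacuous for the paper's right-module convention, since $(\phi\cdot a)(b)=\phi(ab)$ already defines the action on $\BH\mathrm{om}_R(\BA,\BN)$ with no Koszul sign. For projectives you show directly that sums of copies of $\BA$ and $\Pi\BA$ are projective and surject onto every module, instead of using the description of $\mathrm{SMod}_{\BA}$ as $\mathrm{SMod}_R$-enriched functors $\mathbf{B}\BA\to\mathrm{SMod}_R$; this is just the unwound form of the same enriched Yoneda argument.
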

In order to define a derived stable $\infty$-category of supermodules, we start by producing a stable/ differential graded enhancement of the derived category of $\BA$-supermodules for an arbitrary (supercommutative) super-ring $\BA$. This is obtained by putting a \textit{model structure} on the category of cochain complexes of $\BA$-supermodules, which amounts to the datum of a distinguished class of morphisms that we want to formally invert, and the datum of distinguished classes of morphisms and objects that are well-behaved from a homotopical perspective. We will use the notion of model categories as a black box, so it is not really important to dive into deeper detail: the curious reader can read the relevant definitions in \cite{hovey}.

We start by establishing the existence of the \textit{projective} model structure on the category of chain complexes of $R$-supermodules. This can be easily deduced from the existence of the relative model structure on the category of cochain complexes of ordinary $R$-modules (see for example \cite[Prop.~7.1.2.8]{ha}) together with \cref{fact:super_modules_good_abelian}, so we leave the proof to the reader.
\begin{lemma}
\label{lemma:projective_model_structure}
Let $R$ be an ordinary commutative ring, and let $\mathrm{SMod}_R$ denote the Grothendieck abelian category of $R$-supermodules. Then the abelian category $\mathrm{Ch}(\mathrm{SMod}_R)$ of cochain complexes of $R$-supermodules admits a left proper combinatorial model structure, called the \emph{projective model structure}, which can be described as follows. 
\begin{lemenum}
    \item\label{lemma:projective_model_structure_fib} A map $\BM^{\bullet}\to\BN^{\bullet}$ is a fibration if and only if for all $k\in\BZ$ the map $\BM^k\to\BN^k$ is a surjective map of $R$-supermodules.
    \item\label{lemma:projective_model_structure_weak_eq}A map $\BM^{\bullet}\to\BN^{\bullet}$ is a weak equivalence if and only if it is a quasi-isomorphism, i.e., if and only if the map $\BH^k(\BM^{\bullet})\to\BH^k(\BN^{\bullet})$ is an isomorphism for all integers $k\in\BZ$.
    \item\label{lemma:projective_model_structure_cofib} A map $\BM^{\bullet}\to\BN^{\bullet}$ is a cofibration if and only if it has the right lifting property with respect to all maps which are simultaneously a fibration and a weak equivalence.
\end{lemenum}
\end{lemma}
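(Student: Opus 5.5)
The plan is to transport the standard projective model structure on $\mathrm{Ch}(\mathrm{Mod}_R)$ (cf.~\cite[Prop.~7.1.2.8]{ha}) along the decomposition $\mathrm{SMod}_R\simeq\mathrm{Mod}_R\times\mathrm{Mod}_R$ of \cref{fact:super_modules_good_abelian}. Since limits, colimits, kernels, cokernels and cohomology in $\mathrm{SMod}_R$ are computed degree-wise, i.e.\ separately on the even and odd parts, we obtain an isomorphism of categories
\[
\mathrm{Ch}(\mathrm{SMod}_R)\simeq\mathrm{Ch}(\mathrm{Mod}_R)\times\mathrm{Ch}(\mathrm{Mod}_R),\qquad \BM^\bullet\mapsto(\BM^{\bullet,\mathrm{even}},\BM^{\bullet,\mathrm{odd}}),
\]
and the cochain differential must preserve parity, because morphisms of supermodules do. Note that the braiding plays no role at this stage: only the underlying abelian category matters, so the super sign rule can be ignored entirely here.

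Next I will equip the right-hand side with the product model structure, in which fibrations, weak equivalences and cofibrations are defined componentwise. A product of two model categories is again a model category, and each of the axioms (limits and colimits, two-out-of-three, lifting, factorization) is checked factor by factor. I will then check the three descriptions in the statement. A map is a degree-wise surjection of supermodules if and only if both its even and odd components are surjective; this gives \cref{lemma:projective_model_structure_fib}. Since $\BH^k(\BM^\bullet)=\BH^k(\BM^{\bullet,\mathrm{even}})\oplus\BH^k(\BM^{\bullet,\mathrm{odd}})$ as supermodules, quasi-isomorphisms are exactly the componentwise quasi-isomorphisms; this gives \cref{lemma:projective_model_structure_weak_eq}. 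Cofibrations in a model category are determined by the left lifting property against trivial fibrations, so \cref{lemma:projective_model_structure_cofib} follows, reading the lifting property as the left one, as in any model category.

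Finally I will address left properness and combinatoriality. For left properness, pushouts are computed componentwise, so a pushout of a weak equivalence along a cofibration is componentwise such a pushout, and each factor is left proper. For combinatoriality, the category is locally presentable by \cref{fact:SMod_presentable}, and I will write down explicit generating sets. Let $S^n(\BP)$ denote the complex with $\BP$ in degree $n$, and $D^n(\BP)$ the complex $\BP\xrightarrow{\mathrm{id}}\BP$ in degrees $n-1,n$. The generating cofibrations are $S^n(\BP)\to D^{n+1}(\BP)$ and the generating trivial cofibrations are $0\to D^n(\BP)$, with $\BP\in\{R,\Pi R\}$ and $n\in\BZ$. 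These are exactly the images of the usual generators placed in the even or in the odd factor. Equivalently, the product of two cofibrantly generated model categories is cofibrantly generated by the union of the generators inserted in each factor together with identities on the other factor.

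The only point needing care, and the expected main obstacle, is the bookkeeping that the product decomposition is an isomorphism of categories compatible with cohomology, and that generators in the product are correctly transported. In particular one must check that $\Pi$ identifies the odd copy with $\mathrm{Mod}_R$, so that $\Pi R$ really plays the role of $R$ in the odd factor. Everything else is formal.
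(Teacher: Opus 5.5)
Your proposal is correct and is essentially the argument the paper has in mind. The paper leaves the proof to the reader, deducing it from \cite[Prop.~7.1.2.8]{ha} and the splitting $\mathrm{SMod}_R\simeq\mathrm{Mod}_R\times\mathrm{Mod}_R$, and later explicitly identifies this model structure with the product of the projective model structures on the two factors; you are also right that item (3) must be read with the \emph{left} lifting property.

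There is one small indexing slip. With $D^n(\BP)$ given by $\BP\xrightarrow{\mathrm{id}}\BP$ in cochain degrees $n-1,n$, the only cochain map $S^n(\BP)\to D^{n+1}(\BP)$ is zero. The generating cofibrations should therefore be $S^n(\BP)\to D^n(\BP)$, with the sphere mapping to the target of the differential. Your alternative formulation, taking the generators of each factor, already suffices for combinatoriality.
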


\begin{remark}
\label{remark:projective_fibrant_cofibrant}
We recall that an object $X$ in a model category is \textit{cofibrant} (respectively \textit{fibrant}) if the map from the initial object $0\to X$ is a cofibration (respectively if the map to the terminal object $X\to1$ is a fibration). In particular, it follows immediately from \cref{lemma:projective_model_structure_fib} that every cochain complex of $R$-supermodules is fibrant with respect to the projective model structure. However, cofibrant objects are more difficult to characterize. For example, while it is true that any fibrant cochain complex necessarily consists of projective $R$-supermodules in every cohomological degree, the converse implication holds only when a cochain complex is cohomologically bounded above \cite[Prop.~$1.3.5.6$]{ha}.
\end{remark}
\begin{construction}
\label{constr:dg-nerve}
Let $R$ be a commutative ring, and let $\mathrm{Ch}(\mathrm{SMod}_R)$ be the model category of \cref{lemma:projective_model_structure}. We can extract a differential graded category $\mathrm{Ch}_{\mathrm{dg}}(\mathrm{SMod}_R)$, which we can describe as follows.
\begin{enumerate}
    \item The objects of $\mathrm{Ch}_{\mathrm{dg}}(\mathrm{SMod}_R)$ are fibrant--cofibrant objects in $\mathrm{Ch}(\mathrm{SMod}_R)$.
    \item The hom-complexes $\smash{\underline{\Hom}}(\BM^{\bullet},\BN^{\bullet})$ are described in degree $n$ by the formula
    \[
    \smash{\underline{\Hom}}(\BM^{\bullet},\BN^{\bullet})^n\defeq\prod_{k\in\BZ}\Hom_{\mathrm{SMod}_R}(\BM^k,\BN^{n+k}).
    \]
    The differential on a element $f$ in degree $k$ is described by the assignment $f\mapsto d\circ f-(-1)^kf\circ d$.
\end{enumerate}
\end{construction}
\begin{remark}
There is a natural notion of homotopy category of a differential graded category: this is the category having the same class of objects, but whose hom-set $\Hom(X,Y)$ is given by the $0$-th cohomology of $\smash{\underline{\Hom}(X,Y)}$. In the case of $\mathrm{Ch}_{\mathrm{dg}}(\mathrm{SMod}_R)$, for two cochain complexes of $R$-supermodules $\BM^{\bullet}$ and $\BN^{\bullet}$ the fact that a map $f\in\smash{\underline{\Hom}}(\BM^{\bullet},\BN^{\bullet})^0$ is a cocycle means that $f$ is a honest map of cochain complexes; the fact that it is a coboundary means that it is null-homotopic. In particular, \[
\BH^0(\smash{\underline{\Hom}}(\BM^{\bullet},\BN^{\bullet}))\cong\faktor{\Hom_{\mathrm{Ch}(\mathrm{SMod_R})}(\BM^{\bullet},\BN^{\bullet})}{\simeq_{\mathrm{homotopy}}}.
\] 
Recall that fibrant and cofibrant objects in a model category are precisely the ones for which the Whitehead theorem applies: this means that any map between fibrant--cofibrant objects is a weak equivalence if and only if it admits a homotopy inverse. This implies that the hom-set $\BH^0(\smash{\underline{\Hom}}(\BM^{\bullet},\BN^{\bullet}))$ is precisely the hom-set between $\BM^{\bullet}$ and $\BN^{\bullet}$ in the ordinary derived category $\derived(\mathrm{SMod}_R)$ defined via usual homological algebra methods. In particular, the homotopy category of $\mathrm{Ch}_{\mathrm{dg}}(\mathrm{SMod}_R)$ is naturally identified with $\derived(\mathrm{SMod}_R)$.

As proved in \cite{CNS}, \cref{constr:dg-nerve} is essentially the unique way to produce a \textit{differential graded enhancement} of $\derived(\mathrm{SMod}_R)$---that is, to build a pretriangulated differential graded category $\mathrm{Ch}_{\mathrm{dg}}(\mathrm{SMod}_R)$ whose homotopy category is equivalent to $\derived(\mathrm{SMod}_R)$. Applying the correspondence between pretriangulated differential graded categories and $R$-linear stable $\infty$-categories (see \cite{Cohn-DG-cat}) we obtain the \textit{derived $\infty$-category of $R$-supermodules}. For simplicity, we shall call this $\infty$-category the \textit{superderived category of $R$}, and denote it as $\mathbf{SD}_{\infty}(R)$.
\end{remark}
\begin{fact}
We collect here some important properties of the superderived $\infty$-category $\mathbf{SD}_\infty(R)$.
\begin{factenum}
\item\label{fact:presentable} \cite[Prop.~1.3.4.22]{ha} The superderived $\infty$-category of $R$ is presentable: this means that it admits all limits and colimits, and it is generated under colimits by two compact objects (namely: $R$ and $\Pi R$).
\item\label{fact:stable} \cite[Prop.~1.3.5.9]{ha} As already mentioned, the superderived $\infty$-category of a commutative ring $R$ is stable: it has a zero object and a square in $\mathbf{SD}_\infty(R)$ is a pullback if and only if it is a pushout. This implies in particular that the homotopy category is triangulated, with the exact triangles $\BM^{\bullet}\to\BN^{\bullet}\to\BP^{\bullet}$ in the homotopy category arising from the images of pullback/pushout diagrams of the form
\[
 \xymatrix{
\BM^{\bullet} \ar[r] \ar[d] & \BN^{\bullet}\ar[d]
 \\
 0\ar[r] & \BP^{\bullet}.
 }
\]
\end{factenum}
\end{fact}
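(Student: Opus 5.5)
The statement is the Fact collecting the two properties of $\mathbf{SD}_\infty(R)$: presentability, with compact generators $R$ and $\Pi R$, and stability. The plan is to reduce everything to the projective model structure of \cref{lemma:projective_model_structure} and to the splitting $\mathrm{SMod}_R\simeq\mathrm{Mod}_R\times\mathrm{Mod}_R$ of \cref{fact:super_modules_good_abelian}. The point is that, as far as the underlying (non-monoidal) homotopy theory is concerned, the super sign rule plays no role.

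First, the equivalence of abelian categories $\mathrm{SMod}_R\simeq\mathrm{Mod}_R\times\mathrm{Mod}_R$ induces an isomorphism of categories
\[
\mathrm{Ch}(\mathrm{SMod}_R)\cong\mathrm{Ch}(\mathrm{Mod}_R)\times\mathrm{Ch}(\mathrm{Mod}_R).
\]
Under this isomorphism, the projective model structure of \cref{lemma:projective_model_structure} goes to the product of the projective model structures. This holds because fibrations are degreewise surjections and weak equivalences are quasi-isomorphisms, and both are detected componentwise; cofibrations are then determined by the lifting property. Consequently the underlying $\infty$-categories satisfy $\mathbf{SD}_\infty(R)\simeq\derived_\infty(R)\times\derived_\infty(R)$, since passing to underlying $\infty$-categories (via the dg nerve, or via Dwyer--Kan localization) commutes with finite products of combinatorial model categories.

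Presentability then follows in either of two ways. One can invoke \cite[Prop.~1.3.4.22]{ha} directly: $\mathrm{Ch}(\mathrm{SMod}_R)$ is combinatorial because $\mathrm{SMod}_R$ is Grothendieck, by \cref{fact:SMod_Grothendieck}. Alternatively, one uses that a finite product of presentable $\infty$-categories is presentable. For the generators, $R=(R,0)$ and $\Pi R=(0,R)$ correspond to $(R,0)$ and $(0,R)$ in the product. Mapping spectra out of them compute $\BH^\ast$ of the even and odd parts respectively. They are compact because $R\in\derived_\infty(R)$ is compact and filtered colimits are computed componentwise. They generate because an object with vanishing maps from all shifts of both is acyclic in each parity, hence zero.

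Stability follows from \cite[Prop.~1.3.5.9]{ha} applied to the Grothendieck abelian category $\mathrm{SMod}_R$, or again from the fact that a product of stable $\infty$-categories is stable. The description of exact triangles is then the standard identification of cofiber sequences in the homotopy category: one realizes a map $\BM^\bullet\to\BN^\bullet$ by a cofibration between cofibrant objects, so that the homotopy pushout against $0$ is the strict quotient, which is the mapping cone.

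The only step needing genuine care is compatibility of the product decomposition with the passage to underlying $\infty$-categories. This amounts to checking that fibrant--cofibrant objects of the product model structure are pairs of fibrant--cofibrant objects, and that the hom-complexes of \cref{constr:dg-nerve} split as products. Both are immediate from the definitions.
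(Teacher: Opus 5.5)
Your proposal is correct and follows the paper, which gives no argument beyond citing \cite[Prop.~1.3.4.22]{ha} for presentability and \cite[Prop.~1.3.5.9]{ha} for stability; your supplementary decomposition $\mathbf{SD}_\infty(R)\simeq\derived_\infty(R)\times\derived_\infty(R)$ is the same one the paper uses later, e.g.\ in the proofs of \cref{prop:t-structure} and \cref{prop:superf}. The only point worth making explicit is that ``generated under colimits by $R$ and $\Pi R$'' must be read in the stable sense (colimits together with shifts), since colimits alone only produce $\mathbf{SD}_\infty(R)^{\leqslant0}$; your orthogonality argument over all shifts proves exactly that stable version.
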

\subsection{Derived supercommutative algebras and supermodules}
Our goal is to equip the stable derived $\infty$-category $\mathbf{SD}_{\infty}(R)$ with a symmetric monoidal structure, in order to consider derived supercommutative algebras and derived modules over them. Morally, such tensor product should be obtained by left-deriving the bifunctor $\otimes^{\mathrm{super}}_R$, which is right exact separately in each variable. We first define a symmetric monoidal structure on the category of cochain complexes of $R$-supermodules as follows.
\begin{enumerate}
    \item The underlying monoidal structure intertwines the K\"unneth tensor product of cochain complexes and the tensor product of $R$-supermodules. In other words, for two cochain complexes of $R$-supermodules $\BM^{\bullet}$ and $\BN^{\bullet}$ their tensor product is described in degree $n$ by the formula
    \begin{align*}    \left(\BM^{\bullet}\otimes^{\mathrm{super}}_R\BN^{\bullet}\right)^n\defeq\bigoplus_{i+j=n}\BM^i\otimes^{\mathrm{super}}_R\BN^j.
    \end{align*}
    The homological differential does not involve the $\BZ/2\BZ$-grading: namely, for the differential on a homogeneous element $m\otimes n$ acts as
    \[
    \mathrm{d}(m\otimes n)=\mathrm{d}(m)\otimes n+(-1)^{|n|}m\otimes d(n),
    \]
    where $|n|$ denotes the \textit{cohomological} grading of $n$ (independent from its parity).
    \item The braiding intertwines the Koszul sign rule of cochain complexes and the Koszul sign rule of $R$-supermodules. In other words, the braiding $\BM^{\bullet}\otimes^{\mathrm{super}}_R\BN^{\bullet}\simeq\BN^{\bullet}\otimes_R^{\mathrm{super}}\BM^{\bullet}$ is defined on a homogeneous element $m\otimes n$ as
    \[
    \begin{tikzcd}
    m\otimes n\arrow[mapsto]{r} &  (-1)^{|m||n|+p_mp_n}n\otimes m
    \end{tikzcd}
    \]
    where $m$ and $n$ respectively have cohomological degree $|m|$ and $|n|$ and parity $p_m$ and $p_n$. (Notice that we are using the Deligne sign convention, see \cite[Appendix to $\S1$]{Deligne-Supersymmetry} for a discussion on the matter.)
\end{enumerate}
The above rules do define a symmetric monoidal structure on the abelian category $\mathrm{Ch}(\mathrm{SMod}_R)$; but we need some compatibilities between the monoidal and the model structure to transfer such symmetric monoidal structure to the superderived $\infty$-category. We collect the following definition for the sake of the reader.

The category of cochain complexes of $R$-supermodules can be endowed also with an injective model structure; the advantage of the projective model structure of \cref{lemma:projective_model_structure} is that it is compatible with the monoidal structure. This allows us to build a derived tensor superproduct in the superderived $\infty$-category. We recall the following definition.
\begin{definition}
\label{def:monoidal_model_category}
Let $\mathscr{C}$ be a combinatorial model category equipped with a (symmetric) monoidal tensor product $\otimes$. We say that $\mathscr{C}$ is a \textit{(symmetric) monoidal model category} if the following axioms hold.
\begin{axiomenum}
\item\label{axiom:monoidal_closed}The (symmetric) monoidal structure is \textit{closed}, i.e., for any object $X$ in $\mathscr{C}$ the functor $(-)\otimes X$ admits a right adjoint $\smash{\underline{\Hom}}(X,-)$.
\item \label{axiom:unit_cofibrant}The unit for the (symmetric) monoidal structure is cofibrant. 
\item\label{axiom:quillen_bifunctor}Let $f\colon X\to Y$ and $g\colon U\to V$ be cofibrations. Then the map
\[
\begin{tikzcd}
f\wedge g\colon X\otimes V\coprod_{X\otimes U}Y\otimes U\arrow{r} &  Y\otimes V
\end{tikzcd}
\]
is a cofibration. If moreover either $f$ or $g$ is a weak equivalence, so is $f\wedge g$.
\end{axiomenum}
\end{definition}
\begin{remark}[{\cite[Ex.~4.1.7.6]{ha}}]
\label{remark:symmetric_monoidal_infty_cat}
Let $\mathscr{C}$ be a (symmetric) monoidal model category, in the sense of \cref{def:monoidal_model_category}. Then its $\infty$-categorical localization is once again (symmetric) monoidal.
\end{remark}
\begin{example}
Let $\mathrm{Ch}(\Mod_R)$ be the category of cochain complexes of $R$-modules, equipped with the projective model structure. Then it is a symmetric monoidal model category \cite[Prop.~$7.1.2.11$]{ha}. This allows us to deduce that also $\mathrm{Ch}(\mathrm{SMod}_R)$ is a symmetric monoidal model category, because under the equivalence of plain abelian categories 
\[
\mathrm{Ch}(\mathrm{SMod}_R)\simeq\mathrm{Ch}(\mathrm{Mod}_R)\times\mathrm{Ch}(\mathrm{Mod}_R)
\]
the projective model structure of \cref{lemma:projective_model_structure} is precisely the product of the projective model structures on both factors. In particular, we have the following.
\begin{enumerate}
    \item The unit $R^{1\mid0}$ is cofibrant, because it corresponds to the product of the cofibrant objects $R$ and $0$ in $\mathrm{Ch}(\mathrm{Mod}_R)$.
    \item The symmetric monoidal structure is closed: let $\BM^{\bullet}$ and $\BN^{\bullet}$ be two cochain complexes of $R$-supermodules. Then we define the cochain complex $\smash{\underline{\BH\mathrm{om}}}_R(\BM^{\bullet},\BN^{\bullet})^{\bullet}$ via the formula
    \[
    \smash{\underline{\BH\mathrm{om}}}_R(\BM^{\bullet},\BN^{\bullet})^n\defeq\prod_{k\in\BZ}\BH\mathrm{om}_R(\BM^{k},\BN^{n+k}),
    \]
    where $\BH\mathrm{om}_R$ denotes the $R$-supermodule of homomorphisms between ordinary $R$-supermodules constructed in \cref{fact:internal_hom}. Again, the differential does not involve the $\BZ/2\BZ$-grading: it acts on an element $f$ via the assignment 
\[
\begin{tikzcd}
f\arrow[mapsto]{r} &  \mathrm{d}\circ f -(-1)^{|f|}f\circ \mathrm{d}.
\end{tikzcd}
\]
\item \cref{axiom:quillen_bifunctor} holds. Indeed, this follows from the fact that a cofibration (resp. a weak equivalence) in $\mathrm{Ch}(\mathrm{SMod}_R)$ is the product of two cofibrations (resp. of two weak equivalences) in $\mathrm{Ch}(\Mod_R)$. Thus, by decomposing the tensor super-product into direct sums of tensor products of plain cochain complexes, the result follows from the fact that \cref{axiom:quillen_bifunctor} holds in $\mathrm{Ch}(\Mod_R)$. 
\end{enumerate}
Therefore, \cref{remark:symmetric_monoidal_infty_cat} implies that $\mathbf{SD}_{\infty}(R)$ is a symmetric monoidal $\infty$-category with respect to the derived tensor product of $R$-supermodules. Moreover, such monoidal structure preserves colimits separately in each variable. In fact, one can prove more: let $\derived_{\infty}(R)$ denote the stable derived $\infty$-category of cochain complexes of $R$-modules. Then, as a plain monoidal $\infty$-category, $\mathbf{SD}_{\infty}(R)$ is equivalent to the functor $\infty$-category $\Fun(\BZ/2\BZ,\derived_{\infty}(R))$ equipped with the Day convolution product induced by the abelian group structure on $\BZ/2\BZ$ (see \cite[Remark $4.8.1.13$]{ha}). As a corollary, we have that the derived tensor product is exact and colimit-preserving separately in each variable.
\end{example}
\begin{remark}
Our construction is very model-dependent, since we rely on presentations of $\infty$-categories via model categories and differential graded categories. However, we can use \cite{keidar2025twistedgradedcategories} to deduce that whenever $\mathscr{C}$ is a symmetric monoidal $\infty$-category, with monoidal unit $\boldone_{\mathscr{C}}$, which admits a minus one (i.e., a morphism of monoids $\phi$ from $\mathbb{Z}/2\mathbb{Z}$ to the monoid of units $\mathrm{Map}_{\mathscr{C}}(\boldone_{\mathscr{C}},\boldone_{\mathscr{C}})^{\times}$) then the functor $\infty$-category $\Fun\left(\BZ/2\BZ,\mathscr{C}\right)$ admits a symmetric monoidal structure which is obtained by the Day convolution after twisting the braiding with $\phi$ \cite[Prop.~2.3.4]{keidar2025twistedgradedcategories}. This allows us to define a symmetric monoidal $\infty$-category of $R$-supermodules for any commutative ring spectrum admitting a minus one (in particular, any discrete commutative ring of characteristic different from $2$) internally to $\infty$-category theory. 
\end{remark}
Since $\mathbf{SD}_{\infty}(R)$ is now a symmetric monoidal $\infty$-category, it makes sense to define monoids and commutative monoids with respect to its derived tensor product. The definition is a bit convoluted because it relies on operads that encode all possible diagrammatic homotopy coherencies: for example, if $\BA$ is a commutative monoid in $\mathbf{SD}_\infty(R)$, we need to specify all homotopies between $a_1\otimes \cdots \otimes a_n$ and $a_{\sigma(1)}\otimes\cdots \otimes a_{\sigma(n)}$, for any permutation $\sigma\in\Sigma_n$ and for any integer $n\geqslant2$. Nevertheless, we have the following.
\begin{definition}
\label{def:super-derived}
Let $R$ be a commutative ring.
\begin{defenum}
\item\label{def:superalg} The $\infty$-category of \textit{derived $R$-superalgebras} is the $\infty$-category $\mathrm{Alg}(\mathbf{SD}_{\infty}(R))$. We will denote it as $\mathrm{dSAlg}_R$.
\item\label{def:supercalg} The $\infty$-category of \textit{derived supercommutative $R$-algebras} is the $\infty$-category $\mathrm{CAlg}(\mathbf{SD}_{\infty}(R))$. We will denote it as $\mathrm{dSCAlg}_R$.
\end{defenum}
Let now $\BA$ be a derived $R$-superalgebra.
\begin{defenum}
\setcounter{defenumi}{2}
\item\label{def:supermod}The $\infty$-category of \textit{derived $\BA$-supermodules} is the $\infty$-category $\Mod_{\BA}(\mathbf{SD}_{\infty}(R))$. We shall call it the \textit{superderived $\infty$-category of $\BA$}, and denote it by $\mathbf{SD}_{\infty}(\BA)$.
\end{defenum}
Suppose that $\BA$ is supercommutative. Then the superderived $\infty$-category $\mathbf{SD}_{\infty}(\BA)$ inherits a relative derived supertensor product $\otimes_{\BA}^{\mathrm{super}}$, so we can consider algebras inside it.
\begin{defenum}
\setcounter{defenumi}{3}
\item\label{def:superalg-2}The $\infty$-category of \textit{derived $\BA$-superalgebras} is the $\infty$-category $\mathrm{Alg}(\mathbf{SD}_{\infty}(\BA))$. We will denote it as $\mathrm{dSAlg}_{\BA}$.
\item\label{def:supercalg-2}The $\infty$-category of \textit{derived supercommutative $\BA$-algebras} is the $\infty$-category $\mathrm{CAlg}(\mathbf{SD}_{\infty}(\BA))$. We will denote it as $\mathrm{dSCAlg}_{\BA}$.
\end{defenum}
\end{definition}

\begin{remark}
Let us delve deeper into \cref{def:super-derived}.
\begin{enumerate}
    \item A $R$-superalgebra in the sense of \cref{def:superalg} has to be interpreted as a cochain complex of $R$-supermodules $\BA$ with a multiplication
    \[
\begin{tikzcd}    \BA\otimes_R^{\mathrm{super}}\BA\arrow{r} & \BA
\end{tikzcd}
    \]
    which is required to be associative and unital only up to a coherent system of homotopies. Since the braiding is not involved in the definition of associative algebra objects, it is clear that a derived $R$-superalgebra is no different from a $\BZ/2\BZ$-graded derived $R$-algebra.
    \item A supercommutative $R$-algebra in the sense of \cref{def:supercalg} must be interpreted as a $R$-superalgebra $\BA$ that is moreover equipped with a coherent system of homotopies testifying to the commutativity of \textit{all} possible diagrams involving the multiplication of $n$ elements in $\BA$ and swapping factors. In particular, this datum is a piece of structure, and not a property.
    \item When $\BA$ is a $R$-superalgebra, a $\BA$-supermodule is a cochain complex of $R$-supermodules with an action
    \[
    \begin{tikzcd}
\BA\otimes_R^{\mathrm{super}}\BM\arrow{r} & \BM
    \end{tikzcd}
\]
    which is required to satisfy the usual associativity and compatibility properties only up to a coherent system of homotopies. By general features of module categories, for any $R$-superalgebra $\BA$ the $\infty$-category $\mathbf{SD}_{\infty}(\BA)$ is once again stable, presentable and $R$-linear.
    \item If $\BA$ is a derived supercommutative $R$-algebra, the $\infty$-category $\mathbf{SD}_{\infty}(\BA)$ admits a relative (derived) tensor product $\otimes^{\mathrm{super}}_{\BA}$ which turns $\mathbf{SD}_{\infty}(\BA)$ into a symmetric monoidal $\infty$-category (see \cite[$\S4.8$]{ha}). Again, this tensor product commutes with colimits separately in each variable, so for any derived $\BA$-supermodule $\BM$ we have an adjunction
    \[
    \adjunction{(-)\otimes^{\mathrm{super}}_{\BA}\BM}{\mathbf{SD}_{\infty}(\BA)}{\mathbf{SD}_{\infty}(\BA)}{\BR\BH\mathrm{om}_{\BA}(\BM,-).}
    \]
    \item In ordinary commutative algebra, it is known that a commutative algebra $S$ over a commutative ring $R$ is the same as a homomorphism of rings $R\to S$. This also holds in the $\infty$-categorical setting: for any derived supercommutative $R$-algebra $\BA$, we have
    \[
    \mathrm{dSCAlg}_{\BA}\simeq(\mathrm{dSCAlg}_R)_{\BA/},
    \]
    see for example \cite[Cor.~3.4.1.7]{ha}.
    \item Given a morphism of derived supercommutative $R$-algebras $\BA\to\BB$, there is a couple of adjunctions
    \[
    \adjunction{(-)\otimes^{\mathrm{super}}_{\BA}\BB}{\mathbf{SD}_{\infty}(\BA)}{\mathbf{SD}_{\infty}(\BB)}{\mathrm{forget}}\]
    and
    \[
    \adjunction{\mathrm{forget}}{\mathbf{SD}_{\infty}(\BB)}{\mathbf{SD}_{\infty}(\BA)}{\BR\BH\mathrm{om}_{\BA}(\BB,-),}
    \]
    generalizing the classical picture in commutative algebra.
    \item Using \cite[Thm.~4.1.8.4 and Prop.~7.1.4.3]{ha}, it is easy to prove that derived $R$-superalgebras can be described in terms of a model structure on differential grader $R$-superalgebras, having quasi-isomorphisms as weak equivalences. In other words, the $\infty$-category $\mathrm{dSAlg}_R$ can be interpreted as the $\infty$-category of differential graded $R$-superalgebras up to quasi-isomorphism. 
\item Similarly, when $R$ is a $\BQ$-algebra, then we can use \cite[Thm.~4.5.4.7 and Prop.~7.1.4.7]{ha} to deduce an analogous result regarding the $\infty$-category $\mathrm{dSCAlg}_R$ of derived \textit{supercommutative} $R$-algebras, allowing us to interpret a derived supercommutative $R$-algebra as a honest supercommutative differential graded $R$-superalgebra up to a quasi-isomorphism. However, when $R$ is not defined over the rationals one needs to use simplicial $R$-superalgebras instead: we do not delve deeper into the matter, being out of the scope of the present work.
\end{enumerate}
\end{remark}
\subsection{The \texorpdfstring{$t$}{t}-structure}
\label{subsec:t-structure}
One key feature of derived ($\infty$-)categories of modules over commutative rings is that they are equipped with a $t$-structure: objects in the derived category can be understood in terms of their cohomologies, which are naturally graded $R$-modules. Moreover the $t$-structure is very well-behaved: if a cochain complex has trivial cohomology in each degree, then it is a zero object in the derived category. This section generalizes this picture to the super setting.

We first observe that, for any commutative ring $R$, we have a functor $\mathbf{SD}_{\infty}(R)\to\derived_{\infty}(R)$ to the derived $\infty$-category of cochain complexes of $R$-modules described as follows: the image of a cochain complex of $R$-supermodules $\mathbb{M}^{\bullet}$ is $\mathbb{M}^{\bullet,\mathrm{even}}\oplus\mathbb{M}^{\bullet,\mathrm{odd}}$. We interpret this functor to take the \textit{underlying derived $R$-module} of $\BM$. In particular, it makes sense to consider the \textit{cohomology} of a derived $R$-supermodule to be defined as the cohomology of its underlying derived $R$-module (which agrees with the natural notion of cohomology of a cochain complex).

For a general derived $R$-superalgebra $\BA$ and a derived $\BA$-supermodule $\BM$, its \textit{underlying derived $R$-module} is the image of $\BM$ under the composition
\begin{equation}
\label{functor:underlyingmodule}
\begin{tikzcd}
\mathbf{SD}_{\infty}(\BA)\arrow{r} & \mathbf{SD}_{\infty}(R)\arrow{r} & \derived_{\infty}(R)
\end{tikzcd}
\end{equation}
where the first functor is the natural forgetful functor and the second is the underlying derived $R$-module functor defined above.
\begin{definition}
Let $\BM$ be a derived $R$-supermodule. We say that $\BM$ is \textit{connective} (resp. \textit{coconnective}) if the cohomology of its underlying derived $R$-module is concentrated in non-positive (resp. non-negative) cohomological degrees. 

If $\BA$ is a derived $R$-superalgebra, we say that $\BA$ is \textit{connective} if it is connective as a derived $R$-supermodule.

The $\infty$-category of connective derived supercommutative $R$-algebras will be denoted as $\mathrm{dSCAlg}_R^{\leqslant0}$.
\end{definition}
\begin{theorem}
\label{prop:t-structure}
Let $\BA\in\mathrm{dSCAlg}_R^{\leqslant0}$ be a connective derived $R$-superalgebra. Then the $\infty$-category $\mathbf{SD}_{\infty}(\BA)$ admits a left and right complete $t$-structure described as follows.
\begin{enumerate}
    \item [\mylabel{t-str1}{\normalfont{(1)}}] The subcategory $\mathbf{SD}_{\infty}(\BA)^{\leqslant0}$ consists of the full subcategory of connective objects.
    \item [\mylabel{t-str2}{\normalfont{(2)}}] The subcategory $\mathbf{SD}_{\infty}(\BA)^{\geqslant0}$ consists of the full subcategory of coconnective objects.
\end{enumerate}
Let $\iota^{\leqslant n}\dashv \tau^{\leqslant n}$ and $\tau^{\geqslant n}\dashv \iota^{\geqslant n}$ denote the truncation-inclusion adjunctions defined by this $t$-structure, and let $\BH^0\defeq \tau^{\geqslant0}\circ\tau^{\leqslant0}\colon\mathbf{SD}_{\infty}(\BA)\to\mathbf{SD}_{\infty}(\BA)^{\heartsuit}$ denote the natural functor to the heart. Then for any derived $\BA$-supermodule $\BM$ we have that $\BH^0(\BM)\cong\mathrm{H}^0(\BM^{\mathrm{even}})\oplus\mathrm{H}^0(\BM^{\mathrm{odd}})$, and $\mathbf{SD}_{\infty}(\BA)^{\heartsuit}$ is naturally equivalent to the ordinary abelian category of $\BH^0(\BA)$-supermodules. This turns $\mathbf{SD}_\infty(\BA)$ into a \emph{projectively rigid ttt-$\infty$-category}, in the sense of \cite{liang_higher_algebra}.
\end{theorem}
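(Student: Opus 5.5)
We plan to reduce everything to the standard $t$-structure on module categories over connective $\mathbb{E}_1$-algebras in a stable presentable $\infty$-category equipped with a $t$-structure, as in \cite[Prop.~7.1.1.13]{ha}. The first step is to put the natural $t$-structure on $\mathbf{SD}_\infty(R)$ itself. Under the plain equivalence $\mathbf{SD}_\infty(R)\simeq\derived_\infty(R)\times\derived_\infty(R)$ (even and odd parts), we take the product of the standard $t$-structures. It is left and right complete because each factor is. Its connective (resp.\ coconnective) part is exactly the class of derived $R$-supermodules whose underlying derived $R$-module $\BM^{\mathrm{even}}\oplus\BM^{\mathrm{odd}}$ is connective (resp.\ coconnective), since cohomology of a direct sum vanishes iff it vanishes on both summands. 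The heart is $\Mod_R\times\Mod_R\simeq\mathrm{SMod}_R$.

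Next we check that this $t$-structure is compatible with the symmetric monoidal structure. The unit $R$ (and $\Pi R$) is connective, and $\mathbf{SD}_\infty(R)^{\leqslant0}$ is closed under $\otimes^{\mathrm{super}}_R$. The latter holds because, by the Day convolution description, $(\BM\otimes\BN)^{\mathrm{even}}=\BM^{\mathrm{even}}\otimes_R\BN^{\mathrm{even}}\oplus\BM^{\mathrm{odd}}\otimes_R\BN^{\mathrm{odd}}$ (and similarly for the odd part), and derived tensor products of connective $R$-modules are connective. With this in hand, for connective $\BA$ we apply \cite[Prop.~7.1.1.13]{ha} (in its version for algebras in a stable presentable monoidal $\infty$-category with compatible $t$-structure, cf.~\cite[\S7.1.1]{ha}) to $\Mod_{\BA}(\mathbf{SD}_\infty(R))$. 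This gives a $t$-structure for which $\BM$ is connective (resp.\ coconnective) iff its image under the forgetful functor to $\mathbf{SD}_\infty(R)$ is. Composing with \eqref{functor:underlyingmodule}, these are exactly items \ref{t-str1} and \ref{t-str2}.

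For completeness we argue directly. The forgetful functor $\mathbf{SD}_\infty(\BA)\to\mathbf{SD}_\infty(R)$ is conservative and preserves limits and colimits. The coconnective part is defined by forgetting, so the truncations commute with forgetting, and Postnikov towers and limits of $\tau^{\leqslant -n}$ are detected in $\mathbf{SD}_\infty(R)$. Alternatively, we note that $\mathbf{SD}_\infty(\BA)^{\leqslant0}$ is generated under colimits by $\BA$ and $\Pi\BA$, and use \cite[Prop.~1.4.4.11]{ha}.

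For the heart, the formula $\BH^0(\BM)\cong\mathrm{H}^0(\BM^{\mathrm{even}})\oplus\mathrm{H}^0(\BM^{\mathrm{odd}})$ follows because truncations commute with forgetting to $\mathbf{SD}_\infty(R)$, where they are computed factorwise. To identify $\mathbf{SD}_\infty(\BA)^\heartsuit$ we use the general fact (\cite[Prop.~7.1.1.13]{ha} again) that the heart is $\Mod_{\BH^0(\BA)}(\mathbf{SD}_\infty(R)^\heartsuit)$, where $\BH^0(\BA)$ is a commutative algebra in $\mathrm{SMod}_R$ with the induced Koszul braiding. That is exactly the abelian category of $\BH^0(\BA)$-supermodules.

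The main obstacle is the final claim of projective rigidity in the sense of \cite{liang_higher_algebra}. Here we must verify that $\mathbf{SD}_\infty(\BA)$ is compactly generated by dualizable objects, namely $\BA$ and $\Pi\BA$, which are invertible and hence dualizable, and that these generators are connective. We must also show the $t$-structure is compatible with the monoidal structure over $\BA$, which again reduces to connectivity of relative tensor products via bar constructions. The delicate point is matching Liang's precise axioms (compatibility of the generating dualizables with the $t$-structure and right completeness), so we would check each axiom against the generators $\BA$ and $\Pi\BA$.
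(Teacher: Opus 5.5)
Your construction of the $t$-structure, its completeness, the formula for $\BH^0$ and the heart is sound in outline and close to the paper's argument, with two caveats.

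\begin{itemize}
\item The substantive point is that the right orthogonal of the connective part is exactly the coconnective part. The paper checks this by writing connective $\BM$ as colimits of $\BA^{r|s}[n]$ and computing $\BR\BH\mathrm{om}_{\BA}(\BA^{r|s}[n],\BN)$. Your remark that ``truncations commute with forgetting'' presupposes this orthogonality, so it is not an independent argument.
\item \cite[Prop.~7.1.1.13]{ha} is stated for connective $\mathbb{E}_1$-rings, i.e.\ for modules in spectra, not for algebras in an arbitrary $t$-structured monoidal $\infty$-category. You cannot invoke it ``again'' for the heart as written. You could argue directly, as the paper does: treat $\BA=R$ via \cite[Prop.~1.3.2.19]{ha}, then prove full faithfulness of $\BH^0$ on the heart by a colimit-closure argument starting from $\BA$. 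Alternatively, use the compact generator $\BA\oplus\Pi\BA$ to identify $\mathbf{SD}_\infty(\BA)$ with modules over the connective $\mathbb{E}_1$-ring $\mathrm{End}(\BA\oplus\Pi\BA)$; modules over its $\pi_0$ are $\BH^0(\BA)$-supermodules.
\end{itemize}

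The genuine gap is the final assertion, projective rigidity, which you leave unproved. The checks you propose are also not the relevant ones. Compact generation of $\mathbf{SD}_\infty(\BA)$ by the invertible objects $\BA,\Pi\BA$ is not what \cite{liang_higher_algebra} requires. Besides tensor-compatibility of the $t$-structure, one needs two things:
\begin{itemize}
\item the connective part must be projectively generated;
\item the compact projective objects of the connective part must coincide with its dualizable objects.
\end{itemize}
The second point needs its own argument, since a compact projective must be shown to be a retract of a finite free object.

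The idea you are missing is to avoid working over $\BA$ at all. By \cite[Prop.~3.17]{liang_higher_algebra}, module categories over connective commutative algebras in a projectively rigid ttt-$\infty$-category are again projectively rigid, so it suffices to treat $\BA=R$. There the three conditions are verified as follows:
\begin{itemize}
\item Tensor-compatibility is exactly your second step.
\item Projective generation of $\mathbf{SD}_\infty(R)^{\leqslant0}\simeq\mathbf{D}_\infty(R)^{\leqslant0}\times\mathbf{D}_\infty(R)^{\leqslant0}$ by $R\oplus\Pi R$ follows from \cite[Prop.~7.1.4.12]{ha}.
\item Compact projectives agree with dualizables by combining perfect $\Leftrightarrow$ compact from \cref{prop:superf} with \cite[Prop.~7.2.2.6]{ha}.
\end{itemize}
The proof of \cref{prop:superf} itself uses the complete $t$-structure. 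You must therefore observe, as in \cref{remark:last_part_thm_tstr}, that only the first part of the theorem, which is proved independently, enters, so there is no circularity.
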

See \cref{sec:t-structure_proof} for the proof. 
\begin{remark}
\label{remark:on_projective_rigidity}
The projective rigidity condition in the statement of \cref{prop:t-structure} is quite technical in nature, but it allows us to harmlessly generalize some useful and fundamental results in homological algebra to the setting of derived supermodules. Indeed, \cite{liang_higher_algebra} immediately implies that Postnikov towers in $\mathbf{SD}_\infty(\BA)$ converge and consist of square-zero extensions; that the $\infty$-category of derived étale supercommutative $\BA$-algebras is equivalent to the ordinary category of étale supercommutative $\BH^0(\BA)$-algebras; Lazard's theorem on flat derived $\BA$-supermodules; and more.
\end{remark}

\begin{remark}
\label{remark:bosonic_reduction}
Given an ordinary $R$-superalgebra $\BA$, its bosonic reduction is defined as the quotient $\BA/\BA^{\mathrm{odd}}$. In the context of derived supergeometry, this quotient should be replaced by the mapping cone of the inclusion $\BA^{\mathrm{odd}}\to\BA$, computed in the derived $\infty$-category of $\BA$-supermodules. However, since all functors appearing in \eqref{functor:underlyingmodule} are exact, and since connective cochain complexes are stable under mapping cones, it follows that the bosonic reduction of a connective derived $R$-superalgebra is again connective. Moreover, if $\BA$ is an ordinary $R$-superalgebra, the cohomology of this mapping cone is isomorphic to the ordinary bosonic reduction of $\BA$.
\end{remark}
\subsection{Perfect supermodules}
\label{subsec:perf_objects}
Given a commutative ring $R$, the stable $\infty$-category $\mathrm{Perf}(R)$ is the full sub-$\infty$-category of the derived $\infty$-category $\mathbf{D}_\infty(R)$ spanned by fully dualizable objects for the (derived) tensor product over $R$. This means that this is the full sub-$\infty$-category spanned by cochain complexes $M$ such that there exists an object $M^{\vee}\defeq{\mathbf{R}{\Hom}}_{R}(M,R)$ and maps
\[
\begin{tikzcd}
\mathrm{ev}\colon M\otimes_RM^{\vee}\arrow{r} & R,\qquad\mathrm{coev}\colon M^{\vee}\otimes_RM\arrow{r} & R
\end{tikzcd}
\]
satisfying the usual diagrammatic identities; being $\mathbf{D}_\infty(R)$ a closed symmetric monoidal $\infty$-category, the dual $M^{\vee}$ is the internal homomorphism object to the unit, i.e.,
\[
M^{\vee}\defeq\mathbf{R}\mathrm{Hom}_R(M,R).
\]This leads naturally to the following piece of terminology.
\begin{definition}
\label{def:superf}
Let $\BA$ be a derived supercommutative $R$-algebra. The $\infty$-category $\mathrm{SPerf}(\BA)\subseteq\mathbf{SD}_\infty(\BA)$ is the full sub-$\infty$-category spanned by fully dualizable objects for the derived supertensor product over $\BA$.
\end{definition}
\begin{remark}
Since the derived supertensor product $\otimes^{\mathrm{super}}_{\BA}$ always commutes with colimits separately in each variable, it is clear that $\mathrm{SPerf}(\BA)$ contains the zero object and is closed under pushouts inside $\mathbf{SD}_\infty(\BA)$. In particular, $\mathrm{SPerf}(\BA)$ is stable and hence its homotopy category is triangulated.
\end{remark}
When working in the classical realm of derived algebraic geometry, is known that the $\infty$-category of perfect complexes admits two further equivalent characterizations.
\begin{enumerate}
    \item It is the smallest stable sub-$\infty$-category of $\mathbf{D}_\infty(R)$ containing $R$ and closed under retracts.
    \item It is the $\infty$-category spanned by \textit{compact} objects: that is, the $\infty$-category spanned by cochain complexes $M$ such that the mapping space functor
    \[
    \begin{tikzcd}
\mathrm{Map}_R(M,-)\defeq \Omega^\infty{\mathbf{R}{\Hom}}_R(M,-)\colon\mathbf{D}_\infty(R)\arrow{r} & \mathrm{Spaces}
    \end{tikzcd}
\]
    preserves filtered colimits.
\end{enumerate}
This straightforwardly generalizes to the super setting (see \cref{sec:perf_proof} for the proof).
\begin{prop}
\label{prop:superf}
Let $\BA$ a connective derived supercommutative $R$-algebra $\BA$ and let $\BM$ be a derived $\BA$-supermodule. The following are equivalent.
\begin{propenum}
    \item \label{prop:def} The derived $\BA$-supermodule $\BM$ is perfect.
    \item\label{prop:compact}The functor
\[
\begin{tikzcd}
\mathrm{Map}_{\BA}(\BM,-)^{\mathrm{even}} \colon\mathbf{SD}_\infty(\BA)\arrow{r} & \mathrm{Spaces}
\end{tikzcd}
\]
    commutes with filtered colimits.
    \item\label{prop:thick}The derived $\BA$-supermodule $\BM$ belongs to the smallest stable sub-$\infty$-category of $\mathbf{SD}_\infty(\BA)$ containing $\BA$ and $\Pi\BA$ and closed under retracts.
\end{propenum}
If $\BA$ is purely bosonic, then the above are equivalent to the following further condition.
\begin{propenum}
    \setcounter{propenumi}{3}
    \item \label{prop:degreewise}Both derived $\BA$-supermodules $\BM^{\mathrm{even}}$ and $\BM^{\mathrm{odd}}$ are perfect, in the usual sense of homological algebra.
\end{propenum}
\end{prop}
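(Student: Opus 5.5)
The plan is to mirror the classical proof for $\mathbf{D}_\infty(R)$ (as in \cite[\S7.2.4]{ha}), replacing the single compact generator $\BA$ by the pair $\BA,\Pi\BA$. Let $\mathcal{T}\subseteq\mathbf{SD}_\infty(\BA)$ be the smallest stable sub-$\infty$-category containing $\BA$ and $\Pi\BA$ and closed under retracts. I will prove \ref{prop:thick}$\Rightarrow$\ref{prop:def}, \ref{prop:thick}$\Rightarrow$\ref{prop:compact}, \ref{prop:compact}$\Rightarrow$\ref{prop:thick} and \ref{prop:def}$\Rightarrow$\ref{prop:compact}, and then treat \ref{prop:degreewise} separately in the bosonic case.

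For \ref{prop:thick}$\Rightarrow$\ref{prop:def} and \ref{prop:thick}$\Rightarrow$\ref{prop:compact}: $\BA$ is the unit, hence dualizable, and $\Pi\BA\simeq \BA\otimes^{\mathrm{super}}_R\Pi R$ is invertible, since $\Pi R\otimes^{\mathrm{super}}_R\Pi R\simeq R$; so it is dualizable with dual $\Pi\BA$. Fully dualizable objects form a stable subcategory closed under retracts, because duals of cofibers are fibers of duals and a retract of a dualizable object is dualizable via the induced idempotents. Similarly, $\mathrm{Map}_\BA(\BA,-)^{\mathrm{even}}$ and $\mathrm{Map}_\BA(\Pi\BA,-)^{\mathrm{even}}$ are the even and odd parts of the underlying complex. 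These commute with filtered colimits, since filtered colimits in $\mathbf{SD}_\infty(\BA)$ are computed on underlying complexes (\cref{fact:SMod_lim_colim}, and the forgetful functor to $\mathbf{SD}_\infty(R)$ preserves colimits). Compact objects are also closed under finite colimits, shifts and retracts.

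For \ref{prop:def}$\Rightarrow$\ref{prop:compact}: if $\BM$ is dualizable, then $\mathrm{Map}_\BA(\BM,-)\simeq\mathrm{Map}_\BA(\BA,\BM^\vee\otimes^{\mathrm{super}}_\BA -)$. The tensor product commutes with colimits and $\BA$ is compact, so $\BM$ is compact.

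For \ref{prop:compact}$\Rightarrow$\ref{prop:thick}, which I expect to be the main step: $\BA$ and $\Pi\BA$ form a set of compact generators of $\mathbf{SD}_\infty(\BA)$, since $\BM\simeq 0$ if and only if all $\pi_0\mathrm{Map}(\Pi^\epsilon\BA[n],\BM)$ vanish, which is exactly vanishing of the even and odd cohomology. The statement then follows from Neeman's theorem (\cite[Lemma 7.2.4.?/Prop.~5.3.5.11]{ha}, Thomason--Neeman): in a compactly generated stable $\infty$-category, the compact objects are exactly the retracts of finite colimits of shifts of the generators. Concretely, write $\BM$ as a filtered colimit of finite cell complexes built from $\BA[n]$ and $\Pi\BA[n]$. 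Compactness makes the identity of $\BM$ factor through one stage, so $\BM$ is a retract of a finite cell complex, which lies in $\mathcal{T}$. Connectivity of $\BA$ is not actually needed here. The work is in checking that the cell-attachment argument only uses that the generators detect equivalences.

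For \ref{prop:degreewise} with $\BA$ bosonic: the super structure is concentrated in even parity, so $\mathbf{SD}_\infty(\BA)\simeq\mathbf{D}_\infty(\BA)\times\mathbf{D}_\infty(\BA)$ via $\BM\mapsto(\BM^{\mathrm{even}},\BM^{\mathrm{odd}})$. This requires checking that the action of $\BA$ preserves parity, which holds because $\BA^{\mathrm{odd}}=0$. Under this equivalence $\BA\mapsto(\BA,0)$ and $\Pi\BA\mapsto(0,\BA)$. Compact objects of a product are the pairs of compact objects, and compact objects of $\mathbf{D}_\infty(\BA)$ are the classical perfect complexes, so \ref{prop:compact}$\Leftrightarrow$\ref{prop:degreewise}.
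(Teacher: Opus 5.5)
Your proof is correct, but it handles the key implication differently from the paper.

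The paper proves (1)$\Leftrightarrow$(2) and (2)$\Leftrightarrow$(3) separately. Its (2)$\Rightarrow$(3) is the same generation/Ind-completion argument you sketch. Its compact$\Rightarrow$dualizable step is done directly, in two moves. First, it uses the complete $t$-structure of Theorem~\ref{prop:t-structure} to show that $\mathbf{R}\mathbb{H}\mathrm{om}_{\BA}(\BM,-)$ preserves all colimits; this is where connectivity of $\BA$ is invoked. Second, it applies \cite[Thm.~4.8.4.1]{ha} to write this functor as $\mathbf{R}\mathbb{H}\mathrm{om}_{\BA}(\BM,\BA)\otimes^{\mathrm{super}}_{\BA}(-)$, and reads off evaluation and coevaluation.

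You instead close the cycle (1)$\Rightarrow$(2)$\Rightarrow$(3)$\Rightarrow$(1). Your (3)$\Rightarrow$(1) reduces to the observation that dualizable objects form a thick subcategory containing the unit $\BA$ and the invertible object $\Pi\BA$. This is shorter and more elementary. It needs no $t$-structure, no classification of linear colimit-preserving endofunctors, and no connectivity. So it actually proves the generalization alluded to in the remark after the statement.

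It also sidesteps an intermediate claim in the paper that $\mathbf{R}\mathbb{H}\mathrm{om}_{\BA}(\BM,-)$ preserves tensor products. That claim fails as stated (take $\BM=\BA\oplus\BA$), and only $\BA$-linearity is actually needed there.

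What the paper's route offers in return is a proof of compact$\Rightarrow$dualizable that does not pass through the description of compact objects as the thick closure of $\BA$ and $\Pi\BA$. Your treatment of (4) agrees with the paper's. One minor point: the Ind-completion results you cite, such as Prop.~5.3.5.11, are in \cite{htt}, not \cite{ha}.
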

\begin{remark}
The connectivity assumption can be avoided, using a variant of the argument in \cite[Prop.~7.2.4.3]{ha}. Since we are not interested in derived supercommutative algebras which are not connective, we impose this further condition since it shortens the proof of the equivalence \ref{prop:def} $\Leftrightarrow$ \ref{prop:compact}.
\end{remark}
Let us finish this section by introducing a special class of derived supermodules over a derived supercommutative ring $\BA$.
\begin{definition}
\label{def:free-rank}
Let $\BM$ be a derived supermodule over a derived supercommutative algebra $\BA$. Fix sets of indices $I,J$. We say that $\BM$ is \textit{free} over $\BA$ if there exists an isomorphism
\[
\BM\simeq\BA^{\oplus I}\oplus \Pi\BA^{\oplus J}.
\]
If $I$ and $J$ are finite sets of cardinality $m$ and $n$ respectively, then we say that $\BM$ is free of rank $m|n$.  

We say that a derived supermodule is \textit{locally free of rank $m|n$} if it is a retract of a free derived supermodule of finite rank such that for any map of derived supercommutative rings $\BA\to \Bbbk$, where $\Bbbk$ is a field, the classical $\Bbbk$-supervector space $\BH^0(\Bbbk\otimes^{\mathrm{super}}_\BA\BM)$ has rank $m|n$.
\end{definition}
\begin{remark}
Our definition of locally free supermodule is inspired by the one in derived algebraic geometry (see \cite[Definition 2.9.2.1]{sag}). This recovers the notion of projective module when considering a discrete bosonic commutative ring.
\end{remark}
Obviously, free derived modules are always perfect; but this does not hold for a general locally free derived supermodule. Moreover, they do not form a stable sub-$\infty$-category of $\mathbf{SD}_\infty(\BA)$. Still, they behave nicely with respect to base change, as the following proposition shows.
\begin{prop}
\label{prop:rank-descent}
Let $\varphi\colon \BA\to\BB$ be a morphism of derived supercommutative algebras, and let $\BM$ be a derived $\BA$-supermodule that is (locally) free of rank $m|n$ over $\BA$. Then the base change $\BM\otimes^{\mathrm{super}}_\BA\BB$ is (locally) free of rank $m|n$ over $\BB$.
\end{prop}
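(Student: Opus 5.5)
We treat the free case first and then reduce the locally free case to it, using only that the base change functor $(-)\otimes^{\mathrm{super}}_{\BA}\BB\colon\mathbf{SD}_\infty(\BA)\to\mathbf{SD}_\infty(\BB)$ is a symmetric monoidal left adjoint. The main input is that it commutes with colimits, hence with direct sums, and is exact.

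\textbf{Free case.} Suppose $\BM\simeq\BA^{\oplus I}\oplus\Pi\BA^{\oplus J}$. The base change functor preserves direct sums and sends the unit $\BA$ to the unit $\BB$. It also commutes with parity exchange. To see this, write $\Pi\BA\simeq\BA\otimes^{\mathrm{super}}_R\Pi R$; associativity of the relative tensor product then gives
\[
\Pi\BA\otimes^{\mathrm{super}}_{\BA}\BB\simeq \Pi R\otimes^{\mathrm{super}}_R\BB\simeq\Pi\BB.
\]
It follows that $\BM\otimes^{\mathrm{super}}_\BA\BB\simeq\BB^{\oplus I}\oplus\Pi\BB^{\oplus J}$. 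In particular, when $|I|=m$ and $|J|=n$ the base change is free of rank $m|n$.

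\textbf{Locally free case.} Suppose $\BM$ is a retract of a free module $\BF$ of finite rank. Any functor preserves retract diagrams, so $\BM\otimes^{\mathrm{super}}_\BA\BB$ is a retract of $\BF\otimes^{\mathrm{super}}_\BA\BB$, which is free of finite rank by the free case.

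For the rank condition, let $\psi\colon\BB\to\Bbbk$ be a map to a field. Then $\psi\circ\varphi\colon\BA\to\Bbbk$ is a map to a field, and associativity of base change gives
\[
\Bbbk\otimes^{\mathrm{super}}_\BB(\BB\otimes^{\mathrm{super}}_\BA\BM)\simeq \Bbbk\otimes^{\mathrm{super}}_\BA\BM.
\]
This uses the identification $\mathrm{dSCAlg}_{\BB}\simeq(\mathrm{dSCAlg}_R)_{\BB/}$ together with the composite adjunctions recalled in the remark after \cref{def:super-derived}. Hence $\BH^0$ of the left side equals $\BH^0(\Bbbk\otimes^{\mathrm{super}}_\BA\BM)$, which has rank $m|n$ by hypothesis.

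\textbf{Main obstacle.} The only nonformal point is the coherent identification $(\BM\otimes_\BA\BB)\otimes_\BB\Bbbk\simeq\BM\otimes_\BA\Bbbk$ in the $\infty$-categorical setting, together with the compatibility of $\Pi$ with base change. Both follow from the standard results on relative tensor products of module $\infty$-categories in \cite[\S4.8]{ha}. I would cite these rather than reprove them.
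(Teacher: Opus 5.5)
Your proposal is correct and follows essentially the same route as the paper: the free case uses colimit-compatibility of base change together with $\Pi\BA\simeq\BA\otimes^{\mathrm{super}}_R\Pi R$ and associativity, and the locally free case uses preservation of retracts. The one difference is that you explicitly check the fibrewise rank condition, by composing $\BA\to\BB\to\Bbbk$ and using transitivity of base change; the paper's proof leaves this step implicit.
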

\begin{proof}
First, assume that $\BM$ is free. By assumption, we can write
\[
\BM\simeq\BA^{\oplus m}\oplus \Pi\BA^{\oplus n}\simeq \BA^{\oplus m}\oplus\left(\BA\otimes_R^{\mathrm{super}}\Pi R\right)^{\oplus n},
\]
where $R$ is the (classical) base commutative ring. Therefore, using the compatibility of the supertensor product with colimits, we have
\begin{align*}
\BM\otimes^{\mathrm{super}}_\BA\BB&\simeq\left(\BA^{\oplus m}\oplus \Pi\BA^{\oplus n}\right)\otimes^{\mathrm{super}}_\BA\BB\\
&\simeq\BB^{\oplus m}\oplus \left(\Pi\BA^{\oplus n}\otimes^{\mathrm{super}}_\BA\BB\right).
\end{align*}
Since by definition $\Pi\BA\defeq \BA\otimes^{\mathrm{super}}_R\Pi R$, thanks to the associativity of the supertensor product we conclude that
\[
\BM\otimes^{\mathrm{super}}_\BA\BB\simeq \BB^{\oplus m}\oplus\Pi\BB^{\oplus n}.
\]
Finally, for the statement concerning locally free derived supermodules, we just observe that finitely generated free supermodules and retractions are obviously preserved by the base change functor.
\end{proof}
\subsection{Cotangent supercomplexes of superalgebras}
\label{section:cotangentsupercomplex}
In \cite[$\S7.3$]{ha}, the cotangent complex of an object $X$ in any presentable $\infty$-category $\mathscr{C}$ is defined in two steps. First, one proves that $\mathscr{C}$ admits a \textit{tangent bundle}
\[
\begin{tikzcd}
p\colon T_{\mathscr{C}}\arrow{r}{G} & \mathrm{Arr}(\mathscr{C})\arrow{r}{t} & \mathscr{C}.
\end{tikzcd}
\]
Here, $t$ denotes the evaluation on the target of a morphism in $\mathscr{C}$. Roughly, this has to be understood as a fibration $p$ of $\infty$-categories such that for any object $X$ of $\mathscr{C}$ the fiber $p^{-1}(X)$ is presentable and is equivalent to the $\infty$-category $\mathrm{Sp}(\mathscr{C}_{X/})$ of \textit{spectrum objects} in $\mathscr{C}_{/X}$; morally, these are derived analogs of abelian group objects of $\mathscr{C}_{/X}$. In particular, objects of $T_{\mathscr{C}}$ consist of pairs $(X,E)$ where $X$ is an object of $\mathscr{C}$ and $E$ is a spectrum object of $\mathscr{C}_{/X}$. The $\infty$-functor $G$ maps an object $(X,E)$ to a certain morphism $G(X)\to X$, hence $p$ maps the object $(X,E)$ to $X$.

Finally, one defines the cotangent complex of an object $X$ as the object $\LL_X$ of $\mathrm{Sp}(\mathscr{C}_{X/})$ corresponding to the image of $X$ under the left adjoint of the $\infty$-functor
\[
\begin{tikzcd}
q\colon T_{\mathscr{C}}\arrow{r}{G} & \mathrm{Arr}(\mathscr{C})\arrow{r}{s} & \mathscr{C},
\end{tikzcd}
\]
where now $s$ evaluates the morphism $G(X,E)\to X$ on the \textit{source}.

Luckily, this formalism is general and abstract enough so that we can apply it without harm to the supergeometric setting, with little modifications. Let $\BA$ any derived supercommutative $R$-algebra over a commutative ring $R$. Then the $\infty$-category $\mathrm{dSCAlg}_{\BA}$ of derived supercommutative $\BA$-algebras is presentable, because it is by definition the $\infty$-category of commutative algebras in the presentable $\infty$-category of derived $\BA$-supermodules. Therefore, we can apply the machinery described above and obtain our tangent bundle
\[
\begin{tikzcd}
p\colon T_{\mathrm{dSCAlg}_{\BA}}\arrow{r} & \mathrm{dSCAlg}_{\BA}.
\end{tikzcd}
\]
But since $\mathrm{dSCAlg}_{\BA}$ is the $\infty$-category of commutative algebra objects in the stable presentable $\infty$-category $\mathbf{SD}_{\infty}(\BA)$, \cite[\S7.3.4]{ha} allows us to give a more concrete description of such tangent bundle. We collect the basic results that we need in the following statement.
\begin{theorem}[{\cite[Thm.~7.3.4.18]{ha}}]
Let $\BA$ be any derived supercommutative $R$-algebra over a commutative ring $R$. Then the $\infty$-functor \eqref{functor:tangent_fibration} exhibits $\smash{\underline{\mathbf{SD}}}_{\infty}(\BA)$ as a tangent bundle to $\mathrm{dSCAlg}_{\BA}$. In particular:
\begin{thmenum}
\item\label{thm:spectra}Given a derived supercommutative $\BA$-algebra $\BS$, a spectrum object in the $\infty$-category $\mathrm{dSCAlg}_{\BA//\BS}$ of derived supercommutative $\BA$-algebras augmented over $\BS$ uniquely corresponds to a derived $\BS$-supermodule \cite[Thm.~7.3.4.13 and Cor.~7.3.4.14]{ha}.
\item \label{thm:augmentation_ideal}The equivalence $\mathrm{Sp}(\mathrm{dSCAlg}_{\BA//\BS})\overset{\simeq}{\to}\mathbf{SD}_{\infty}(\BS)$ is described by sending an object $\varepsilon\colon\BT\to\BS$ to $\mathrm{cone}(\varepsilon)[-1]$, seen as a derived $\BS$-supermodule \cite[Thm.~7.3.4.7]{ha}.
\item\label{thm:square_zero}The inverse $\mathbf{SD}_{\infty}(\BS)\overset{\simeq}{\to}\mathrm{Sp}(\mathrm{dSCAlg}_{\BA//\BS})$ is described by sending a derived $\BS$-supermodule $\BM$ to a spectrum object whose underlying augmented derived $\BA$-superalgebra is \emph{homotopy square-zero}. This means that its underlying derived $\BA$-supermodule is $\BS\oplus \BM$, and the restriction of the multiplication map $\BM\otimes_{\BA}^{\mathrm{super}}\BM\to\BM$ is null-homotopic \cite[Rem.~$7.3.4.16$]{ha}.
\end{thmenum}
\end{theorem}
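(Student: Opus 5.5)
The plan is to reduce the statement to Lurie's general theorem \cite[Thm.~7.3.4.18]{ha}. That result applies to the $\infty$-category of commutative algebra objects in any stable presentable symmetric monoidal $\infty$-category whose tensor product preserves colimits separately in each variable. So the first step is to check these hypotheses for $\mathscr{C}=\mathbf{SD}_\infty(\BA)$, equipped with $\otimes^{\mathrm{super}}_{\BA}$.

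Stability and presentability of $\mathbf{SD}_\infty(\BA)=\Mod_{\BA}(\mathbf{SD}_\infty(R))$ follow from the corresponding properties of $\mathbf{SD}_\infty(R)$ (\cref{fact:presentable}, \cref{fact:stable}), together with the general results of \cite[\S4.2, \S4.8]{ha} on module categories. Compatibility of $\otimes^{\mathrm{super}}_{\BA}$ with colimits was recorded in the remark after \cref{def:super-derived}. That remark relied on the fact that $\mathrm{Ch}(\mathrm{SMod}_R)$ is a symmetric monoidal model category, so that $\mathbf{SD}_\infty(R)$ is presentably symmetric monoidal, and it passes to relative tensor products over $\BA$ by \cite[Thm.~4.5.2.1]{ha}. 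Note also that the super sign rule enters only through the braiding, and Lurie's theorem is insensitive to which braiding is used, as long as it is a coherent symmetric monoidal structure.

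Next I would unwind the conclusions. By \cite[Thm.~7.3.4.13, Cor.~7.3.4.14]{ha}, for each $\BS\in\mathrm{dSCAlg}_{\BA}$ one has $\mathrm{Sp}(\mathrm{dSCAlg}_{\BA//\BS})\simeq\mathrm{Sp}(\mathrm{CAlg}(\mathscr{C})_{/\BS})$. The identification $\mathrm{dSCAlg}_{\BA}\simeq(\mathrm{dSCAlg}_R)_{\BA/}$ shows that augmented $\BA$-algebras over $\BS$ are the same as commutative algebras in $\mathscr{C}$ over $\BS$, and this stabilization is equivalent to $\Mod_{\BS}(\mathscr{C})$. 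Since $\Mod_{\BS}(\Mod_{\BA}(\mathbf{SD}_\infty(R)))\simeq\Mod_{\BS}(\mathbf{SD}_\infty(R))=\mathbf{SD}_\infty(\BS)$ by \cite[Cor.~3.4.1.9]{ha}, this gives item~(1).

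Item~(2) is the explicit description of the functor $\Omega^\infty$-inverse: it sends an augmentation $\varepsilon$ to its fibre, i.e.\ the augmentation ideal $\mathrm{fib}(\varepsilon)\simeq\mathrm{cone}(\varepsilon)[-1]$. Here I use that fibres in $\mathrm{CAlg}(\mathscr{C})_{/\BS}$ are computed in $\mathscr{C}$ and that $\mathscr{C}$ is stable.

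Item~(3) follows from \cite[Rem.~7.3.4.16]{ha}: the inverse equivalence sends $\BM$ to the trivial square-zero extension $\BS\oplus\BM$, whose underlying object is the direct sum and whose multiplication restricted to $\BM\otimes\BM$ is null-homotopic.

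The main obstacle I expect is the identification of the ``tangent fibration'' functor with the family $\underline{\mathbf{SD}}_\infty(\BA)$ of module categories over varying $\BS$. This requires the construction of $\Mod(\mathscr{C})\to\mathrm{CAlg}(\mathscr{C})$ as a presentable fibration \cite[\S4.5.3]{ha} and checking that it is a coCartesian fibration whose transition functors are the base-change functors $-\otimes^{\mathrm{super}}_{\BS}\BS'$. Given presentable symmetric monoidality, I would simply cite this construction and note that nothing in it depends on the super braiding beyond its coherence.
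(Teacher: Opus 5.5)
Your proposal is correct and follows the paper's route. The paper gives no separate proof: it invokes \cite[Thm.~7.3.4.18]{ha}, with \cite[Thm.~7.3.4.7, Thm.~7.3.4.13, Cor.~7.3.4.14, Rem.~7.3.4.16]{ha} for the three items, on the grounds that $\mathrm{dSCAlg}_{\BA}=\mathrm{CAlg}(\mathbf{SD}_\infty(\BA))$ and that $\mathbf{SD}_\infty(\BA)$ is stable, presentable and symmetric monoidal with a colimit-compatible tensor product. Your explicit checks of these hypotheses and of $\Mod_{\BS}(\mathbf{SD}_\infty(\BA))\simeq\mathbf{SD}_\infty(\BS)$ only make that citation more transparent. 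Keep in mind that \eqref{functor:tangent_fibration} is written over $\mathrm{dSCAlg}^{\leqslant0}_{\BA}$, so the statement must be read, as you implicitly do, with the full fibration $\Mod^{\mathrm{Comm}}(\mathbf{SD}_\infty(\BA))\to\mathrm{dSCAlg}_{\BA}$ over all algebras; over connective algebras alone, $(\BS,\BM)\mapsto\BS\oplus\BM$ need not land in connective algebras.
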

As a consequence of \cref{thm:square_zero}, we obtain that the $\infty$-functor
\[
\begin{tikzcd}
q\colon\smash{\underline{\mathbf{SD}}}_{\infty}(\BA)\arrow{r}{G} & \mathrm{Arr}(\mathrm{dSCAlg}_{\BA})\arrow{r}{s} & \mathrm{SCAlg}_{\BA}
\end{tikzcd}
\]
is described by first sending a pair $(\BS,\BM)$ to the morphism $\BS\oplus\BM\to\BS$, and then by evaluating it on the source, obtaining the square-zero extension $\BS\oplus\BM$. Its left adjoint 
\[
\begin{tikzcd}
\BL\colon \mathrm{dSCAlg}_{\BA}\arrow{r} & \smash{\underline{\mathbf{SD}}}_{\infty}(\BA)
\end{tikzcd}
\]
takes a derived supercommutative $\BA$-algebra $\BS$ and produces an object $(\BS,\BL_{\BS/\BA})$, where $\BL_{\BS/\BA}$ is a derived $\BS$-supermodule.
\begin{definition}
\label{def:super_cotangent}
Let $\BA$ be any derived supercommutative $R$-algebra over an arbitrary commutative ring $R$, and let $\BS$ be a derived supercommutative $\BA$-algebra. The \textit{cotangent supercomplex} of $\BS$ is the derived $\BS$-supermodule $\BL_{\BS/\BA}$.
\end{definition}
\begin{remark}
\label{remark:cotangent_universal_property}
From \cite[\S7.3.2]{ha}, it follows that $\BL_{\BS/\BA}$ is the derived $\BS$-supermodule enjoying the following universal property: for all $\BS$-modules $\BN$ we have a homotopy equivalence of spaces
\[
\mathrm{Map}_{\mathbf{SD}_{\infty}(\BS)}(\BL_{\BS/\BA},\BN)\simeq\mathrm{Map}_{\mathrm{dSCAlg}_{\BA//\BS}}(\BS,\BS\oplus\BN).
\]
In other words: let $\BS$ be a derived supercommutative $\BA$-algebra, and let $\BM$ be a $\BS$-supermodule. We define the space of \textit{higher $\BA$-linear super-derivations} from $\BS$ to $\BM$ to be the space of morphisms from $\BS$ to $\BS\oplus\BM$ as derived supercommutative $\BA$-algebras augmented over $\BS$:
\begin{align*}
\mathrm{Der}_{\BA}(\BS,\BM)&\defeq\mathrm{Map}_{\mathrm{dSCAlg}_{\BA//\BS}}(\BS,\BS\oplus \BM)
\\&\simeq\mathrm{fib}_{\mathrm{id}_{\BA}}\left(\mathrm{Map}_{\mathrm{dSCAlg}_{\BA}}(\BS,\BS\oplus \BM)\longrightarrow\mathrm{Map}_{\mathrm{dSCAlg}_{\BA}}(\BS,\BS)\right).
\end{align*}
Then $\BL_{\BS/\BA}$ corepresents the $\infty$-functor
\[
\begin{tikzcd}
\mathrm{Der}(\BS,-)\colon\mathbf{SD}_{\infty}(\BS)\arrow{r} & \mathrm{Spaces}.
\end{tikzcd}
\]
Similarly, if $\varphi\colon\BS\to\BT$ is a morphism of derived supercommutative $\BA$-algebras, the functor $\BL$ yields a morphism
\[
\begin{tikzcd}
\BL_{\BS/\BA}\otimes^{\mathrm{super}}_{\BS}\BT \arrow{r} & \BL_{\BT/\BA}.
\end{tikzcd}
\]
For a derived $\BT$-supermodule $\BN$, we write
\[
\mathrm{Der}_{\BS}(\BT,\BN)\defeq\mathrm{Map}_{\mathrm{dSCAlg}_{\BA//\BT}}(\BS,\BT\oplus\BN)
\]
for the space of \textit{$\BS$-linear} super-derivations from $\BT$ to $\BN$. Then the object $\BL_{\BS/\BA}\otimes^{\mathrm{super}}_{\BS}\BT$ is seen to corepresent the functor
\[
\begin{tikzcd}
\mathrm{Der}_{\BS}(\BT,-)\colon\mathbf{SD}_{\infty}(\BS)\arrow{r} & \mathrm{Spaces}.
\end{tikzcd}
\]
\end{remark}
Using \cite[$\S$ 7.3.3]{ha}, this picture can be further generalized to the relative setting.
\begin{definition}
\label{def:relative_cotangent}
Let $\BA$ a ground derived supercommutative $R$-algebra, and let $\varphi\colon \BS\to\BT$ be a morphism between derived supercommmutative $\BA$-superalgebras. Let $\BL_{\BS/\BA}$, $\BL_{\BT/\BS}$ denote their cotangent supercomplexes. The \emph{relative cotangent supercomplex} $\BL_{\BT/\BS}$ is the mapping cone of the natural morphism of derived $\BT$-supermodules
\[
\begin{tikzcd}
\BL_{\BS/\BA}\otimes^{\mathrm{super}}_{\BS}\BT\arrow{r} & \BL_{\BT/\BA}.
\end{tikzcd}
\]
\end{definition}
\begin{remark}
\label{remark:relative_cotangent_functorial}
\cref{def:relative_cotangent} is not quite the precise formalization of the relative cotangent supercomplex. Indeed, in the formalism of \cite[$\S$ 7.3.3]{ha} the relative cotangent complex is defined to be a \textit{functor}
\[
\begin{tikzcd}
\mathrm{Arr}(\mathscr{C})\arrow{r} &  T_{\mathscr{C}}.
\end{tikzcd}
\]
from the $\infty$-category of morphisms in a presentable $\infty$-category $\mathscr{C}$ to its tangent bundle. Point-wise, this is proved to be computed as a mapping cone \textit{a posteriori} \cite[Prop.~7.3.3.5]{ha}. In particular, the relative cotangent supercomplex enjoys the following functorial property with respect to pushouts: for any pair of morphisms $\BS\to\BT$ and $\BS\to\BT'$ of derived supercommutative $\BA$-algebras, we have an equivalence of $(\BT\otimes^{\mathrm{super}}_{\BS}\BT')$-supermodules
\[
\BL_{\BT/\BS}\otimes_{\BT}^{\mathrm{super}}\left(\BT\otimes^{\mathrm{super}}_{\BS}\BT'\right)\simeq\BL_{\BT\otimes^{\mathrm{super}}_{\BS}\BT'/\BT'}.
\]
\end{remark}
We finish this section by collecting some important results concerning the relationship between cotangent supercomplexes and superderivations. We first recall the following pieces of terminology from \cite{ha}.
\begin{definition}
Let $\BA\in\mathrm{dSCAlg}_R^{\leqslant0}$ be a connective supercommutative $R$-algebra. 
\begin{defenum}
    \item\label{def:super-derivations}The $\infty$-category $\BD\mathrm{er}$ of \emph{$\BA$-linear super-derivations} is the $\infty$-category consisting of triples $(\BS,\BM,\eta)$ where $\BS$ is a connective derived supercommutative $\BA$-algebra, $\BM$ is a derived $\BS$-supermodule, and $\eta\colon\BL_{\BS/\BA}\to \BM$ is a derivation (equivalently, a morphism of derived supercommutative $\BA$-algebras $\BS\to\BS\oplus\BM$ which is a section of the projection). We say that a super-derivation is \emph{$n$-small} if $\BM\in\mathbf{SD}_\infty(\BS)^{\leqslant-n}\cap\mathbf{SD}_\infty(\BS)^{\geqslant-2n}$: we denote by $\BD\mathrm{er}_{n\mathrm{-sm}}$ the full sub-$\infty$-category of $\BD\mathrm{er}$ spanned by $n$-small super-derivations. 
    \item\label{def:square-zero}A morphism $\varphi\colon\BT\to\BS$ of connective derived supercommutative $\BA$-algebras is a \emph{square-zero extension} of $\BS$ by a derived $\BS$-supermodule $\BM$ if there exists a derivation $\eta\colon \BS\to\BS\oplus\BM[1]$ and a pullback diagram of derived supercommutative $\BA$-algebras augmented over $\BS$ of the form
    \[
 \xymatrix{
\BT \ar[r]^{\varphi} \ar[d]^{\varphi} & \BS \ar[d]^{\eta}
 \\
\BS\ar[r]^{\iota} & \BS\oplus\BM[1].
 }
\]
If that is the case, we shall denote $\BT$ as $\BS^{\eta}$.
\item The $\infty$-category $\mathrm{Arr}(\mathrm{dSCAlg}^{\leqslant0}_{\BA})_{n\text{-sm}}$ of \emph{$n$-small extensions} is the full sub-$\infty$-category of $\mathrm{Arr}(\mathrm{dSCAlg}^{\leqslant0}_{\BA})$ spanned by morphisms $\varphi\colon \BT\to\BS$ for which $\mathrm{cone}(\varphi)[-1]\in\mathbf{SD}_\infty(\BS)^{\leqslant -n}\cap\mathbf{SD}_\infty(\BS)^{\geqslant-2n}$ and for which the multiplication
\[
\mathrm{cone}(\varphi)[-1]\otimes^{\mathrm{super}}_{\BT}\mathrm{cone}(\varphi)[-1]\to\mathrm{cone}(\varphi)[-1]\]is null-homotopic.
\end{defenum}
\end{definition}
\begin{theorem}[{\cite[Thm.~6.3]{liang_higher_algebra}}]
\label{thm:cotangent-square-zero}
There is a functor
\[
\begin{tikzcd}[row sep=tiny]
\BD\mathrm{er}\arrow{r} & \mathrm{Arr}(\mathrm{dSCAlg}^{\leqslant0}_{\BA})
\\
(\BS,\BM,\eta)\arrow[mapsto]{r} &  \left\{\BS\oplus\BM\to\BM\right\}
\end{tikzcd}
\]
which induces an equivalence
\[
\BD\mathrm{er}_{n\text{-}\mathrm{sm}}\simeq \mathrm{Arr}(\mathrm{dSCAlg}^{\leqslant0}_{\BA})_{n\text{-}\mathrm{sm}}.
\]
In particular, all $n$-small extensions of connective derived supercommutative $\BA$-algebras are square-zero extensions.
\end{theorem}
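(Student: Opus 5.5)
The plan is to reduce the statement to the general machinery of \cite[\S7.4]{ha} and \cite{liang_higher_algebra}, taking as input that $\mathbf{SD}_\infty(\BA)$ is a presentable stable symmetric monoidal $\infty$-category whose tensor product preserves colimits in each variable. By \cref{prop:t-structure}, for connective $\BA$ it also carries a compatible $t$-structure, and that result records that it is a projectively rigid ttt-$\infty$-category. The theorem of \cite[Thm.~6.3]{liang_higher_algebra} is stated for exactly this class of ambient categories. So the proof amounts to checking its hypotheses and then translating its output into the notation of \cref{def:super-derivations} and \cref{def:square-zero}.

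First I would build the functor. Given $(\BS,\BM,\eta)$, the derivation $\eta$ corresponds by \cref{remark:cotangent_universal_property} to a section $\BS\to\BS\oplus\BM$ of the projection. Its pullback against the zero section $\BS\to\BS\oplus\BM$ (after the shift convention of \cref{def:square-zero}) gives an object $\BS^{\eta}\to\BS$ of $\mathrm{Arr}(\mathrm{dSCAlg}_{\BA})$. The assignment is functorial because pullbacks in $\mathrm{dSCAlg}_{\BA}$ are computed on underlying objects of $\mathbf{SD}_\infty(\BA)$ and depend functorially on the square.

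Second, I would check that $n$-small derivations land in $n$-small extensions. The fiber of $\BS^{\eta}\to\BS$ is $\BM$ (up to the shift), so the amplitude condition holds by definition. Connectivity of $\BS^\eta$ follows from the long exact sequence of cohomology, using the exactness of the underlying-module functor \eqref{functor:underlyingmodule}. The induced multiplication on the fiber factors through the square-zero structure of $\BS\oplus\BM$ from \cref{thm:square_zero}, so it is null-homotopic.

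The main obstacle is the converse: essential surjectivity and full faithfulness on $n$-small objects, namely that every $n$-small extension $\varphi\colon\BT\to\BS$ is classified by a derivation $\BL_{\BS/\BA}\to\mathrm{cone}(\varphi)$. I would follow Lurie's argument \cite[Thm.~7.4.1.26]{ha} in the form axiomatized by Liang. One considers the map $\BL_{\BS/\BT}\to\mathrm{cone}(\varphi)\otimes\BS$ and needs the connectivity estimate that $\mathrm{cofib}(\mathrm{cone}(\varphi)[-1]\otimes_{\BT}\BS\to\BL_{\BS/\BT}[-1])$ is $(2n)$-connective. This estimate comes from the Hurewicz-type comparison for free supercommutative algebras $\mathrm{Sym}^{\ge2}$. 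Here the super setting enters only through the symmetric powers, and in characteristic $\neq 2$ the super symmetric powers are direct summands of tensor powers twisted by signs. Hence the connectivity bounds are identical to the classical ones. This is precisely what projective rigidity packages, so I would verify that the bound transfers rather than reprove it.

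With that estimate in hand, the amplitude window $[-2n,-n]$ forces the comparison map to be an equivalence on the relevant truncations. The standard argument then shows that the derivation-to-extension functor is an equivalence on $n$-small objects. The final sentence of the statement follows immediately.
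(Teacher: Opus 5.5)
Your plan matches the paper's: the statement is imported directly from Liang's Theorem~6.3, whose hypotheses hold because \cref{prop:t-structure} exhibits $\mathbf{SD}_\infty(\BA)$ as a projectively rigid ttt-$\infty$-category, and the rest is notation (you correctly read the displayed target $\BS\oplus\BM\to\BM$ as $\BS^{\eta}\to\BS$). Your optional sketch of the internal argument has two slips, which do not affect this citation-based proof: first, the canonical map runs $\mathrm{cone}(\varphi)\otimes^{\mathrm{super}}_{\BT}\BS\to\BL_{\BS/\BT}$, and with $I=\mathrm{cone}(\varphi)[-1]$ in degrees $[-2n,-n]$ the cofiber of $I\otimes^{\mathrm{super}}_{\BT}\BS\to\BL_{\BS/\BT}[-1]$ lies in $\mathbf{SD}_\infty(\BS)^{\leqslant -2n-2}$, which is exactly what is needed to extend the retraction $I\otimes^{\mathrm{super}}_{\BT}\BS\to I$ uniquely (your bound, a cofiber only in $\mathbf{SD}_\infty(\BS)^{\leqslant -2n}$, would not suffice, since $I$ has cohomology down to degree $-2n$); second, super symmetric powers $(\BM^{\otimes m})_{h\Sigma_m}$ are not summands of tensor powers unless $m!$ is invertible, so the connectivity bound should instead be justified by the fact that homotopy orbits preserve connectivity.
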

Informally, the meaning of \cref{thm:cotangent-square-zero} is that even if not all arbitrary square-zero extensions $\varphi\colon\BT\to\BS$ can be recovered by the $\BS$-supermodule $\mathrm{cone}(\varphi)[-1]$ together with the super-derivation $\BS\to\BS\oplus\mathrm{cone}(\varphi)[-1]$, this holds when restricting ourselves to $n$-small square-zero extensions and derivations. 
\begin{corollary}
\label{corollary:postnikov_tower}
Let $\tau^{\geqslant-n}$ denote the truncation functor for the $t$-structure of \cref{prop:t-structure}. Then for every connective derived supercommutative $R$-algebra $\BA$, every connective derived supercommutative $\BA$-algebra $\BS$ is a limit of its Postnikov tower
\[
\begin{tikzcd}
\cdots\arrow{r} & \tau^{\geqslant-2}\BS\arrow{r} & \tau^{\geqslant-1}\BS\arrow{r} & \tau^{\geqslant0}\BS\,\cong\,\BH^0(\BS).
\end{tikzcd}
\]
Moreover, each morphism in the Postnikov tower is a square-zero extension.
\end{corollary}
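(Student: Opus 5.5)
The plan is to reduce the corollary to the general convergence and square-zero statements that follow from projective rigidity, together with \cref{thm:cotangent-square-zero}. There are two claims: the Postnikov tower converges, and each of its maps is a square-zero extension.

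\emph{Convergence.} First we check that the tower is a tower of connective derived supercommutative $\BA$-algebras. For this we need each truncation $\tau^{\geqslant -n}\BS$ to inherit a commutative algebra structure with $\BS\to\tau^{\geqslant-n}\BS$ a map of algebras. Since the $t$-structure of \cref{prop:t-structure} is compatible with the supertensor product (connective objects are closed under $\otimes^{\mathrm{super}}_{\BA}$ and contain the unit), the standard argument applies: $\tau^{\geqslant -n}$ restricted to connective objects is a symmetric monoidal localization, hence lifts to $\mathrm{CAlg}$. Both facts can be checked on underlying derived $R$-modules via the exact functor \eqref{functor:underlyingmodule}, because $\otimes^{\mathrm{super}}$ is a direct sum of ordinary tensor products of the even and odd parts.

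Next, the forgetful functor $\mathrm{dSCAlg}_{\BA}\to\mathbf{SD}_\infty(\BA)$ preserves and detects limits. It therefore suffices to show $\BS\simeq\lim_n\tau^{\geqslant-n}\BS$ in $\mathbf{SD}_\infty(\BA)$. This is exactly the left completeness of the $t$-structure asserted in \cref{prop:t-structure}. Concretely, the fibre of $\BS\to\lim\tau^{\geqslant -n}\BS$ has vanishing $\BH^k$ for all $k$, by a Milnor sequence argument: the transition maps are surjective on cohomology, so $\lim^1=0$. An object with vanishing cohomology is zero, since this is detected on the underlying derived $R$-module.

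\emph{Square-zero.} For each $n$, consider $\varphi_n\colon\tau^{\geqslant-n}\BS\to\tau^{\geqslant-(n-1)}\BS$. Its fibre is $\mathrm{cone}(\varphi_n)[-1]\simeq\BH^{-n}(\BS)[n]$, an object concentrated in cohomological degree $-n$. Thus it lies in $\mathbf{SD}_\infty^{\leqslant-n}\cap\mathbf{SD}_\infty^{\geqslant-2n}$ for $n\geqslant1$. The multiplication on it,
\[
\BH^{-n}(\BS)[n]\otimes^{\mathrm{super}}\BH^{-n}(\BS)[n]\to\BH^{-n}(\BS)[n],
\]
has source in degrees $\leqslant -2n<-n$ and target concentrated in degree $-n$. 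By $t$-structure orthogonality this map is null-homotopic. Hence $\varphi_n$ is an $n$-small extension, and \cref{thm:cotangent-square-zero} exhibits it as a square-zero extension $\BS_n^{\eta}$ of $\tau^{\geqslant-(n-1)}\BS$ by a derivation $\eta$ into $\BH^{-n}(\BS)[n+1]$.

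The main obstacle is a mismatch in the $n$-smallness condition. The fibre is naturally a module over $\tau^{\geqslant-n}\BS$, whereas the theorem wants a $\tau^{\geqslant-(n-1)}\BS$-module. I would handle this by observing that a discrete-type module $\BH^{-n}(\BS)[n]$ over a connective algebra has its action factor through $\BH^0$, hence through any truncation. This again follows from $t$-structure orthogonality on underlying modules. Granting that point, everything else is formal from the earlier statements.
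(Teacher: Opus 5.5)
Your proposal is correct and is essentially the argument the paper intends: the paper writes out no separate proof. It deduces the corollary from the completeness of the $t$-structure of \cref{prop:t-structure} (see also \cref{remark:on_projective_rigidity}) together with \cref{thm:cotangent-square-zero} applied to the $n$-small extensions $\tau^{\geqslant-n}\BS\to\tau^{\geqslant-(n-1)}\BS$, exactly as you do, and the ``obstacle'' you isolate is harmless for a simpler reason than the one you give: the $n$-smallness condition only concerns the connectivity of the fibre, which is tested on underlying supermodules, while the $\tau^{\geqslant-(n-1)}\BS$-module structure on the shifted fibre is part of the output of the equivalence in \cref{thm:cotangent-square-zero}.
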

The following statements can be deduced straight-forwardly from \cite[Thm.~7.4.3.12]{ha}, generalizing the classical picture for cotangent complexes of spectra.
\begin{theorem}
\label{thm:connectivity-cotangent}
Let $\varphi\colon\BS\to\BT$ be a morphism of connective derived supercommutative algebras.
\begin{thmenum}
\item\label{thm:obstruction-theory-on-truncation}If $\mathrm{cone}(\varphi)$ is $(-n)$-connective, then $\BL_{\BB/\BA}$ is $(-n)$-connective. The converse holds if $\BH^0(\varphi)$ is an isomorphism.
\item The morphism $\varphi$ is a quasi-isomorphism if and only if $\BH^0$ is an isomorphism and $\BL_{\BB/\BA}$ vanishes.
\item The cotangent supercomplexes $\BL_{\BA}$ and $\BL_{\BB/\BA}$ are connective.
\item The $0$-th cohomology $\BH^0(\BL_{\BA})$ is isomorphic to $\BH^0(\BL_{\BH^0(\BA)})$, which in turn is isomorphic to $\Omega^1_{\BH^0(\BA)}$.
\item The $0$-th cohomology $\BH^0(\BL_{\BB/\BA})$ is isomorphic to $\Omega^1_{\BH^0({\BB/\BA})}$.
\end{thmenum}
\end{theorem}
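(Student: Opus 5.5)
The plan is to reduce every item of \cref{thm:connectivity-cotangent} to its non-super analogue in \cite[Thm.~7.4.3.12]{ha}. That result is proved for commutative algebra objects in a stable presentable symmetric monoidal $\infty$-category carrying a $t$-structure compatible with the tensor product, subject to a few hypotheses. These are: connective objects are closed under tensor products and contain the unit; the heart behaves well; and Postnikov towers converge with square-zero layers. I will write $\BA,\BB$ for the source and target of $\varphi$, as in the notation of the statement. The first step is to check these hypotheses for $\mathbf{SD}_\infty(R)$ and $\mathbf{SD}_\infty(\BA)$.

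\begin{itemize}
\item Compatibility of the $t$-structure with $\otimes^{\mathrm{super}}$ follows from the Day convolution description $\mathbf{SD}_\infty(R)\simeq\Fun(\BZ/2\BZ,\derived_\infty(R))$. Under it, the supertensor product is a finite direct sum of ordinary derived tensor products of even and odd parts, so connectivity is preserved.
\item Left and right completeness, the identification of the heart, and projective rigidity are exactly \cref{prop:t-structure}.
\item Convergence of Postnikov towers through square-zero extensions is \cref{corollary:postnikov_tower}, together with \cref{thm:cotangent-square-zero}.
\end{itemize}

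Having checked these, I will run Lurie's argument verbatim.

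For (3), connectivity of $\BL_{\BA}$ comes from the universal property of \cref{remark:cotangent_universal_property}. If $\BN$ is coconnective, shifted suitably, then $\mathrm{Map}(\BL_{\BA},\BN)$ computes derivations into a square-zero extension. Using a free resolution by symmetric superalgebras on $\BA$ and $\Pi\BA$, whose cotangent complex is free, the mapping space vanishes in the required degrees. The relative statement then follows from the cofiber sequence of \cref{def:relative_cotangent}. For (4) and (5), I will apply $\BH^0$ and use the equivalence of hearts. This gives $\BH^0(\BL_{\BA})$ as the corepresenting object of ordinary superderivations of $\BH^0(\BA)$, which is $\Omega^1_{\BH^0(\BA)}$ since square-zero extensions by discrete modules factor through $\BH^0$. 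The relative case follows by right exactness of $\BH^0$ on the cofiber sequence.

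The main step is (1), the Hurewicz-type connectivity estimate. Suppose $\mathrm{cone}(\varphi)$ is $(-n)$-connective. I would factor $\varphi$ through its Postnikov stages and compare $\mathrm{cone}(\varphi)$ with $\BL_{\BB/\BA}$ via the natural map $\BB\otimes_{\BA}\mathrm{cone}(\varphi)\to\BL_{\BB/\BA}$. Lurie's argument shows that the cofiber of this map has connectivity one higher. That argument uses the filtration of the free algebra by symmetric powers: the $k$-th symmetric power of a $(-n)$-connective object is $(-kn)$-connective. In the super setting the symmetric powers are taken with the Koszul sign. They are still coinvariants of $\Sigma_k$ acting on $\otimes^k$, so the connectivity estimate survives because coinvariants preserve connectivity; this is where I expect the only genuinely super subtlety to lie. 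The converse uses the same comparison map inductively, assuming $\BH^0(\varphi)$ is an isomorphism. Finally, (2) follows from (1): if $\BL_{\BB/\BA}=0$ and $\BH^0(\varphi)$ is an isomorphism, then $\mathrm{cone}(\varphi)$ is $(-n)$-connective for all $n$. It therefore vanishes by left completeness of the $t$-structure in \cref{prop:t-structure}.
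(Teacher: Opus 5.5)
Your plan is the paper's. Its entire proof is the remark that all five items follow from \cite[Thm.~7.4.3.12]{ha}, and your check of that theorem's hypotheses on $\mathbf{SD}_\infty$ (tensor-compatibility of the $t$-structure via the Day convolution description, completeness from \cref{prop:t-structure}, Postnikov convergence) is exactly what the citation leaves implicit; your arguments for (2)--(5) are fine.

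There is one quantitative slip in (1). Suppose the cofiber of $\epsilon\colon\BB\otimes^{\mathrm{super}}_{\BA}\mathrm{cone}(\varphi)\to\BL_{\BB/\BA}$ is only one step more connective than $\mathrm{cone}(\varphi)$. That gives the forward implication, but in the inductive converse it only gives surjectivity of $\BH^{-m}(\mathrm{cone}(\varphi))\to\BH^{-m}(\BL_{\BB/\BA})$. You therefore need the estimate Lurie actually proves, which your $\mathrm{Sym}^{\geqslant 2}$ terms also produce: $\mathrm{fib}(\epsilon)$ is $(-2m)$-connective whenever $\mathrm{cone}(\varphi)$ is $(-m)$-connective. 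For $m\geqslant 1$ this makes $\BH^{-m}(\epsilon)$ an isomorphism.
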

\begin{remark}
Let $\BS$ be a derived supercommutative $\BA$-algebra over a derived supercommutative $R$-algebra $\BA$. Then the cotangent supercomplex of $\BS$ as a $\BA$-superalgebra is the same as the relative cotangent supercomplex of $\BS$ over $\BA$ as $R$-superalgebras \cite[$\S$ 7.3.3]{ha}. Therefore, the statements of \cref{thm:connectivity-cotangent} imply analogous statements on any connective base superalgebra $\BA$.
\end{remark}
\begin{warning}
The theory developed here generalizes the \textit{topological} cotangent complex of ring spectra due to André--Quillen. This means that, for $R$ an ordinary commutative ring and $A$ a derived commutative $R$-algebra seen as a even derived supercommutative $R$-algebra, our cotangent complex will recover the cotangent complex $\BL_{A/R}$ defined in \cite{ha} and this, in general, differs from Illusie's cotangent complex $\BL_{A/R}^{\mathrm{alg}}$ from algebraic geometry. However, for any commutative ring $R$, the complex $\BL_{A/R}^{\mathrm{alg}}$ is obtained by base change over $A$ from $\BL_{A/R}$, and if $A\to B$ is a morphism of \textit{connective} derived $R$-superalgebras, then we have a morphism $\BL_{B/A}\to\BL_{B/A}^{\mathrm{alg}}$ which induces an isomorphism in cohomology in degrees $\geqslant-1$, a surjection in degree $-2$, and is a rational isomorphism in all degrees. In particular, if $R$ is a $\BQ$-algebra then $\BL_{B/A}\simeq\BL_{B/A}^{\mathrm{alg}}$. See \cite[$\S$ 25.3]{sag} for more on the matter.

While an analogous theory of \textit{algebraic} cotangent complexes can be developed in the super-setting, we are not interested in developing it in the present work.
\end{warning}

\subsection{Grothendieck topologies on derived superalgebras}
\label{sec:grothendieck_topologies}
Recall (see for example \cite{BrHR21}) that a morphism $\varphi\colon\BS\to\BT$ of ordinary $\BA$-superalgebras is said to be 
\begin{enumerate}
    \item \textit{flat} if the functor $(-)\otimes^{\mathrm{super}}_{\BS}\BT\colon\mathrm{SMod}_{\BS}\to\mathrm{SMod}_{\BT}$ is exact,
    \item \textit{faithfully flat} if the functor $(-)\otimes^{\mathrm{super}}_{\BS}\BT\colon\mathrm{SMod}_{\BS}\to\mathrm{SMod}_{\BT}$ is exact and faithful (equivalently, if the morphism is flat and induces a surjection between the corresponding affine superschemes),
    \item \emph{smooth of relative dimension $m|n$} if it is flat, locally of finite presentation, $\Omega_{\BT/\BS}$ is locally free of rank $m|n$, and for every irreducible components $\boldit V \subset \Spec \BT$ and $\boldit W \subset \Spec \BS$ such that $\Spec \varphi$ maps $\boldit V$ into $\boldit W$, one has $\dim \boldit V=\dim \boldit W + m|n$,
    \item \textit{étale} if it is smooth of relative dimension $0|0$. 
\end{enumerate}

\begin{definition}
\label{def:etale_topology_derived}
Let $\BA$ be any derived supercommutative $R$-algebra.
\begin{defenum}
\item \label{def:flatness} We say that a derived $\BA$-supermodule $\BM$ is \textit{flat} if $\BH^0(\BM)$ is a flat $\BH^0(\BA)$-supermodule and for all $n\in\BZ$ the natural morphism
\[
\begin{tikzcd}
\BH^n(\BA)\otimes_{\BH^0(\BA)}\BH^0(\BM)\arrow{r} & \BH^n(\BM)
\end{tikzcd}
\]
is an isomorphism of $\BH^0(\BA)$-supermodules.
\item \label{def:faithfully_flatness}  We say that a derived $\BA$-supermodule $\BM$ is \textit{faithfully flat} if it is flat as a derived $\BA$-supermodule and $\BH^0(\BM)$ is a faithfully flat $\BH^0(\BA)$-supermodule.
\item \label{def:etale} We say that a morphism $\varphi\colon\BS\to\BT$ of derived supercommutative $\BA$-algebras is \textit{étale} if the morphism $\BH^0(\varphi)\colon\BH^0(\BS)\to\BH^0(\BT)$ of ordinary supercommutative $\BH^0(\BA)$-algebras is étale and $\varphi$ exhibits $\BT$ as a flat derived $\BS$-supermodule.
\item \label{def:smooth} We say that a morphism $\varphi\colon\BS\to\BT$ of derived supercommutative $\BA$-algebras is \textit{smooth} if the morphism $\BH^0(\varphi)\colon\BH^0(\BS)\to\BH^0(\BT)$ of ordinary supercommutative $\BH^0(\BA)$-algebras is smooth and $\varphi$ exhibits $\BT$ as a flat derived $\BS$-supermodule.
\item\label{def:zariski}We say that a morphism $\varphi\colon\BS\to\BT$ of derived supercommutative $\BA$-algebras is a \textit{Zariski open immersion} if the morphism $\BH^0(\varphi)\colon\BH^0(\BS)\to\BH^0(\BT)$ of ordinary supercommutative $\BH^0(\BA)$-algebras induces a Zariski open immersion at the level of superschemes, and $\varphi$ exhibits $\BT$ as a flat derived $\BS$-supermodule.
\end{defenum}
\end{definition}
If $\BA$ is an underived supercommutative $R$-algebra, the above definitions agree with the usual definitions from algebraic supergeometry. In fact, we can state a stronger \textit{topological invariance} result, when we consider étale morphisms and Zariski open immersions.
\begin{theorem}
\label{thm:topological_invariance}
Let $\BA$ be a connective derived supercommutative algebra. The functor
\[
\begin{tikzcd}
\BH^0\colon\mathrm{dSCAlg}_{\BA}^{\leqslant0,\mathrm{\acute{e}t}}\arrow{r}&\mathrm{CAlg}^{\mathrm{\acute{e}t}}_{\BH^0(\BA)}
\end{tikzcd}
\]
is an equivalence of $\infty$-categories between derived étale supercommutative $\BA$-algebras and underived étale supercommutative $\BH^0(\BA)$-algebras. Moreover, it restricts to an equivalence of $\infty$-categories between derived Zariski open immersions from $\BA$ and open immersions from $\BH^0(\BA)$.

In particular, all these $\infty$-categories are $1$-categories.
\end{theorem}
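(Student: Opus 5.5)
The plan is to deduce the statement from the topological invariance of the étale site in the setting of a projectively rigid ttt-$\infty$-category, as recorded in \cref{remark:on_projective_rigidity}. By \cref{prop:t-structure}, $\mathbf{SD}_\infty(\BA)$ is such a category and its heart is the abelian category of $\BH^0(\BA)$-supermodules. The results of \cite{liang_higher_algebra} then identify derived étale commutative algebras in $\mathbf{SD}_\infty(\BA)$ with étale commutative algebras in the heart. What remains is to check that the abstract notion of étaleness used there agrees with \cref{def:etale}.

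The direct argument I would run, in case the citation needs unpacking, proceeds in the following steps.

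First, for $\BT$ étale over $\BA$, flatness gives $\BH^n(\BT)\cong\BH^n(\BA)\otimes_{\BH^0(\BA)}\BH^0(\BT)$, that is, $\BT\simeq\BA\otimes^{\mathrm{super}}_{\BA}\BT$ is determined degreewise by $\BH^0(\BT)$.

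Second, I would show that $\BL_{\BT/\BA}\simeq 0$. By \cref{thm:connectivity-cotangent}, $\BH^0(\BL_{\BT/\BA})\cong\Omega^1_{\BH^0(\BT)/\BH^0(\BA)}=0$, since $\BH^0(\varphi)$ is étale. To get the higher vanishing, I would base change along $\BA\to\BH^0(\BA)$, using the pushout compatibility of \cref{remark:relative_cotangent_functorial}, and reduce to the underived étale case. There the vanishing of the cotangent complex of an étale superalgebra can be checked on the bosonic reduction plus the odd nilpotents, or via a standard presentation. Then connectivity and Nakayama, in the form of \cref{thm:connectivity-cotangent}, lift the vanishing back over $\BA$.

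Third, full faithfulness. For $\BT,\BT'$ étale, I would compute $\mathrm{Map}_{\mathrm{dSCAlg}_\BA}(\BT,\BT')$ by climbing the Postnikov tower of $\BT'$ from \cref{corollary:postnikov_tower}. Each stage is a square-zero extension by \cref{thm:cotangent-square-zero}, so the obstruction and lifting spaces are mapping spaces out of $\BL_{\BT/\BA}\simeq0$ and are therefore contractible. This gives $\mathrm{Map}(\BT,\BT')\simeq\mathrm{Map}(\BT,\BH^0(\BT'))\simeq\Hom(\BH^0(\BT),\BH^0(\BT'))$, a discrete set, which also proves that these $\infty$-categories are $1$-categories.

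Fourth, essential surjectivity. Given an étale $\BH^0(\BA)$-superalgebra $T_0$, I would lift it inductively along the Postnikov tower $\tau^{\geqslant -n}\BA$ of $\BA$. At each square-zero step, existence and uniqueness of the deformation is governed by $\BL_{T_0/\BH^0(\BA)}\simeq0$. The limit is then flat with $\BH^0=T_0$, using left completeness from \cref{prop:t-structure}. The Zariski statement follows by restricting along these equivalences, since open immersions are exactly the étale maps whose $\BH^0$ is an open immersion.

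I expect the main obstacle to be the second step, namely the vanishing of the cotangent supercomplex of an étale morphism of underived superalgebras in the topological André--Quillen sense. Over $\BQ$ this can be taken from the algebraic cotangent complex via the Warning. In general I would instead use the local structure theorem for étale superalgebras (standard étale presentations) to reduce to the even case treated in \cite[\S7.5]{ha}.
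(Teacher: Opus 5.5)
Your main route is exactly the paper's. The proof there consists of citing \cite[Thm.~6.26]{liang_higher_algebra} for the étale case, which applies because \cref{prop:t-structure} makes $\mathbf{SD}_\infty(\BA)$ projectively rigid, and then observing that the Zariski case follows formally from the étale case and \cref{def:zariski}. The paper does not check that the abstract étaleness in \cite{liang_higher_algebra} matches \cref{def:etale}, which you rightly flag, and it does not unpack the citation, so your Postnikov-tower sketch in the style of \cite[\S7.5]{ha} goes beyond what the paper's argument contains.
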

\begin{proof}
The étale case is precisely \cite[Thm.~6.26]{liang_higher_algebra}. The Zariski open immersion case is then a formal consequence from the étale case and the definition of derived Zariski open immersion.
\end{proof}
We recall the following useful characterization of derived flat supermodules.
\begin{prop}[{\cite[Prop.~2.15]{liang_higher_algebra}}]
\label{prop:flatcharacterization}
Let $\BA\in\mathrm{dSCAlg}_R^{\leqslant0}$ a connective derived supercommutative $R$-algebra and let $\BM$ be a connective derived $\BA$-supermodule. The following are equivalent.
\begin{propenum}
\item \label{prop:flat1}The supermodule $\BM$ is flat over $\BA$.
\item \label{prop:flat2}If $\BN$ is a $n$-coconnective module (i.e., $\BH^k(\BN)\cong0$ for $k>n$) then $\BM\otimes^{\mathrm{super}}_{\BA}\BN$ is $n$-coconnective.
\item\label{prop:flat3}If $\BN$ is a discrete flat $\BA$-supermodule (i.e., $\BN\simeq\BH^0(\BN)$) then $\BM\otimes^{\mathrm{super}}_{\BA}\BN$ is discrete.
\end{propenum}
\end{prop}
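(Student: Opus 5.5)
The plan is to transcribe Lurie's proof of the non-super statement (\cite[Thm.~7.2.2.15]{ha}) into $\mathbf{SD}_\infty(\BA)$. Two tools make this possible: the $t$-structure of \cref{prop:t-structure}, which is left and right complete with heart the $\BH^0(\BA)$-supermodules, and the fact that $\otimes^{\mathrm{super}}_{\BA}$ is exact and colimit-preserving in each variable. Parity plays no role beyond sign bookkeeping. The key intermediate claim is that flatness of $\BM$ is equivalent to the natural map $\BM\otimes^{\mathrm{super}}_{\BA}\BH^0(\BA)\to\BH^0(\BM)$ being an equivalence with $\BH^0(\BM)$ flat over $\BH^0(\BA)$. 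One direction uses the Tor spectral sequence $\mathrm{Tor}^{\BH^\ast(\BA)}_{\ast}(\BH^\ast(\BM),\BH^0(\BA))\Rightarrow\BH^\ast(\BM\otimes^{\mathrm{super}}_{\BA}\BH^0(\BA))$. Under \cref{def:flatness} we have $\BH^\ast(\BM)\cong\BH^\ast(\BA)\otimes_{\BH^0(\BA)}\BH^0(\BM)$ with $\BH^0(\BM)$ flat, so the $E_2$-page reduces to $\BH^0(\BM)$ in degree $0$.

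For \ref{prop:flat1}$\Rightarrow$\ref{prop:flat2}, first treat a discrete $\BN$, i.e.\ an $\BH^0(\BA)$-supermodule. Then
\[
\BM\otimes^{\mathrm{super}}_{\BA}\BN\simeq(\BM\otimes^{\mathrm{super}}_{\BA}\BH^0(\BA))\otimes^{\mathrm{super}}_{\BH^0(\BA)}\BN\simeq\BH^0(\BM)\otimes^{\mathrm{super}}_{\BH^0(\BA)}\BN,
\]
which is discrete by flatness of $\BH^0(\BM)$. Shifts then give the result for $\BN\simeq\BH^k(\BN)[-k]$. For a bounded $n$-coconnective $\BN$, induct along the finite Postnikov filtration using the exact triangles and exactness of the tensor product. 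For a general $n$-coconnective $\BN$, write $\BN\simeq\colim_k\tau^{\geqslant -k}\BN$. This holds because filtered colimits are $t$-exact, so cohomology commutes with them. Since $\otimes^{\mathrm{super}}_{\BA}$ preserves colimits, the bound passes to the colimit. The implication \ref{prop:flat2}$\Rightarrow$\ref{prop:flat3} is immediate: a discrete $\BN$ is $0$-coconnective, so $\BM\otimes^{\mathrm{super}}_{\BA}\BN$ is $0$-coconnective. It is also connective, as a tensor product of connective objects, so it is discrete.

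For \ref{prop:flat3}$\Rightarrow$\ref{prop:flat1}, I would first test against discrete $\BH^0(\BA)$-supermodules. From $\BM\otimes^{\mathrm{super}}_{\BA}\BH^0(\BA)$ being discrete we get that it equals $\BH^0(\BM)$. Then $\BH^0(\BM)\otimes^{\mathrm{super},\BL}_{\BH^0(\BA)}\BN'\simeq\BM\otimes^{\mathrm{super}}_{\BA}\BN'$ is discrete for every discrete $\BN'$, so the higher Tor's vanish and $\BH^0(\BM)$ is flat. To get the isomorphisms $\BH^n(\BA)\otimes_{\BH^0(\BA)}\BH^0(\BM)\cong\BH^n(\BM)$, I would induct up the Postnikov tower of $\BA$ (\cref{corollary:postnikov_tower}), with $\BM_k\defeq\BM\otimes^{\mathrm{super}}_{\BA}\tau^{\geqslant -k}\BA$. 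Tensoring the fibre sequence $\BH^{-k}(\BA)[k]\to\tau^{\geqslant -k}\BA\to\tau^{\geqslant-k+1}\BA$ with $\BM$ and using $\BM\otimes^{\mathrm{super}}_{\BA}\BH^{-k}(\BA)\simeq\BH^0(\BM)\otimes_{\BH^0(\BA)}\BH^{-k}(\BA)$ (discrete) identifies $\BH^{-k}(\BM_k)$. Convergence $\BM\simeq\lim_k\BM_k$ comes from left completeness.

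The main obstacle is the hypothesis in \ref{prop:flat3}. As stated it only allows \emph{flat} discrete $\BN$, whereas the argument above needs arbitrary discrete $\BN$, for instance $\BH^0(\BA)$ and $\BH^{-k}(\BA)$. I would first try to bridge this with the Lazard-type results available from projective rigidity (\cref{remark:on_projective_rigidity}). Failing that, I would check in \cite{liang_higher_algebra} whether ``discrete'' alone is the intended hypothesis, in which case the argument goes through verbatim.
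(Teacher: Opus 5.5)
Your argument is correct for the statement the paper intends. However, the obstacle you flag should be removed by correcting the statement, not bridged: as printed, (3)$\Rightarrow$(1) is false, so no Lazard-type input can rescue it.

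\emph{The counterexample.} Take $\BA\defeq R\otimes^{\mathrm{super}}_{R[x]}R$, with $x$ even and acting by $0$ on $R$, so that $\BH^0(\BA)\cong R\cong\BH^{-1}(\BA)$. Let $\BM\defeq\BH^0(\BA)$. A discrete $\BN$ is flat in the sense of \cref{def:flatness} only if $\BH^{-1}(\BA)\otimes_R\BH^0(\BN)\cong\BH^{-1}(\BN)=0$, i.e.\ only if $\BN\simeq0$. So the printed (3) holds vacuously for $\BM$. Yet $\BM$ is not flat, since $\BH^{-1}(\BA)\otimes_R\BH^0(\BM)\cong R\neq0=\BH^{-1}(\BM)$. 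The intended hypothesis is just ``discrete''. This is also how the paper applies (3) in the proof of \cref{thm:flat_descent}, to an arbitrary discrete $\BS$-supermodule.

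\emph{The misprint in (2), and a resulting slip.} The same $\BM$ shows that the printed parenthetical in (2), $\BH^k(\BN)\cong0$ for $k>n$, is also wrong. That is a connectivity condition, preserved by $\BM\otimes^{\mathrm{super}}_{\BA}(-)$ for every connective $\BM$, so the printed (2) holds automatically. Your step (2)$\Rightarrow$(3) silently uses the correct reading, $\BH^k(\BN)\cong0$ for $k<-n$, which matches the paper's definition of $n$-coconnective derived affine superschemes. Under that reading, your reduction to bounded $\BN$ transcribes the homological formula into cohomological notation. The truncations $\tau^{\geqslant-k}\BN$ form a tower, not a direct system, and $\tau^{\geqslant-k}\BN\simeq\BN$ once $k\geqslant n$, so they reduce nothing. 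You want $\BN\simeq\colim_k\tau^{\leqslant k}\BN$, with each term concentrated in $[-n,k]$. Then use that $\mathbf{SD}_\infty(\BA)^{\geqslant-n}$ is closed under filtered colimits.

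\emph{A simplification.} In (3)$\Rightarrow$(1) you do not need convergence of $\lim_k\BM_k$. Since $\BM\otimes^{\mathrm{super}}_{\BA}\tau^{\leqslant-k-1}\BA$ lies in $\mathbf{SD}_\infty(\BA)^{\leqslant-k-1}$, you get $\BH^{-k}(\BM)\cong\BH^{-k}(\BM_k)$ directly.

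\emph{Comparison with the paper.} The paper gives no argument of its own here. It imports the result from \cite{liang_higher_algebra}, which applies because $\mathbf{SD}_\infty(\BA)$ is a projectively rigid ttt-$\infty$-category (\cref{prop:t-structure}, with rigidity checked in the appendix via \cref{prop:superf}). Your direct transcription of Lurie's argument in \cite[\S7.2.2]{ha} is more self-contained. It uses only the $t$-structure and its heart, exactness of $\otimes^{\mathrm{super}}_{\BA}$, and the Postnikov tower of $\BA$, and it needs neither Lazard's theorem nor projective rigidity. Its one unstated input is a Tor spectral sequence for derived supermodules, with $E_2$-page computed over $\BH^\ast(\BA)$ as a $\BZ\times\BZ/2\BZ$-graded ring with the combined Koszul sign. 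You should set this up explicitly, as in the classical case, from resolutions by sums of shifts of $\BA$ and $\Pi\BA$. The citation, in exchange, is short and places the statement in a framework that also yields the other consequences listed in \cref{remark:on_projective_rigidity}.
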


\begin{definition}
Let $\BA$ be a connective derived supercommutative $R$-algebra. We can define two Grothendieck topologies on the $\infty$-category of derived affine $\BA$-superschemes $\mathrm{dSAff}_{\BA}\defeq\smash{(\mathrm{dSCAlg}^{\leqslant0}_{\BA})^{\mathrm{op}}}$.
\begin{defenum}
\item\label{def:fpqc topology} The \textit{faithfully flat topology} is the Grothendieck topology whose coverings are collections of  morphisms $\left\{\BS\to\BT_i\right\}_{i\in I}$ for which there exists a finite subset of indices $\left\{i_1,\ldots,i_n\right\}$ such that the morphism
\[
\begin{tikzcd}
\BS\arrow{r} &\displaystyle \prod_{j=1}^n\BT_{i_j}
\end{tikzcd}
\]
is a faithfully flat morphism of derived supercommutative $\BA$-algebras.
\item \label{def:etale topology}The \textit{étale topology} whose coverings are collections of \textit{étale} morphisms $\left\{\BS\to\BT_i\right\}_{i\in I}$ for which there exists a finite subset of indices $\left\{i_1,\ldots,i_n\right\}$ such that the morphism
\[
\begin{tikzcd}
\BS\arrow{r} & \displaystyle\prod_{j=1}^n\BT_{i_j}
\end{tikzcd}
\]
is an \'etale morphism of derived supercommutative $\BA$-algebras and such that
\[
\begin{tikzcd}
\Spec \BH^0(\prod_{j}\BT_{i_j}) \arrow{r} & \Spec \BH^0(\BS)
\end{tikzcd}
\]
is a surjection between ordinary topological spaces.
\end{defenum}
\end{definition}

\begin{remark}
We observe the following.
\begin{enumerate}
\item \cref{def:fpqc topology,def:etale topology} do define Grothendieck topologies, because faithfully flat and étale morphisms define Grothendieck topologies on the ordinary categories of $\BH^0(\BA)$-superalgebras and the derived tensor super-product behaves well under composition and base change. 
\item It is clear from the definition that the faithfully flat topology is finer than the étale topology.
\end{enumerate}
\end{remark}
\subsection{Descent for derived \texorpdfstring{$\infty$}{oo}-categories of supermodules}
In derived algebraic geometry, the cotangent complex is first defined over affine derived schemes in terms of (higher) derivations. Then, this construction is extended to arbitrary derived (pre)stacks, in virtue of the fact that the global cotangent complex of a derived (pre)stack $X$ is uniquely determined (up to a contractible space of ambiguities) by the following properties.
\begin{enumerate}
    \item For any morphism $x\colon\Spec R\to X$, there exists a cotangent complex at $x$.
    \item For any morphisms of derived affine schemes $f\colon\Spec S\to\Spec R$, the cotangent complex at $x\circ f$ is the base change of the cotangent complex at $x$ along $f$.
\end{enumerate}
In fact, under these assumptions the local cotangent complexes can be glued to obtain a global object living over $X$. This discussion makes it clear that a key result in establishing the definition of a global cotangent complex is that \textit{quasicoherent sheaves of derived stacks satisfy descent}. The goal of this section is to prove a super-analog of this statement, in order to apply the usual machinery to introduce a notion of \textit{derived super-derivation} and finally obtain our desired global cotangent supercomplex. 
\begin{construction}
Let $\BA$ be connective derived supercommutative $R$-algebra over a classical commutative ring $R$. Let $\mathrm{Pr}^{\mathrm{L},\otimes}$ denote the $\infty$-category of presentable symmetric monoidal $\infty$-categories: then the derived supercategory $\mathbf{SD}_{\infty}(\BA)$ is an object in $\mathrm{Pr}^{\mathrm{L},\otimes}$, and we can write
\[
\mathrm{Pr}^{\mathrm{L},\otimes}_{\BA}\defeq \Mod_{\mathbf{SD}_{\infty}(\BA)}(\mathrm{Pr}^{\mathrm{L},\otimes})
\]
to mean the \textit{$\infty$-category of presentable $\BA$-linear symmetric monoidal $\infty$-categories}: roughly, these are pretriangulated differential graded categories which are $\BA$-linear, in a homotopy-coherent way. The technical machinery of \cite[Ch.~$4.8.5$]{ha} allows to define a $\infty$-functor
\begin{equation}
\label{functor:derived_mod}
\begin{tikzcd}
\mathbf{SD}_{\infty}(-)\colon\mathrm{dSCAlg}^{\leqslant0}_{\BA}\arrow{r} & \mathrm{Pr}^{\mathrm{L},\otimes}_{\BA},
\end{tikzcd}
\end{equation}
where a morphism $\BS\to\BT$ is sent to the adjunction $(-)\otimes^{\mathrm{super}}_{\BS}\BT\dashv\mathrm{forget}$. In virtue of the $\infty$-categorical Grothendieck construction \cite[$\S3.2$]{htt}, this is equivalently described in terms of a presentable (both Cartesian and coCartesian) fibration of $\infty$-categories
\begin{equation}
\label{functor:tangent_fibration}
\begin{tikzcd}
\smash{\underline{\mathbf{S}\derived}}_{\infty}(\BA)\arrow{r} & \mathrm{dSCAlg}^{\leqslant0}_{\BA}.
\end{tikzcd}
\end{equation}
Roughly, $\smash{\underline{\mathbf{S}\derived}}_{\infty}(\BA)$ is the $\infty$-category consisting of pairs of objects $(\BS,\BM)$, where $\BS$ is a connective derived supercommutative $\BA$-algebra and $\BM$ is a derived $\BS$-supermodule. Morphisms $(\BS,\BM)\to(\BT,\BN)$ are given by a morphism of connective derived supercommutative $R$-algebras $\BS\to\BT$, together with a morphism of derived $\BT$-supermodules $\BM\otimes^{\mathrm{super}}_{\BS}\BS\to\BN$.
\end{construction}
\begin{theorem}[{See also \cite[$\S$3.4]{dang_derived_supergeometry}}]
\label{thm:flat_descent}
The $\infty$-functor \eqref{functor:tangent_fibration} is a sheaf for the faithfully flat (hence, for the étale) topology.
\end{theorem}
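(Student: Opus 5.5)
The plan is to reduce the statement to faithfully flat descent for modules over (non-super) derived commutative rings, or better, to the general descent criterion of Lurie \cite[Thm.~4.7.5.2 and Cor.~D.6.3.3]{sag} (Barr--Beck--Lurie for comonadic descent). Concretely, I must show: (a) finite products $\BS\to\prod_j \BT_j$ are sent to products $\mathbf{SD}_\infty(\prod_j\BT_j)\simeq\prod_j\mathbf{SD}_\infty(\BT_j)$; and (b) for a single faithfully flat map $\varphi\colon \BS\to\BT$ with Čech conerve $\BT^{\otimes\bullet+1}$, the canonical functor
\[
\mathbf{SD}_\infty(\BS)\longrightarrow \lim_{[n]\in\Delta}\mathbf{SD}_\infty\bigl(\BT^{\otimes^{\mathrm{super}}_{\BS}(n+1)}\bigr)
\]
is an equivalence. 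Point (a) is formal: a finite product of algebras corresponds to a decomposition of the unit into orthogonal idempotents, and modules split accordingly, exactly as in the non-super case. Since the sheaf condition for a Grothendieck topology generated by such coverings reduces to (a) and (b), this suffices.

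For (b) I will apply the Beck--Chevalley form of the Barr--Beck--Lurie theorem \cite[Cor.~4.7.5.3]{ha}. The cosimplicial diagram satisfies the Beck--Chevalley condition because base change along pushout squares of derived supercommutative algebras is computed by the relative supertensor product, and associativity of $\otimes^{\mathrm{super}}$ gives $\BM\otimes_{\BS}\BT\otimes_{\BT}(\BT\otimes_\BS\BT)\simeq \BM\otimes_\BS\BT\otimes_\BS\BT$ compatibly with the forgetful functors. It then remains to check that $F=(-)\otimes^{\mathrm{super}}_{\BS}\BT$ is conservative and preserves totalizations of $F$-split cosimplicial objects.

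Conservativity: if $\BM\otimes_\BS\BT\simeq0$, I want $\BM\simeq0$. Using \cref{prop:t-structure} (left and right completeness, homotopy groups detect zero) it is enough to show $\BH^n(\BM)=0$ for all $n$; if $\BM$ were nonzero and bounded above, take the top nonvanishing $\BH^n$, and flatness (\cref{prop:flatcharacterization}) gives $\BH^n(\BM\otimes_\BS\BT)\cong\BH^n(\BM)\otimes_{\BH^0\BS}\BH^0\BT$, which is nonzero by faithful flatness of $\BH^0\BS\to\BH^0\BT$ in the ordinary super sense; the unbounded case follows by truncating, since $F$ is $t$-exact by the same proposition and commutes with $\tau^{\geq n}$. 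Preservation of limits: I would first restrict to uniformly bounded-above (coconnective after shift) $F$-split objects, where $t$-exactness of $F$ and the fact that totalizations of uniformly bounded-above cosimplicial objects are computed degreewise on cohomology by finite stages let $F$ commute with $\mathrm{Tot}$; then extend to all objects using right completeness and compatibility of $F$ with Postnikov truncations. This is precisely the shape of the argument in \cite[D.6.3]{sag}.

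The main obstacle is this last step: handling unbounded objects and making sure the $t$-exactness/completeness arguments genuinely go through with the $\BZ/2\BZ$-grading. My fallback is to reduce to the classical case: the underlying-module functor $\mathbf{SD}_\infty(\BA)\to \derived_\infty(R)$ is conservative, exact, and commutes with limits and colimits, and the underlying algebra of a derived supercommutative $\BS$ is an $\mathbb{E}_1$-algebra whose bimodule descent along $\BS\to\BT$ can be checked on underlying modules; $\BH^0$-level faithful flatness of a supercommutative algebra map implies faithful flatness of the even parts, allowing one to invoke classical descent for the conservativity and totalization checks and then lift back using that all functors involved commute with the forgetful functor.
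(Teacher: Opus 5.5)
Your architecture is the paper's: reduce to a single faithfully flat $\varphi\colon\BS\to\BT$ via finite products, then apply the Beck--Chevalley form of Barr--Beck--Lurie \cite[Cor.~4.7.5.3]{ha}, with adjointability supplied by the forgetful functors. It then remains to check that $F=(-)\otimes^{\mathrm{super}}_{\BS}\BT$ is conservative and preserves totalizations of $F$-split cosimplicial objects. Your conservativity argument is fine, and more direct than the paper's route through the triangle involving $\mathrm{cone}(\varphi)$. You do not need the ``top nonvanishing degree'' reduction: $t$-exactness of flat base change gives $\BH^n(\BM\otimes^{\mathrm{super}}_{\BS}\BT)\cong\BH^n(\BM)\otimes_{\BH^0(\BS)}\BH^0(\BT)$ for every $n$ and every $\BM$.

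The gap is the totalization step, which you flag yourself.

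\emph{What fails.} Your bounded case needs the cosimplicial object to be uniformly bounded \emph{below} in the paper's cohomological grading, i.e.\ coconnective after a shift. It holds for any exact $t$-exact functor and never uses that $M^\bullet$ is $F$-split. So the whole weight of the hypothesis falls on the extension ``by completeness and compatibility with truncations''. That extension requires $F$ to commute with the sequential limit $\lim_n\mathrm{Tot}(\tau^{\geqslant -n}M^\bullet)$, and completeness plus $t$-exactness do not give this. It is false without further input. By the stable Dold--Kan correspondence, the tower $\{\prod_{s\leqslant n}\BZ\}_n$ is $\{\mathrm{Tot}_nM^\bullet\}_n$ for a cosimplicial complex $M^\bullet$ with $M^n$ in degrees $[-n,0]$. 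Base change along the faithfully flat map $\BZ\to\BQ\times\widehat{\BZ}$ does not commute with its totalization $\prod_s\BZ$.

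\emph{The missing idea.} This is the heart of the paper's argument: apply faithful flatness to the cohomology of $M^\bullet$ itself.
\begin{enumerate}
\item $\BH^j(FM^\bullet)\cong\BH^0(\BT)\otimes_{\BH^0(\BS)}\BH^j(M^\bullet)$ is split, so its cochain complex has no cohomology in positive degrees.
\item By faithful flatness, the same holds for $\BH^j(M^0)\to\BH^j(M^1)\to\cdots$.
\item The spectral sequence for $\mathrm{Tot}$ then degenerates, giving $\BH^j(\mathrm{Tot}\,M^\bullet)\cong\ker\bigl(\BH^j(M^0)\to\BH^j(M^1)\bigr)$, and similarly for $FM^\bullet$.
\item Flatness identifies these kernels after base change, so the comparison map $F(\mathrm{Tot}\,M^\bullet)\to\mathrm{Tot}(FM^\bullet)$ is an isomorphism on every $\BH^j$.
\end{enumerate}
This vanishing is also exactly what would make your truncation tower stabilize, since it forces $\mathrm{Tot}(\BH^{-n}(M^\bullet)[n])$ to be concentrated in degree $-n$. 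Once you have it, the bounded case is unnecessary.

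\emph{The fallback.} It does not close the gap either. The underlying object of a derived supercommutative algebra is only an $\mathbb{E}_1$-algebra, because the forgetful functor to $\derived_\infty(R)$ is monoidal but not symmetric monoidal. There is no classical faithfully flat descent theorem to appeal to there. Identifying $\mathbf{SD}_\infty$ of the \v{C}ech terms with categories of underlying modules would itself need proof.
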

The proof is essentially the same as the proof of faithfully flat descent for derived $\infty$-categories in \cite[\S D.6]{sag} with ``super'' added where needed. We still revisit it in \cref{sec:flat_descent_proof} for the sake of the reader.
\section{Derived algebraic supergeometry}
\label{sec:dasg}
One of the leading motivations for introducing derived algebraic geometry is that, in many cases of interest, obstruction theories (and thus virtual fundamental classes, enumerative invariants, and so forth) arise from \textit{derived enhancements} of classical moduli spaces/stacks. This is what happens for the usual perfect obstruction theory on the moduli stack of stable maps yielding Gromov--Witten invariants. This is why we need to introduce the formalism of \textit{derived algebraic supergeometry}, which is a key input in order to adapt many constructions and results from derived algebraic geometry to supergeometry.

Let us emphasize that the idea of extending homotopical and derived tools to the super setting fits into a currently quite lively field of inquiry. In a differential context, derived supergeometry was first introduced in \cite{carchedi_derived_supergeometry}; during the final stages of preparation of this current paper, derived \textit{algebraic }methods were introduced in \cite{dang_derived_supergeometry}. In both cases, the main focus seemed to be the theory of D-modules and Higgs sheaves. It is also worth mentioning the foundational papers \cite{hesselholt_pstragowski_I,hesselholt_pstragowski_II} that introduced derived methods in the closely related (but different) field of \textit{Dirac geometry}, concerning algebras in the $\mathbf{Z}$-graded (rather than in the $\mathbf{Z}/2$-graded) setting. 

In this section, we describe the general characters and features of derived algebraic supergeometry and relate them to the analogs from usual derived algebraic geometry. The well-definedness of these concepts can be deduced from the affine case together with appropriate descent arguments.

\subsection{Accessible presheaves and derived superstacks}
Let $\mathscr{C}$ be a small $\infty$-category. The $\infty$-category of presheaves of spaces (or homotopy types, or anima) over $\mathscr{C}$ is the functor $\infty$-category
\[
\mathcal{P}(\mathscr{C})\defeq \mathrm{Fun}(\mathscr{C}^{\mathrm{op}},\mathrm{Spaces}).
\]
If moreover $\mathscr{C}$ (or equivalently, its homotopy category $\mathrm{h}\mathscr{C}$) is equipped with a Grothendieck topology $\tau$, then one can consider the full sub-$\infty$-category $\mathrm{Shv}_\tau(\mathscr{C})$ of $\mathcal{P}(\mathscr{C})$ spanned by \textit{sheaves} of spaces. Assuming for simplicity that $\mathscr{C}$ admits all colimits, sheaves can be described as those functors $F\colon\mathscr{C}^{\mathrm{op}}\to\mathrm{Spaces}$ satisfying the following property: let $\left\{\varphi_i\colon U_i\to X\right\}_{i\in I}$ be a $\tau$-covering in $\mathscr{C}$, and let $\varphi\colon\coprod_i U_i\to X$ be the resulting morphism. Let $\mathrm{\check{C}}(\varphi)$ denote the \v{C}ech nerve of $\varphi$. Then the natural comparison map
\[
\begin{tikzcd}
F(X)\arrow{r} & \lim_{[n]\in\bm{\Delta}} F(\mathrm{\check{C}}(\varphi)_n)
\end{tikzcd}
\]
is a homotopy equivalence. 

The inclusion $\mathrm{Shv}_\tau(\mathscr{C})\subseteq\mathcal{P}(\mathscr{C})$ always commutes with all limits; in fact, $\mathrm{Shv}_\tau(\mathscr{C})$ is a \textit{topological localization} of $\mathcal{P}(\mathscr{C})$, in the sense that there exists a sheafification functor 
\[
\begin{tikzcd}
(-)^\dagger\colon\mathcal{P}(\mathscr{C})\arrow{r} & \mathrm{Shv}_\tau(\mathscr{C})
\end{tikzcd}
\]
behaving as a left adjoint to the inclusion functor, which moreover preserves finite colimits.

However, for a bigger $\infty$-category such as $\mathrm{dSCAlg}^{\leqslant0}_{\BA}$, where $\BA$ is a connective derived supercommutative $R$-algebra, size issues provide obstructions to the existence of a sheafification functor for the fpqc topology. This is why we restrict ourselves to \textit{accessible presheaves}. We first set some terminology.
\begin{notation}
Let $\BA$ be an arbitrary connective derived supercommutative $R$-algebra. We denote by $\mathrm{dSAff}_{\BA}$ the opposite $\infty$-category of $\mathrm{dSCAlg}_{\BA}^{\leqslant0}$, and call it the $\infty$-category of \emph{derived affine superschemes}. For a derived supercommutative $\BA$-algebra $\BS$, we denote by $\Spec \BS$ the corresponding object in $\mathrm{dSAff}_{\BA}$.
\end{notation}
\begin{definition}
\label{def:derived-super-prestack}
Let $\BA$ be an arbitrary connective derived supercommutative $R$-algebra. A \textit{derived pre-superstack over $\BA$} is a presheaf $F\colon \mathrm{dSAff}^{\mathrm{op}}_{\BA}\to\mathrm{Spaces}$ that is \textit{accessible}: there exists a small regular cardinal $\kappa$ such that $F$ preserves filtered $\kappa$-colimits.

We denote by $\mathrm{dSPSt}_{\BA}$ the full sub-$\infty$-category of $\mathcal{P}(\mathrm{dSAff}_{\BA})$ spanned by derived pre-superstacks over $\BA$.
\end{definition}
The $\infty$-category $\mathrm{dSAff}_{\BA}$ admits Grothendieck topologies $\tau_{\text{fpqc}}$ and $\tau_{\text{ét}}$ that are the super-analog of the fpqc and étale topologies of commutative rings; see \cref{sec:grothendieck_topologies} for definitions and further details. In particular, it makes sense to consider sheaves with respect to those topologies.

\begin{definition}[Derived superstack]
\label{def:derived-superstack}
Let $\BA$ be a connective derived supercommutative algebra. A \textit{derived $\tau_\ast$-superstack} over $\BA$ is a derived pre-superstack which is a sheaf for the Grothendieck topology $\tau_\ast$, where $\ast\in\left\{\text{fpqc},\text{ét}\right\}.$ For the rest of the paper, we shall assume the choice of one of these two topologies and leave $\tau_\ast$ implicit; in particular, we shall call a derived $\tau_\ast$-superstack simply a \textit{derived superstack}.

We denote by $\mathrm{dSSt}_{\BA}\subseteq\mathrm{dSPSt}_{\BA}$ the full sub-$\infty$-category spanned by derived superstacks over $\BA$. 
\end{definition}
\begin{remark}
The following facts are well-known to practitioners of derived algebraic geometry; we still revisit them for the sake of the less accustomed reader.
\begin{remarkenum}
\item In the classical (not super/not derived) setting, the underlying functor of a stack is accessible if and only if the stack is locally of finite presentation. So, in some way, we are imposing further smallness conditions on our prestack so as to ensure the existence of a stackification functor.
\item Another way to ensure the existence of a fpqc stackification functor is to consider the $\infty$-category of presheaves of \textit{large} spaces $\widehat{\mathcal{P}}(\mathrm{dSAff}_{\BA})\defeq\mathrm{Fun}(\mathrm{dSAff}_{\BA}^{\mathrm{op}},\widehat{\mathrm{Spaces}})$. Accessible presheaves then embed fully faithfully inside $\widehat{\mathcal{P}}(\mathrm{dSAff}_{\BA})$, and their image is contained in the sub-$\infty$-category of large presheaves whose fpqc-stackification is a sheaf of \textit{small} spaces.
\end{remarkenum}
For further inquiries on the matter, we redirect the curious reader to \cite[App.~A]{hesselholt_pstragowski_II} and to the more concise account in \cite[Remarks 3.1.6 and 3.1.7]{gregoric2025periodizationspectralstacks}.
\end{remark}
\begin{example}We provide some natural examples of derived superstacks.
\begin{exmpenum}
\item Let $\BS$ be a derived supercommutative $\BA$-algebra. Then the derived affine scheme $\Spec \BS$ represents a derived pre-superstack
\[
\begin{tikzcd}
\mathrm{Map}_{\mathrm{dSAff}_{\BA}}(-,\Spec\BS)\colon\mathrm{dSAff}^{\mathrm{op}}_{\BA}\arrow{r} & \mathrm{Spaces},
\end{tikzcd}
\]
that we shall call again $\Spec\BS$. This is a derived superstack for the fpqc topology (hence, for the étale topology): in particular, they are both subcanonical topologies. We argue as in \cite[Prop.~A.5.7.2]{sag}. The fact that $\Spec\BS$ sends colimits of derived affine superschemes to limits of spaces is clear, because of the natural equivalence
\[
\mathrm{Map}_{\mathrm{dSAff}_{\BA}}(-,\Spec\BS)\simeq\mathrm{Map}_{\mathrm{dSCAlg}_{\BA}}(\BS,-).
\]
In particular, $\Spec\BS$ sends finite coproducts of derived affine superschemes to products. Therefore, we are left to prove that any flat cover of derived affine superschemes $\varphi\colon\Spec \widetilde{\BT}\to\Spec\BT$ identifies $\Spec\BT$ as the colimit of the \v{C}ech nerve of $\varphi$. In other words, we need to prove that the morphism of derived supercommutative algebras
\[
\begin{tikzcd}
\BT\arrow{r} & \displaystyle \lim_{[n]\in\bm{\Delta}}\widetilde{\BT}\otimes^{\mathrm{super}}_{\BT}\cdots\otimes^{\mathrm{super}}_{\BT}\widetilde{\BT}
\end{tikzcd}
\]
is an equivalence if $\varphi$ is faithfully flat. Since the forgetful functor $\mathrm{dSCAlg}_{\BA}\to\mathbf{SD}_\infty(\BA)$ is conservative and preserves all limits, we can reduce ourselves to check that the above map is an equivalence of derived supermodules; but this claim is an immediate consequence of \cref{thm:flat_descent}.
\item As one could expect, any superstack as in \cref{sec:superstacks} is naturally a derived superstack. Indeed, the $\infty$-categorical Grothendieck construction of \cite[Thm.~2.2.1.2]{htt} provides an equivalence between fibrations in groupoids with descent data, and sheaves for the \'etale topology with values in $1$-truncated spaces. In particular, any superscheme or algebraic superspace is also a derived superstack.
\item If $\BCal X$ and $\BCal Y$ are derived superstacks, then their product $\BCal X\times\BCal Y$ is a derived superstack. Indeed, sheaves for a Grothendieck topology on a small $\infty$-site are always closed under arbitrary limits inside all presheaves.
\item The $\infty$-category $\mathrm{dSSt}_{\BA}$ is not an $\infty$-topos in general (it is ``too large'', being not presentable); but it is up to enlarging our universe of choice. In particular, if $\BCal X$ and $\BCal Y$ are derived superstacks, we have a derived superstack of morphisms
\[
\begin{tikzcd}
\smash{\underline{\mathbb{R}\mathrm{Hom}}}(\BCal X,\BCal Y)\colon\mathrm{dSAff}^{\mathrm{op}}_{\BA}\arrow{r} & \mathrm{Spaces}.
\end{tikzcd}
\]
This property is inherited by any slice $\infty$-category of $\mathrm{dSSt}_{\BA}$ as well. Namely, let $\BCal{W}$ be an arbitrary derived superstack. Then, for any $\BCal X\to\BCal{W}$ and $\BCal Y\to\BCal{W}$ derived superstacks over $\BCal{W}$ there exists a derived superstack of \textit{relative} morphisms
\[
\begin{tikzcd}
\smash{\underline{\mathbb{R}\mathrm{Hom}}}_{/\BCal{W}}(\BCal X,\BCal Y)\colon(\mathrm{dSAff}_{/\BCal{W}})^{\mathrm{op}}\arrow{r} & \mathrm{Spaces}.
\end{tikzcd}
\]
\end{exmpenum}
\end{example}
We now turn to the relationship between derived stacks and derived superstacks over ordinary commutative rings.
\begin{construction}[{See \cite[$\S3.6$]{dang_derived_supergeometry}}]
Fix a (not super) derived commutative ring $R$. Then we have an ambidextrous adjunction
\[
\adjunction{(-)_0}{\mathbf{D}_\infty(R)}{\mathbf{SD}_\infty(R)}{(-)^{\mathrm{even}},}
\]
where $(-)_0$ is a fully faithful embedding. This functor views a derived $R$-module as a $R$-supermodule in pure even degree, so it preserves all limits and colimits. Its right adjoint considers only the even part of a derived $R$-supermodule. Since $(-)_0$ is strongly monoidal, such right adjoint is lax monoidal, so the above adjunction lifts to the level of commutative algebras
\[
\adjunction{(-)_0}{\mathrm{dCAlg}_R}{\mathrm{dSCAlg}_R}{(-)^{\mathrm{even}}.}
\]
The inclusion still preserves all limits and colimits, so it admits a left adjoint itself. Passing to the $\infty$-categories of prestacks (and, then, of stacks), we obtain adjunctions
\[
\adjunction{\iota}{\mathrm{dSt}_R}{\mathrm{dSSt}_R}{(-)_{\mathrm{bos}}}\quad\text{ and }\quad\adjunction{\mathrm{L}}{\mathrm{dSSt}_R}{\mathrm{dSt}_R}{\iota.}
\]
The functor $\iota$ is, again, a fully faithful inclusion; its right adjoint agrees with the bosonic reduction; and the left adjoint takes the even part of a derived superstack. If $X\simeq\Spec\BA$ is affine, then $(\Spec\BA)^{\mathrm{bos}}\simeq\Spec \BA^{\mathrm{bos}}$, where $\BA^{\mathrm{bos}}$ is the derived commutative ring of \cref{remark:bosonic_reduction}.
\end{construction}
We finish this small section with the definition of \textit{truncated} and \textit{derived enhancements} of superstacks, which will play a key role in our main result. This is a generalization of \cite[Ch.~$2$, $\S1.3$]{studyindag1} to the super-setting.
\begin{definition}
Let $\Spec\BS$ be a derived affine superscheme over an underived supercommutative ring $\BA$. We say that $\Spec\BS$ is $n$-coconnective for some integer $n\geqslant0$ if $\BH^k(\BS)\cong0$ for all $k\leqslant -n-1$. We let $\mathrm{dSAff}^{\geqslant-n}_{\BA}$ denote the full sub-$\infty$-category of derived affine superschemes spanned by $n$-coconnective derived superschemes.
\end{definition}
By their very definition, the Grothendieck topologies on derived affine superschemes restrict well to $n$-coconnective derived affine superschemes; this yields an $\infty$-category of \textit{$n$-truncated derived superstacks}, that we denote as $\mathrm{dSSt}_{\BA}^{\geqslant -n}$. For any integer $n\geqslant0$, we have an adjunction
\[
\adjunction{\iota_n}{\mathrm{dSSt}_{\BA}^{\geqslant-n}}{\mathrm{dSSt}_{\BA}}{\mathfrak{t}_n.}
\]
The left adjoint is fully faithful and produces a left Kan extension along the inclusion $\mathrm{dSCAlg}^{[-n,0]}_{\BA}\subseteq\mathrm{dSCAlg}^{\leqslant0}_{\BA}$.
\begin{definition}
Let $\BCal X$ be a derived superstack over an underived supercommutative algebra $\BA$. Its \textit{classical truncation} is the higher superstack $\mathfrak{t}_0\BCal X$.

Given a classical superstack $\BCal{X}$, we say that $\BCal X$ admits a \textit{derived enhancement} if there exists a derived superstack $\BR\BCal X$ and an equivalence $\mathfrak{t}_0\BR\BCal X\simeq \BCal X$. 
\end{definition}
\begin{remark}
\label{rmk:truncated-derived-enhanc}
We collect some useful facts regarding truncations and derived enhancements of superstacks, generalizing well-known results in derived algebraic geometry (see for example \cite[$\S4.3$]{MR2483943}). The proofs are formally identical, hence they are omitted.
\begin{remarkenum}
\item If a derived superstack $\BCal{X}\simeq\Spec\BS$ is affine, its truncation $\mathfrak{t}_n\BCal{X}$ is very easy to compute: it is just the affine superscheme corresponding to $\tau^{\geqslant-n}\BA$, where $\tau^{\geqslant-n}$ is the coconnective cover of the $t$-structure of \cref{prop:t-structure}. For $n=0$, in particular, it is an affine superscheme corresponding to the underived supercommutative algebra $\BH^0(\BS)$.
\item For any higher superstack $\BCal{X}\colon\mathrm{SCAlg}_{\BA}\to\mathrm{Spaces}$, its left Kan extension $\iota_0\BCal{X}$ is a trivial derived enhancement of $\BCal X$.
\item Since $\mathfrak{t}_0$ is a right adjoint, it follows that if we have underived superstacks $\left\{ \BCal{X}\right\}_{i\in I}$ then the natural map
\[
\begin{tikzcd}
\mathfrak{t}_0\iota_0\left(\lim_{i\in I}\BCal{X}_i\right)\arrow{r} & \mathfrak{t}_0\left(\lim_{i\in I}\iota_0(\BCal{X}_i)\right)
\end{tikzcd}
\]
is an equivalence; therefore, the homotopy limit $\lim_i \iota_0(\BCal{X}_i)$ is a derived enhancement of the ordinary limit $\iota_0(\lim_i \BCal{X}_i)$. Notice however that these two limits often fail to be equivalent: for example, in the case of a diagram of affine superschemes $\Spec\BA\to\Spec \BB\leftarrow\Spec \BC$, the homotopy limit computes the spectrum of the derived tensor product $\BA\otimes^{\mathrm{super}}_{\BB}\BC$, while the ordinary limit is the ordinary spectrum of the (underived) tensor product. The above morphism between limits is readily identified as the closed immersion corresponding to the projection
\[
\begin{tikzcd}
\BA\otimes^{\mathrm{super}}_{\BB}\BC\arrow{r} & \BH^0(\BA\otimes^{\mathrm{super}}_{\BB}\BC).
\end{tikzcd}
\]
\item \label{flat-implies-Hom} Let $\BCal{X}$ and $\BCal{Y}$ be underived superstacks. The superstack of morphisms $\iota_0\smash{\underline{\mathrm{Hom}}}(\BCal{X},\BCal{Y})$ is not equivalent to the \textit{derived} superstack $\smash{\underline{\mathbb{R}\mathrm{Hom}}}(\iota_0(\BCal{X}),\iota_0(\BCal{Y}))$. Since the former is defined in terms of classical (underived) fiber products while the latter is defined in terms of derived pullbacks, for general $\BCal{X}$ and $\BCal{Y}$ it is not even true that  $\smash{\underline{\mathbb{R}\mathrm{Hom}}}(\iota_0(\BCal{X}),\iota_0(\BCal{Y}))$ is a derived enhancement of $\iota_0\smash{\underline{\mathrm{Hom}}}(\BCal{X},\BCal{Y})$. However, this is true if $\BCal{X}$ is flat over the base. 
\end{remarkenum}
\end{remark}
\subsection{Geometric derived superstacks and representable morphisms}
Given a derived supercommutative algebra $\BA$, the $\infty$-category $\mathrm{dSSt}_{\BA}$ is still very large and---for most purposes---as wild as $\mathrm{dSPSt}_{\BA}$. In particular, for an arbitrary derived superstack its superderived $\infty$-category may be too large, or it may fail to have a well-defined cotangent complex, and so forth.

In \cite{hag_2}, To\"en and Vezzosi single out a class of \textit{geometric} derived stacks, generalizing the notion of Artin stacks from algebraic geometry. More recently, this notion has been extended to the supergeometric realm in \cite{hesselholt_pstragowski_II,dang_derived_supergeometry}, mirroring the original definition in derived algebraic geometry by To\"en--Vezzosi. We record the following piece of terminology, following \cite{studyindag2}.

\begin{definition}
Let $k\geqslant0$ be an integer, and let $\mathcal{P}$ be a property of morphisms of derived supercommutative algebras which is stable under composition, base change, and equivalences (e.g., $\mathcal{P}$ is flat, étale, smooth, disjoint union of Zariski open immersions, surjective...). Let $\BA$ be a derived supercommutative algebra, and let $f\colon\BCal X\to\BCal Y$ be a morphism of derived superstacks over $\BA$.
\begin{defenum}
\item The derived superstack $\BCal X$ is $0$-geometric if it is equivalent to an arbitrary (small) coproduct of derived affine superschemes:
\[
\BCal X\simeq\coprod_{i\in I}\Spec\BS_i.
\]
If $\BCal X$ is $0$-geometric and $\BCal Y\simeq\Spec\BT$ is derived affine, we say that $f$ has property $\mathcal{P}$ if for all $i\in I$ the composition $\Spec\BS_i\to\BCal Y$ has property $\mathcal{P}$.
\item  For general $\BCal Y$, the morphism $f$ is $0$-representable if for any morphism $\Spec\BA\to\BCal Y$ the fiber product $\Spec A\times_{ \BCal Y} \BCal X$ is $0$-geometric. In this case, we say that it has property $\mathcal{P}$ if the induced morphism $\Spec\BA\times_{ \BCal Y} \BCal X\to\Spec\BA$ has property $\mathcal{P}$ in the above sense.
\end{defenum}
Assume we have defined $n$-geometric derived superstacks, $n$-representable morphisms, and what does it mean for an $n$-representable morphism to have property $\mathcal{P}$, for $0\leqslant n \leqslant k-1$.
\begin{defenum}
\setcounter{defenumi}{2}
\item The derived superstack $\BCal X$ is $k$-geometric if the diagonal $\BCal X\to\BCal X\times\BCal X$ is $(k-1)$-representable, and there exists a $(k-1)$-representable morphism from a $(k-1)$-geometric derived superstack $\BCal Z\to\BCal X$ that is smooth and surjective (as a $(k-1)$-representable morphism). We call it a \textit{smooth atlas} for $\BCal X$. If $\BCal X$ is $k$-geometric and $\BCal Y$ is derived affine, we say that $f$ has property $\mathcal{P}$ if the above (equivalently, any) smooth atlas $\BCal Z$ has property $\mathcal{P}$.
\item For general $\BCal Y$, the morphism $f$ is $k$-representable if for any morphism $\Spec\BA\to\BCal Y$ the fiber product $\Spec A\times_{\BCal Y}\BCal X$ is $k$-geometric. In this case, we say that it has property $\mathcal{P}$ if the induced morphism $\Spec\BA\times_{\BCal Y}\BCal X\to\Spec\BA$ has property $\mathcal{P}$.
\end{defenum}
\end{definition}

\begin{definition}
\label{def:geometricity}
We say that a derived superstack $\BCal X$ is \emph{geometric} if it is $k$-geometric for some $k\geqslant0$, and we say that it is \emph{locally geometric} if it is covered by a family of geometric derived open substacks.    
\end{definition}

\begin{remark}
By definition, if $\varphi\colon\BCal Z\to\BCal X$ is a smooth atlas for a derived superstack $\BCal X$, then it is also an étale covering. In particular, we have an equivalence of derived superstacks
\[
\BCal X\simeq\colim_{[n]\in\bm{\Delta}^{\mathrm{op}}}\mathrm{\check{C}}(\varphi)_n.
\]
This can be used to prove by induction that, indeed, geometric derived superstacks satisfy descent (hence they deserve the term ``superstack'' rather than ``prestack'').
\end{remark}

\begin{definition}
Let $\BCal X$ be a $1$-geometric derived superstack. We say that $\BCal X$ is a \textit{derived superscheme} if the $0$-geometric atlas $\BCal Z\to\BCal X$ is a disjoint union of Zariski open immersions.
\end{definition}
\begin{remark}
By definition, if $\BCal X$ is a derived superscheme then the classical truncation $\mathfrak{t}_0\BCal X$ is a superscheme and $\mathfrak{t}_0\BCal Z$ is a Zariski atlas. In virtue of the topological invariance of \cref{thm:topological_invariance}, it follows that the converse holds as well: if $\BCal X$ is a geometric derived superstack whose underlying classical superstack $\mathfrak{t}_0\BCal X$ is a superscheme, then the Zariski atlas uniquely lifts to a Zariski atlas of $\BCal X$. More generally, \cref{thm:topological_invariance} implies that for a geometric derived superstack there is an equivalence of $\infty$-groupoids
\[
\mathrm{dSSt}^{\text{Zar}}_{/\BCal X}\simeq \mathrm{\SSt}^{\text{Zar}}_{/\mathfrak{t}_0\BCal X}
\]
between Zariski open derived substacks of $\BCal X$ and Zariski open substacks of $\mathfrak{t}_0\BCal X$. (See also \cite[Prop.~2.1]{Schurg_Toen_Vezzosi_Determinants_of_perfect_complexes} for the analogous statement in derived algebraic geometry.)
\end{remark}
\subsection{Superderived \texorpdfstring{$\infty$}{∞}-categories of derived superstacks}
In \cref{sec:homological_super_algebra}, we developed what essentially amounts to \textit{affine} derived algebraic supergeometry. In this section, we globalize the main definitions introduced in \cref{sec:homological_super_algebra} in order to make notions such as superderived $\infty$-categories, perfect supermodules and cotangent complexes meaningful for arbitrary derived super(pre)stacks. The main technical input is \cref{thm:flat_descent}, which states that the functor $\mathbf{SD}_\infty(-)$ satisfies descent for the flat topology of connective supercommutative algebras. 

Recall the functor \eqref{functor:derived_mod}, given by the assignment $\BA\mapsto\mathbf{SD}_\infty(\BA)$. It is a sheaf of symmetric monoidal presentable and $\BA$-linear $\infty$-categories; considering the right Kan extension along the Yoneda embedding $\mathrm{dSCAlg}_{\BA}^{\leqslant0}\subseteq\mathrm{dSPSt}_{\BA}$ yields a functor
\begin{equation}
\label{functor:derived_qcoh}
\begin{tikzcd}
\mathbf{SD}_\infty(-)\colon\mathrm{dSPSt}_{\BA}\arrow{r} & \smash{\widehat{\mathrm{Cat}}^{\mathrm{L},\otimes}_{\infty}}.
\end{tikzcd}
\end{equation}
Here, $\smash{\widehat{\mathrm{Cat}}^{\mathrm{L},\otimes}_{\infty}}$ denotes the $\infty$-category of (possibly large) $\infty$-categories admitting all colimits and a symmetric monoidal structure compatible with colimits separately in each variable, together with strongly monoidal and colimit-preserving functors. The reason why the target is not $\mathrm{Pr}^{\mathrm{L},\otimes}_{\BA}$ anymore is that, in general, this is not closed inside $\smash{\widehat{\mathrm{Cat}}^{\mathrm{L},\otimes}_{\infty}}$ under \textit{large} limits. 
\begin{definition}
\label{def:derived_qcoh}
Let $\BCal{X}$ be a derived pre-superstack. Its \textit{superderived $\infty$-category} $\mathbf{SD}_\infty(\BCal{X})$ is the image of $\BCal{X}$ under the functor \eqref{functor:derived_qcoh}. Explicitly, this is computed as the limit of presentable symmetric monoidal $\infty$-categories
\[
\mathbf{SD}_\infty(\BCal{X})\defeq\lim_{\Spec\BS\in\mathrm{dSAff}_{/\BCal{X}}^{\mathrm{op}}}\mathbf{SD}_\infty(\BS).
\]
We call an object of $\mathbf{SD}_\infty(\BCal{X})$ a \textit{derived quasicoherent sheaf} over $\BCal{X}$.
\end{definition}
By definition, for any derived pre-superstack $\BCal{X}$ the superderived $\infty$-category $\mathbf{SD}_\infty(\BCal{X})$ is stable, admits all limits and colimits, and it is equipped with a tensor product that commutes with colimits separately in each variable. Limits, colimits and tensor products in $\mathbf{SD}_\infty(\BCal{X})$ are computed locally on all sections over derived affine superschemes $\Spec\BA\to\BCal{X}$. Moreover, since the obvious forgetful functor $\smash{\widehat{\mathrm{Cat}}^{\mathrm{L},\otimes}_{\infty}}\to \smash{\widehat{\mathrm{Cat}}_\infty}$ is conservative and preserves all limits, \cref{def:derived_qcoh} formalizes the intuition that a derived quasicoherent sheaf over a derived pre-superstack $\BCal{X}$ is a family of derived $\BA$-supermodules parametrized by all $\BA$-rational points of $\BCal{X}$, together with a homotopy-coherent system of cocycle conditions allowing us to glue local sections over arbitrary (derived) intersections. 
\begin{remark}
Flat descent for supermodules immediately implies the following facts concerning derived quasicoherent sheaves over derived superstacks.
\begin{remarkenum}
\item If $\BCal{X}$ is a derived pre-superstack with stackification $\BCal{X}^\dagger$, then the canonical functor
\[
\begin{tikzcd}
\mathbf{SD}_\infty(\BCal{X}^\dagger)\arrow{r} & \mathbf{SD}_\infty(\BCal{X})
\end{tikzcd}
\]
is an equivalence. 
\item If $\Spec\BA$ is a derived affine superscheme, then obviously $\mathbf{SD}_\infty(\BA)\simeq\mathbf{SD}_\infty(\Spec\BA)$. More generally, if we have an étale cover $\varphi\colon\coprod_{i\in I}\Spec \BA_i\to\BCal{X}$ then the canonical functor
\[
\begin{tikzcd}
\mathbf{SD}_\infty(\BCal{X})\arrow{r} & \displaystyle\lim_{[n]\in\bm{\Delta}}\mathbf{SD}_\infty(\mathrm{\check{C}}(\varphi)_n)
\end{tikzcd}
\]
is an equivalence.
\item If $\BCal{X}$ is a derived pre-superstack generated under small colimits by the essential image of the Yoneda embedding $\mathrm{dSAff}_{\BA}\subseteq\smash{\widehat{\mathrm{dSSt}}}_{\BA}$, the superderived $\infty$-category $\mathbf{SD}_\infty(\BCal{X})$ is actually a \textit{presentable} $\infty$-category (i.e., the limit in \cref{def:derived_qcoh} is still an object in $\mathrm{Pr}^{\mathrm{L},\otimes}_{\BA}$). This is true for example for all geometric derived superstacks.
\item By definition, $\mathbf{SD}_\infty(-)$ is a functor: therefore, for any morphism of derived pre-superstacks $f\colon\BCal{X}\to\BCal{Y}$ there is an induced pullback functor $f^\ast\colon\mathbf{SD}_\infty(\BCal{Y})\to\mathbf{SD}_\infty(\BCal{X})$ that commutes with all colimits and preserves the monoidal structure. By abstract nonsense, if both derived pre-superstacks are actually geometric superstacks (or, more generally, they are such that the superderived $\infty$-category is presentable), then the functor $f^\ast$ admits a right adjoint $f_\ast$.
\item\label{Dperf} Since the symmetric monoidal structure of $\mathbf{SD}_\infty(\BCal{X})$ is described by its behaviour over all local sections, it follows that fully dualizable objects of $\mathbf{SD}_\infty(\BCal{X})$ are the same as quasicoherent sheaves whose pullback along any morphism $\Spec\BA\to \BCal{X}$ is fully dualizable (i.e., perfect) over $\BA$. We call the full sub-$\infty$-category spanned by fully dualizable objects the \textit{$\infty$-category of perfect complexes} over $\BCal{X}$, and denote it as $\mathrm{SPerf}(\BCal{X})$. Notice that in general a perfect complex over a derived pre-superstack may not be compact (in the sense of \cref{prop:compact}). 
\item\label{free-rank-global}Just like in the affine case, for any derived superstack $\BCal X$ and integers $m,n\geqslant1$, it makes sense to define a \textit{free sheaf of rank $m|n$} over $\BCal X$ as an object $\mathcal{E}$ inside $\mathbf{SD}_\infty(\BCal X)$ such that
\[
\mathcal{E}\simeq\mathscr{O}_{\BCal X}^{\oplus m}\oplus\Pi \mathscr{O}_{\BCal X}^{\oplus n}.
\]
In virtue of the descent for superderived $\infty$-categories and of \cref{prop:rank-descent}, the condition of being free of rank $m|n$ can be tested locally on $\BCal X$. Similarly, we can define \textit{locally free sheaves} of rank $m|n$ over $\BCal X$ as objects $\mathcal{E}\in\mathbf{SD}_\infty(\BCal X)$ such that for every morphism $\varphi\colon\Spec\BA\to\BCal X$ the pullback $\varphi^\ast\mathcal{E}$ is locally free of rank $m|n$ in the sense of \cref{def:free-rank}.
\item For any derived super(pre)stack $\BCal{X}$ for which $\mathbf{SD}_\infty(\BCal{X})$ is presentable, it makes sense to define a $t$-structure in the following way: a quasicoherent sheaf $\mathcal{F}\in\mathbf{SD}_\infty(\BCal{X})$ is connective if and only if for every morphism $x\colon\Spec\BA\to\BCal{X}$ the pullback $x^\ast\mathcal{F}$ is connective for the canonical $t$-structure on $\mathbf{SD}_\infty(\BA)$ of \cref{prop:t-structure}. This condition defines an accessible sub-$\infty$-category $\mathbf{SD}_\infty(\BCal{X})^{\leqslant0}\subseteq\mathbf{SD}_\infty(\BCal{X})$ closed under colimits, hence an accessible $t$-structure. If $\BCal{X}$ admits an étale cover $\varphi\colon\coprod_i\Spec \BA_i\to\BCal{X}$ (for example, if $\BCal{X}$ is geometric), then an object $\mathcal{F}\in\mathbf{SD}_\infty(\BCal{X})$ is coconnective if and only if for all $\varphi_i\colon\Spec \BA_i\to\BCal{X}$ the pullback $\varphi_i^\ast\mathcal{F}$ is coconnective for the $t$-structure on $\mathbf{SD}_\infty(\BA_i)$ of \cref{prop:t-structure}. This follows from the fact that, by definition, if $\eta\colon\Spec\BS\to\BCal{X}$ is flat then the pullback functor $\eta^\ast$ is $t$-exact. Moreover, in this case, the $t$-structure on $\mathbf{SD}_\infty(\BCal{X})$ is both left and right complete, and the heart of the $t$-structure agrees with the usual abelian category of quasicoherent sheaves over the superstack $\mathfrak{t}_0\BCal X$.
\end{remarkenum}
\end{remark}

\begin{remark}
For underived algebraic stacks, the lisse-\'etale site is the most common choice in order to build a well-behaved (triangulated) derived category of quasicoherent sheaves. Yet, it lacks good functoriality properties. That is, given a morphism of algebraic stacks $f\colon\CX\to\CY$ this does not induce a geometric morphism between the corresponding lisse-\'etale topoi. However, in order to glue cotangent supercomplexes of derived supercommutative algebras covering a derived superstack, it is crucial that morphisms of derived superstacks induce geometric morphisms between the corresponding $\infty$-topoi: this is why we define the derived $\infty$-category of derived superstacks using the fpqc/\'etale topology. This is not an issue: as proved in \cite[Ch.~$1$, $\S2.4$]{studyindag1}, when $\CX$ is a classical algebraic stack which is quasi-compact and quasi-separated, then the triangulated homotopy category of the derived stable $\infty$-category of $\CX$ defined via the \'etale site and the usual derived category defined via the lisse-\'etale as defined in \cite{LMB} agree, at least when restricting to eventually coconnective objects. 
\end{remark}
\subsection{Cotangent complexes and perfect obstruction theories}
\label{sec:cotangent_pot}
Recall from \cref{section:cotangentsupercomplex} that for any derived affine superscheme $\Spec \BA$ 
there is a cotangent complex $\BL_\BA$ which classifies infinitesimal deformations of $\Spec \BA$. This is a well-defined object in the superderived $\infty$-category $\mathbf{SD}_\infty(\BA)$. Since the functor $\mathbf{SD}_\infty(-)$ is a sheaf for the flat topology of supermodules (\cref{thm:flat_descent}), using the formalism of \textit{local almost representable functors} (\cite[$\S17.2.1$--$17.2.3$ and Ex.~17.2.4]{sag}), for any derived superstack we can define its cotangent complex as follows.
\begin{definition}
\label{def:cotangent-global}
Let $\BCal X$ be a derived superstack over a base $\BA$. The \textit{cotangent complex of $\BCal X$} is, if it exists, an object in $\mathbf{SD}_\infty(\BCal{X})$ satisfying the following universal properties:
\begin{enumerate} \item for every
morphism $x\colon\Spec\BA\to \BCal X$ there exists a derived $\BA$-module $\BL_{\BCal X,x}$ representing the functor
\[
\begin{tikzcd}
\operatorname{fib}_x(\BCal X(\BA \oplus(-)) \arrow{r} & \BCal X(\BA)) 
\end{tikzcd}
\]
\item for every diagram
\[
\begin{tikzcd}
\Spec \BB\arrow[swap]{dr}{y}\arrow{rr}{\phi} && \Spec \BA\arrow{dl}{x} \\
& \BCal X
\end{tikzcd}
\]
the canonical map $\phi^*\BL_{\BCal X,x}\to \BL_{\BCal X,y}$ is an isomorphism. 
\end{enumerate}
If such an object exists, these conditions allow us to glue the local cotangent complexes to a well-defined quasicoherent sheaf, unique up to a contractible space of choices.
\end{definition}
For any morphism of derived superstacks $f\colon\BCal X\to\BCal Y$ admitting cotangent complexes we have a relative cotangent complex $\BL_{f}$ which is the mapping cone of the morphism $f^\ast\BL_{\BCal Y}\to\BL_{\BCal X}$. Moreover, the results in \cref{section:cotangentsupercomplex} can be globalized in order to obtain deformation-theoretic descriptions and connectivity estimates on the cotangent complexes, see also \cite[$\S17$]{sag}.

This discussion allows us to give the definition of a \emph{perfect obstruction theory} in the setting of supergeometry.
\begin{definition}
\label{def:superPOT}
Let $f\colon \BCal X \to \BCal Y$ be a morphism of superstacks admitting cotangent complexes. An \emph{obstruction theory} relative to $f$ is the datum of an object $\BE\in \mathbf{SD}_\infty(\BCal X)$ along with a morphism $\psi\colon \BE \to \BL_{f}$ such that
\begin{enumerate}
    \item [\mylabel{OT-i}{(i)}] $\BH^0(\psi)$ is an isomorphism of $\OO_{\BCal X}$-modules, and
    \item [\mylabel{OT-ii}{(ii)}] $\BH^{-1}(\psi)$ is a surjection of $\OO_{\BCal X}$-modules.
\end{enumerate}
An obstruction theory is said to be \emph{perfect} if, moreover, $\BE$ is perfect of cohomological amplitude $[-1,0]$. If $\BCal{X}$ is defined over an underived base commutative superalgebra $\BA$, an obstruction theory on the structure morphism $\BCal{X} \to \Spec \BA$ is simply called an obstruction theory on $\BCal X$.

Finally, the \emph{virtual dimension} of a perfect obstruction theory $\BE \to \BL_{\BCal X}$ is defined to be the rank of $\BE$, in the sense of \Cref{free-rank-global}.
\end{definition}

\begin{theorem}
\label{thm:cotg-exists-and-induced-pot}
Let $\BCal X$ be a derived superstack. Then, the following statements hold true. 
\begin{enumerate}
\item [\mylabel{cotg-exists}{\normalfont{(1)}}] If $\BCal{X}$ is locally geometric (cf.~\Cref{def:geometricity}), then $\BCal{X}$ has a global cotangent complex $\BL_{\BCal{X}}$ in the sense of \Cref{def:cotangent-global}.
\item [\mylabel{induced-POT}{\normalfont{(2)}}] Assume $\BCal{X}$ is an algebraic superstack defined over an underived supercommutative algebra $\BA$, and let $\BR\BCal X$ be a locally geometric derived enhancement of $\BCal X$ with closed immersion $\iota \colon \BCal X\into \BR\BCal X$. Then, the canonical morphism $\iota^*\BL_{\BR\BCal X}\to \BL_{\BCal X}$ is an obstruction theory on $\BCal X$.
\end{enumerate}
\end{theorem}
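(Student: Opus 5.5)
For part \ref{cotg-exists}, I will follow the strategy of \cite[\S17.2]{sag} (and of To\"en--Vezzosi in \cite{hag_2}), with the affine input supplied by \Cref{section:cotangentsupercomplex} and gluing supplied by \Cref{thm:flat_descent}. The argument has three layers.

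First, the affine case. For $\BCal X=\Spec\BS$ and a point $x\colon\Spec\BB\to\Spec\BS$, I will show that $x^\ast\BL_{\BS/\BA}=\BL_{\BS/\BA}\otimes^{\mathrm{super}}_{\BS}\BB$ corepresents $\operatorname{fib}_x(\BCal X(\BB\oplus-)\to\BCal X(\BB))$. This is \Cref{remark:cotangent_universal_property} (the $\BS$-linear derivations into a $\BB$-module). Condition (2) of \Cref{def:cotangent-global} is then tautological.

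Second, coproducts and the geometric inductive step. For a $0$-geometric stack the claim follows componentwise. Assuming the statement for $(k-1)$-geometric stacks, let $\BCal X$ be $k$-geometric with smooth atlas $p\colon\BCal Z\to\BCal X$. I will first construct the relative cotangent complex $\BL_p$ of the $(k-1)$-representable smooth morphism. Base change to affines reduces this to the inductive hypothesis, and descent along affines (\Cref{thm:flat_descent}) glues it. Smoothness should make $\BL_p$ locally free of finite rank, with pullbacks along $\BB\to\BB\oplus\BM$ behaving well. I will then define $\BL_{\BCal X}$ on $\BCal Z$ as $\mathrm{fib}(\BL_{\BCal Z}\to\BL_p)$. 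The infinitesimal lifting property of smooth morphisms lets every $\BB$-point of $\BCal X$ lift étale-locally to $\BCal Z$, and the fibre sequence of deformation functors identifies this fibre with the representing object at each point. Finally, I will check that this object carries descent data along the \v{C}ech nerve of $p$ (each term is again $(k-1)$-geometric over $\BCal X$), so that it descends to $\mathbf{SD}_\infty(\BCal X)$ by the descent remark after \Cref{def:derived_qcoh}.

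Third, the locally geometric case. Cotangent complexes on geometric opens are compatible on overlaps by uniqueness, and descent for the Zariski cover glues them.

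I expect the main obstacle to be the inductive step, specifically that the fibre construction satisfies the pointwise universal property at \emph{all} $\BB$-points, not just those factoring through $\BCal Z$. My plan is to use infinitesimal cohesiveness and the lifting of square-zero extensions against smooth maps, which uses \Cref{thm:cotangent-square-zero}, and then to deduce the pointwise statement by étale descent in $\BB$.

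For part \ref{induced-POT}, since $\BCal X\simeq\mathfrak t_0\BR\BCal X$, the closed immersion $\iota$ is, over any affine chart $\Spec\BS\to\BR\BCal X$ (a smooth atlas, pulled back via \Cref{thm:topological_invariance}), the map $\BS\to\BH^0(\BS)$. Its cone is $1$-connective, i.e.\ concentrated in degrees $\leqslant-1$. By \Cref{thm:obstruction-theory-on-truncation}, the relative cotangent complex $\BL_\iota$ is then concentrated in degrees $\leqslant-2$. I will sharpen this with the usual Hurewicz-type estimate: $\BH^{-2}(\BL_\iota)\cong\BH^{-1}(\BS)$, and in particular $\BL_\iota$ is $2$-connective. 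The transitivity triangle $\iota^\ast\BL_{\BR\BCal X}\to\BL_{\BCal X}\to\BL_\iota$, read in the long exact cohomology sequence, then gives that $\BH^0$ of the canonical map is an isomorphism and $\BH^{-1}$ is surjective. Since the $t$-structure on a geometric stack is tested on an étale/smooth cover, the local statements globalize.
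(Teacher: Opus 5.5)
For (2) you follow the paper's route: the paper also reduces locally to \Cref{thm:obstruction-theory-on-truncation}, and you simply spell out the transitivity triangle and its long exact sequence. For (1) your route is different and more complete. The paper proves only the affine input, which is your first layer: that $\BL_{\BS/\BA}\otimes^{\mathrm{super}}_{\BS}\BT$ corepresents $\operatorname{fib}_\varphi\bigl(\mathrm{Map}(\BS,\BT\oplus(-))\to\mathrm{Map}(\BS,\BT)\bigr)$, via the adjunction $\varphi\circ(-)\dashv(-)\times_{\BT}\BS$. It then treats the passage to a locally geometric $\BCal X$ as formal once a surjection from a disjoint union of affines is available.

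That passage is only formal for $0$-geometric stacks. For a $k$-geometric $\BCal X$ with atlas $p\colon\BCal Z\to\BCal X$, one has $p^\ast\BL_{\BCal X}\simeq\mathrm{fib}(\BL_{\BCal Z}\to\BL_p)$ rather than $\BL_{\BCal Z}$. One must also check the universal property at points that do not factor through $\BCal Z$. Your induction over smooth atlases supplies exactly this: relative cotangent complexes of $(k-1)$-representable maps, étale-local lifting of points against smooth maps, and descent along the \v{C}ech nerve. This is the To\"en--Vezzosi argument the paper leaves implicit. It costs length, but it actually covers stacky $\BCal X$, whereas the paper's version buys brevity at the price of that step.

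A few things to fix along the way.
\begin{enumerate}
\item In the inductive step, $\BL_p$ need not be locally free once $p$ has stacky fibres, which can happen for $k\geqslant2$. For instance, for the affine atlas of $\mathrm{B}^2\mathbb{G}_m$, $\BL_p$ sits in cohomological degree $1$. What you actually use is formal smoothness, $\pi_0\mathrm{Map}(x^\ast\BL_p,\BM[1])=0$ for connective $\BM$, and that does hold.
\item In (2), the pullback of $\iota$ to a smooth chart is $\Spec\BH^0(\BS)$ because smooth maps are flat and truncation commutes with flat base change. It does not follow from \Cref{thm:topological_invariance}, which only concerns étale maps and Zariski opens.
\item More importantly, (2) has an off-by-one error. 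The cone of $\BS\to\BH^0(\BS)$ lies in degrees $\leqslant-2$; it is the fibre $\tau^{\leqslant-1}\BS$ that lies in degrees $\leqslant-1$. From a cone in degrees $\leqslant-1$, \Cref{thm:obstruction-theory-on-truncation} would only put $\BL_\iota$ in degrees $\leqslant-1$. That gives surjectivity on $\BH^0$ of $\iota^\ast\BL_{\BR\BCal X}\to\BL_{\BCal X}$, but neither injectivity on $\BH^0$ nor surjectivity on $\BH^{-1}$. The Hurewicz isomorphism $\BH^{-2}(\BL_\iota)\cong\BH^{-1}(\BS)$ does not by itself force $\BH^{-1}(\BL_\iota)=0$, so your ``in particular'' does not follow. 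With the correct estimate, the theorem gives $\BH^0(\BL_\iota)=\BH^{-1}(\BL_\iota)=0$ directly, which is all your long exact sequence needs.
\end{enumerate}
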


\begin{proof}
As for \ref{cotg-exists}: since $\BCal X$ is covered by geometric derived superstacks, we can consider a surjection $\BCal Z\to \BCal X$ given by (a disjoint union of) affine superschemes. Thus, the statement \ref{cotg-exists} is equivalent to the fact that affine superschemes admit cotangent supercomplexes (which is true in virtue of \cref{section:cotangentsupercomplex}), and that for a morphism of derived supercommutative $\BA$-algebras $\varphi\colon\BS\to\BT$ the object $\BL_{\BS/\BA}\otimes^{\mathrm{super}}_{\BS}\BT$ corepresents the functor
\[
\begin{tikzcd}
\operatorname{fib}_{\varphi}(\mathrm{Map}_{\mathrm{dSCAlg}_{\BA}}(\BS,\BT\oplus(-)) \arrow{r} &\mathrm{Map}_{\mathrm{dSCAlg}_{\BA}}(\BS,\BT) ).
\end{tikzcd}
\]
Equivalently, this is the functor sending a derived $\BT$-supermodule $\BM$ to the space of maps between derived supercommutative $\BA$-algebras augmented over $\BT$:
\[
\mathrm{Map}_{\mathrm{dSCAlg}_{\BA//\BT}}\left(\BS,\BT\oplus(-)\right).
\]
By the tensor-forgetful adjunction and the universal property of the cotangent supercomplex, for any derived $\BT$-supermodule we have a chain of equivalences
\begin{align*}
\mathrm{Map}_{\mathbf{SD}_\infty(\BT)}\left(\BL_{\BS/\BA}\otimes^{\mathrm{super}}_{\BS} \BT,\BM\right)&\simeq\mathrm{Map}_{\mathbf{SD}_\infty(\BS)}{\left(\BL_{\BS/\BA},\BM\right)}\\&
\simeq\mathrm{fib}_{\mathrm{id}}\left( \mathrm{Map}_{\mathrm{dSCAlg}_{\BA}}(\BS,\BS\oplus\BM)\to\mathrm{Map}_{\mathrm{dSCAlg}_{\BA}}(\BS,\BS)\right)
\\&\simeq\mathrm{Map}_{\mathrm{dSCAlg}_{\BA//\BS}}(\BS,\BS\oplus\BM).
\end{align*}
Now we simply observe that the derived supercommutative algebra $\BS\oplus\BM$ is just the image of $\BT\oplus\BM$ under the right adjoint in the adjunction
\[
\adjunction{\varphi\circ(-)}{\mathrm{dSCAlg}_{\BA//\BS}}{\mathrm{dSCAlg}_{\BA//\BT}}{(-)\times_{\BT}\BS,}
\]
therefore
\begin{align*}
\mathrm{Map}_{\mathbf{SD}_\infty(\BT)}\left(\BL_{\BS/\BA}\otimes^{\mathrm{super}}_{\BS} \BT,\BM\right)&\simeq\mathrm{Map}_{\mathrm{dSCAlg}_{\BA//\BS}}(\BS,\BS\oplus\BM)\\
&\simeq\mathrm{Map}_{\mathrm{dSCAlg}_{\BA//\BT}}(\BS,\BT\oplus\BM),
\end{align*}
as desired.

The statement \ref{induced-POT} can be deduced from testing the statement locally, in which case the relevant statement is \cref{thm:obstruction-theory-on-truncation}.
\end{proof}

\begin{definition}
\label{def:quasi-smooth}
We say that a derived superstack $\BCal X$ is \emph{quasi-smooth} if its cotangent complex $\BL_{\BCal X}$ is perfect of Tor amplitude contained in $[-1,0]$.
\end{definition}

We immediately obtain the following corollary of \Cref{thm:cotg-exists-and-induced-pot}\ref{induced-POT}. 

\begin{corollary}
\label{cor:quasi-smooth-pot}
A derived enhancement $\BR\BCal X$ of an algebraic superstack $\BCal X$ is quasi-smooth if and only if the canonical obstruction theory $\iota^*\BL_{\BR\BCal X}\to \BL_{\BCal X}$ is perfect.
\end{corollary}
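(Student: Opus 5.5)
We need to prove the equivalence: $\BR\BCal X$ is quasi-smooth if and only if $\BE\defeq\iota^*\BL_{\BR\BCal X}\to\BL_{\BCal X}$ is a perfect obstruction theory. By \Cref{thm:cotg-exists-and-induced-pot}\ref{induced-POT} this morphism is already an obstruction theory. So the content is: $\BL_{\BR\BCal X}$ is perfect of Tor amplitude in $[-1,0]$ if and only if $\iota^*\BL_{\BR\BCal X}$ is perfect of amplitude $[-1,0]$. Both conditions are local: perfectness is tested on pullbacks to affines by \Cref{Dperf}, and amplitude on an étale cover by the $t$-structure remark. So I will reduce to an affine chart $\Spec\BS\to\BR\BCal X$ with truncation $\Spec\BH^0(\BS)\to\BCal X$, where $\iota$ corresponds to the map $\BS\to\BH^0(\BS)$. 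Here I use the topological invariance \Cref{thm:topological_invariance}, so that étale charts of $\BCal X$ lift uniquely to charts of $\BR\BCal X$. The statement then reads: for $\BM\in\mathbf{SD}_\infty(\BS)$ connective (cotangent complexes are connective by \Cref{thm:connectivity-cotangent}), $\BM$ is perfect of Tor amplitude $[-1,0]$ if and only if $\BM\otimes^{\mathrm{super}}_\BS\BH^0(\BS)$ is.

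The direction ``$\Rightarrow$'' is formal. Base change preserves fully dualizable objects, and Tor amplitude is stable under base change, since $(\BM\otimes\BH^0\BS)\otimes_{\BH^0\BS}\BN\simeq\BM\otimes_\BS\BN$ for discrete $\BN$.

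For ``$\Leftarrow$'' I would run the standard Nakayama-type argument from derived algebraic geometry (Lurie, SAG §2.7), adapted to the super setting.

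First, perfectness. A connective $\BM$ whose base change to $\BH^0(\BS)$ is perfect is itself perfect (almost perfectness lifts along the Postnikov tower). To see this, use \Cref{corollary:postnikov_tower}: each stage $\tau^{\geqslant -n-1}\BS\to\tau^{\geqslant-n}\BS$ is a square-zero extension. I then lift a finite cell structure built from $\BA$ and $\Pi\BA$ (\cref{prop:thick}) stage by stage, using that $\BH^0(\BM)$ is finitely generated over $\BH^0(\BS)$. I do this by choosing lifts of even and odd generators.

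Second, amplitude. For discrete $\BN$ we have $\BM\otimes_\BS\BN\simeq(\BM\otimes_\BS\BH^0\BS)\otimes_{\BH^0\BS}\BN$. This lies in degrees $[-1,0]$ by hypothesis, so $\BM$ has Tor amplitude $[-1,0]$ over $\BS$. Here I use Tor amplitude in the sense of tensoring against discrete modules, matching \Cref{prop:flatcharacterization}.

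The main obstacle is the perfectness lift. In the super setting it needs the super Nakayama lemma and the even/odd splitting of generators. I would mimic Lurie's argument verbatim, using \Cref{prop:superf} together with the compact generators $\BA,\Pi\BA$. Alternatively, I would cite the projective rigidity of \Cref{remark:on_projective_rigidity} to import the result from \cite{liang_higher_algebra}. Finally, the global statement follows by descent (\Cref{thm:flat_descent}).
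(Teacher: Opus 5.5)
The paper gives no argument for this corollary beyond ``immediately''; it tacitly identifies perfectness of $\iota^\ast\BL_{\BR\BCal X}$ with quasi-smoothness of $\BR\BCal X$. Your proposal makes explicit what that word hides. The ``only if'' direction is formal base change. The ``if'' direction is a Nakayama-type descent of perfectness and Tor amplitude along $\BS\to\BH^0(\BS)$. Your remark that Tor amplitude is detected on discrete modules, which are the same over $\BS$ and $\BH^0(\BS)$, is exactly right. You are also right to read ``amplitude'' in \Cref{def:superPOT} as Tor (perfect) amplitude in the Behrend--Fantechi sense. If it only meant that the cohomology sheaves lie in degrees $[-1,0]$, the ``if'' direction could fail. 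So your strategy is sound and more complete than the paper's treatment. However, two justifications need repair, and both come from treating $\BCal X$ as if it were Deligne--Mumford.

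\textbf{Connectivity.} \Cref{thm:connectivity-cotangent} is a statement about algebras. An algebraic superstack only has smooth charts, and $x^\ast\BL_{\BR\BCal X}$ for $x\colon\Spec\BS\to\BR\BCal X$ can have nonzero $\BH^1$ (already for $B\mathbb{G}_m$). This is not cosmetic, because your lift fails without some boundedness. Over $\BQ$, if $\BS$ is free on an even generator $t$ in cohomological degree $-2$, then $\BS[t^{-1}]\otimes^{\mathrm{super}}_{\BS}\BH^0(\BS)\simeq 0$, yet $\BS[t^{-1}]$ is not perfect. The fix is to use the triangle $x^\ast\BL_{\BR\BCal X}\to\BL_{\BS}\to\BL_{\BS/\BR\BCal X}$. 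It shows that $x^\ast\BL_{\BR\BCal X}$ is cohomologically bounded above (in degrees $\leqslant 1$ for a smooth representable chart). The top cohomology of a bounded-above module is unchanged by $-\otimes^{\mathrm{super}}_{\BS}\BH^0(\BS)$. Hence the hypothesis on $\iota^\ast\BL_{\BR\BCal X}$ forces $x^\ast\BL_{\BR\BCal X}$ to be connective, and only then does your lift apply.

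\textbf{Charts.} \Cref{thm:topological_invariance} only concerns étale maps and Zariski opens, so smooth charts of $\BCal X$ do not lift uniquely. You do not need this. Truncate a smooth atlas of $\BR\BCal X$ instead: $\mathfrak{t}_0$ preserves fibre products, $\mathfrak{t}_0\Spec\BS\simeq\Spec\BH^0(\BS)$, and smoothness is defined on $\BH^0$. Then use that both properties are local for faithfully flat covers: flat base change handles Tor amplitude, and \Cref{thm:flat_descent} handles dualizability.

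\textbf{The perfectness lift.} Your Postnikov-tower lifting of cell structures does not explain how to pass from the stages $\tau^{\geqslant -n}\BS$ to $\BS$. Once connectivity and Tor amplitude are known, a direct argument is simpler.
\begin{enumerate}
\item Zariski-locally, choose even and odd generators of the finitely presented supermodule $\BH^0(\BM)$, giving $\BS^{\oplus m}\oplus\Pi\BS^{\oplus n}\to\BM$ surjective on $\BH^0$.
\item Its fibre $\BK$ is connective, and $\BK\otimes^{\mathrm{super}}_{\BS}\BH^0(\BS)$ is discrete, flat and perfect, hence locally free of some rank $a|b$.
\item Lifting a basis gives $\BS^{\oplus a}\oplus\Pi\BS^{\oplus b}\to\BK$. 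Its cofibre is connective with vanishing base change, so it is zero, by looking at its top cohomology.
\end{enumerate}
Thus $\BM$ is locally a two-term complex of free supermodules in degrees $-1,0$.
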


\section{Moduli of stable supermaps}
\label{sec:stable-supermaps}
\subsection{Prestable and stable supercurves}
In this section we recall the definition of the superstacks
\[
\Mf^{\mathsf{st-}\mathrm{SUSY}}_{g,\nf}\subset \Mf_{g,\nf}^{\mathsf{\pre-}\mathrm{SUSY}},
\]
parametrising stable SUSY curves and prestable SUSY curves, respectively. See \cite[Def.~2.21]{BrHRMa23} for the notion of a Cohen--Macaulay morphism in the super setting. 

\begin{definition}
\label{def:prestablecurve} 
Fix $g \in \BZ_{\geq 0}$ and a pair of non-negative integers $\nf =(n_{\mathrm{NS}},n_{\mathrm{RR}})$. Fix a superscheme $\boldit{S}$.

\begin{enumerate}
\item [\mylabel{supercurve}{(i)}] A \emph{supercurve} of arithmetic genus $g$ over a superscheme $\Sc$ is a proper and Cohen--Macaulay morphism $f\colon \Xcal \to \Sc$ of superschemes of relative dimension $1|1$ whose bosonic fibers have arithmetic genus $g$.  

\item [\mylabel{prestable-supercurve}{(ii)}] A \emph{prestable supercurve} of arithmetic genus $g$ with punctures over a superscheme $\Sc$ is a supercurve $f\colon \Xcal \to \Sc$, together with:
\begin{enumerate}
\item a collection of disjoint closed sub-superschemes $\boldit{W}_i\hookrightarrow \U$ ($i=1,\dots,n_{\mathrm{NS}}$), where $\U$ is the smooth locus of $f$, such that $f \colon \boldit{W}_i\to\Sc$ is an isomorphism for every $i$. These superschemes are called   \emph{Neveu--Schwarz (NS) punctures};
\item a collection of disjoint Cartier divisors $\Zc_j$ of relative degree 1 ($j=1,\dots,n_{\mathrm{RR}}$), contained in $\U$. They are called   \emph{Ramond--Ramond (RR) punctures}. 
\end{enumerate}
These data must satisfy the following condition: if for every bosonic fiber $X_s$ of $f\colon \Xcal \to \Sc$ we write $x_{s,i}=W_i\cap X_s$ and $z_{s,j}=Z_j\cap X_s$,  then the pair $(X_s, D_s)$, where   
\[
D_s= \{x_{s,1}\dots x_{s,n_{\mathrm{NS}}},z_{s,1}\dots, z_{s,n_{\mathrm{RR}}}\},
\]
is a prestable $(n_{\mathrm{NS}}+n_{\mathrm{RR}})$-pointed curve of 
arithmetic genus $g$ (in the sense of \Cref{sec:classical-stable-maps}).
\end{enumerate}
\end{definition}

\begin{definition}[{\cite[Def.~5.8]{BrHRMa23}}]
\label{def:susycurve}
A \emph{prestable SUSY curve} of arithmetic genus $g$ with punctures over a superscheme $\Sc$ is a  prestable supercurve $(f\colon\Xcal\to\Sc,\set{\boldit{X}_i},\set{\boldit{Z}_j})$ of arithmetic genus $g$, as in \Cref{def:prestablecurve}\ref{prestable-supercurve}, together with
an epimorphism $\overline{\delta}\colon \Omega_f\to \omega_f(\Zc)$ of $\Oc_{\Xcal}$-modules, where $\Zc=\sum \Zc_j$. In other words, the composition
\[
\begin{tikzcd}
\ker\overline{\delta} \arrow[hook]{r} &  
\Omega_f \arrow{r}{d} & 
\Omega_f\wedge \Omega_f \arrow{rr}{\overline{\delta}\wedge\overline{\delta}} &&
\omega_f^{\otimes 2}(2\Zc)
\end{tikzcd}
\]
yields an isomorphism
\[
\begin{tikzcd}
\ker\overline{\delta} \arrow{r}{\sim} & \omega_f^{\otimes 2}(\Zc).
\end{tikzcd}
\]
\end{definition}

In the case where the prestable SUSY curve is smooth, the isomorphism $\ker \overline{\delta} \simto \omega_f^{\otimes 2}(\boldit Z)$ is equivalent to a Ramond--Ramond conformal structure, see \cite[Prop.~3.5]{BrHR21}.

We now move to the definition of \emph{stable SUSY curves}, following \cite{Felder-Kazhdan-Polishchuk}. 
If  $f\colon \Xcal\to\Sc$ is a prestable SUSY curve, there is a one-to-one correspondence
between NS punctures and effective Cartier divisors
(as in \cite[Def.~2.13]{Felder-Kazhdan-Polishchuk}
and following lines); if the NS puncture is $P$
and the corresponding Cartier divisor is locally given by a superfunction $f$, then the ideal of $P$ is generated by $(f, D(f))$, where $D$ is a local generator of the superconformal structure. 
So the  NS punctures $\Wc_i\hookrightarrow \Xcal$ will be identified with effective Cartier divisors, which we shall denote by the same symbol.

Moreover, if $\varepsilon\colon \Ucal\hookrightarrow \Xcal$ is the inclusion of the smooth locus of $f\colon \Xcal\to\Sc$, the sheaf
\[
\omega_{\Xcal/\Sc}^2\defeq  \varepsilon_\ast \varepsilon^\ast \omega_{\Xcal/\Sc}^{\otimes 2}=\varepsilon_\ast (\omega_{\Ucal/\Sc}^{\otimes 2})\,,
\]
where $\omega_{\Xcal/\Sc}=\omega_f$ is the relative dualizing sheaf, is locally free, i.e., it  is a  {line bundle} \cite[Thm.~4.1]{Felder-Kazhdan-Polishchuk}.

\begin{definition}
\label{def:stablecurve} 
A \emph{stable SUSY curve with punctures} is a prestable SUSY curve $f\colon \Xcal\to\Sc$ with punctures, such that the line bundle
\[
\omega_{\Xcal/\Sc}^2(\Wc+\Zc)\,,
\]
where $\Wc=\sum_i\Wc_i$ and $\Zc=\sum_j\Zc_j$, is relatively strongly ample\footnote{See \cite[Sec.~2.5]{BrHRPo20} or \cite[Def.~4.2]{Felder-Kazhdan-Polishchuk} for such a notion of relative ampleness.} with respect to $f\colon\Xcal\to\Sc$.
\end{definition}

Contrary to what happens for stable SUSY curves, a prestable SUSY curve $f\colon \Xcal\to\Sc$ may not be (super)projective. We will see, however, that the prestable SUSY curve supporting a stable supermap is always (super)projective. For this reason, we restrict ourselves to projective prestable SUSY curves.

It is straightforward to check that \emph{projective} prestable SUSY curves of genus $g$ with fixed number of punctures form a groupoid fibration over $\SSchs_{\et}$, that we denote by $\preSUSY$.   

\begin{prop}
\label{prop:prestable-are-superstack}
The category fibered in groupoids  $\preSUSY\to \SSchs_{\et}$ is a  superstack.
\end{prop}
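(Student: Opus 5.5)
To prove that $\preSUSY\to\SSchs_{\et}$ is a superstack, we check the two axioms of a stack over the étale site of superschemes. For isomorphisms, we must show that $\mathfrak{Isom}(x_1,x_2)$ is an étale sheaf. For descent, we must show that every descent datum is effective. The overall strategy is to reduce both to étale descent for superschemes and for quasi-coherent sheaves on superschemes. This descent is available from the general theory of \cite{BrHRMa23,BrHRPo20,BHRSS25}, together with the subcanonicity of $\SSchs_{\et}$.

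\textbf{Isomorphisms form a sheaf.} Take two objects $x_1=(f_1\colon\Xcal_1\to\Sc,\Wc^1,\Zc^1,\overline\delta_1)$ and $x_2$ over $\Sc$. An isomorphism between them is an $\Sc$-isomorphism $\phi\colon\Xcal_1\to\Xcal_2$ carrying punctures to punctures and satisfying $\phi^\ast\overline\delta_2=\overline\delta_1$ under $\phi^\ast\Omega_{f_2}\cong\Omega_{f_1}$ and $\phi^\ast\omega_{f_2}(\Zc^2)\cong\omega_{f_1}(\Zc^1)$.
\begin{itemize}
\item Morphisms of superschemes over $\Sc$ glue along étale covers $\{\Sc_i\to\Sc\}$, since the functor of points of $\Xcal_2$ is an étale sheaf by subcanonicity.
\item Being an isomorphism, preserving closed sub-superschemes and Cartier divisors, and matching the maps $\overline\delta$ are all conditions that can be checked étale locally.
\item Here we use that $\Omega_f$ and $\omega_f$ commute with base change. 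For $\omega_f$ this follows from the Cohen--Macaulay and flat hypotheses, as in \cite{BrHRMa23}.
\end{itemize}

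\textbf{Effectivity of descent.} Let $\{\Sc_i\to\Sc\}$ be an étale cover with objects $x_i$ over $\Sc_i$ and cocycle isomorphisms on overlaps. The key point is to use projectivity to glue the curves.
\begin{itemize}
\item Each $\Xcal_i$ is projective over $\Sc_i$. We would like a canonical relatively very ample line bundle that is compatible with the gluing isomorphisms.
\item For prestable curves no canonical polarization exists a priori. I would therefore first try the approach used classically for $\FM_{g,n}$ (Behrend): descend $\Xcal_i$ as an algebraic superspace. Étale descent data for algebraic superspaces are effective, since algebraic superspaces are étale sheaves, so the glued object $\Xcal$ is an algebraic superspace proper over $\Sc$.
\item Alternatively, one can work with families of superschemes that are only étale-locally projective. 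In that case the stack is understood with that convention, matching the convention in \cite{BHRSS25}.
\end{itemize}
If a genuinely projective $\Xcal$ is required, I would instead use a canonical ample bundle when one is available. For instance, $\omega^2_{\Xcal/\Sc}(\Wc+\Zc)$ is ample only in the stable case, so in general it must be twisted by auxiliary sections. Showing that the glued object is projective is the main obstacle, and I expect to resolve it by working étale-locally on $\Sc$ as above.

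\textbf{Descent of the remaining structures.} Once $\Xcal$ is glued, properness, the Cohen--Macaulay property, relative dimension $1|1$ and the genus are étale-local on the base, so they descend. The NS punctures $\Wc_i$ and the RR divisors $\Zc_j$ are closed sub-superspaces, so they descend by gluing of closed immersions. The epimorphism $\overline\delta$ is a morphism of quasi-coherent sheaves, so it descends by étale descent for quasi-coherent supermodules. The condition $\ker\overline\delta\cong\omega_f^{\otimes2}(\Zc)$ descends because it can be checked étale locally. Finally, the prestability condition on bosonic fibers only concerns geometric fibers, which are unchanged by the cover.
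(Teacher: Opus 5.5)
Your argument that $\mathfrak{Isom}(x_1,x_2)$ is an \'etale sheaf is fine, and it differs from the paper's. The paper deduces the sheaf property from representability of the Isom functor by a superscheme \cite[Cor.~4.27]{BrHRPo20} plus subcanonicity. You instead glue morphisms directly, using that the functor of points of $\boldit{X}_2$ is an \'etale sheaf and that compatibility with the punctures and with $\overline{\delta}$ is \'etale-local. Your route does not need projectivity; the paper's gives the stronger representability statement.

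The genuine gap is effectivity of descent. Objects of $\preSUSY$ are \emph{projective} prestable SUSY curves over superschemes, and none of your routes produces one. Gluing as an algebraic superspace gives an object that need not be a superscheme, let alone projective over $\boldit{S}$; even there you must still check that the glued sheaf has an \'etale atlas, since being a sheaf is not enough. Passing to \'etale-locally projective families changes the moduli problem. And ``working \'etale-locally on $\boldit{S}$'' cannot prove projectivity, because projectivity over $\boldit{S}$ is not \'etale-local on $\boldit{S}$: a projective family descends only together with a descent datum for a relatively ample line bundle.

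That is exactly the input of the paper's proof. It invokes effective descent for \emph{polarized} superschemes \cite[Prop.~B.63]{BHRSS25}, and then the punctures and $\overline{\delta}$ descend much as you describe. To run that argument you must equip the $\boldit{X}_i$ with relatively ample line bundles and isomorphisms on overlaps compatible with the cocycle.

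Your worry that prestable curves carry no canonical polarization is well founded. It is precisely what the paper's ``it is easily seen'' passes over: if projectivity is a mere property, effectivity can fail. Take $g=1$, $\nf=(0,0)$, let $S$ be a nodal cubic, let $a\in E(k)$ be a non-torsion point of an elliptic curve, and let $P_a\to S$ be obtained from $E\times\mathbb{P}^1$ by identifying $(e,0)$ with $(e+a,\infty)$.
\begin{itemize}
\item $P_a$ is trivial over a suitable \'etale cover of $S$. Hence $P_a\times\mathbb{A}^{0|1}$, with the translation-invariant superconformal structure, is glued from projective prestable SUSY curves.
\item A relatively ample line bundle on it would restrict to $P_a$. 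Its pullback to $E\times\mathbb{P}^1$ would be $\mathrm{pr}_1^\ast M\otimes\mathrm{pr}_2^\ast\mathcal{O}(m)$ with $M$ ample and $t_a^\ast M\cong M$, which forces $a$ to be torsion.
\end{itemize}
So this descent datum is not effective in $\preSUSY$. A complete argument must therefore do one of two things. Either it supplies compatible polarizations: on the stable locus the canonical bundle $\omega^2_{\boldit{X}/\boldit{S}}(\boldit{W}+\boldit{Z})$ does this, and the paper's argument then goes through. Or it allows total spaces that are algebraic superspaces, \'etale-locally projective over the base, in which case your superspace gluing is the appropriate proof.
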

\begin{proof}Isomorphisms between projective prestable SUSY curves are an \'etale sheaf because they are representable by superschemes (see \cite[Corol.\! 4.27]{BrHRPo20} and note that the noetherian hypothesis is not necessary). Since descent data for polarized superschemes are effective \cite[Prop.\! B.63]{BHRSS25}, it is easily seen that descent data for projective prestable SUSY curves are effective as well.   
\end{proof}

It is, once again, straightforward to observe that stable SUSY curves of genus $g$ with fixed number of punctures form a groupoid fibration over $\SSchs_{\et}$, that we denote by $\Mf^{\mathsf{st-}\mathrm{SUSY}}_{g,\nf}$. In fact, since stability is an open condition,\footnote{A SUSY curve is stable if and only if  the underlying bosonic curve with
$n_{\mathrm{NS}}+n_{\mathrm{RR}}$ punctures is stable (see \cite[Prop.\! A.2]{Felder-Kazhdan-Polishchuk}). Since for ordinary curves stability is an open condition, the same is true for SUSY curves.} $\Mf^{\mathsf{st-}\mathrm{SUSY}}_{g,\nf}$ is easily seen to be an open sub-superstack of $\Mf^{\mathsf{pre-}\mathrm{SUSY}}_{g,\nf}$, that is there is an open immersion 
\[
\begin{tikzcd}
\Mf^{\mathsf{st-}\mathrm{SUSY}}_{g,\nf}
\arrow[hook]{r}{} & \Mf^{\mathsf{pre-}\mathrm{SUSY}}_{g,\nf}. 
\end{tikzcd}
\]

\begin{prop}
[{\cite[Thm.~A]{Felder-Kazhdan-Polishchuk}}]
\label{prop:moduli}  
The category fibered in groupoids $p\colon \Mf^{\mathsf{st-}\mathrm{SUSY}}_{g,\nf}\to \SSchs_{\et}$ 
of stable SUSY curves is a  proper and smooth Deligne--Mumford superstack.
\qed
\end{prop}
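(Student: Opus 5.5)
The statement is attributed to \cite[Thm.~A]{Felder-Kazhdan-Polishchuk}, so the plan is to reduce to that result rather than reprove it. What has to be checked is that their stack matches ours. Their stack is a category fibered in groupoids over superschemes, defined with the superconformal structure, NS and RR punctures, and the ampleness of $\omega^2_{\Xcal/\Sc}(\Wc+\Zc)$. Ours is $\Mf^{\mathsf{st-}\mathrm{SUSY}}_{g,\nf}$, viewed over $\SSchs_{\et}$ as in \Cref{sec:superstacks}.

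\textbf{Matching the moduli problems.} First I would check that the moduli problems agree. Definition \ref{def:susycurve} (prestable SUSY curve with the epimorphism $\overline\delta$) and Definition \ref{def:stablecurve} (stability) are taken verbatim from \cite{Felder-Kazhdan-Polishchuk} and \cite{BrHRMa23}. NS punctures correspond to Cartier divisors, as recalled before Definition \ref{def:stablecurve}.

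Stable SUSY curves are automatically projective, because $\omega^2(\Wc+\Zc)$ is relatively ample. Hence $\Mf^{\mathsf{st-}\mathrm{SUSY}}_{g,\nf}$ is the open substack of $\preSUSY$ cut out by stability. Openness follows from \cite[Prop.~A.2]{Felder-Kazhdan-Polishchuk}, which reduces stability to the bosonic fibre, together with openness of stability for ordinary pointed curves. Combined with Proposition \ref{prop:prestable-are-superstack}, this shows it is a superstack for the étale topology.

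\textbf{Deligne--Mumford property.} Next I would establish the Deligne--Mumford property. The diagonal is representable by superschemes, since Isom-functors of polarized projective supercurves are representable \cite[Corol.~4.27]{BrHRPo20}. It is unramified because infinitesimal automorphisms of a stable SUSY curve vanish: $H^0$ of the superconformal vector fields twisted by $-\Wc-\Zc$ is zero by stability. By the characterization of Deligne--Mumford superstacks via an unramified diagonal (\cite[Prop.~6.3]{BHRSS25}, recalled above), it then suffices to produce a smooth atlas.

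For the atlas, use the relative pluricanonical embedding by $(\omega^2(\Wc+\Zc))^{\otimes m}$, $m\gg0$, into a super projective space. This exhibits the stack as a quotient of a locally closed subsuperscheme of a super Hilbert scheme, with the extra data of punctures and $\overline\delta$, by a general linear supergroup.

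\textbf{Smoothness.} Smoothness is a deformation-theoretic statement. Obstructions to deforming a stable SUSY curve lie in an $H^2$-type group, namely $\mathrm{Ext}^2$ of the relevant deformation complex on a curve, which vanishes because the curve is one-dimensional and has nodal singularities. Here one uses the local model of SUSY nodes, which is unobstructed. One then checks the infinitesimal lifting criterion for the atlas.

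\textbf{Properness.} For properness, apply the valuative criterion. Reduce via bosonic reduction, since properness is topological and the bosonic reduction is $\overline{\CM}_{g,n}$ with a spin structure, i.e.\ the stack of stable spin curves, which is proper. Then lift using smoothness.

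\textbf{Main obstacle.} I expect the main obstacle to be the treatment of nodes. Both the smoothness (unobstructedness at NS and RR type nodes) and the lifting in the valuative criterion require the explicit local description of the superconformal structure near a node, which is exactly the technical core of \cite{Felder-Kazhdan-Polishchuk}. I would import that description rather than redo it.
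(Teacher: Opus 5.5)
Your proposal is correct and matches the paper, which offers no argument of its own beyond citing Theorem~A of \cite{Felder-Kazhdan-Polishchuk}. The step that is genuinely needed is your identification of the two moduli problems: stable SUSY curves are automatically projective, stability is open by \cite[Prop.~A.2]{Felder-Kazhdan-Polishchuk}, and the definitions agree. The rest of your sketch re-traces their proof; one small simplification is that no lifting is needed for properness, since the valuative criterion only involves bosonic DVRs and so reduces entirely to properness of the bosonic reduction, the stack of stable spin curves.
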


\begin{prop}
\label{prop:prestable-are-smooth}
The superstack  $\preSUSY\to \SSchs_{\et}$ of the projective prestable SUSY curves is  smooth and algebraic.
\end{prop}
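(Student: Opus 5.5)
To prove that $\preSUSY$ is smooth and algebraic, I would follow the classical argument for $\FM_{g,n}$ (Behrend's proof that prestable curves form a smooth algebraic stack), using the stable locus $\Mf^{\mathsf{st-}\mathrm{SUSY}}_{g,\nf}$, which \Cref{prop:moduli} shows is smooth Deligne--Mumford, as the building block. The argument has three steps: algebraicity via a smooth atlas, representability of the diagonal, and smoothness via unobstructedness of the deformation problem.

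\textbf{Atlas.} For $m\geq 0$, let $\Mf_{g,\nf,m}^{\mathsf{pre-}\mathrm{SUSY}}$ be the superstack of projective prestable SUSY curves with $m$ additional NS punctures. Let $\BCal{V}_m$ be the open locus where the curve becomes stable once these extra NS punctures are added. By \cite[Prop.~A.2]{Felder-Kazhdan-Polishchuk}, stability of a SUSY curve can be checked on the underlying bosonic pointed curve, so this locus is open. Moreover, $\BCal{V}_m$ is an open substack of $\Mf^{\mathsf{st-}\mathrm{SUSY}}_{g,(n_{\mathrm{NS}}+m,n_{\mathrm{RR}})}$, hence a smooth Deligne--Mumford superstack. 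Forgetting the extra punctures gives a morphism $\BCal{V}_m\to\preSUSY$.

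This morphism is representable. Its fiber over a family $\Xcal\to\Sc$ is an open sub-superscheme of the $m$-fold fiber product of the smooth locus $\U$, from which one removes the diagonals and the existing punctures. Since NS punctures are just sections of $\U$, with the superconformal structure imposing no extra condition, this fiber is smooth over $\Sc$ of relative dimension $m|m$. Every projective prestable SUSY curve becomes stable after adding enough NS points on unstable components, and this can be done étale-locally on $\Sc$. Hence $\coprod_m\BCal{V}_m\to\preSUSY$ is a smooth surjection, and composing with étale atlases of the $\BCal{V}_m$ yields a smooth atlas by superschemes.

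\textbf{Diagonal.} Representability of the diagonal follows from the representability of $\mathfrak{Isom}$ by superschemes for projective families, as used in \Cref{prop:prestable-are-superstack}.

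\textbf{Smoothness.} Smoothness of $\preSUSY$ now follows formally, since it admits a smooth atlas by smooth superschemes. Alternatively, one can verify the infinitesimal lifting criterion directly. Locally, a prestable SUSY curve is either smooth, where superconformal structures are locally rigid of standard form, or near a node has the local models of \cite{Felder-Kazhdan-Polishchuk}, which deform without obstruction. Since the obstruction space lives in $H^2$ of a curve, which vanishes, global deformations are unobstructed.

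The main obstacle I expect is the claim that adding NS punctures gives a smooth surjective representable morphism at the nodes and in the presence of RR divisors. In particular, one must check that the smooth locus $\U$ is smooth of relative dimension $1|1$ over $\Sc$ uniformly, and that every unstable component, including the super analogues of rational components carrying RR punctures, can be stabilized by NS punctures alone. I would address this by reducing to bosonic fibers via \cite[Prop.~A.2]{Felder-Kazhdan-Polishchuk}, where the classical argument applies verbatim.
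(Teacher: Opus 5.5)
Your proposal is correct and follows essentially the same route as the paper. The paper also uses the superstacks $\mathcal{U}_m$ of curves with $m$ extra disjoint sections in the smooth locus, regarded as NS punctures, and their open stable loci $\mathcal{V}_m$; it shows the forgetful maps are superschematic and smooth via fibre products of the smooth locus $\mathcal{C}^{\mathrm{sm}}$, gets surjectivity by stabilizing with enough NS punctures, and deduces algebraicity and smoothness from Felder--Kazhdan--Polishchuk. Your separate diagonal step and alternative deformation-theoretic check are not used there but do no harm.
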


\begin{proof} 
The technique is similar to the one employed in the classical case, see \cite[Lemma 2.1]{Bae-Schmidt} and \cite[Prop.~2]{Behrend_GW}.

Consider the universal projective prestable SUSY curve $\pi: \mathcal{C}^{\mathsf{pre-}\mathrm{SUSY}}_{g,\nf} \to \preSUSY$. 
The morphism $\pi$ is flat, projective, and superschematic. Let $\mathcal{C}^{\text{sm}} \subset \mathcal{C}^{\mathsf{pre-}\mathrm{SUSY}}_{g,\nf} $ be the locus where $\pi$ is smooth. Since the singular locus of the fibers is closed, $\mathcal{C}^{\text{sm}}$ is an open substack. Consequently, the restricted morphism $\pi^{\text{sm}}: \mathcal{C}^{\text{sm}} \to  {\preSUSY}$ is superschematic, smooth and surjective.

For any integer $m \ge 1$ consider the $m$-fold fiber product
\[ \mathcal{C}^{\text{sm}, m} = \mathcal{C}^{\text{sm}} \times_\preSUSY \dots \times_\preSUSY \mathcal{C}^{\text{sm}} \]
Let $\mathcal{U}_m \subset \mathcal{C}^{\text{sm}, m} $ be the open substack defined by excluding the pairwise diagonals.\footnote{As the universal curve is a representable morphism (actually superschematic), the relative diagonal morphisms are injective \cite[Prop.\! 2.20]{BHRSS25}. Moreover, the universal curve is separated, and then the relative diagonal morphisms are proper \cite[Def.\! 4.3]{BHRSS25}. It follows that they are closed immersions, so that the pairwise diagonals are closed sub-superstacks, and their complement $\mathcal{U}_m$  is an open substack of $\mathcal{C}^{\text{sm}, m}$. }
For every superscheme $S$, the $S$-points of $\mathcal{U}_m$ corresponds to projective prestable SUSY curves over $S$ equipped with $n_{\mathrm{NS}}$ NS punctures, $n_{\mathrm{RR}}$ RR punctures 
and $m$ disjoint sections lying in the smooth locus.
The   projection map $p_m\colon \mathcal{U}_m \to \preSUSY$, which  forgets the  $m$ additional sections, is a  superschematic smooth morphism, as shown in the subsequent Lemma \ref{lem:proj-is-smooth}.

Now we regard the $m$ sections as NS punctures.
Let $\mathcal{V}_m \subset \mathcal{U}_m$ be the locus where the SUSY curves, equipped with $n_{\mathrm{NS}}+m$ NS punctures and
$n_{\mathrm{RR}}$ RR punctures, are stable. Since stability is an open condition, $\mathcal{V}_m$ is an open sub-superstack of $\mathcal{U}_m$. Thus, the restricted projection $p_m|_{\mathcal{V}_m}\colon  \mathcal{V}_m \to \preSUSY$ is superschematic and smooth. Note that  $\mathcal{V}_m$ is an open sub-superstack of  a superstack of stable SUSY curves, hence it is algebraic and smooth by \cite{Felder-Kazhdan-Polishchuk}.
 
To conclude, note that the family 
\[
\left\{\begin{tikzcd}
p_m|_{\mathcal{V}_m} \colon \mathcal{V}_m \arrow{r} & \preSUSY
\end{tikzcd}\right\}_{m \ge 1}
\]
covers $\preSUSY$.
Let  $(\boldit{C}, p_1, \dots, p_{n_{\mathrm{NS}}}, r_1,\dots,r_{n_{\mathrm{RR}}})$ be a geometric point in $\preSUSY$.  By choosing $m$ sufficiently large and placing $m$ distinct NS punctures $q_1, \dots, q_m$ in the smooth loci of the components that violate the stability condition, the SUSY curve 
\[
(\boldit{C}, p_1, \dots, p_{n_{\mathrm{NS}}},q_1, \dots, q_m, r_1,\dots,r_{n_{\mathrm{RR}}})
\]
becomes stable.

Since $\preSUSY$ is covered by the images of the smooth morphisms   originating from algebraic smooth superstacks, it is algebraic and smooth.
 \end{proof}

 Later we shall need to use the sheaf $\mathcal A$ of infinitesimal deformations of a projective prestable punctured SUSY curve. A close analysis of the theory developed in \cite{Felder-Kazhdan-Polishchuk} shows that this sheaf has the same expression as the one for stable SUSY curves, the difference between the two situations being of global nature. Denoting by
 $\mathcal A_+$ and $\mathcal A_-$ its even and odd parts, we have
\begin{equation}\label{eq:sheafA} \mathcal A_+ = T_C\left(-\sum_iZ_{\mathrm{NS},i}-\sum_jZ_{\mathrm{RR},j}\right),
 \qquad \mathcal A_- = T_C\otimes L \left(-\sum_iZ_{\mathrm{NS},i}-\sum_jZ_{\mathrm{RR},j}\right),
\end{equation}
 where the line bundle $L$ is the spin structure given by $L^2=\omega_C\big(\sum_jZ_{RR,j}\big)$ see \cite[Cor.~3.10]{Felder-Kazhdan-Polishchuk}).

\begin{lemma} 
\label{lem:proj-is-smooth}
The projection map $p_m\colon \mathcal{U}_m \to \preSUSY$, which  forgets the $m$ additional sections, is a superschematic smooth morphism.
\end{lemma}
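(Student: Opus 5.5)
We need to prove that $p_m\colon \mathcal{U}_m\to\preSUSY$ is superschematic and smooth. The plan is to factor $p_m$ through the iterated fibre products of the smooth locus of the universal curve:
\[
\mathcal{U}_m \subset \mathcal{C}^{\mathrm{sm},m} \longrightarrow \mathcal{C}^{\mathrm{sm},m-1} \longrightarrow \cdots \longrightarrow \mathcal{C}^{\mathrm{sm}} \longrightarrow \preSUSY,
\]
where each arrow forgets the last factor. By construction, $\mathcal{C}^{\mathrm{sm},k}\to\mathcal{C}^{\mathrm{sm},k-1}$ is the base change of $\pi^{\mathrm{sm}}\colon\mathcal{C}^{\mathrm{sm}}\to\preSUSY$ along the projection $\mathcal{C}^{\mathrm{sm},k-1}\to\preSUSY$.

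Superschematicity and smoothness are both stable under base change and composition, so it suffices to know that $\pi^{\mathrm{sm}}$ has both properties. This was observed in the proof of \Cref{prop:prestable-are-smooth}, and I would verify it as follows.

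1. The universal curve $\pi$ is superschematic, since it is projective. Its smooth locus is open, so $\pi^{\mathrm{sm}}$ is superschematic as well.
2. Smoothness can be tested after pulling back along an arbitrary $\boldit S\to\preSUSY$ from a superscheme. There $\pi^{\mathrm{sm}}$ becomes the restriction of a flat, Cohen--Macaulay supercurve $f\colon\Xcal\to\boldit S$ of relative dimension $1|1$ to its smooth locus $\boldit U$. This restriction is smooth by definition of $\boldit U$, in the sense of \cite[Def.~2.7]{BrHRMa23}.

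It follows that $\mathcal{C}^{\mathrm{sm},m}\to\preSUSY$ is superschematic and smooth of relative dimension $m|m$.

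Finally, $\mathcal{U}_m$ is the open sub-superstack of $\mathcal{C}^{\mathrm{sm},m}$ complementary to the pairwise diagonals. These diagonals are closed by the footnote in the proof of \Cref{prop:prestable-are-smooth}: the universal curve is separated and representable, so its relative diagonals are closed immersions. An open immersion is superschematic and étale, so composing gives the claim for $p_m$.

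Two points need more care.

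1. The fibre products must be taken over $\preSUSY$ and not over $\boldit S$. They commute with base change to a superscheme $\boldit S$, where $\mathcal{C}^{\mathrm{sm},m}\times_{\preSUSY}\boldit S$ is the $m$-fold fibre product $\boldit U\times_{\boldit S}\cdots\times_{\boldit S}\boldit U$.
2. The identification of $S$-points of $\mathcal{U}_m$ with curves carrying $m$ extra disjoint sections must not interfere with the NS and RR punctures. Since $p_m$ only forgets sections, no compatibility is required here; the disjointness from punctures matters only later, when passing to $\mathcal V_m$.

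The main obstacle is the smoothness of $\boldit U\to\boldit S$ in the super sense. For a flat Cohen--Macaulay morphism, smoothness at a point is not automatic in supergeometry. I would argue as follows.

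1. Characterize smooth points by local freeness of $\Omega_f$ of rank $1|1$ plus flatness, as in \cite{BrHRMa23}.
2. Reduce to the bosonic fibre: the bosonic fibres are nodal curves, which are smooth away from the nodes.
3. Use the fibrewise criterion for smoothness of flat morphisms of superschemes.

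If the fibrewise criterion is not available in the needed form, the fallback is to define the smooth locus directly as the open set where $\Omega_f$ is locally free of rank $1|1$. The smoothness of $\pi^{\mathrm{sm}}$ then holds by definition, and only the openness of that locus has to be checked, which follows from semicontinuity of fibre ranks of the coherent sheaf $\Omega_f$.
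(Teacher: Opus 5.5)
Your proposal is correct and follows essentially the same route as the paper. You write $\mathcal{C}^{\mathrm{sm},m}\to\preSUSY$ as an iterated composite of base changes of $\pi^{\mathrm{sm}}$, use that superschematic smooth morphisms are stable under base change and composition, and finish with the open immersion $\mathcal{U}_m\subset\mathcal{C}^{\mathrm{sm},m}$. Your extended worry about the smoothness of $\boldit U\to\boldit S$ is unnecessary: the paper defines $\mathcal{C}^{\mathrm{sm}}$ as the smooth locus, so $\pi^{\mathrm{sm}}$ is smooth tautologically. The only nontrivial input is the openness of that locus, which the paper takes as given in the proof of \Cref{prop:prestable-are-smooth}.
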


\begin{proof} The projection $p_m$ is clearly superschematic. 
Its smoothness  is a direct consequence of the   stability of smoothness under base change and under composition, and the fact that open immersions are smooth.   Recall that  $\pi^{\text{sm}} \colon \mathcal{C}^{\text{sm}} \to  {\preSUSY}$ is   smooth by definition.

 We now consider the case $m=2$. The projection map $\mathrm{pr}_1 \colon \mathcal{C}^{\text{sm}} \times_\preSUSY  \mathcal{C}^{\text{sm}} \to \mathcal{C}^{\text{sm}}$ is the base change of the smooth morphism $\pi^{\text{sm}} \colon \mathcal{C}^{\text{sm}} \to\preSUSY$ along $\pi^{\text{sm}}$ itself, hence it  is smooth. The morphism $\mathcal{C}^{\text{sm}, 2}\to \preSUSY $ is the composition of the smooth morphisms  $\mathrm{pr}_1$ with $\pi^{\text{sm}}$, hence it is smooth. By   induction on $m$ the   projection $\mathcal{C}^{\text{sm}, m}_{\mathfrak{M}_{g,n}} \to \preSUSY$ is smooth.

Finally one uses the fact that open immersions are smooth.
\end{proof}

\subsection{Stable supermaps}
In this Subsection we describe the moduli superstack of stable supermaps, following \cite[Sec.~2.1]{BHRSM25}.

\subsubsection{Chow groups of superschemes}
As in the case of stable maps, the definition of stable supermap involves fixing a 1-cycle class in the target. A theory of cycles for superschemes has been developed in \cite{BrHRMa23}. We briefly review the basic theory in this subsection.

Let us write $\mathbb{Z}^2$ as $\mathbb{Z}\oplus\Pi\mathbb{Z}$ (here $\Pi$ is the parity change operator), so that it has a superring structure with product:
\[
(m-\Pi n)(m'-\Pi n') = (mm'+nn' - \Pi (mn'+m'n)).
\]
An $h$-supercycle on a superscheme $\Xcal$ is a finite sum
\[
\alpha={\textstyle \sum_i} (m_i -  {\Pi }n_i) [Y_i]
\]
where $m_i-\Pi n_i \in\mathbb{Z}^2$ and the $Y_i$ are closed $h$-dimensional subvarieties of the bosonic reduction $X$ of $\Xcal$. The group of supercycles of $\Xcal$ is the bigraded $\Z^2$-module
\[
Z_\bullet (\Xcal)=\bigoplus_{h=0}^m Z_h(\Xcal) = Z_\bullet (X) \oplus \Pi Z_\bullet (X),
\]
where $m$ is the dimension of $X$.
There is a natural notion of linear equivalence for supersycles, so that we can define the Chow group 
\[
A_h(\Xcal)= Z_h(\Xcal)/W_h(\Xcal)
\]
where $W_h(\Xcal)\subset Z_h(\Xcal)$ is the  graded $\mathbb{Z}^2$-submodule formed by the 
 $h$-supercycles rationally equivalent to zero.

One can prove \cite[Def.~4.17]{BrHRMa23} that pushforward morphisms $f_\ast\colon A_\bullet(\Xcal) \to A_\bullet(\Ycal)$ exist for every proper superscheme morphism  $f\colon \Xcal \to \Ycal$.

\subsubsection{The moduli superstack of stable supermaps}
We now fix nonnegative integers $g$, $n_{\mathrm{NS}}$ and $n_{\mathrm{RR}}$, a smooth projective superscheme $\boldit{Y}$ with a polarization $\Lcl_{\Ycal}$, and a rational equivalence class of 1-supercycles $\beta\in A_1(\Ycal)$.

\begin{definition}
\label{def:stablemap2}  
A stable supermap $F$ to $\Ycal$ over a superscheme $\Sc$ of class $\beta$ with $n_{\mathrm{NS}}$ NS punctures and $n_{\mathrm{RR}}$ RR punctures is given by the following set of data:
\begin{enumerate}
\item [\mylabel{st1}{(1)}] A prestable SUSY curve $(f\colon\Xcal\to\Sc, \{\boldit{W}_i\},\{\Zc_j\}, \overline{\delta})$ over $\Sc$ with $n_{\mathrm{NS}}$ Neveu--Schwarz punctures and $n_{\mathrm{RR}}$ Ramond--Ramond punctures, as in \Cref{def:susycurve}.
\item [\mylabel{st2}{(2)}] A superscheme morphism $\phi\colon\Xcal\to \Ycal$ such that $\phi_\ast[\Xcal_s]=\beta$ for every closed point $s\in \boldit S$.
\end{enumerate}
These data are subject to the following stability condition:
\begin{enumerate}
\item [\mylabel{st3}{(3)}] the even line bundle
\[
\Lcl_F\defeq \omega_{\Xcal/\Sc}^2\left(\sum_{1\le i\le n_{\mathrm{NS}}} \boldit X_i + \sum_{1\le j\le n_{\mathrm{RR}}} \Zc_j\right)\otimes \phi^\ast \Lcl_{\Ycal}^{\otimes 3}
\]
is  strongly relatively ample with respect to $f\colon\Xcal\to\Sc$.\footnote{See \cite[Subsect.~2.5]{BrHRPo20} or \cite[Def.~4.2]{Felder-Kazhdan-Polishchuk} for the definition of strongly relatively ample.}
\end{enumerate}
\end{definition}

When $\Ycal$ reduces to a point, we recover the definition of stable SUSY curve with punctures. 

\begin{definition}[{\cite[Def.\! 6.5]{BrHRMa23}, \cite[Def.\! 2.8]{BHRSM25}}]
\label{def:CFGsmaps}  
The category fibered in groupoids 
\[
\begin{tikzcd}
\Mf^{\mathsf{st}-\mathrm{SUSY}}_{g,\nf}(\Ycal,\beta) \arrow{r}{p} & \SSchs_{\et}
\end{tikzcd}
\]
of $\beta$-valued stable supermaps of arithmetic genus $g$ with $n_{\mathrm{NS}}$ Neveu--Schwarz  punctures and $n_{\mathrm{RR}}$ Ramond--Ramond punctures into $\Ycal$ 
is given by the following data: 
\begin{itemize}
\item Objects are $\beta$-valued stable supermaps $\Xf=((f\colon\Xcal\to\Sc, \{\boldit{W}_i\},\{\Zc_j\}, \overline{\delta}), \phi)$ over a superscheme $\Sc$ into $\Ycal$, of arithmetic genus $g$, with $n_{\mathrm{NS}}$ Neveu--Schwarz punctures and $n_{\mathrm{RR}}$ Ramond--Ramond punctures,\footnote{We use a simplified notation here; some of the relevant  data are left implicit.} just as in \Cref{def:stablemap2}. 
\item Morphisms are cartesian diagrams
\begin{equation}
\label{eq:cartesian}
\begin{tikzcd}[row sep=large,column sep=large]
\boldit{X}'\MySymb{dr}\arrow{r}{\Xi}\arrow{d}{g} & \boldit{X}\arrow{d}{f} \\
\boldit{T}\arrow{r}{\xi} & \boldit{S}
\end{tikzcd}
\end{equation}
that is, diagrams inducing an isomorphism $\Xcal'\simto \xi^\ast \Xcal\defeq \Xcal\times_{\Sc}\Tc$ of superschemes over $\Tc$, compatible with the punctures and the morphisms $\overline{\delta}$, and such that $\phi'=\phi\circ\Xi$. 
\end{itemize}
The functor $p$ maps a stable supermap to the base superscheme and the morphism $\Xi$ to the base morphism $\xi$. The pullback $\xi^\ast\boldit{X}\to\Tc$ is given by the fiber product $f_{\Tc}\colon\Xcal\times_{\Sc}\Tc\to\Tc$ and the fiber products of the data $\{\boldit{W}_i\},\{\Zc_j\}, \overline{\delta}$ (thus providing  a natural ``cleavage'').\footnote{The fiber product of $\overline{\delta}$ is a derivation of the same kind due to \cite[Prop.\! 2.26]{BrHRMa23}.} 
\end{definition}

\begin{theorem}[{\cite[Thm.\! 2.17]{BHRSM25}}]
\label{thm:DMstack}
The category fibered in grupoids of stable supermaps
\[
\begin{tikzcd}
\Mf^{\mathsf{st}-\mathrm{SUSY}}_{g,\nf}(\Ycal,\beta) \arrow{r}{p} & \SSchs_{\et}
\end{tikzcd}
\]
is a Deligne--Mumford superstack.
\end{theorem}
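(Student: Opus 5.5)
The proof follows the classical strategy for stable maps (Behrend--Manin, Fulton--Pandharipande), adapted to superstacks: realise the stable supermaps as a substack of a relative Hom-superstack over the stack of curves, and transfer the Deligne--Mumford property from there. Two inputs are available. By \Cref{prop:prestable-are-smooth}, $\preSUSY$ is an algebraic (smooth) superstack. By \Cref{prop:prestable-are-superstack}, descent is effective for projective prestable SUSY curves. The plan has four steps.

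\textbf{Step 1 (superstack property).} First I check that $\Mf^{\mathsf{st}-\mathrm{SUSY}}_{g,\nf}(\Ycal,\beta)$ is a superstack.
\begin{enumerate}
\item Isomorphisms of stable supermaps are isomorphisms of the underlying prestable SUSY curves that commute with $\phi$. This is a closed condition inside the Isom-superscheme of curves, so Isom is representable and hence an \'etale sheaf.
\item For descent data, the curve descends by \Cref{prop:prestable-are-superstack}, and the morphism $\phi$ descends because $\Ycal$ is a sheaf.
\item The class condition $\phi_\ast[\Xcal_s]=\beta$ and the ampleness of $\Lcl_F$ are both fibrewise. Hence they descend.
\end{enumerate}

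\textbf{Step 2 (algebraicity).} Let $\tau\colon \Mf^{\mathsf{st}-\mathrm{SUSY}}_{g,\nf}(\Ycal,\beta)\to\preSUSY$ be the forgetful morphism. I will show that $\tau$ is representable by locally finite type algebraic superspaces (indeed superschemes).
\begin{enumerate}
\item Take a point $\Sc\to\preSUSY$, i.e.\ a projective curve $\Xcal\to\Sc$. The fibre product is an open and closed sub-superfunctor of $\underline{\Hom}_{\Sc}(\Xcal,\Ycal\times\Sc)$.
\item Via graphs, this Hom-superfunctor embeds as an open sub-superfunctor of the Hilbert superscheme of $\Xcal\times\Ycal$ over $\Sc$. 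That Hilbert superscheme exists because $\Xcal\to\Sc$ and $\Ycal$ are projective; I take its representability from the super Hilbert scheme theory in \cite{BrHRPo20}.
\item The fixed class $\beta$ is locally constant, so it cuts out an open and closed piece.
\item Strong relative ampleness of $\Lcl_F$ is an open condition. This follows because it can be checked on the bosonic fibres, as in the footnote for stable curves.
\end{enumerate}
Pulling back a smooth atlas of $\preSUSY$ then yields a smooth atlas of the moduli superstack. So it is algebraic.

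\textbf{Step 3 (Deligne--Mumford).} To upgrade to Deligne--Mumford, I use the characterisation of \cite[Prop.~6.3]{BHRSS25} recalled earlier: it suffices that the diagonal is unramified, equivalently that the stabilizers at geometric points are finite and reduced. For a geometric point $\Spec k\to\Mf$, the stabilizer is an ordinary group scheme, since formally unramified morphisms are bosonic. Concretely, it is the group of automorphisms of the SUSY curve compatible with $\phi$. I would split this into two parts.
\begin{enumerate}
\item \emph{Finiteness.} The ampleness of $\Lcl_F$ forces the group to be finite. On a component contracted by $\phi$, $\omega^2(\Wc+\Zc)$ restricts amply, so that component is stable in the SUSY sense. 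By \cite[Prop.~A.2]{Felder-Kazhdan-Polishchuk} it then has finite automorphisms. On non-contracted components, the restriction of $\phi^\ast\Lcl_{\Ycal}$ is nontrivial, which kills the infinite part of the automorphisms, exactly as in the classical argument.
\item \emph{Reducedness.} The stabilizer has no infinitesimal automorphisms: $H^0$ of the even part of the relevant deformation sheaf (built from $\mathcal A$ twisted by the map) vanishes, which gives unramifiedness.
\end{enumerate}

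The main obstacle is this last step: controlling infinitesimal automorphisms in the super setting, where odd infinitesimal symmetries of the superconformal structure could a priori survive. I would first try to reduce to the bosonic stable map $(X,\phi_{\bos})$ with the spin structure $L$, and show that vanishing of the global vector fields $H^0(T_C(-D))$ on unstable-for-the-map components, twisted by $L$, follows from ampleness, using \eqref{eq:sheafA}.
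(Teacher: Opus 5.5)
The paper does not prove this theorem; it quotes it from \cite[Thm.~2.17]{BHRSM25}, so your argument has to stand on its own. Steps~1--2 are a reasonable route to algebraicity: you present the forgetful map as representable by open pieces of Hom-superschemes, via graphs in Hilbert superschemes, over the algebraic superstack $\preSUSY$. This is also the picture the paper later uses for the derived enhancement.

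The genuine gap is Step~3. This is exactly where the super-specific content of the theorem lies, and you leave it open. First, ``the stabilizer is an ordinary group scheme, since formally unramified morphisms are bosonic'' is circular: it presupposes that the diagonal is unramified. For an algebraic superstack, $\mathbf{Aut}(x)$ is a priori only a group algebraic superspace, and its odd directions are precisely the odd infinitesimal automorphisms that must be excluded. What you have to show is that, at every geometric point, the Lie superalgebra
\[
\ker\bigl(H^0(\boldit C,\mathcal A)\longrightarrow H^0(\boldit C,\varphi^\ast T_{\boldit Y})\bigr)
\]
vanishes in \emph{both} parities. Once it does, $\mathbf{Aut}(x)$ is unramified and of finite type over $k$, hence a finite \'etale ordinary group, and your finiteness item is not even needed. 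Your reducedness item only asks for the even part, which at best shows that the bosonic reduction of the stabilizer is \'etale.

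Second, your sketch for the odd part is to obtain vanishing of the $L$-twisted fields from ampleness. This fails on components $E$ on which $\varphi_{\bos}$ is non-constant. There the positivity of $\Lcl_F$ may come entirely from $\varphi^\ast\Lcl_{\boldit Y}$, and the fields need not vanish: on a rational tail attached at one node they form a $2|1$-dimensional space. What kills them is compatibility with $\varphi$. At a general point of $E$, in superconformal coordinates $(z|\theta)$, the even fields are $f(z)\partial_z+\tfrac12 f'(z)\theta\partial_\theta$ and the odd ones are $g(z)(\partial_\theta-\theta\partial_z)$. Since the base is $\Spec k$, an even function $h$ on $\boldit Y$ pulls back to some $a(z)$. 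The two fields send it to $fa'$ and $-\theta g a'$, so $f=g=0$ as soon as $\varphi_{\bos}|_E$ is non-constant.

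The right dichotomy is therefore whether $\varphi_{\bos}|_E$ is constant, not whether $\varphi$ contracts $E$. A component can have $\varphi_{\bos}$ constant while $\varphi$ is non-trivial in odd directions. Then $\varphi^\ast\Lcl_{\boldit Y}$ is still trivial on the bosonic component, so your argument for non-contracted components does not apply to it. On such components, ampleness of $\Lcl_F$ (tested on the bosonic reduction) makes the component stable, with its punctures and node branches of NS or Ramond type. The Deligne--Mumford property of stable SUSY curves \cite{Felder-Kazhdan-Polishchuk} then excludes superconformal infinitesimal automorphisms of either parity. Note that \cite[Prop.~A.2]{Felder-Kazhdan-Polishchuk} by itself is only the stability criterion. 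The matching conditions at the nodes still have to be checked.

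Two minor points in Step~2. The fibre product is open in the Hom-superscheme, not open and closed. Also, local constancy of $\varphi_\ast[\boldit X_s]$ requires reading $\beta$ up to algebraic or numerical equivalence rather than rational equivalence.
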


One can also define an algebraic superstack
\[
\Mf_{g,\nf}^{\pre-\mathrm{SUSY}}(\Ycal,\beta)
\]
whose groupoid of $\boldit S$-objects, for a superscheme $\boldit S$, is given by conditions \ref{st1}--\ref{st2} in \Cref{def:stablemap2}, but ignoring the stability condition \ref{st3}. This stack is not of Deligne--Mumford type. There is a diagram of superstacks
\[
\begin{tikzcd}
\Mf^{\mathsf{st-}\mathrm{SUSY}}_{g,\nf}(\Ycal,\beta)\arrow[hook]{r}{\mathrm{open}}\arrow[swap]{dr}{\boldit{\tau}} &\Mf_{g,\nf}^{\pre-\mathrm{SUSY}}(\Ycal,\beta)\arrow{d} \\
&\Mf_{g,\nf}^{\mathsf{\pre-}\mathrm{SUSY}}
\end{tikzcd}
\]
where the vertical morphism forgets the datum of the map $\boldit{X} \to \boldit{Y}$, and the target is smooth by \Cref{prop:prestable-are-smooth}. We view the diagonal map $\boldit{\tau}$ as the (super) analog of the forgetful map $\tau \colon \Mbar \to \mathfrak M_{g,n}$ recalled in \Cref{sec:classical-stable-maps}.

We now move to the derived setup.
Let $\BCal{C}^{\pre-\mathrm{SUSY}}_{g,\nf} \to \BCal{M}^{\pre-\mathrm{SUSY}}_{g,\nf}$ be the universal curve. Since this morphism is flat, by \Cref{flat-implies-Hom} the derived superstack
\[
\BR\Mf_{g,\nf}^{\pre-\mathrm{SUSY}}(\Ycal) \defeq \BR\underline{\Hom}_{/\BCal{M}^{\pre-\mathrm{SUSY}}_{g,\nf}}(\BCal{C}^{\pre-\mathrm{SUSY}}_{g,\nf},\BCal{M}^{\pre-\mathrm{SUSY}}_{g,\nf} \times \boldit{Y})
\]
defines a derived enhancement of 
\[
\Mf_{g,\nf}^{\pre-\mathrm{SUSY}}(\Ycal)\defeq \coprod_{\beta} \Mf_{g,\nf}^{\pre-\mathrm{SUSY}}(\Ycal,\beta),
\]
where $\beta$ ranges over all possible choices of 1-supercycles of the form $(1-\Pi)\beta_0$, where $\beta_0\in A_1(\boldit{Y}_{\bos})$.
For an arbitrary derived affine superscheme $\boldit{S}$, its $\infty$-groupoid of $\boldit{S}$-valued points is described by the mapping space
\[
\mathrm{Map}_{/\BCal{M}^{\pre-\mathrm{SUSY}}_{g,\nf}\times\boldit{S}}(\BCal{C}^{\pre-\mathrm{SUSY}}_{g,\nf} \times \boldit{S},\BCal{M}^{\pre-\mathrm{SUSY}}_{g,\nf} \times \boldit{Y}\times\boldit{S}).
\]
Here, by $\times$ we mean a \textit{derived} pullback of derived superstacks. Being a relative derived hom-superstack, it comes with a structure morphism
\begin{equation}
\label{eq:structuremap}
\begin{tikzcd}
\pi\colon \BR\Mf_{g,\nf}^{\pre-\mathrm{SUSY}}(\Ycal) \arrow{r} & 
\Mf_{g,\nf}^{\pre-\mathrm{SUSY}}.
\end{tikzcd}
\end{equation}
In virtue of the topological invariance of \Cref{thm:topological_invariance}, the Zariski open substack 
\[
\Mf_{g,\nf}^{\mathsf{st-}\mathrm{SUSY}}(\Ycal) \subset \Mf_{g,\nf}^{\mathsf{pre-}\mathrm{SUSY}}(\Ycal)
\]
lifts to a Zariksi open immersion of derived superstacks 
\[
\BR\Mf_{g,\nf}^{\mathsf{st-}\mathrm{SUSY}}(\Ycal) \subset \BR\Mf_{g,\nf}^{\mathsf{pre-}\mathrm{SUSY}}(\Ycal).
\]
By abuse of notation we shall still denote by $\pi$ the natural structure morphism 
\[
\begin{tikzcd}
\BR\Mf_{g,\nf}^{\mathsf{st-}\mathrm{SUSY}}(\Ycal) \arrow{r} &
\Mf_{g,\nf}^{\pre-\mathrm{SUSY}}.
\end{tikzcd}
\]

\subsection{\texorpdfstring{Perfect obstruction theory for moduli of stable supermaps}{}}
The goal of this section is to prove the following result.

\begin{theorem}
\label{thm:pot-on-maps}
Let $\boldit Y$ be a smooth projective superscheme. Fix $g\geq 0$, $\beta \in A_1(\boldit Y)$ and a pair of non-negative integers $\nf = (n_{\mathrm{NS}},n_{\mathrm{RR}})$. Then, the Deligne--Mumford superstack $\Mf^{\mathsf{st-}\mathrm{SUSY}}_{g,\nf}(\Ycal,\beta)$ admits a canonical perfect obstruction theory.
\end{theorem}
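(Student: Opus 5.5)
The plan is to apply \Cref{cor:quasi-smooth-pot} to the derived enhancement $\BR\Mf^{\mathsf{st-}\mathrm{SUSY}}_{g,\nf}(\Ycal,\beta)$. This is the open and closed derived substack of $\BR\Mf_{g,\nf}^{\mathsf{st-}\mathrm{SUSY}}(\Ycal)$ cut out by the class $\beta$; by topological invariance (\Cref{thm:topological_invariance}), components of the truncation lift. By \Cref{flat-implies-Hom} and the Zariski lifting, its classical truncation is $\Mf^{\mathsf{st-}\mathrm{SUSY}}_{g,\nf}(\Ycal,\beta)$. Three things then remain to be shown: the enhancement is locally geometric, so that \Cref{thm:cotg-exists-and-induced-pot} supplies $\BL$ and the obstruction theory $\iota^\ast\BL\to\BL_{\Mf}$; its cotangent complex is perfect; and it has Tor amplitude in $[-1,0]$.

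\textbf{Geometricity and the relative cotangent complex.} Since $\Mf^{\pre-\mathrm{SUSY}}_{g,\nf}$ is a smooth algebraic superstack (\Cref{prop:prestable-are-smooth}) and $\pi$ is a relative derived mapping stack from a flat projective curve to the smooth projective $\Ycal$, I would adapt the standard Toën--Vezzosi/Lurie representability argument verbatim (``super'' added). For that argument it suffices that $\Ycal$ is quasi-projective and that the curve is flat and proper, so that derived pushforward preserves perfect complexes. The relative cotangent complex is computed pointwise: for $x=(\boldit C\to\boldit S,\varphi)$ one has
\[
x^\ast\BL_\pi\simeq (\RR p_\ast \varphi^\ast T_{\Ycal})^\vee,
\]
where $p\colon \boldit C\to \boldit S$. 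This follows from the universal property in \Cref{def:cotangent-global}: a square-zero deformation $\boldit S\oplus\BM$ of a map over a fixed curve is a section of $\varphi^\ast T_\Ycal\otimes p^\ast\BM$, and this is corepresented using Grothendieck duality for the Cohen--Macaulay morphism $p$. Since $p$ has relative dimension $1|1$ and bosonic fibres of dimension one, $\RR p_\ast$ of the locally free sheaf $\varphi^\ast T_\Ycal$ is perfect with amplitude $[0,1]$. Hence $\BL_\pi$ is perfect of amplitude $[-1,0]$.

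\textbf{Absolute cotangent complex.} I would use the transitivity triangle
\[
\pi^\ast\BL_{\Mf^{\pre-\mathrm{SUSY}}_{g,\nf}}\to \BL_{\BR\Mf}\to \BL_\pi.
\]
Because $\Mf^{\pre-\mathrm{SUSY}}_{g,\nf}$ is smooth algebraic, its cotangent complex is perfect of amplitude $[0,1]$: it is locally $[\Omega_U\to \Omega_{U/\Mf}]$ for a smooth atlas, and $H^1$ is dual to the infinitesimal automorphisms $\Gamma(\mathcal A)$. The extension of two perfect complexes is perfect. However, $\BL_{\BR\Mf}$ has possible amplitude $[-1,1]$, and on the Deligne--Mumford open locus one must show that $H^1$ vanishes. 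I expect this to be the main obstacle. To handle it, I would identify the connecting map $\BL_\pi\to \pi^\ast\BL_{\Mf^{\pre}}[1]$ pointwise with the dual of $\RR\Gamma(\mathcal A)\to \RR\Gamma(\varphi^\ast T_\Ycal)$, where $\mathcal A$ is the deformation sheaf \eqref{eq:sheafA}. This gives the cone description $\mathrm{Cone}(\Gamma_\ast(\varphi^\ast T_{\Ycal})^\vee\to\Gamma_\ast(\mathcal A)^\vee)[-1]$. $H^1$ of this complex is dual to the kernel of $H^0(\mathcal A)\to H^0(\varphi^\ast T_\Ycal)$, which is the space of infinitesimal automorphisms of the stable supermap. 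Stability, equivalently the unramified diagonal of the Deligne--Mumford superstack (\Cref{thm:DMstack}), forces this kernel to vanish. A cleaner route is to note that for a Deligne--Mumford truncation $H^1(\BL)$ restricts to $H^1(\BL_{\Mf})=0$, and that $H^1$ of a connective-up-to-$[1]$ perfect complex is detected on the truncation by right exactness. By Nakayama on fibres, the amplitude is then $[-1,0]$.

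\textbf{Conclusion.} With quasi-smoothness established, \Cref{cor:quasi-smooth-pot} yields that $\iota^\ast\BL_{\BR\Mf}\to\BL_{\Mf^{\mathsf{st-}\mathrm{SUSY}}_{g,\nf}(\Ycal,\beta)}$ is a perfect obstruction theory. It is canonical because it depends only on the natural derived enhancement.
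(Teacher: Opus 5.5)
Your proposal is correct and follows the same route as the paper: the transitivity triangle for $\pi$, the identifications $x^\ast\BL_\pi\simeq\Gamma_\ast(\varphi^\ast T_{\boldit Y})^\vee$ and $\pi(x)^\ast\BL_{\preSUSY}\simeq\Gamma_\ast(\mathcal A)^\vee[-1]$, the resulting cone description, and then \Cref{thm:cotg-exists-and-induced-pot} (equivalently \Cref{cor:quasi-smooth-pot}). Your treatment of the degree-one cohomology is more accurate than the paper's: you identify $H^1$ with the dual of $\ker\bigl(H^0(\mathcal A)\to H^0(\varphi^\ast T_{\boldit Y})\bigr)$, the infinitesimal automorphisms of the supermap, which vanish by the Deligne--Mumford property, whereas the paper argues that $\Gamma_\ast(\mathcal A)^\vee[-1]$ is itself concentrated in degree $0$ (i.e.\ that $H^0(\mathcal A)=0$), which fails whenever the underlying prestable SUSY curve has an unstable rational component not contracted by $\varphi$.
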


\begin{proof}
We will first compute the cotangent complex of its derived enhancement $\BR\Mf^{\mathsf{st-}\mathrm{SUSY}}_{g,\nf}(\Ycal,\beta)$ and then show that it enjoys the desired finiteness conditions of \Cref{def:superPOT}.

In order to compute this cotangent complex, we will study the local cotangent complex at an $\boldit{S}$-point using the map \eqref{eq:structuremap}. Let $x\colon\Spec \mathbb{A} \to \BR\Mf_{g,\nf}^{\mathrm{SUSY}}(\Ycal)$ classify a stable supermap $(\BCal C, \boldit{Z}_{\mathrm{NS}},\boldit{Z}_{\mathrm{RR}},\delta,\varphi)$. Pre-composing with $\pi$ amounts to forgetting the stable map $\varphi$;
we shall denote by $\widetilde{\BCal{C}}$
the corresponding set of data.
By the transitivity triangle for the cotangent complex we have an exact triangle
\[
\begin{tikzcd}
\pi(x) ^\ast \BL_{\preSUSY} \arrow{r} &  x^\ast\BL_{\BR\Mf^{\mathsf{st-}\mathrm{SUSY}}_{g,\nf}(\Ycal)} \arrow{r} &  x^\ast \BL_\pi.
\end{tikzcd}
\]
The cotangent complex of $\Mf_{g,\nf}^{\pre-\mathrm{SUSY}}$ at the point $\pi(x)$ is computed by the dual of the sheaf $\mathcal{A}$ of infinitesimal deformations of the prestable SUSY curve
$\widetilde{\BCal{C}}$ (see \Cref{eq:sheafA})
 shifted by $-1$, i.e., $\pi(x)^\ast \BL_{\preSUSY}\simeq \Gamma_\ast (\mathcal A)^\vee[-1]$.

Let us compute the cotangent complex of $\pi$ at the point $x$. The fiber of the morphism $\pi$ at the point $\pi(x)$ is the derived hom-superstack
$\BR\underline{\Hom}_{\SSt}(\boldit{C},\boldit{Y})$, whose cotangent complex at the point $x$, since the target $\boldit Y$ is smooth, is given by $\Gamma_\ast(\varphi^\ast T_{\boldit{Y}})^\vee$: this follows from adapting the argument in \cite[Prop.~4.4.2]{DerivedHilbert} to the super-setting. Thus, the cotangent complex $x^\ast \BL_{\BR\Mf^{\mathsf{st-}\mathrm{SUSY}}_{g,\nf}(\Ycal)}$ is computed by the cone
\begin{equation}
\label{eq:cone}
\begin{tikzcd}
\mathrm{Cone}(
\Gamma_\ast(\varphi^\ast T_{\boldit{Y}})^\vee \arrow{r} & 
\Gamma_\ast (\mathcal A)^\vee) {[}-1{]}
\end{tikzcd}
\end{equation}
whose map is induced by the sequence
\[
\begin{tikzcd}
\mathcal A \arrow[hook]{r} &   T_{\boldit C} \arrow{r} & \varphi^\ast T_{\boldit Y}.
\end{tikzcd}
\]
We need to check that this defines a perfect obstruction theory. Note that, as $\boldit Y$ is smooth and projective over $\BA$,
the derived $\BA$-supermodules $\Gamma_\ast(\varphi^\ast T_{\boldit{Y}})^\vee$
and $\Gamma_\ast (\mathcal A)^\vee$ 
are perfect supermodules concentrated in degrees $-1$ and 0, so that the same applies to the cone in \Cref {eq:cone}. Note in particular that   $\Gamma_\ast (\mathcal A)^\vee[-1]$ is concentrated in degree 0 as its $\HH^0(\mathcal A)$ computes the Lie   algebra  of the automorphism group, that for a stable supermap is finite and reduced --- see \cite{BHRSM25}.

By \Cref{thm:cotg-exists-and-induced-pot}\ref{induced-POT}, the closed immersion
\[
\begin{tikzcd}
\Mf^{\mathsf{st-}\mathrm{SUSY}}_{g,\nf}(\Ycal)\arrow[hook]{r}{\iota} & \BR\Mf^{\mathsf{st-}\mathrm{SUSY}}_{g,\nf}(\Ycal)
\end{tikzcd}
\]
yields a perfect obstruction theory
$\iota^\ast \BL_{\BR\Mf^{\mathsf{st-}\mathrm{SUSY}}_{g,\nf}(\Ycal)}\to 
\BL_{\Mf^{\mathsf{st-}\mathrm{SUSY}}_{g,\nf}(\Ycal)}$.
\end{proof}
 
\subsection{The virtual dimension of \texorpdfstring{$\SUSY$}{Mstgn(X,)}}
By definition, the virtual dimension of the moduli superstack $\SUSY$ is the Euler characteristic of the cone in \Cref{eq:cone}. It is computed using the super Grothendieck--Riemann--Roch theorem proved by Manin, Penkov and Voronov in \cite{VMP} (see also \cite{BHRSM25} for a streamlined exposition). 

This computation was made in \cite{BHRSM25}. The computation of the cone in \Cref{eq:cone} made in the present paper, therefore, supports the formula given  in \textit{loc. cit.} which was only conjectural, as it was not based on a suitable deformation theory. 

We report here the resulting formula (\Cref{eq:dimmod}); details are available in \cite{BHRSM25}.  The formula we are going to give involves a sheaf $\mathcal F_{\boldit Y}$ on the scheme $Y = \boldit{Y}_{\mathrm{bos}}$; this is defined as $\mathcal F_{\boldit Y} = \Pi \mathcal J/\mathcal J^2$, where $\mathcal J$ is the ideal sheaf in $\OO_{\boldit Y}$ generated by the odd elements of $\OO_{\boldit Y}$ (note that as
$\mathcal F_{\boldit Y}$ is annihilated by $\mathcal J$, it may be regarded as a sheaf on $Y$). Actually, up to parity, $\mathcal F_{\boldit Y}$ is the odd part of the restriction of the tangent bundle of $\boldit Y$ to $Y$.

\begin{theorem}
\label{thm:vdim-maps}
The virtual  dimension of the moduli superstack   $\SUSY$ is 
 \begin{multline}
 \label{eq:dimmod}
\mathrm{vdim } \SUSY 
  = (r-3)(1-g) + n_{\mathrm{NS}}+ n_{\mathrm{RR}} (1+ s/2) \\  - \Pi \left[ (1-g)(s-2)+ n_{\mathrm{NS}}+ (n_{\mathrm{RR}}/2) (r+1)\right]  
  + (1-\Pi)\int_{\beta_0} [ \ch_1(T_Y)-\ch_1(\mathcal F_{\boldit Y})],
\end{multline}
where $r\vert s = \dim \boldit Y$. Separating the even and odd dimensions for clarity,
   \begin{align*}
   \mathrm{even\ vdim \ } \SUSY &= (r-3)(1-g) + n_{\mathrm{NS}}+ n_{\mathrm{RR}} (1+ s/2)
   + \int_{\beta_0} [ \ch_1(T_Y)-\ch_1(\mathcal F_{\boldit Y})], \\
   \mathrm{odd\ vdim\  } \SUSY &=  (1-g)(s-2)+ n_{\mathrm{NS}}+ (n_{\mathrm{RR}}/2) (r+1)
   + \int_{\beta_0} [ \ch_1(T_Y)-\ch_1(\mathcal F_{\boldit Y})].
   \end{align*}
\end{theorem}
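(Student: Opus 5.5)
The virtual dimension is by definition the (super)rank of the perfect complex $\BE = x^\ast\BL_{\BR\Mf^{\mathsf{st-}\mathrm{SUSY}}_{g,\nf}(\Ycal)}$ of \eqref{eq:cone}. Rank is additive in exact triangles and the rank of a perfect complex equals its super Euler characteristic. From the cone description we therefore get
\[
\mathrm{vdim} = \chi\bigl(C,\varphi^\ast T_{\boldit Y}\bigr) - \chi\bigl(C,\mathcal A\bigr),
\]
where $\chi$ denotes the super Euler characteristic $\chi_+ - \Pi\chi_-$ over a point of the moduli space. Both terms are locally constant in flat families, so it suffices to compute them on a single geometric fibre $\boldit C$ over a point, whose bosonic reduction is a prestable curve $C$ of genus $g$ with spin structure $L$, $L^2=\omega_C(\sum_j Z_{\mathrm{RR},j})$. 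The plan is to evaluate each Euler characteristic separately and then assemble.

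\textbf{The term $\chi(\mathcal A)$.} By \eqref{eq:sheafA}, $\mathcal A=\mathcal A_+\oplus\Pi\mathcal A_-$ is a sum of line bundles on $C$, so ordinary Riemann--Roch applies. The even part $\mathcal A_+ = T_C(-D)$ has degree $2-2g-n_{\mathrm{NS}}-n_{\mathrm{RR}}$. The odd part $T_C\otimes L(-D)$ has degree $2-2g+(g-1+n_{\mathrm{RR}}/2)-n_{\mathrm{NS}}-n_{\mathrm{RR}}$. Applying $\chi = \deg + 1-g$ to each and subtracting from the next term gives the constant parts $-3(1-g)+n_{\mathrm{NS}}+n_{\mathrm{RR}}$ (even) and $2(1-g)+n_{\mathrm{NS}}+n_{\mathrm{RR}}/2$ (odd) of the formula. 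The extra NS-versus-RR asymmetry must be tracked carefully, since the NS puncture in the odd sheaf may contribute differently.

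\textbf{The term $\chi(\varphi^\ast T_{\boldit Y})$.} Here I would invoke the super Grothendieck--Riemann--Roch theorem of \cite{VMP}, in the form used in \cite{BHRSM25}. On a supercurve $\boldit C$ with bosonic curve $C$ and $\mathcal O_{\boldit C}=\mathcal O_C\oplus\Pi L^{-1}$ (up to conventions), a locally free sheaf pulled back from $\boldit Y$ restricts on $C$ to the filtration with graded pieces built from $T_Y$, $\mathcal F_{\boldit Y}$, $L^{-1}$ and their parity shifts. Its super Euler characteristic is therefore an alternating sum of ordinary Riemann--Roch terms:
\[
\mathrm{rk}\cdot(1-g) + \deg \;\text{ on each piece, with degrees } \int_{\beta_0}\ch_1 \text{ and twists by } \deg L = g-1+n_{\mathrm{RR}}/2 .
\]
Collecting even and odd parts produces $r(1-g)$, resp.\ $s(1-g)$, from the ranks, together with terms of the form $n_{\mathrm{RR}}s/2$ and $n_{\mathrm{RR}}r/2$ from the $L$-twist. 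The class $\beta=(1-\Pi)\beta_0$ contributes $(1-\Pi)\int_{\beta_0}[\ch_1(T_Y)-\ch_1(\mathcal F_{\boldit Y})]$.

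\textbf{Assembly and main obstacle.} Adding the two computations should yield \eqref{eq:dimmod}, and separating the $\Pi$-coefficients gives the even and odd formulas. The main obstacle is the second term. It requires correctly identifying the restriction of $\varphi^\ast T_{\boldit Y}$ to $C$ through the odd-nilpotent filtration, with the right parity conventions. The hope is that the $L$-twisted pieces combine so that only $\ch_1(T_Y)-\ch_1(\mathcal F_{\boldit Y})$ survives, with the same integral appearing in both even and odd parts. I would first check the result against the bosonic specialisation $s=0$, no odd directions, which must recover the classical formula of \Cref{sec:classical-stable-maps}. I would also check the point-target case, which must recover $\dim\Mf^{\mathsf{st-}\mathrm{SUSY}}_{g,\nf}$ from \cite{Felder-Kazhdan-Polishchuk}. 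Both checks fix sign and parity conventions before the general calculation, for whose routine details I would follow \cite{BHRSM25}.
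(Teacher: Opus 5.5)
Your route is the paper's. The virtual dimension is the rank of the cone \eqref{eq:cone}, i.e.\ $\chi(\varphi^\ast T_{\boldit Y})-\chi(\mathcal A)$, evaluated by (super) Riemann--Roch as in \cite{BHRSM25}. Working on a geometric fibre, where $\mathcal J_{\boldit C}^2=0$ and the filtration $\mathcal J_{\boldit C}E\subset E$ reduces everything to ordinary Riemann--Roch on $C$, is a fine way to carry this out. Two details are off, however, and your calibration checks would catch only the first.

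\textbf{The odd constant.} We have $\chi(\mathcal A_-)=2(1-g)-n_{\mathrm{NS}}-n_{\mathrm{RR}}/2$. So the odd constant is $-2(1-g)+n_{\mathrm{NS}}+n_{\mathrm{RR}}/2$ (the $-2$ in $(1-g)(s-2)$), not $+2(1-g)+\cdots$.

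\textbf{The parity convention for $\mathcal O_{\boldit C}$.} This convention is not harmless: it must be $\mathcal O_{\boldit C}=\mathcal O_C\oplus\Pi L$. Indeed, odd superconformal fields restrict on $C$ to sections of $(\mathcal J_{\boldit C}/\mathcal J_{\boldit C}^2)^\vee$ vanishing at the NS punctures. On the other hand, \eqref{eq:sheafA} together with $L^2=\omega_C(\sum_j Z_{\mathrm{RR},j})$ reads $\mathcal A_-\cong L^{-1}(-\sum_i Z_{\mathrm{NS},i})$, so $\mathcal J_{\boldit C}/\mathcal J_{\boldit C}^2\cong L$.

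Write $E_0=\varphi_{\bos}^\ast T_Y$ and $E_1=\varphi_{\bos}^\ast(\mathcal J_{\boldit Y}/\mathcal J_{\boldit Y}^2)^\vee$ for the even and odd parts of $(\varphi^\ast T_{\boldit Y})|_C$, and use $\deg L=g-1+n_{\mathrm{RR}}/2$. Then
\[
\begin{aligned}
\chi_+(\varphi^\ast T_{\boldit Y})&=\chi(E_0)+\chi(L\otimes E_1)=r(1-g)+\tfrac{s\,n_{\mathrm{RR}}}{2}+d,\\
\chi_-(\varphi^\ast T_{\boldit Y})&=\chi(E_1)+\chi(L\otimes E_0)=s(1-g)+\tfrac{r\,n_{\mathrm{RR}}}{2}+d,
\end{aligned}
\]
with $d=\deg E_0+\deg E_1=\int_{\beta_0}[\ch_1(T_Y)-\ch_1(\mathcal F_{\boldit Y})]$.

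The minus sign in $d$ comes from $E_1$ being a pullback of the \emph{dual} of $\mathcal J_{\boldit Y}/\mathcal J_{\boldit Y}^2$. The same $d$ appears in both parities because twisting by $L$ only shifts degrees by $\mathrm{rk}\cdot\deg L$. This settles your worry about how the twisted pieces combine.

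With $\Pi L^{-1}$ you would instead get $2s(1-g)-s\,n_{\mathrm{RR}}/2$ and $2r(1-g)-r\,n_{\mathrm{RR}}/2$ in place of $s\,n_{\mathrm{RR}}/2$ and $r\,n_{\mathrm{RR}}/2$. Neither of your checks sees this:
\begin{itemize}
\item the classical formula tests only the even part with $s=0$, where the twisted piece $L^{\pm1}\otimes E_1$ vanishes;
\item for a point target, $\varphi^\ast T_{\boldit Y}=0$.
\end{itemize}
The genus-zero comparison with \cite{KSY-2022} in the Remark after the theorem does detect it. Its odd part $n-2+\int_{\beta_0}\ch_1(T_Y)$ comes out right only for $\Pi L$.
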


\begin{remark} 
When the target $\boldit Y$ is bosonic (an ordinary scheme $Y$) then $s = \ch_1(\mathcal F_{\boldit Y}) =0$ and the previous formula reduces to
 \begin{multline*}  \operatorname{vdim } \SUSY    =   (1-g)(r-3) - \Pi(2g-2) + n_{\mathrm{NS}}+ n_{\mathrm{RR}} (1+ s/2) \\  - \Pi ( n_{\mathrm{NS}}+ (n_{\mathrm{RR}}/2) (r+1))
  + (1-\Pi) \int_{\beta_0}  \ch_1(T_Y), 
  \end{multline*}
which agrees with the formula that in \cite[Thm.~4.3.8]{KSY-2022} was 
conjectured in the case $g=n_{\mathrm{RR}}=0$, i.e.,
\[
\mathrm{vdim\ } \Mf^{\mathsf{st-}\hbox{\rm\tiny SUSY}}_{0,(n,0)}( Y,\beta_0) = r - 3 + n - \Pi (n-2) + (1-\Pi) \int_{\beta_0}  \ch_1(T_Y).
\]
 Moreover, further specializing to the ordinary moduli stack of stable maps into a scheme $Y$, so that
$n_{\mathrm{RR}}=0$ and $n_{\mathrm{NS}}$ is the number $n$ of punctures, one gets the usual formula for the virtual dimension 
\[
(r-3)(1-g) + n + \int_{\beta_0}  \ch_1(T_Y)  .
\]
\end{remark}

\appendix
\section{Proofs left behind}

\subsection{Proof of \cref{prop:t-structure}}
\label{sec:t-structure_proof}
Since the functor \eqref{functor:underlyingmodule} commutes with all colimits and limits, it is clear that $\mathbf{SD}_{\infty}(\BA)^{\leqslant0}$ is closed under colimits and the inclusion functor is accessible, so this defines the connective part of an accessible $t$-structure on $\mathbf{SD}_{\infty}(\BA)$. So we are only left to prove that the $\infty$-category $\mathbf{SD}_{\infty}(\BA)^{\geqslant0}$ is precisely the right orthogonal to $\mathbf{SD}_{\infty}(\BA)^{\leqslant0}$: in other words, we need to check that given a derived $\BA$-supermodule $\BN$, the underlying derived $R$-module of the derived $\BA$-supermodule $\BR\BH\mathrm{om}_{\BA}(\BM,\BN)$ is coconnective for all connective derived $\BA$-supermodules $\BM$ if and only if $\BN$ is coconnective in our sense. We first observe that, being $\BA$ connective, $\mathbf{SD}_{\infty}(\BA)^{\leqslant0}$ can be characterized as the smallest full subcategory of $\mathbf{SD}_{\infty}(\BA)^{\leqslant0}$ containing both $\BA$ and $\Pi\BA$ and closed under small colimits. Therefore, writing any connective derived $\BA$-supermodule $\BM$ as a colimit of positive shifts of $\BA$ and $\Pi\BA$, using the fact that the functor \eqref{functor:underlyingmodule} commutes with all limits and colimits, we can write
\begin{align*}
\BR\BH\mathrm{om}_{\BA}(\BM,\BN)&\simeq\BR\BH\mathrm{om}_{\BA}(\colim_{r,s,n_{rs}}\BA^{r|s}[n_{rs}],\BN)\\&\simeq\lim_{r,s,n_{rs}}\BR\BH\mathrm{om}_{\BA}(\BA^{r|s}[n_{rs}],\BN)^{\mathrm{even}}\\&\simeq\lim_{r,s,n_{rs}}\BR\BH\mathrm{om}_R(R^{r|s}[n_{rs}],\BN)^{\mathrm{even}}\\&\simeq\lim_{r,s,n_{rs}} (\BN^{\oplus r})^{\mathrm{even}}\oplus(\BN^{\oplus s})^{\mathrm{odd}}[-n_{rs}]
\end{align*}
where $r,s,n_{rs}\geqslant0$. Since taking cohomology commutes with all direct sums, such derived $R$-module is coconnective for all $r,s,n_{rs}\geqslant0$ if and only if this is true for $r=s=1$ and $n_{rs}=0$. But for this choice of integers this is precisely the underlying derived $R$-supermodule of $\BN$. The claims on the left and right completion of the $t$-structure and on the description of $\BH^0$ are then straight-forward. 

For the statement concerning the heart of the $t$-structure, we first notice that the statement is true when $\BA=R$ is the (ordinary) ground ring $R$: in this setting, $\mathbf{SD}_{\infty}(R)$ is by definition the derived $\infty$-category of the abelian category $\mathrm{SMod}_R$, and so the claim follows from \cite[Prop.~1.3.2.19]{ha}. For a general derived $R$-superalgebra, we observe that the functor $\BH^0\colon \mathbf{SD}_{\infty}(R)\to\mathrm{SMod}_R$ sending a derived $R$-supermodule $\BM$ to $\mathrm{H}^0(\BM^{\mathrm{even}})\oplus \mathrm{H}^0(\BM^{\mathrm{odd}})$ preserves algebra objects, and thus it defines a functor $\BH^0\colon\mathbf{SD}_{\infty}(\BA)\to\mathrm{SMod}_{\BH^0(\BA)}$. Restricting such functor to the heart of the $t$-structure, we obtain a well-defined functor $\BH^0\colon \mathbf{SD}_{\infty}(\BA)^{\heartsuit}\to\mathrm{SMod}_{\BH^0(\BA)}$. This is clearly essentially surjective. In order to prove that it is fully faithful, we observe that for any ordinary $\BH^0(\BA)$-supermodule (seen as a discrete derived $\BA$-supermodule), the subcategory of $\mathbf{SD}_{\infty}(\BA)$ consisting of derived $\BA$-supermodules $\BM$ for which the map
\[
\begin{tikzcd}
\tau^{\leqslant0}\BR\BH\mathrm{om}_{\BA}(\BM,\BN)\arrow{r} & \Hom_{\BH^0(\BA)}(\BH^0(\BM),\BN)
\end{tikzcd}
\]
is bijective is closed under colimits and contains $\BA$. Therefore, it contains the whole $\mathbf{SD}_{\infty}(\BA)^{\leqslant0}$, and in particular $\mathbf{SD}_{\infty}(\BA)^{\heartsuit}$.

We finally prove the claim concerning the projective rigidity of $\mathbf{SD}_\infty(\BA)$: since for any connective derived supercommutative $R$-algebra $\BA$ this is a $\infty$-category of modules for an algebra in $\mathbf{SD}_\infty(R)$, we can reduce ourselves to check the claim when $\BA=R$ is an ordinary commutative ring thanks to \cite[Prop.~3.17]{liang_higher_algebra}.
\begin{enumerate}
    \item The subcategory $\mathbf{SD}_\infty(R)^{\leqslant0}\simeq\mathbf{D}_\infty(R)^{\leqslant0}\times\mathbf{D}_\infty(R)^{\leqslant0}$ is accessible and contains the monoidal unit $R^{1|0}$. Moreover, since the monoidal structure of $\mathbf{SD}_\infty(R)$ is described in terms of direct sums of tensor products in $\mathbf{D}_\infty(R)$, connective objects are readily seen to be stable under tensor products.
    \item The $\infty$-category $\mathbf{SD}_\infty(R)^{\leqslant0}$ is projectively generated: one can check that the projective object $R\oplus \Pi R$ generates everything under colimits, using \cite[Prop.~7.1.4.12]{ha}.
    \item Dualizable and compact projective objects in $\mathbf{SD}_\infty(R)^{\leqslant0}$ coincide. This is an easy consequence of \cref{prop:superf} (implication \ref{prop:def} $\Leftrightarrow$ \ref{prop:compact}): see \cref{remark:last_part_thm_tstr}.
\end{enumerate}
\subsection{Proof of \cref{prop:superf}}
\label{sec:perf_proof}
We first show that \ref{prop:def} implies \ref{prop:compact}. Let $\BM$ be a fully dualizable object: then the functor
\[
\begin{tikzcd}
\mathbf{R}\mathbb{H}\mathrm{om}_{\BA}(\BM,-)\colon\mathbf{SD}_\infty(\BA)\arrow{r} & \mathbf{SD}_\infty(\BA)
\end{tikzcd}
\]
is equivalent to the functor $\BM^{\vee}\otimes^{\mathrm{super}}_{\BA}(-)$, and thus preserves all colimits. Forgetting the $\BA$-module structure preserves again all colimits, so we have a colimit-preserving functor
\[
\begin{tikzcd}
\mathbf{R}\mathbb{H}\mathrm{om}_{\BA}(\BM,-)^{\mathrm{even}}\oplus\mathbf{R}\mathbb{H}\mathrm{om}_{\BA}(\BM,-)^{\mathrm{odd}}\colon\mathbf{SD}_\infty(\BA)\arrow{r} & \mathbf{SD}_\infty(R)\simeq\mathbf{D}_\infty(R)\times\mathbf{D}_\infty(R).
\end{tikzcd}
\]
Let us remark that the above equivalence is an equivalence of plain (\textit{not monoidal}) stable $\infty$-categories. The functor $\mathrm{Map}_{\BA}(\BM,-)^{\mathrm{even}}$ is then obtained by projecting onto the first copy of $\mathbf{SD}_\infty(R)$, and finally post-composing with the ``forgetful'' functor
\[
\begin{tikzcd}
\Omega^\infty\colon\mathbf{D}_\infty(R)\arrow{r} & \mathrm{Spaces}.
\end{tikzcd}
\]
Since the functor $\Omega^\infty$ commutes with sifted colimits (in particular, with filtered ones), it follows that the functor $\mathrm{Map}_{\BA}(\BM,-)^{\mathrm{even}}$ commutes with filtered colimits.

Let us now prove that \ref{prop:compact} implies \ref{prop:def} (this is where we use the connectivity assumption on $\BA$). We first show that if the functor $\mathrm{Map}_{\BA}(\BM,-)^{\mathrm{even}}$ commutes with filtered colimits, then also the functor
\[
\begin{tikzcd}
\mathbf{R}\mathbb{H}\mathrm{om}_{\BA}(\BM,-)\colon\mathbf{SD}_\infty(\BA)\arrow{r} & \mathbf{SD}_\infty(\BA)
\end{tikzcd}
\]
must preserve them as well. Let $\left\{\BN_i\right\}_{i\in I}$ be a filtered system of derived $\BA$-supermodules; since the $t$-structure on $\mathbf{SD}_\infty(\BA)$ of \cref{prop:t-structure} is both left and right complete, it is sufficient to check that for all integers $n\in\BZ$ the map
\begin{equation}
    \label{eq:comparison}
    \begin{tikzcd}
    \BH^n\left(\colim_{i\in I}\mathbf{R}\mathbb{H}\mathrm{om}_{\BA}(\BM,\BN_i)\right)\arrow{r} & \BH^n\left(\mathbf{R}\mathbb{H}\mathrm{om}_{\BA}\left(\BM,\colim_{i\in I}\BN_i\right)\right)
    \end{tikzcd}
\end{equation}
is an isomorphism of $R$-supermodules. Note that, for all integers $n\in\BZ$ and for all derived $R$-supermodules, we have
\[
\BH^n(\BM)^{\mathrm{even}}\cong\pi_{0}\Omega^\infty(\BM[-n]) \quad\text{ and }\quad\BH^n(\BM)^{\mathrm{odd}}\cong\pi_{-n}\Omega^\infty(\Pi\BM[-n]).
\]
Since by assumption $\mathrm{Map}_{\BA}(\BM,-)^{\mathrm{even}}\simeq\Omega^\infty \mathbf{R}\mathbb{H}\mathrm{om}_{\BA}(\BM,-)^{\mathrm{even}}$ commutes with filtered colimits, and homotopy groups of spaces commute with filtered colimits, it follows that the map \eqref{eq:comparison} is an isomorphism for all integers $n\geqslant0$. In particular, the functor $\mathbf{R}\mathbb{H}\mathrm{om}_{\BA}(\BM,-)$ commutes with arbitrary small colimits (since it always commutes with finite colimits) and it preserves the derived supertensor product: indeed, writing two arbitrary derived $\BA$-supermodules as $\BN\simeq\colim_{\BN}\BA$ and $\BP\simeq\colim_{\BP}\BA$ and using the compatibility of the supertensor product with colimits, it follows that
\[
\mathbf{R}\mathbb{H}\mathrm{om}_{\BA}(\BM,\BN\otimes^{\mathrm{super}}_{\BA}\BP)\simeq\colim_{\BN}\colim_{\BP}\mathbf{R}\mathbb{H}\mathrm{om}_{\BA}(\BM,\BA)\simeq\mathbf{R}\mathbb{H}\mathrm{om}_{\BA}(\BM,\BN)\otimes^{\mathrm{super}}_{\BA}\mathbf{R}\mathbb{H}\mathrm{om}_{\BA}(\BM,\BP).
\]
Using the fact that colimit-preserving and $\mathbf{SD}_\infty(\BA)$-linear endofunctors of $\mathbf{SD}_\infty(\BA)$ are always given by tensoring with a derived $\BA$-supermodule \cite[Thm.~4.8.4.1]{ha} it follows that $\mathbf{R}\mathbb{H}\mathrm{om}_{\BA}(\BM,\BN)\simeq\mathbf{R}\mathbb{H}\mathrm{om}_{\BA}(\BM,\BA)\otimes^{\mathrm{super}}\BN$ for any derived supermodule $\BA$. Therefore the obvious evaluation and the coevaluation
\[
\begin{tikzcd}
\mathrm{coev}\colon\BA\arrow{r} & \mathbf{R}\mathbb{H}\mathrm{om}_{\BA}(\BM,\BA)\otimes^{\mathrm{super}}_{\BA}\BM\simeq\mathbf{R}\mathbb{H}\mathrm{om}_{\BA}(\BM,\BM)
\end{tikzcd}
\]
selecting the identity morphism of $\BM$ yield the desired dualizability data for $\BM$.

The fact that \ref{prop:thick} implies \ref{prop:compact} is clear, since $\BA$ and $\Pi\BA$ are both clearly dualizable (and actually self dual) and hence compact, so the smallest stable sub-$\infty$-category they generate under finite colimits and retracts only consists of compact objects. We now prove that all compact objects are contained in such $\infty$-category: let $\mathrm{Thick}(\BA,\Pi\BA)$ be the $\infty$-category described in \cref{prop:thick}. The inclusion $\mathrm{Thick}(\BA,\Pi\BA)\subseteq\mathbf{SD}_\infty(\BA)$ induces a fully faithful embedding $\mathrm{Ind}(\mathrm{Thick}(\BA,\Pi\BA))\subseteq\mathbf{SD}_\infty(\BA)$. If this were not essentially surjective, $\mathrm{Thick}(\BA,\Pi\BA)$ would produce a presentable and stable sub-$\infty$-category of $\mathbf{SD}_\infty(\BA)$ that is closed under colimits; therefore, it would define the connective part of a $t$-structure. If an object $\BN$ is coconnective for such $t$-structure, it follows that $\mathrm{Map}_{\BA}(\BM,\BN)\simeq\left\{\ast\right\}$ for all $\BM$ in the essential image of $\mathrm{Thick}(\BA,\Pi\BA)$. But this contains the objects $\BA[n]$ and $\Pi\BA[n]$ for all integers $n$, thus we obtain that $\BH^n(\BN)^{\mathrm{even}}$ and $\BH^n(\BN)^\mathrm{odd}$ are trivial for all integers $n$, so $\BN$ is actually the trivial derived supermodule. Since $\mathbf{SD}_\infty(\BA)$ is generated by its compact objects, it follows that $\mathrm{Thick}(\BA,\Pi\BA)\simeq\mathrm{SPerf}(\BA)$.

Finally, if $\BA$ is purely bosonic, then we have an equivalence
\[
\mathbf{R}\mathbb{H}\mathrm{om}_{\BA}(\BM,-)^{\mathrm{even}}\simeq\mathbf{R}\Hom_{\BA}(\BM^{\mathrm{even}},(-)^{\mathrm{even}})\oplus\mathbf{R}\Hom_{\BA}(\BM^{\mathrm{odd}},(-)^{\mathrm{odd}}).
\]
In particular, $\BM$ is compact in $\mathbf{SD}_\infty(\BA)$ if and only if both $\BM^{\mathrm{even}}$ and $\BM^{\mathrm{odd}}$ are compact in $\mathbf{D}_\infty(\BA)$, proving the equivalence between \ref{prop:compact} and \ref{prop:degreewise}.
\begin{remark}
\label{remark:last_part_thm_tstr}
Using \cite[Prop.~7.2.2.6]{ha}, the equivalence \cref{prop:def} $\Leftrightarrow$ \ref{prop:compact} implies that all dualizable objects in $\mathbf{SD}_\infty(\BA)^{\leqslant0}$ are compact \textit{and projective}. This concludes the last step in the proof of \cref{prop:t-structure}. Notice that no circularity is involved in our argument, since in order to prove the above equivalence we only use the existence of the left and right complete $t$-structure from \cref{prop:t-structure} (which is proved independently).
\end{remark}
\subsection{Proof of \cref{thm:flat_descent}}
\label{sec:flat_descent_proof}
Let $\left\{\BS\to\BT_i\right\}_i$ be a faithfully flat covering of connective derived supercommutative $\BA$-algebras. The functor $\mathbf{SD}_{\infty}(-)$ is a functor of the form $\mathrm{Mod}_{(-)}(\mathscr{C})$ for a stable presentable symmetric monoidal $\infty$-category $\mathscr{C}$; thus, we can easily adapt the proof of \cite[Lemma D.3.5.5]{sag} to the super setting and deduce that $\mathbf{SD}_{\infty}(-)$ preserves finite products. So, if $\left\{i_1,\ldots,i_n\right\}$ denotes the finite subset of indices yielding the desired faithfully flat morphism, we can replace the $\BT_i$'s with $\prod_{j=1}^n\BT_{i_j}$. Therefore, we are reduced to prove the following: let $\varphi\colon \BS\to\BT$ be a morphism of derived supercommutative $\BA$-algebras, and let $\check{\mathrm{C}}(\varphi)^{\bullet}$ denote its \v{C}ech nerve in the $\infty$-category $\mathrm{dSCAlg}_{\BA}$. In other words, this is the cosimplicial derived $\BA$-superalgebra whose $n$-th term is described by the $n$-fold tensor product $\BT\otimes_{\BS}^{\mathrm{super}}\cdots\otimes_{\BS}^{\mathrm{super}}\BT$. Then the functor
\[
\begin{tikzcd}
\mathbf{SD}_{\infty}(\BT)\arrow{r} & \displaystyle\lim_{[n]\in\bm{\Delta}^{\mathrm{op}}}\mathbf{SD}_{\infty}(\check{\mathrm{C}}(\varphi)_n)
\end{tikzcd}
\]
is an equivalence. Using the dual statement of \cite[Cor.~4.7.5.3]{ha}, we are reduced to prove the following.
\begin{enumerate}
    \item For any morphism $\alpha\colon[m]\to[n]$ in $\bm{\Delta}$, the natural diagram 
\[
\begin{tikzcd}
  \mathbf{SD}_{\infty}(\check{\mathrm{C}}(\varphi)_m)
    \arrow[r, "d^0"]
    \arrow[d, "\alpha^\ast"]
  &
  \mathbf{SD}_{\infty}(\check{\mathrm{C}}(\varphi)_{m+1})
    \arrow[d, "\alpha^\ast"]
  \\
  \mathbf{SD}_{\infty}(\check{\mathrm{C}}(\varphi)_n)
    \arrow[r, "d^0"]
  &
  \mathbf{SD}_{\infty}(\check{\mathrm{C}}(\varphi)_{n+1})
\end{tikzcd}
\]
is horizontally right adjointable. This is clear because the forgetful functor $\mathbf{SD}_{\infty}(\check{\mathrm{C}}(\varphi)_{m+1})\to\mathbf{SD}_{\infty}(\check{\mathrm{C}}(\varphi)_m)$ does the trick. 
\item The functor $\mathbf{SD}_{\infty}(\BS)\to\mathbf{SD}_{\infty}(\check{\mathrm{C}}(\varphi)_0)\simeq\mathbf{SD}_{\infty}(\BT)$ is comonadic. 
\end{enumerate}
By virtue of the Barr--Beck--Lurie monadicity theorem \cite[Thm.~4.7.3.5]{ha}, proving the comonadicity of the functor $(-)\otimes^{\mathrm{super}}_{\BS}\BT$ is equivalent to proving that it admits a right adjoint (which is trivially given by the forgetful functor along $\BS\to\BT$), that it reflects equivalences, and that it preserves totalizations of cosimplicial diagrams which become split\footnote{The general definition of split cosimplicial objects can be found in \cite[Definition 4.7.2.2]{ha}, but for the scope of our arguments we only need to consider split cosimplicial objects in abelian categories, which are equivalent to non-negatively graded cochain complexes that are acyclic resolutions of $\ker(d^0)$.} after base change to $\BT$. 
\begin{enumerate}
    \item We start by proving that the functor $(-)\otimes^{\mathrm{super}}_{\BS}\BT$ reflects equivalences. Let $f\colon\BM\to\BN$ be a morphism of derived $\BS$-supermodules, and assume that $f\otimes\mathrm{id}_{\BT}\colon\BM\otimes_{\BS}^{\mathrm{super}}\BT\to\BN\otimes_{\BS}^{\mathrm{super}}\BT$ is an equivalence. This is equivalent to saying that the mapping cone of $f$ is annihilated by tensoring with $\BT$. Consider the unit morphism $\eta_{\mathrm{cone}(f)}\colon\mathrm{cone}(f)\to \mathrm{cone}(f)\otimes^{\mathrm{super}}_{\BS}\BT$, and complete it to a triangle. By the compatibility of the derived tensor product with mapping cones, the mapping cone $\mathrm{cone}(\eta)$ is equivalent to $\mathrm{
    cone}(f)\otimes_{\BS}^{\mathrm{super}}\mathrm{cone}(\eta_{\BS})$. Moreover, $\eta_{\BS}$ naturally identifies with $\varphi$ itself. In other words, we obtain a triangle
    \[
    \begin{tikzcd}
    \mathrm{cone}(f)\arrow{r} & \mathrm{cone}(f)\otimes^{\mathrm{super}}_{\BS}\BT\arrow{r} & \mathrm{cone}(f)\otimes^{\mathrm{super}}_{\BS}\mathrm{cone}(\varphi).
    \end{tikzcd}
    \]
    In virtue of the naturality of the long exact sequence of cohomology supergroups, for all integers $n\in\BZ$ we obtain a commutative diagram
\[
\begin{tikzcd}[row sep=large, column sep=large]
  \BH^{n-1}\left(
    \mathrm{cone}(\varphi)
    \otimes^{\mathrm{super}}_{\BS}
    \mathrm{cone}(f)
  \right)
    \arrow[r, "\delta_n"]
    \arrow[d]
  &
  \BH^n\left(\mathrm{cone}(\varphi)\right)
    \arrow[d]
  \\
  \BH^{n-1}\left(
    \tau^{\geqslant n}\left(\mathrm{cone}(\varphi)\right)
    \otimes^{\mathrm{super}}_{\BS}
    \mathrm{cone}(f)
  \right)
    \arrow[r]
  &
  \BH^n\left(\tau^{\geqslant n}\mathrm{cone}(\varphi)\right),
\end{tikzcd}
\]
where $\tau^{\geqslant n}$ is understood as the truncation functor for the $t$-structure of \cref{prop:t-structure}. We claim that $\mathrm{cone}(\varphi)$ is itself a flat discrete $\BS$-superalgebra: if this was true, then the bottom left corner of the above diagram would be zero, the right vertical arrow would be an isomorphism, and thus every connecting morphism $\delta_n$ would be zero. So the long exact sequence in cohomology would actually split in short exact sequences
    \[
    \begin{tikzcd}[column sep=small]
    0\arrow{r} & \BH^{n}(\mathrm{cone}(f))\arrow{r} & \BH^n(\mathrm{cone}(f)\otimes_{\BS}^{\mathrm{super}}\BT)\arrow{r} & \BH^n(\mathrm{cone}(f)\otimes_{\BS}^{\mathrm{super}}\mathrm{cone}(\varphi))\arrow{r} & 0.
    \end{tikzcd}
    \]
    Since $f\otimes\mathrm{id}_{\BT}$ is an equivalence, the middle term is zero for any integer $n$, so the left hand side is zero as well. But this is equivalent to saying that $f$ was already an equivalence.    
    
    So, let us prove that $\mathrm{cone}(\varphi)$ is indeed $\BS$-flat. Using \cref{prop:flat3}, we need to prove that for a discrete $\BS$-supermodule $\BM$ the tensor product $\BM\otimes^{\mathrm{super}}_{\BA}\mathrm{cone}(\varphi)$ is again discrete. This is the mapping cone of the map $\BM\to\BM\otimes_{\BS}^{\mathrm{super}}\BT$, where both source and target are discrete because $\BT$ is flat over $\BS$. Thus, the only obstruction to $\mathrm{cone}(\varphi)$ being $\BS$-flat is
    \[
\begin{aligned}
\BH^{-1}\bigl(\mathrm{cone}(\varphi)
  \otimes_{\BS}^{\mathrm{super}}\BM\bigr)
&\cong \mathrm{ker}\left(
  \begin{tikzcd}[ampersand replacement=\&, cramped, column sep=small]
  \BM \arrow[r]
  \& \BM\otimes_{\BS}^{\mathrm{super}}\BT
  \end{tikzcd}
  \right)\\
&\cong \mathrm{ker}\left(
  \begin{tikzcd}[ampersand replacement=\&, cramped, column sep=small]
  \BM \arrow[r]
  \& \BM\otimes_{\BH^0(\BS)}^{\mathrm{super}}\BH^0(\BT)
  \end{tikzcd}
  \right),
\end{aligned}
\]
where the second isomorphism follows from flatness of $\BT$. But $\BH^0(\BT)$ is a flat $\BH^0(\BS)$-supermodule, so the map $\BM\to\BM\otimes^{\mathrm{super}}_{\BH^0(\BS)}\BH^0(\BT)$ is injective: indeed, it is the unit morphism for the adjunction
    \[
    \adjunction{(-)\otimes^{\mathrm{super}}_{\BH^0(\BS)}\BH^0(\BT)}{\mathrm{SMod}_{\BH^0(\BS)}}{\mathrm{SMod}_{\BH^0(\BT)}}{\mathrm{forget}},
    \]
    and since $\BH^0(\BT)$ is faithfully flat the left adjoint is faithful. 
    \item We now prove that the functor $(-)\otimes^{\mathrm{super}}_{\BS}\BT$ preserves totalizations of simplicial objects which become split in $\mathbf{SD}_{\infty}(\BT)$. We first observe that, since $BT$ is faithfully flat over $\BS$, the adjunction $(-)\otimes^{\mathrm{super}}_{\BS}\BT\dashv\mathrm{forget}$ induces an adjunction at the level of the heart
    \[
    \adjunction{(-)\otimes^{\mathrm{super}}_{\BH^0(\BS)}\BH^0(\BT)}{\mathrm{SMod}_{\BH^0(\BS)}}{\mathrm{SMod}_{\BH^0(\BT)}}{\mathrm{forget}}.
    \]
    Since the left adjoint preserves all equalizers and reflects equivalences (because $\BH^0(\BT)$ is faithfully flat over $\BH^0(\BS)$) it follows that it is monadic. So let $\BK^{\bullet}\colon\mathbf{\Delta}^{\mathrm{op}}_+\to\mathbf{SD}_{\infty}(\BS)$ be an augmented simplicial diagram such that $\BK^{\bullet}\otimes^{\mathrm{super}}_{\BS}\BT$ is split. Post-composing with the functors $\BH^n(-)$, this yields a split cosimplicial $\BH^0(\BS)$-supermodule $\BK^{\bullet}(n)$ which, under the Dold--Kan correspondence \cite[Thm.~1.2.3.7]{ha}, can be understood as a cochain complex of $\BH^0(\BT)$-supermodules
    \[
    \begin{tikzcd}
    \BK^{-1}(n)\cong\mathrm{ker}(d^0)\arrow{r} &  \BK^0(n)\arrow{r}{d^0} & \BK^1(n)\arrow{r} & \cdots.
    \end{tikzcd}
    \]
    Moreover, after base change over $\BH^0(\BT)$ this cochain complex becomes a split exact resolution of $\BK^{-1}(n)\otimes^{\mathrm{super}}_{\BH^0(\BS)}\BT$. But since $\BH^0(\BT)$ is faithfully flat over $\BH^0(\BS)$, it follows that $\BK^{\bullet}(n)$ was already a split exact resolution of $\BK^{-1}(n)$ for all integers $n\in\BZ$. Applying (the dual of) \cite[Cor.~$1.2.4.12$]{ha} we obtain that the limit of the diagram $\BK^{\bullet}$ in $\mathbf{SD}_{\infty}(\BA)$ is hence characterized by the fact that
\[
\BH^n\left(\lim_{\mathbf{\Delta}}\BK^{\bullet}\right)\cong \BK^{-1}(n)
\]
for all $n\in\BZ$, while the limit of the diagram $\BK^{\bullet}\otimes^{\mathrm{super}}_{\BS}\BT$ is characterized by the fact that
\[
\BH^n\left(\lim_{\mathbf{\Delta}}\left(\BK^{\bullet}\otimes^{\mathrm{super}}_{\BS}\BT\right)\right)\cong\BK^{-1}(n)\otimes^{\mathrm{super}}_{\BH^0(\BS)}\BH^0(\BT)
\]
for all $n\in\BZ$. In particular, because of faithfully flatness of $\BT$, the natural comparison map
\[
\begin{tikzcd}
\left(\displaystyle\lim_{\mathbf{\Delta}}\BK^{\bullet}\right)\otimes^{\mathrm{super}}_{\BS}\BT\arrow{r} & \displaystyle\lim_{\mathbf{\Delta}}\left(\BK^{\bullet}\otimes^{\mathrm{super}}_{\BS}\BT\right)
\end{tikzcd}
\]
induces an isomorphism on all cohomology supergroups, hence it is an equivalence.
\end{enumerate}
\begin{remark}
Let $\BA$ be an underived supercommutative $R$-algebra, and let $\varphi\colon \BS\to\BT$ be a faithfully flat cover between \textit{underived} supercommutative $\BA$-algebras. Recall that \cref{prop:t-structure} equips $\mathbf{SD}_\infty(\BA)$ with a $t$-structure whose heart is equivalent to the abelian category $\mathrm{SMod}_{\BA}$ of super $\BA$-modules. On the other hand, the stable $\infty$-category
\[  
\lim_{[n]\in\mathbf{\Delta}}\mathbf{SD}_\infty(\check{\mathrm{C}}(\varphi)_n)
\]
admits a $t$-structure where the full sub-$\infty$-category of connective objects is described as
\[
\left(\lim_{[n]\in\mathbf{\Delta}}\mathbf{SD}_\infty(\check{\mathrm{C}}(\varphi)_n)\right)^{\leqslant0}=\lim_{[n]\in\mathbf{\Delta}}\mathbf{SD}_\infty(\check{\mathrm{C}}(\varphi)_n)^{\leqslant0}\simeq\lim_{[n]\in\mathbf{\Delta}}\mathbf{SD}_\infty(\BT\otimes_{\BS}^{\mathrm{super}}\cdots\otimes_{\BS}^{\mathrm{super}}\BT)^{\leqslant0}.
\]
In the above formula, the super-tensor product is underived because $\BT$ is flat over $\BS$.  Flatness of $\BT$ over $\BS$ also implies that the coconnective objects are described as
\[
\left(\lim_{[n]\in\mathbf{\Delta}}\mathbf{SD}_\infty(\check{\mathrm{C}}(\varphi)_n)\right)^{\geqslant0}=\lim_{[n]\in\mathbf{\Delta}}\mathbf{SD}_\infty(\check{\mathrm{C}}(\varphi)_n)^{\geqslant0}\simeq\lim_{[n]\in\mathbf{\Delta}}\mathbf{SD}_\infty(\BT\otimes_{\BS}^{\mathrm{super}}\cdots\otimes_{\BS}^{\mathrm{super}}\BT)^{\geqslant0}.
\]
It follows that
\[
\left(\lim_{[n]\in\mathbf{\Delta}}\mathbf{SD}_\infty(\check{\mathrm{C}}(\varphi)_n)\right)^{\heartsuit}\simeq\lim_{[n]\in\mathbf{\Delta}}\mathbf{SD}_\infty(\BT\otimes_{\BS}^{\mathrm{super}}\cdots\otimes_{\BS}^{\mathrm{super}}\BT)^{\heartsuit}\simeq\lim_{[n]\in\mathbf{\Delta}}\mathrm{SMod}_{\BT\otimes_{\BS}^{\mathrm{super}}\cdots\otimes_{\BS}^{\mathrm{super}}\BT}.
\]
This is now a limit over a cosimplicial diagram computed in the $2$-category of abelian categories. This implies that we can equivalently compute it as the $2$-limit of the truncated diagram
\[
\begin{tikzcd}[column sep=large]
\mathrm{SMod}_{\BT}
  \arrow[r]
&
\mathrm{SMod}_{\BT\otimes^{\mathrm{super}}_{\BS}\BT}
  \arrow[r, shift left=0.6ex]
  \arrow[r, shift right=0.6ex]
&
\mathrm{SMod}_{\BT\otimes^{\mathrm{super}}_{\BS}\BT
  \otimes^{\mathrm{super}}_{\BS}\BT},
\end{tikzcd}
\]
which is precisely the category of descent data for $\varphi$. Thus, passing to the hearts in the statement of \cref{thm:flat_descent}, we recover the ordinary faithfully flat descent statement for abelian categories of supermodules.
\end{remark}
\bibliographystyle{siam}
\bibliography{bib}

\bigskip
\noindent
{\small Ugo Bruzzo \\
\address{Instituto de Ci\^encias Exatas, Universidade Federal de Minas Gerais, Av.~Ant\^onio Carlos 6627,  Belo Horizonte,
MG 30123‑970 (Brazil);
 IGAP (Institute for Geometry and Physics), Trieste (Italy)  } \\
\href{mailto:ubruzzo@ufmg.br}{\texttt{ubruzzo@ufmg.br}}}

\bigskip
\noindent
{\small Daniel Hern\'andez Ruip\'erez \\
\address{Departamento de Matem\'aticas and IUFFYM, Universidad de Salamanca, Plaza de la Merced 1--4, 37008, Salamanca (Spain)} \\
\href{mailto:ruiperez@usal.es}{\texttt{ruiperez@usal.es}}}

\bigskip
\noindent
{\small Emanuele Pavia \\
\address{Maison du Nombre, University of Luxembourg, 6, avenue de la Fonte L-4364, Esch-sur-Alzette (Luxembourg)} \\
\href{mailto:emanuele.pavia@uni.lu}{\texttt{emanuele.pavia@uni.lu}}}

\bigskip
\noindent
{\small Andrea T. Ricolfi \\
\address{SISSA, Via Bonomea 265, 34136, Trieste (Italy)} \\
\href{mailto:aricolfi@sissa.it}{\texttt{aricolfi@sissa.it}}}
\end{document}